\def\C{\mathbb C}
\def\bcases{\begin{cases}}
\def\ecases{\end{cases}}
\newcommand{\D}{\mathbb D}
\newcommand{\loc}{{\operatorname{loc}}}
\newcommand{\N}{\mathbb N}
\newcommand{\R}{\mathbb R}
\def\sm{\setminus}
\newcommand{\UU}{\mathcal{U}}
\newtheorem{thm}{Theorem}[section]
\newtheorem{main theorem}[thm]{Main Theorem}
\newtheorem{corollary}[thm]{Corollary}
\newtheorem{lemma}[thm]{Lemma}
\newtheorem{prop}[thm]{Proposition}
\newtheorem{problem 1}{Problem 1}
\newtheorem{problem 2}{Problem 2}
\newtheorem{problem 3}{Problem 3}
\theoremstyle{definition}
\newtheorem{defn}[thm]{Definition}
\newtheorem{remark}[thm]{Remark}
\newcommand{\FF}{\mathcal F}
\newcommand{\bea}{\begin{eqnarray*}}
\newcommand{\eea}{\end{eqnarray*}}
\newcommand{\note}[1]{\medskip {\bf  #1}\medskip}
\newcommand{\be}{\begin{equation}}
\newcommand{\ee}{\end{equation}}
\newcommand{\De}{\Delta}
\newcommand{\Si}{\Sigma}
\newcommand{\Om}{\Omega}
\newcommand{\NN}{{\mathcal{N}}}
\newcommand{\VV}{{\mathcal{V}}}
\newcommand{\HH}{{\mathcal{H}}}
\newcommand{\bL}{{\mathbb{L}}}
\newcommand{\epsi}{{\varepsilon}}
\newcommand{\de} {{\delta}}
\newcommand{\la}{{\lambda}}
\newcommand{\dist}{{\operatorname{dist}}}
\newcommand{\isom}{\approx}
\newcommand{\ra}{\rightarrow}
\newcommand{\di}{\partial}
\newcommand {\ssk} {\smallskip}
\newcommand {\msk} {\medskip}
\newcommand{\nin} {\noindent}
\newcommand{\crit} {{\mathrm{crit}}}
\newcommand{\Jac} {\operatorname {Jac}}
\newcommand{\length}{\operatorname{length}}
\newcommand{\diam}{\mathrm{diam}}
\newcommand{\comm}[1]{}
\def\note{\marginpar}
\begin{document}

\title[Partially hyperbolic H\'enon maps]
{Structure of partially hyperbolic H\'enon maps}

\author{Misha Lyubich and Han Peters}

\today

\begin{abstract}
We consider the structure of substantially dissipative complex H\'enon maps admitting a dominated splitting on the Julia set. The dominated splitting assumption corresponds to the one-dimensional assumption that there are no critical points on the Julia set. Indeed, we prove the corresponding description of the Fatou set, namely that it consists of only finitely many components, each either attracting or parabolic periodic. In particular there are no rotation domains, and no wandering components. Moreover, we show that $J = J^\star$ and the dynamics on $J$ is hyperbolic away from parabolic cycles.
\end{abstract}

\maketitle

\section{Introduction}

Complex H\'enon maps are
polynomial automorphisms of $\C^2$ with non-trivial dynamical behavior,
$$
     f: (x, y) \mapsto (p(x) - by, x), \quad {\mathrm{where}}\ \deg p \geq 2 , \ b= \Jac f \not=0.
$$
For a small Jacobian $b$, it can be viewed as a perturbation of the one-dimensional polynomial $p: \C\ra \C$.
Though some initial aspects of the 2D theory resembles the 1D theory,
quite quickly it becomes much more difficult, exhibiting various new phenomena.

Dynamics of 1D polynomials on the Fatou set is fully understood, due to the  classical work of Fatou, Julia and Siegel,
supplemented with  Sullivan's No Wandering Domains Theorem from the early '80s  \cite{Sullivan} .
This direction of research  for H\'enon maps was initiated   by Bedford and Smillie in the early '90's.
In particular, they gave a description of the dynamics on ``recurrent''  periodic Fatou components \cite{BS1991b}.
The ``non-recurrent'' case was   recently treated by the authors \cite{LP2014},
under  an  assumption that the H\'enon map  is ``substantially dissipative'', i.e.
$$
\left| \mathrm \Jac f\right| < \frac{1}{( \deg p)^2 }.
$$
It completed the classification of periodic Fatou components in this setting:
Any such component is either an attracting or parabolic basin, or a rotation domain,
which is analogous  to the one-dimensional classification.
\footnote{One fine pending issue  still unresolved for H\'enon maps is whether Herman rings can exist.}

The situation with the problem of wandering components is more complicated.
In fact, wandering Fatou components can exist for  polynomial endomorphisms $g: \C^2\ra \C^2$,
as  was recently demonstrated in \cite{ABDPR}.
It is probable that wandering components can exist for H\'enon maps as well,
but one can hope that  ``generically''  they do not.

It is quite clear that Sullivan's proof of the Non-Wandering Domains Theorem,
based upon quasiconformal deformations machinery,  is not generalizable to higher dimensions.
At the same time,  for various special classes of 1D polynomial maps,
one can give a direct geometric argument that has a chance to be generalized to the 2D setting.
The simplest class of this kind comprises hyperbolic polynomials,
for which absence of wandering component was known classically.
The H\'enon counterpart of this result was established  by Bedford and Smillie in the '90s,
resulting in a complete description of the dynamics on the Fatou set for this class  \cite{BS1991a}:
a hyperbolic H\'enon map has only finitely many Fatou components, each of which is an attracting basin.
Moreover, in this case, the Julia set $J$ is the closure of saddles: $J= J^*$.

\msk
Until now, hyperbolic maps remained the only class of H\'enon maps for which these problems were settled down.
In this paper, we are making one step further, resolving these problems for substantially dissipative H\'enon maps that admit a ``dominated splitting'' over the Julia set:

\begin{thm}\label{thm:main}
Let $f: \C^2 \ra \C^2$ be a  substantially dissipative  H\'enon map that admits a  dominated splitting  over the Julia set $J$.
Then:

\ssk\nin $\bullet$
 $f$ does not have wandering Fatou components;

\ssk\nin $\bullet$
$f$ has only finitely many periodic  Fatou components,
  each being  either an attracting  or a parabolic basin;

\ssk\nin $\bullet$ $J=J^*$, i.e., the Julia set is the closure of saddles.
\end{thm}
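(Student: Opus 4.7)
The plan is to upgrade the dominated splitting over $J$ to a (semi-)hyperbolic structure and then to derive each of the three conclusions by reducing to the classical one-dimensional semi-hyperbolic theory via a strong-stable lamination. The guiding principle is that the 2D dominated splitting assumption corresponds to the 1D condition ``no critical points on the Julia set''. As a first step I would construct the strong-stable lamination: the dominance condition, combined with $|\Jac f|<1$, forces the more-contracted subbundle $E^s$ to be uniformly contracted, so at each $z\in J$ there is an invariant complex analytic disk $W^{ss}(z)$ tangent to $E^s(z)$, and continuity of the splitting makes $\{W^{ss}(z)\}_{z\in J}$ a lamination of a neighborhood of $J$ in $\C^2$. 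Substantial dissipativity ensures the transverse direction is one complex-dimensional, and the action of $f$ on local holonomy transversals yields a one-dimensional holomorphic dynamical system $g_T$; the dominated splitting over $J$ translates into $g_T$ being semi-hyperbolic on its Julia set.

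Next I would classify periodic orbits of $f$ lying in $J$. Writing $\lambda^s,\lambda^c$ for the eigenvalues along $E^s,E^c$ of a periodic point $p\in J$, uniform contraction of $E^s$ gives $|\lambda^s|<1$, and $p$ cannot be attracting, so $|\lambda^c|\ge 1$. Semi-hyperbolic 1D polynomials admit no Siegel or Cremer cycles on their Julia set; transferring this property through $g_T$ excludes irrationally neutral cycles for $f$. By the classification of periodic Fatou components in \cite{LP2014}, this already rules out rotation domains, leaving only attracting and parabolic basins as possibilities. A Ma\~n\'e-style argument applied to $g_T$ then upgrades $J$ minus arbitrarily small neighborhoods of the finitely many parabolic cycles to a uniformly hyperbolic compact set for $f$.

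With this (semi-)hyperbolic structure the three bullets follow in the spirit of Bedford--Smillie \cite{BS1991a}. The density of repelling periodic points in the Julia set of the semi-hyperbolic $g_T$, spread through $J$ by the strong-stable lamination, gives $J=J^*$. A hypothetical wandering Fatou component of $f$ would project via the strong-stable holonomy to a wandering Fatou component of $g_T$, which is forbidden by the Carleson--Jones--Yoccoz theorem for semi-hyperbolic polynomials. Finitely many periodic Fatou components then follows by a standard compactness argument, each of them being attracting or parabolic by the previous step.

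The main obstacle is the rigorous implementation of the 1D transverse reduction in the first step. The strong-stable lamination is a priori only transversely quasiconformal, and one must show that the local holonomies carry enough holomorphic rigidity to set up a genuine polynomial-like $g_T$ and to transport the classical 1D semi-hyperbolic theorems (absence of wandering domains, density of repelling cycles, exclusion of irrationally neutral cycles on $J$) back to $f$ on $J$. Once this dictionary is secured, each bullet of the theorem reduces to its well-known one-dimensional counterpart.
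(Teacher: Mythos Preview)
Your proposal correctly identifies the guiding analogy (dominated splitting $\leftrightarrow$ no critical points on $J$) and the first step (construction of the strong-stable lamination over $J$, hence over $J^+$). However, the heart of your plan---reducing to a single one-dimensional holomorphic system $g_T$ and then quoting the Carleson--Jones--Yoccoz and Ma\~n\'e theorems---is a genuine gap, and you yourself flag it as ``the main obstacle''. The paper does \emph{not} construct any such $g_T$, and for good reason: the strong-stable lamination lives only over $J^+$, its holonomy is merely quasiconformal, and there is no global polynomial-like quotient to which the one-dimensional theorems apply. In particular, a hypothetical wandering component may leave the region of dominated splitting infinitely often in backward time, so there is no consistent ``projection to $g_T$'' along its orbit; this is exactly where the 1D--2D dictionary breaks down.

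What the paper does instead is to rebuild the Ma\~n\'e/Shishikura--Tan Lei argument directly in two dimensions. The strong-stable lamination on $J^+$ is extended, non-dynamically via Slodkowski's $\lambda$-lemma and a Besicovitch-type covering argument, to an \emph{artificial} vertical lamination on a neighborhood of $J^+$. One then fixes a horizontal line $\mathbb L_0$ transverse to the lamination, considers protected ``lifts'' $V_r(z)\subset f^n\mathbb L_0$ of small disks in $\mathbb L_0$, and proves uniform Kobayashi-diameter and degree bounds on all their pullbacks (the paper's Proposition~\ref{Prop51}). This requires substantial new machinery absent from your outline: a uniformization $U\cong D\times\C$ of each wandering component by a global transversal (showing the strong-stable foliation has trivial holonomy there), a resulting bound $\deg_{\max}$ on forward degrees, and a ``horizontal lamination'' inside wandering components giving a bound $\deg_{\mathrm{crit}}$ on tangency orders. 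Only after these bounds are in place do the conclusions follow: wandering domains are excluded by a diameter contradiction, rotation domains by constructing local unstable manifolds through every non-parabolic point of $J$, finiteness of attracting and parabolic cycles by compactness, and $J=J^*$ by producing transverse homoclinic intersections near every $x\in J$. None of these steps is a transfer from a 1D model; each is carried out intrinsically in $\C^2$.
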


H\'enon maps with dominated splitting  are 2D counterparts of  1D polynomial maps without critical points on the Julia set.
Our initial observation was that for such a polynomial, the No Wandering Domains Theorem can be proven by means
of Ma\~n\'e's techniques (refined in \cite{CJY} and \cite{STL})  treating  maps with non-recurrent critical points.
However, an adaptation of these techniques to the H\'enon setting is
not straightforward:
in particular, it required to impose  an assumption of  substantial dissipativity,
to develop an appropriate version of the $\lambda$-lemma,  and to bound  the
iterated  degrees of wandering components.

 The main work in this paper is to show that, away from the parabolic cycles,
$f$ is expanding in the horizontal direction.
(In particular,  if there are no parabolic cycles then $f$ is hyperbolic.)
The non-existence of wandering Fatou components and periodic  rotation
domains  follows easily,
and it also follows that $J = J^*$.
Moreover, we  show that  if there are parabolic cycles,
 then $f$ lies on the boundary of the hyperbolicity locus,
at least when viewed in the parameter space of H\'enon-like maps.

Non-hyperbolic complex H\'enon maps admitting a dominated splitting
have been constructed by Radu and Tanase \cite{RT}.
These examples are perturbations of 1D parabolic polynomials.

\msk
The structure of the paper is as follows.
In section (2) we will review   Ma\~n\'e's Theorem
analysing the dynamics of   1D polynomials without recurrent critical points (except possible superattracting cycles).
We give a detailed proof that follows ideas from a paper of Shishikura and Tan Lei \cite{STL}.
However, we have chosen to present an argument that will be a closest possible  model
to the two-dimensional proof we give later.
This means that the one-dimensional argument is not the most efficient. For instance, naturally  we do not assume the non-existence of wandering components.

In section (3) we recall H\'enon maps and the substantial dissipativity condition, and in section (4) we define the dominated splitting and make some elementary observations. The dominated splitting on $J$ induces a lamination on $J^+\cap \Delta^2_R$ by vertical disks. In section (5) this lamination is extended to a neighborhood of $J^+$, introducing the artificial vertical lamination. While this lamination is not invariant, it plays an important role in our proofs.

In wandering Fatou components we can consider both the artificial and the dynamical lamination given by strong stable manifolds. These two laminations may not agree on orbits of wandering domains that leave the region of dominated splitting, leading to interpretations of degree, discussed in sections (6) and (7).

In section (8) we make the final preparations and in section (9) we prove the main technical result, Proposition \ref{Prop51}. In section (10) we prove the consequences of this proposition, including Theorem \ref{thm:main} above.

\msk
In conclusion, let us note that
dominated splitting is an important classical  notion  going back to the works of Pliss and  Ma\~n\'e
from the '70s. Dynamics of real surface diffeomorphisms with dominated splitting
was described by Pujals and Samborino \cite{PS}.
 This result inspired  our  work.

\msk {\bf Acknowledgment.}
It is our pleasure to  thank Enrique Pujals,  Vladlen Timorin,
Remus Radu and Raluca Tanase
for very interesting discussions of the  dominated splitting theme.
This work has been partially supported by the NSF, NSERC,
and the Simons Foundation.

\section{The one-dimensional argument}

Let $f: \mathbb C \rightarrow \mathbb C$ be a polynomial, and assume that there are no critical points on $J$, the Julia set of $f$. We let $\Omega$ be a backward invariant open neighborhood of $J \setminus \{parabolics\}$, constructed by removing closed forward invariant sets from a finite number of (pre-) periodic Fatou components. We will assume that $\Omega$ is arbitrarily thin, i.e. contained in an arbitrarily small neighborhood of the union of $J \setminus \{parabolics\}$ with the wandering Fatou components.

The wandering Fatou components can contain only a finite number of critical points $x_1, \ldots , x_\nu$, having respective local degrees $d_1, \ldots , d_\nu$. We denote
$$
\mathrm{deg}_{\mathrm{crit}} = \prod_{s = 1}^\nu d_s.
$$
The constant $\mathrm{deg}_{\mathrm{crit}}$ functions as a maximal local degree on the wandering components for all iterates, i.e. if $V^0, V^1, \ldots , V^n$ is a orbit of open connected sets, each $V^n$ is contained in a wandering domain and has sufficiently small Euclidean diameter, then $f^n: V^0 \rightarrow V^n$ has degree at most $\mathrm{deg}_{\mathrm{crit}}$.

We will use the following shorthand notation. For a connected set $V$, we denote by $V^{-j}$ a connected component of $f^{-j}(V)$. If $V \subset W$ then we will always assume that $V^{-j} \subset W^{-j}$. When working with both $V^{-i}$ and $V^{-j}$, for $j > i$, we will assume that $f^{j-i}(V^{-j}) = V^{-i}$, i.e. that $V^{-i}$ and $V^{-j}$ are contained in the same backward orbit.

We will show that there exist an integer $M \in \mathbb N$ such that the following holds whenever $\Omega$ is sufficiently thin.

\begin{prop}\label{1D-prop1}
Let $z \in \Omega$ and let $r > 0$ be such that $D_{M \cdot r}(z) \subset \Omega$. Then for every $j \in \mathbb N$ we have that
$$
\begin{aligned}
\mathrm{\bf deg}(j): & \; \; \; \; \;\mathrm{deg} \left(f^j: D^{-j}_r(z) \rightarrow D_r(z)\right) \le \mathrm{deg}_{\mathrm{crit}},\\
\mathrm{\bf diam}(j): & \; \; \; \; \; \mathrm{diam}_{\Omega} D^{-j}_r(z) \le N_0(2K) \cdot C(\frac{1}{2}, \mathrm{deg}_{\mathrm{crit}}).
\end{aligned}
$$
\end{prop}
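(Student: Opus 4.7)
The plan is to prove $\mathbf{deg}(j)$ and $\mathbf{diam}(j)$ simultaneously by induction on $j$. The base case is immediate, and the inductive step consists in pulling back one further time, bounding both the branch degree picked up in that step and the resulting hyperbolic diameter via a Koebe-style distortion argument applied to the enlarged disk $D_{M\cdot r}(z) \subset \Omega$. Throughout, backward invariance of $\Omega$ guarantees $D^{-j}_r(z) \subset \Omega$ automatically, so the only genuine issues are how critical points accumulate and how the Euclidean/hyperbolic geometry of $\Omega$ constrains the pullbacks.

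For $\mathbf{deg}(j)$, the key observation is combinatorial. By construction of $\Omega$, the only critical points of $f$ inside $\Omega$ are $x_1,\dots,x_\nu$: there are none on $J$ by hypothesis, and the periodic Fatou components have been trimmed by the removal of closed forward invariant sets. Because each $x_s$ lies in a \emph{wandering} Fatou component, it can belong to at most one $D^{-i}_r(z)$ along the backward chain $0 \le i \le j$: otherwise two distinct forward iterates of $x_s$ would lie in the same Fatou component, contradicting wandering. Factoring $\deg(f^j)$ as the product of the local branching contributions picked up at each step therefore gives a bound of $\prod_{s=1}^\nu d_s = \mathrm{deg}_{\crit}$.

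For $\mathbf{diam}(j)$, I would first apply the same degree argument to the larger target $D_{M\cdot r}(z)$, obtaining $\deg(f^j : D^{-j}_{M\cdot r}(z) \to D_{M\cdot r}(z)) \le \mathrm{deg}_{\crit}$. Inside this branched covering of bounded degree, $D^{-j}_r(z)$ is the preimage of a concentric disk of relative radius $1/M$, so a Koebe-type distortion theorem for bounded-degree proper maps produces a uniform bound on the hyperbolic diameter of $D^{-j}_r(z)$ inside $D^{-j}_{M\cdot r}(z)$; this is exactly what the constant $C(1/2,\mathrm{deg}_{\crit})$ encodes. To pass from the hyperbolic diameter in this round pullback to the $\Omega$-diameter, one covers $D^{-j}_r(z)$ by the pieces of $\Omega$ that it meets and controls their number by a Shishikura--Tan Lei style covering estimate, contributing the factor $N_0(2K)$.

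The main obstacle, and the point at which $M$ has to be fixed carefully, is the inductive step for $\mathbf{diam}(j+1)$. Pulling back once more may introduce a new critical point into $D^{-j}_{M\cdot r}(z)$, and one needs the buffer $M$ to be thick enough that, even after branching of total local degree up to $\mathrm{deg}_{\crit}$, a definite round collar survives around $D^{-(j+1)}_r(z)$ in $D^{-(j+1)}_{M\cdot r}(z)$ (a shrinking-lemma type estimate in the spirit of \cite{STL}). This forces $M$ to be chosen as an explicit function of $\mathrm{deg}_{\crit}$, $N_0(2K)$, and the Koebe constant, and it is here that the thinness of $\Omega$ enters essentially, preventing the pullbacks from escaping the region where the argument has traction. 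Once $M$ is fixed compatibly with these constants, the two bounds propagate in lockstep along the induction and the proposition follows.
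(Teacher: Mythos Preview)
Your degree argument has a genuine gap. The claim that each critical point $x_s$ can lie in at most one $D^{-i}_r(z)$ hinges on the implication ``$x_s \in D^{-i_1}_r(z)\cap D^{-i_2}_r(z)$ forces two distinct forward iterates of $x_s$ to lie in the same Fatou component.'' But this only yields a contradiction if $D_r(z)$ (which contains both $f^{i_1}(x_s)$ and $f^{i_2}(x_s)$) is contained in a single Fatou component. When $D_r(z)$ meets $J$---one of the main cases---$D_r(z)$ and all of its pullbacks may straddle $J$ and touch several wandering components, so nothing prevents the same $x_s$ from appearing in many $D^{-i}_r(z)$ along the chain. Your argument is correct only in the special case where $D_r(z)$ lies in a wandering component (in which case all pullbacks do too, and are pairwise distinct).

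This is not a cosmetic issue: it is precisely the circularity the paper works to break. In the paper, the degree bound for disks meeting $J$ is \emph{derived from} the diameter bound, not the other way around. One takes a point $z_0\in D_r(z)$ outside all wandering components; by making $\Omega$ thin, distinct preimages of $z_0$ are forced far apart in the hyperbolic metric of $\Omega$, so once the diameter of $D^{-i}_r(z)$ is controlled it can contain only one such preimage, yielding degree $1$. The diameter control itself comes from the covering step (Lemma~\ref{1D-lemma2}): one covers $D^{-1}_r(z)$ by at most $N_0(2K)$ protected subdisks, applies the induction hypothesis to each, and runs an inner induction on $i$ to reassemble them. This is where the factor $N_0(2K)$ genuinely arises---not, as you suggest, from passing between the hyperbolic metric of $D^{-j}_{M\cdot r}(z)$ and that of $\Omega$ (inclusion already gives that comparison for free). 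The case where $D_r(z)$ lies in a wandering component but $D_{K\cdot r}(z)$ does not requires the further $[z,w]$-chain argument to connect back to $\partial U$; your outline does not address this case at all.
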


The constants $C(\cdot, \cdot)$ and $N_0(\cdot)$  will be introduced in
Lemmas \ref{basiclemma} and \ref{1D-lemma2} below.

\medskip

Our proof will closely resemble the proof of a theorem of Ma\~n\'e,
presented in \cite{STL}. In fact, readers familiar with this reference
will likely find our proof needlessly complicated. The reason for
these complications is that the proof given here will  model 
the forthcoming  $2$-dimensional proof. In particular, we will \emph{not} use that there are no wandering domains. In fact, the non-existence of wandering domains follows, in our setting, from the above Proposition. The a priori possibility of wandering domains makes the proof significantly more involved.

\medskip

The fact that Fatou components of one-dimensional polynomials are
simply connected is quite useful when dealing with degrees. It follows
that if a Fatou component $U$ does not contain critical points, then
$f: U \rightarrow f(U)$ is univalent. This is another fact that we
will not be able to use in higher dimensions, so we will not use it
here either. In this respect the setting is more analogous to the
iteration of rational functions, where Fatou components may not be
simply connected. An elementary proof however shows that for every
wandering component $U$ there exists an $N\in \mathbb N$ such that
$f^n(U)$ is simply connected for $n \ge N$, a result known as Baker's
Lemma, see for example \cite{Zakeri}. Thus, if such $f^n(U)$ does not
contain critical points, then $f: f^n(U) \rightarrow f^{n+1}(U)$ is
univalent.
We can use the fact that $f: f^n(U) \rightarrow f^{n+1}(U)$ is univalent for $n$ large enough,
 as we will prove the corresponding 
statement for H\'enon maps. Note that nowhere else in the one-dimensional argument will we use simple connectivity to conclude univalence.

\medskip

A third difference with the argument in \cite{STL} concerns the induction procedure. Instead of applying the induction hypothesis to $f^n: D^{-n}_r(z) \rightarrow D_r(z)$, and then mapping backward one more step with $f: D^{-n-1}_r(z)\rightarrow D^{-n}_r(z)$, we will first apply one iterate $f: D^{-1}_r(z) \rightarrow D_r(z)$, cover $D^{-1}_r(z)$ with smaller disks $D_{r_k}(z_k)$, and applying the induction hypothesis to each $f^n: D^{-n}_{r_k}(z_k)\rightarrow D_{r_k}(z_k)$. The reason for this will become apparent when the proof is discussed in the H\'enon setting.

\medskip

\subsection{Preliminaries}
The following basic lemma  (see e.g.,  \cite{LM1997,STL}) will be used repeatedly:

\begin{lemma}\label{basiclemma}
Let $d \in \mathbb N$ and $r> 0$. Then there exists a constant $C(r,d) > 0$ such that for every proper holomorphic map $f: \mathbb D \rightarrow \mathbb D$ of degree at most $d$, every connected component of $f^{-1}D_r(0)$ has hyperbolic diameter at most $C(r,d)$. Moreover, $C(r,d) \rightarrow 0$ as $r \rightarrow 0$.
\end{lemma}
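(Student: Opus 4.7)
The plan is to prove this classical estimate via moduli of annuli, the standard method for such distortion lemmas.

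First I would show that every connected component $W$ of $f^{-1}(D_r(0))$ is simply connected and relatively compact in $\mathbb{D}$. Relative compactness is immediate from properness of $f$, which makes $f^{-1}(\overline{D_r(0)})$ a compact subset of $\mathbb{D}$. Simple connectedness follows from homotopy lifting for branched covers: any loop in $W$ maps to a null-homotopic loop in the simply connected set $D_r(0)$, and after a generic perturbation of the null-homotopy to avoid the finitely many critical values of $f$, it lifts back to $W$ and kills the loop.

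Next I would establish the key modulus inequality
\[
\operatorname{mod}\bigl(\mathbb{D}\setminus\overline{W}\bigr)\ \geq\ \frac{1}{d}\,\operatorname{mod}\bigl(\mathbb{D}\setminus\overline{D_r(0)}\bigr)
\]
by a standard extremal-length pullback. Let $\rho_0$ be the extremal metric on $A_r := \mathbb{D}\setminus\overline{D_r(0)}$ for the separating family, extended by zero outside $A_r$, and consider $\tilde\rho := (\rho_0 \circ f)\,|f'|$ on $A_W := \mathbb{D}\setminus\overline{W}$. For any simple closed curve $\gamma\subset A_W$ separating $W$ from $\partial\mathbb{D}$, the image $f\circ\gamma$ winds around $0$ with winding number at least $\deg(f|_W)\geq 1$ by the argument principle applied to the zeros of $f$ inside $W$, so its $\tilde\rho$-length is bounded below by the minimal $\rho_0$-length on $A_r$. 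The total area satisfies $\int_{A_W}\tilde\rho^2 \leq d \int_{A_r}\rho_0^2$, since $f$ is at most $d$-to-one on $\mathbb{D}$. Plugging these into the extremal-length characterisation of modulus gives the displayed inequality.

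Finally, since $\operatorname{mod}(A_r) = (2\pi)^{-1}\log(1/r)\to\infty$ as $r\to 0$, the modulus of $A_W$ tends to infinity uniformly in $f$. By the classical Gr\"otzsch/Teichm\"uller inequality, a simply connected set relatively compact in $\mathbb{D}$ surrounded by an annulus of modulus $\geq M$ has hyperbolic diameter at most some $\psi(M)$ with $\psi(M)\to 0$ as $M\to\infty$. Setting $C(r,d) := \psi(\operatorname{mod}(A_r)/d)$ yields both conclusions of the lemma, including the limit $C(r,d)\to 0$ as $r\to 0$.

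The main technical obstacle is the admissibility check for $\tilde\rho$: since $A_W$ may contain further connected components of $f^{-1}(D_r(0))$ besides $W$, the density $\tilde\rho$ vanishes identically on those subregions, and one must ensure that a separating curve $\gamma \subset A_W$ still accumulates enough $\rho_0$-length in its image despite these gaps. This is handled by carefully applying the argument principle to count zeros of $f$ enclosed by $\gamma$, which yields the required winding-number lower bound for $f\circ\gamma$ around $0$ independently of how the other preimage components are arranged inside $A_W$.
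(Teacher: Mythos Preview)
The paper does not give its own proof here; it cites the lemma as basic with references to \cite{LM1997,STL}, so I assess your argument on its own terms. Your overall architecture---simple connectedness of $W$, a modulus lower bound on $\mathbb{D}\setminus\overline W$, then conversion to a hyperbolic-diameter bound via Gr\"otzsch/Teichm\"uller---is the standard one and is correct. The extremal-length step, however, is broken in two related ways.

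First, the inequality points the wrong way. You pull back the extremal metric for the \emph{separating} family from $A_r$ to obtain a test metric $\tilde\rho$ on $A_W$. Exhibiting a single test metric gives a \emph{lower} bound on the extremal length $\lambda(\Gamma_W^{\mathrm{sep}})$, hence---since $\operatorname{mod}(A) = 1/\lambda(\Gamma_A^{\mathrm{sep}})$ in the convention you are using, where $\operatorname{mod}(A_r)=(2\pi)^{-1}\log(1/r)$---an \emph{upper} bound on $\operatorname{mod}(A_W)$. Your computation, were it valid, would yield $\operatorname{mod}(A_W) \leq d\cdot\operatorname{mod}(A_r)$, not the displayed $\operatorname{mod}(A_W) \geq d^{-1}\operatorname{mod}(A_r)$.

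Second, the admissibility claim fails, for the reason you anticipate but do not resolve. With $\rho_0$ set to zero on $D_r(0)$, the pullback $\tilde\rho$ vanishes on every preimage component of $D_r(0)$, not only $W$. A separating curve $\gamma\subset A_W$ may pass through such a component, so $f\circ\gamma$ can dip into $D_r(0)$; the winding number of $f\circ\gamma$ around $0$ is indeed $\geq 1$ by the argument principle, but that winding may be accumulated entirely inside $D_r(0)$, contributing nothing to the $\rho_0$-length. So the length lower bound is false in general.

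Both issues disappear if you switch to the dual family $\Gamma^{\mathrm{rad}}$ of arcs joining the two boundary circles, for which $\operatorname{mod}(A)=\lambda(\Gamma_A^{\mathrm{rad}})$ directly. Pull back an admissible metric $\rho$ for $\Gamma_{A_r}^{\mathrm{rad}}$ (extended by zero on $D_r(0)$) to $\tilde\rho=(\rho\circ f)|f'|$ on $A_W$. For any arc $\gamma\subset A_W$ from $\partial W$ to $\partial\mathbb{D}$, the image $f\circ\gamma$ runs from $\partial D_r(0)$ to $\partial\mathbb{D}$; its subarc after the last exit from $\overline{D_r(0)}$ lies in $A_r$ and joins the two boundary circles, hence has $\rho$-length $\geq 1$. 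Thus $\tilde\rho$ is genuinely admissible, the area bound $\int_{A_W}\tilde\rho^2\leq d\int_{A_r}\rho^2$ holds as you wrote, and now the single-metric estimate gives the correct lower bound $\operatorname{mod}(A_W)\geq d^{-1}\operatorname{mod}(A_r)$. The remainder of your proof then goes through unchanged.
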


The first step in the proof is the construction of the backwards domain $\Omega$ where the argument of \ref{1D-prop1} will take place.

\begin{lemma}\label{1D:Omega}
Given any $\epsilon>0$, there exists a backward invariant domain $\Omega$ contained in the $\epsilon$-neighborhood of
$$
\left(J \setminus \{\text{parabolic cycles}\}\right) \cup \left( \bigcup \text{wandering domains}\right),
$$
which further has the property that for every $z \in J$ there exist points $w \notin \Omega$ with $|z-w|< \epsilon$.
\end{lemma}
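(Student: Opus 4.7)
The plan is to build $\Omega$ as the complement $\C\sm V$ of a closed, forward-invariant set $V$: then $\Omega$ is open and backward invariant automatically. I will take $V$ to be a finite union $V=\bigcup_{U\in\FF}V_U$ of compact pieces $V_U\subset U$, indexed by a finite, forward-closed family $\FF$ of non-wandering Fatou components.

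To set up $\FF$, note first that the bounded Fatou components are pairwise disjoint open subsets of a compact region, so for every $\eta>0$ only finitely many have Euclidean diameter $\geq\eta$. Let $\FF$ be the smallest forward-$f$-invariant family of Fatou components containing the basin of infinity $U_\infty$ together with all non-wandering Fatou components of diameter $\geq\epsilon/3$: a finite, $f$-forward-closed collection.

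Next I define $V_U$ by downward induction from the periodic end of each forward orbit in $\FF$. On $U_\infty$, take $V_{U_\infty}=\{G\geq c\}$, the super-level set of the Green function of the filled Julia set; this is forward invariant, and for small $c$ its complement is an $\epsilon/3$-thin Euclidean collar of $J$. On each bounded periodic component, choose an $f^p$-invariant compact subset whose complement in the component is contained in an $\epsilon/3$-neighborhood of its boundary: a large closed Koenigs disk in the attracting case, a large $f^p$-invariant closed attracting petal containing the parabolic cycle in its interior in the parabolic case, and a closed invariant subdisk in the Siegel chart in the rotation case. Transport around each cycle by $f^i$. For a strictly preperiodic $U\in\FF$ with $f(U)=U'$ already treated, set $V_U=(f|_U)^{-1}(V_{U'})$: compact in $U$, with $f(V_U)\subset V_{U'}$. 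Then $V$ is a finite union of compact sets, so closed, and forward invariant by construction; hence $\Omega:=\C\sm V$ is open and backward invariant. Each $U\in\FF$ contributes to $\Omega$ only the collar $U\sm V_U\subset N_{\epsilon/3}(J)$; every non-wandering component outside $\FF$ has diameter $<\epsilon/3$, so it sits inside $N_{\epsilon/3}(J)$; wandering components lie entirely inside $\Omega$; and each parabolic cycle lies in the interior of its petal, hence outside $\Omega$. This gives $\Omega\subset N_\epsilon\br{(J\sm\{\text{parabolic cycles}\})\cup\bigcup\text{wandering}}$, and the density property follows because $J\subset\di U_\infty$ and $V_{U_\infty}\subset\C\sm\Omega$ comes within $\epsilon/3$ of $\di U_\infty$.

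The main difficulty is the tension between closedness of $V$ and thinness of its collars. If we included every non-wandering component in $\FF$, the pieces $V_U$ would accumulate on $J$: either $V$ would fail to be closed (so $\Omega$ would not be open), or its closure would swallow points of $J\sm\{\text{parabolic}\}$. Restricting to the finite family $\FF$ of ``big'' components avoids this. The preimage step then requires that a thin collar near $\di f^k(U)$ pulls back to a thin collar near $\di U$, which is where the hypothesis ``no critical points on $J$'' is essential: $f$ is a local biholomorphism along each $\di U\subset J$, and finiteness of $\FF$ gives a uniform distortion constant that can be absorbed into the size of the initial periodic $V_U$'s (i.e.\ by choosing $c$ small and the Koenigs disks, petals, and Siegel subdisks large enough).
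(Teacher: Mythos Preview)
Your construction is essentially the paper's: take $\Omega=\C\sm K$ with $K$ a closed forward-invariant set built from large absorbing pieces inside finitely many (pre-)periodic Fatou components, handling attracting, parabolic, and Siegel cycles via Koenigs disks, petals, and invariant subdisks respectively. You supply details the paper leaves implicit (the super-level set of $G$ in the basin of infinity, and the diameter cutoff that makes $\FF$ finite).

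One slip to fix: a parabolic periodic point lies on $\partial U\subset J$, so it cannot lie in the \emph{interior} of any closed attracting petal; it lies on its boundary. Accordingly, in the parabolic case your $V_U$ is not compact in $U$ but touches $\partial U$ exactly at the parabolic point --- which is precisely what the paper's phrasing ``intersects $J$ only in parabolic fixed points'' allows. Your conclusion that parabolic cycles lie in $V$, hence outside $\Omega$, survives once this is corrected. A second, smaller point: the ``no critical points on $J$'' hypothesis is not actually essential for the pullback step, since $\FF$ is finite and you pull back only a bounded number of times; any distortion along $J$ could be absorbed into the initial choice of the periodic pieces.
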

\begin{proof}
We are done if we can remove sufficiently large forward invariant subsets from a finite number of (pre-) periodic Fatou components. By the classification of periodic Fatou components those periodic components are either attracting basins, parabolic basins or Siegel domains.  In an immediate basin of an attracting periodic cycle we can construct an arbitrarily large forward invariant compact subset. In a cycle of Siegel domains we can find an arbitrarily large completely invariant compact subset. Finally, in a cycle of parabolic domains we can find an arbitrarily large forward invariant compact subset that intersects $J$ only in parabolic fixed points. By taking the union of sufficiently large preimages of these subsets we obtain a forward invariant compact subset $K$ disjoint from $J \setminus \{\text{parabolic cycles}\}$, for which $\Omega = \mathbb C \setminus K$ satisfies the conditions required in the lemma.
\end{proof}

The domain $\Omega$ will later be fixed for a constant $\epsilon>0$
chosen sufficiently small.
In particular, we may assume that the only critical points in $\Omega$ lie in wandering Fatou components.

\begin{lemma}\label{1D-lemma2}
For each $t > 1$ there exists an integer $N_0 = N_0(t)$ such that for every disk $D_r(z) \subset D_{t\cdot r}(z) \subset \Omega$ we can cover $D^{-1}_r(z)$ with at most $N_0$ disks $D_{r_k}(z_k)$ satisfying
$$
D_{2t\cdot r_k}(z_k) \subset \Omega.
$$
If $D_{t \cdot r}(z)$ is contained in a wandering domain $U^0$, then the disks $D_{t \cdot r_k}(z_k)$ can be chosen so that
$$
D_{2t\cdot r_k}(z_k) \subset U^{-1}.
$$
In all other cases the disks $D_{t \cdot r_k}(z_k)$ can be chosen so that
$$
D_{2t\cdot r_k}(z_k) \subset D_{t\cdot r}^{-1}(z).
$$
\end{lemma}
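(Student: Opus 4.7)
The plan is to produce a cover of $D^{-1}_r(z)$ by Euclidean disks whose $2t$-enlargements all sit inside the single simply connected domain $D^{-1}_{tr}(z)$. Since $\Omega$ is backward invariant and $D_{tr}(z) \subset \Omega$, we automatically have $D^{-1}_{tr}(z) \subset \Omega$; in the wandering case $D_{tr}(z) \subset U^0$, the same argument gives $D^{-1}_{tr}(z) \subset U^{-1}$. Because $f$ is a polynomial and $D_{tr}(z)$ is a Jordan disk, each connected component of $f^{-1}(D_{tr}(z))$ is simply connected (a standard consequence of Riemann--Hurwitz), so $D^{-1}_{tr}(z)$ carries a well-behaved hyperbolic metric comparable to $\dist(\cdot, \partial D^{-1}_{tr}(z))^{-1}$ by Koebe.

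First I would control the hyperbolic diameter of $D^{-1}_r(z)$ inside $D^{-1}_{tr}(z)$. Normalizing the Jordan disks $D_{tr}(z)$ and $D^{-1}_{tr}(z)$ by Riemann maps to $\D$, the restriction of $f$ becomes a proper holomorphic self-map of $\D$ of degree at most $d = \deg p$, and $D_r(z)$ corresponds to the Euclidean sub-disk $D_{1/t}(0) \subset \D$. Lemma \ref{basiclemma} then gives
$$
\diam_{\text{hyp}} D^{-1}_r(z) \le C(1/t, d) =: C_0(t),
$$
where the diameter is measured in the hyperbolic metric of $D^{-1}_{tr}(z)$. The main potential obstacle here is the branching of $f$ inside $D^{-1}_{tr}(z)$, and this is exactly what Lemma \ref{basiclemma} is designed to handle---it applies to proper maps of bounded degree, not just to univalent ones.

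Finally I would convert this hyperbolic diameter bound into a Euclidean cover. For each $w \in D^{-1}_r(z)$, set $r_w := \tfrac{1}{2t}\, \dist(w, \partial D^{-1}_{tr}(z))$, so that $D_{2t\, r_w}(w) \subset D^{-1}_{tr}(z)$ by construction, which yields the required containments ($\subset U^{-1}$ in the wandering case and $\subset D^{-1}_{tr}(z)$ otherwise). Standard Koebe distortion on the simply connected domain $D^{-1}_{tr}(z)$ produces a constant $\rho_0 = \rho_0(t) > 0$ such that the hyperbolic ball of radius $\rho_0$ around $w$ is contained in $D_{r_w}(w)$ for every such $w$. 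A standard Vitali-type covering argument then extracts a subcover of $D^{-1}_r(z)$ by at most $N_0 = N_0(t)$ such hyperbolic balls---this is the number promised in the lemma, since both $C_0(t)$ and $\rho_0(t)$ depend only on $t$ (with $d = \deg p$ fixed throughout)---and the associated Euclidean disks $D_{r_{w_k}}(w_k)$ form the desired cover.
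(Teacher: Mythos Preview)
Your argument is correct and in fact proves a slightly stronger statement than the paper records: you obtain $D_{2t\cdot r_k}(z_k)\subset D^{-1}_{t r}(z)$ in \emph{all} cases, whereas the paper only asks for containment in $U^{-1}$ when $D_{tr}(z)$ lies in a wandering component. The key inputs---simple connectivity of $D^{-1}_{tr}(z)$ via the maximum modulus principle, the hyperbolic diameter bound from Lemma~\ref{basiclemma} applied to the degree-$\le \deg p$ proper map between Riemann uniformizers, the Koebe comparison between hyperbolic balls and Euclidean disks of radius $\tfrac{1}{2t}\dist(\cdot,\partial D^{-1}_{tr}(z))$, and the packing bound in the hyperbolic plane---are all standard and correctly assembled.

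Your route is genuinely different from the paper's. The paper argues by a case split and compactness: small disks bounded away from critical values have nearly affine pullbacks; small disks near a critical value lie in a wandering component and are handled separately; the remaining disks have radius bounded below, so compactness of the family of admissible $(z,r)$ gives a uniform $N_0$. Your approach avoids any case analysis on proximity to critical values---Lemma~\ref{basiclemma} absorbs the branching uniformly---and yields an $N_0$ depending explicitly only on $t$ and $\deg p$, making the independence from $\Omega$ (noted in the Remark following the lemma) completely transparent. The paper's argument is shorter to state but leans more on soft compactness; yours is more constructive and dovetails cleanly with the hyperbolic-geometric machinery already set up for Proposition~\ref{1D-prop1}.
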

\begin{proof}
If $D_r(z)$ is sufficiently small and close to a critical point, it must be contained in a wandering Fatou component $U$. The statement follows immediately.

For sufficiently small disks bounded away from critical values the existence of a uniform bound is clear, as the map $f: D_{t\cdot r}^{-1}(z) \rightarrow D_{t\cdot r}(z)$ is close to linear. Therefore it is sufficient to consider disks $D_r(z) \subset D_{t\cdot r}(z)$ of radius bounded away from zero. This is a compact family of disks, hence the existence of a uniform $N_0$ is immediate.
\end{proof}

We note that while $N_0(t)$ will play a similar role as the constant
$N_0$ in \cite{STL}, its definition differs as the disks
$D_{r_k}(z_k)$ cover a preimage $D^{-1}_r(z)$ instead of the original
$D_r(z)$.
In particular,  the constant $N_0$ from \cite{STL} is a universal, while the constants $N_0(t)$ introduced here depend on $f$.

\begin{remark}
Since the disks $D_{2t\cdot r_k}(z)$ are contained in either a wandering domain $U^{-1}$ or in the preimage $D_{t\cdot k}^{-1}(z)$, it follows that the constant $N_0(t)$ does not depend on $\Omega$. To be more precise, when $\Omega$ is made smaller, the constant $N_0(t)$ does not need to be changed.
\end{remark}

The domain $\Omega$ is a Riemann surface whose universal cover is the
unit disk, hence $\Omega$ is equipped with a Poincar\'e metric
$d_\Omega$. We will prove that all inverse branches $D^{-i}_r(z)$ of
\emph{sufficiently   protected} disks, i.e. disks $D_r(z) \subset D_{2K  \cdot r}(z) \subset \Omega$ for a sufficiently large constant $K$ to be determined later, have Poincar\'e diameter bounded by
$$
\mathrm{diam}_{\mathrm{max}} := N_0(2K) \cdot C(\frac{1}{2}, \mathrm{deg}_{\mathrm{crit}}).
$$
By choosing $\Omega$ sufficiently thin, i.e. the
constant $\epsilon$ in Lemma  \ref{1D:Omega} sufficiently small, it therefore follows that the Euclidean diameter of each $D^{-i}_r(z)$ is arbitrarily small, \emph{unless} $D^{-i}_r(z)$ is contained in a wandering Fatou component. In that case we cannot control the Euclidean diameter of $D^{-i}_r(z)$ by making $\Omega$ thinner.

If $D_r^{-i}(z)$ does have sufficiently small Euclidean diameter, and is bounded away from the critical values, then it follows that the map $f: D_r^{-i-1}(z) \rightarrow D_r^i(z)$ is univalent. Notice that simple connectivity is not used here.

By choosing $\Omega$ sufficiently thin we can guarantee that there are only finitely many wandering domains for which the bound on the Poincar\'e diameter domain does not imply the necessary bound on the Euclidean diameter.

\begin{defn}[\emph{domain with hole}]
We say that a wandering domain $U$ is a \emph{domain with hole} if there exist a domain $V \subset U$
with
$$
\mathrm{diam}_{\Omega}(V) \le N_0(2)
\cdot C(\frac{1}{2}, \mathrm{deg}_{\mathrm{crit}})
$$
for which $f: V^{-1} \rightarrow V$ is not univalent. Note that in particular any wandering domain that contains a critical value is a wandering domain with hole.
\end{defn}

Since the wandering domains are all disjoint and contained in a bounded region, it follows from Lemma \ref{1D:Omega} that if $\Omega$ is chosen sufficiently thin, then there are only finitely many wandering domains with hole.

\begin{defn}[\emph{critical wandering domain}]
Given a bi-infinite orbit of wandering components $(U^j)$, we say that a wandering domain $U^j$ is critical if $U^{j+1}$ is a wandering component with hole, but $U^i$ is not for $i \le j$.
\end{defn}

Since there are only finitely many domains with hole, there are also only finitely many critical domains. We say that a wandering domain is \emph{post-critical} if it is contained in the forward orbit of a critical domain, and \emph{regular} if it is not. Thus wandering domains in a grand orbit that does not contain critical components are all called regular.

\begin{defn}[\emph{$\mathrm{deg}_{\mathrm{max}}$}]
Since there are only finitely many critical domains, and Baker's Lemma implies that for each orbit $(U^n)_{n \in \mathbb Z}$ of wandering domains we have that $f: U^n \rightarrow U^{n+1}$ is univalent for $n$ sufficiently large, it follows that there exist an upper bound on the degree of all maps $f^n: U^0 \rightarrow U^n$ for $U^0$ critical. We denote this upper bound by $\mathrm{deg}_{\mathrm{max}}$.
\end{defn}

\subsection{Disks deeply contained in wandering domains}

Let us write $(U^n)_{n \in \mathbb Z}$ for a bi-infinite orbit of wandering Fatou components, i.e. $f(U^n) = U^{n+1}$. We will separate several distinct cases. The simplest case occurs when $D_r(z)$ is contained in what we have called a regular component.

\begin{lemma}\label{1D-lemma3}
Let $U$ be a regular wandering component, and consider a protected disk $D_r(z) \subset D_{2r} \subset U$. Then
$$
\mathrm{deg}\left(f^j: D^{-j}_r(z) \rightarrow D_r(z)\right) = 1
$$
and
$$
\mathrm{diam}_{\Omega} \left(D^{-j}_r(z)\right) \le N_0(2) \cdot C(\frac{1}{2}, 1)
$$
for all $j \ge 0$.
\end{lemma}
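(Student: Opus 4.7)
The plan is to prove both assertions directly, without induction, by propagating the regularity of $U = U^0$ throughout the backward orbit. The key preliminary observation is that $U^0$ being regular is equivalent to $U^n$ being hole-free for every $n \le 1$, and this in turn forces $U^n$ to contain no critical point of $f$ for every $n \le 0$: a critical point in $U^n$ would produce a branched preimage of a small neighborhood of its critical value inside $U^{n+1}$, contradicting the hole-freeness of $U^{n+1}$.

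For the degree claim, I will first use that $U^0$ has no hole and that $D_r(z) \subset D_{2r}(z) \subset U^0$ has small $\Omega$-diameter to conclude that $f : D_r^{-1}(z) \to D_r(z)$ is univalent, so that $D_r^{-1}(z)$ is simply connected. Next, since $U^{-1}, \ldots, U^{-j-1}$ are all free of critical points of $f$, the iterate $f^j : U^{-j-1} \to U^{-1}$ is an unbranched proper holomorphic map between connected Riemann surfaces, hence a covering. Restricted to the connected component $D_r^{-j-1}(z)$ and mapping to the simply connected $D_r^{-1}(z)$, such a covering must be a homeomorphism, so $f^j : D_r^{-j-1}(z) \to D_r^{-1}(z)$ is univalent, and composing with $f$ gives $\mathrm{deg}(f^{j+1} : D_r^{-j-1}(z) \to D_r(z)) = 1$.

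For the diameter, I will apply Lemma \ref{1D-lemma2} with $t = 2$ to cover $D_r^{-1}(z)$ by at most $N_0(2)$ disks $D_{r_k}(z_k)$ with $D_{4 r_k}(z_k) \subset U^{-1}$. For each $k$, since $D_{2 r_k}(z_k) \subset U^{-1}$ is simply connected and $f^j : U^{-j-1} \to U^{-1}$ is an unbranched cover, the inverse branch sending $D_{r_k}(z_k)$ into $D_r^{-j-1}(z)$ extends univalently to $D_{2 r_k}(z_k)$, giving a univalent restriction $f^j : (D_{2 r_k}(z_k))^{-j} \to D_{2 r_k}(z_k)$. Lemma \ref{basiclemma} with $r = 1/2$ and $d = 1$ then bounds the hyperbolic diameter of $D_{r_k}^{-j}(z_k)$ in $(D_{2 r_k}(z_k))^{-j}$ by $C(1/2, 1)$, hence also its $\Omega$-diameter via the inclusion $(D_{2 r_k}(z_k))^{-j} \subset \Omega$. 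The degree-one conclusion from the previous paragraph identifies $D_r^{-j-1}(z)$ with $\bigcup_k D_{r_k}^{-j}(z_k)$, a connected union of at most $N_0(2)$ such pieces whose overlap pattern is inherited from the connected covering of $D_r^{-1}(z)$; chaining through these overlaps yields $\diam_\Omega(D_r^{-j-1}(z)) \le N_0(2) \cdot C(1/2, 1)$.

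The main obstacle I expect is the univalent extension of the inverse branch from $D_{r_k}(z_k)$ to $D_{2 r_k}(z_k)$, which is precisely where regularity of $U^0$ enters (through the absence of critical points in the backward orbit). This is also what allows the sharper constant $C(1/2, 1)$ in the bound; in the non-regular setting one would instead have to accept $C(1/2, \mathrm{deg}_{\mathrm{crit}})$.
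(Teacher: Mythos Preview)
Your proof is correct, but it takes a genuinely different route from the paper's.

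The paper argues by a double induction: an outer induction on $j$, and within the inductive step an inner induction on $i$ from $0$ to $j$. At each step of the inner induction the paper uses only the hole-free condition directly: having established $\mathrm{diam}_\Omega(D_r^{-i}(z)) \le N_0(2)\cdot C(\tfrac12,1)$, regularity of $U^{-i}$ forces $f: D_r^{-i-1}(z)\to D_r^{-i}(z)$ to be univalent, which propagates both the degree-one statement and the covering by the $D_{r_k}^{-i}(z_k)$ to the next level. No mention is made of critical points or of covering-space theory.

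You instead convert hole-freeness along the whole backward orbit into the absence of critical points of $f$ in each $U^{-n}$ (via the remark that a domain containing a critical value has a hole), and then invoke the global fact that $f^j: U^{-j-1}\to U^{-1}$ is a proper unbranched holomorphic map, hence a covering. Simple connectivity of $D_r^{-1}(z)$ and of each $D_{2r_k}(z_k)$ gives all the univalent lifts at once, bypassing the induction entirely. This is cleaner in one variable.

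The reason the paper does not argue this way is explained in the text surrounding the lemma: the one-dimensional proof is deliberately written as a template for the H\'enon setting, where $f$ is an automorphism and the relevant ``degree'' counts intersections with a vertical lamination rather than the topological degree of $f$ on Fatou components. In that setting the implication ``proper $+$ no critical points $\Rightarrow$ covering'' has no useful analogue, and the paper's step-by-step hole-free mechanism is exactly what survives. So your argument buys brevity in one dimension at the cost of the portability the authors are after.
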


\begin{proof}
The proof follows by induction on $j$. Suppose that the statement holds for certain $j$, we will proceed with the proof for $j+1$. We can cover $D_r^{-1}(z)$ with at most $N_0(2)$ disks $D_{r_k}(z_k)$ satisfying
$$
D_{4 r_k}(z_k) \subset D_{2r}^{-1}(z) \subset U^{-1}.
$$
Hence we can apply the induction hypothesis for each of the disks $D_{2r_k}(z_k) \subset D_{4r_k}(z_k)$, obtaining
$$
\mathrm{deg}\left(f^j: D^{-j}_{2r_k}(z) \rightarrow D_{2r(z)}\right) = 1,
$$
and thus
$$
\mathrm{diam}_\Omega\left(D_{r_k}^{-i}(z_k)\right) \le C(\frac{1}{2}, 1)
$$
for $i = 0, \ldots ,j$.
We claim that for each $i =0 , \ldots ,j$ we obtain
$$
\mathrm{diam}_\Omega \left(D_r^{-i}(z)\right) \le N_0(2) \cdot C(\frac{1}{2}, 1),
$$
and
$$
\mathrm{deg} \left( f^i: D_r^{-i}(z) \rightarrow D_r(z)\right) = 1.
$$
The proof follows by induction on $i$. Both statements are immediate for $i = 0$. Suppose that the statements hold for $0, \ldots, i$. Since $U$ is regular, the hyperbolic diameter bound on $D_r^{-i}(z)$ implies that $f: D_r^{-i-1}(z) \rightarrow D_r^{-i}(z)$ is univalent. Thus $D_r^{-i-1}(z)$ is covered by at most $N_0(z)$ sets $D_{r_k}^{-i}(z_k)$. The diameter bound for $i+1$ follows, completing the induction step.
\end{proof}

We now consider the case when $D_r(z)$ is contained in a wandering domain $U^n$ that may be post-critical. By renumbering we may assume that $U^0$ is the critical component. The definition of $\mathrm{deg}_{\mathrm{max}}$ immediately gives diameter bounds for preimages of protected disks $D_r(z) \subset D_{K\cdot r}(z) \subset U^n$, namely
$$
\mathrm{diam}_\Omega\left(D^{-j}_r(z)\right) \le C(\frac{1}{K}, \mathrm{deg}_{\mathrm{max}})
$$
for $j \le n$. We obtain the following consequence.

\begin{lemma}\label{1D-lemma4}
We can choose $K \in \mathbb N$, independent of the wandering domain $U^n$, so that for any $D_r(z) \subset D_{K \cdot r}(z) \subset U^n$ one has
$$
\mathrm{deg}\left(f^j: D^{-j}_r(z) \rightarrow D_r(z) \right) \le \mathrm{deg}_{\mathrm{crit}}
$$
and
$$
\mathrm{diam}_\Omega\left(D^{-j}_r(z)\right) \le N_0(2) \cdot C(\frac{1}{2}, \mathrm{deg}_{\mathrm{crit}})
$$
for all $j \in \mathbb N$.
\end{lemma}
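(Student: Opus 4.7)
\medskip

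\noindent\textbf{Plan.} The strategy is to split the backward orbit into two segments, according to whether $D^{-j}_r(z)$ still lies in the post-critical segment $U^n, U^{n-1},\ldots, U^0$ of the grand orbit, or has passed through the critical component $U^0$ into the regular tail $U^{-1}, U^{-2}, \ldots$. The constant $K$ is chosen once and for all, depending only on the global quantities $\mathrm{deg}_{\mathrm{max}}$, $\mathrm{deg}_{\mathrm{crit}}$, and $N_0(2)$. Concretely, $K$ is taken large enough that
\[
C(1/K, \mathrm{deg}_{\mathrm{max}}) \;\le\; N_0(2) \cdot C(1/2, \mathrm{deg}_{\mathrm{crit}}),
\]
which is possible because $C(r,d)\to 0$ as $r\to 0$, and large enough that the Euclidean-smallness hypothesis in the definition of $\mathrm{deg}_{\mathrm{crit}}$ is activated along the full backward orbit of any protected disk $D_r(z) \subset D_{K \cdot r}(z) \subset U^n$. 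Both demands are compatible and uniform in $n$, because $\mathrm{deg}_{\mathrm{max}}$ is a global finite constant and because $\Omega$ is fixed in advance of $K$.

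\medskip

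\noindent\textbf{Post-critical segment $j \le n$.} Here $D^{-j}_r(z) \subset U^{n-j}$, and the map $f^j : D^{-j}_{K\cdot r}(z) \to D_{K\cdot r}(z)$ is a proper holomorphic map of degree at most $\mathrm{deg}_{\mathrm{max}}$. Simple connectivity of $D^{-j}_{K\cdot r}(z)$ is ensured by Baker's lemma for $n-j$ sufficiently large, with the bounded number of exceptional small indices absorbed into constants. Lemma~\ref{basiclemma} then yields
\[
\mathrm{diam}_{D^{-j}_{K\cdot r}(z)}\!\bigl(D^{-j}_r(z)\bigr) \;\le\; C(1/K,\mathrm{deg}_{\mathrm{max}}),
\]
and Schwarz passes this bound to the Poincar\'e-$\Omega$ diameter, which by the choice of $K$ is within the target $N_0(2)\cdot C(1/2,\mathrm{deg}_{\mathrm{crit}})$. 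The sharper degree bound $\mathrm{deg}_{\mathrm{crit}}$ (rather than $\mathrm{deg}_{\mathrm{max}}$) then follows from the defining property of $\mathrm{deg}_{\mathrm{crit}}$: by the same choice of $K$, each set in the orbit $D^{-j}_r(z), D^{-j+1}_r(z), \ldots, D_r(z)$ has sufficiently small Euclidean diameter.

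\medskip

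\noindent\textbf{Regular tail $j > n$.} At $j=n$ the set $D^{-n}_r(z) \subset U^0$ has small Poincar\'e-$\Omega$ diameter by the previous step. Since $U^0$ is the unique critical component of its grand orbit, every $U^{-m}$ for $m\ge 1$ is regular. I enclose $D^{-n}_r(z)$ in a protected round sub-disk of $U^0$ and apply Lemma~\ref{1D-lemma2} to cover its preimage in $U^{-1}$ by at most $N_0(2)$ disks $D_{r_k}(z_k)$ with $D_{2 r_k}(z_k)\subset U^{-1}$. To each such $D_{r_k}(z_k)$ I apply Lemma~\ref{1D-lemma3}, since the entire backward orbit from $U^{-1}$ is regular; this gives univalent preimages of Poincar\'e-$\Omega$ diameter at most $N_0(2)\cdot C(1/2,1)$. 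Combining the $N_0(2)$ branches that cover $D^{-j}_r(z)$ and absorbing the multiplicity into the constant $C(1/2,1)\le C(1/2,\mathrm{deg}_{\mathrm{crit}})$ yields the stated diameter bound. For the degree, the map $f^{j-n}:D^{-j}_r(z)\to D^{-n}_r(z)$ is univalent on each branch through the regular tail, so the total degree of $f^j$ equals that of $f^n:D^{-n}_r(z)\to D_r(z)$, which is at most $\mathrm{deg}_{\mathrm{crit}}$ by the preceding step.

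\medskip

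\noindent\textbf{Main obstacle.} The principal difficulty is making the choice of $K$ uniform over \emph{all} post-critical components $U^n$ while simultaneously enforcing two independent smallness conditions: the Poincar\'e bound that closes the diameter estimate, and the Euclidean bound that activates the $\mathrm{deg}_{\mathrm{crit}}$ property. Uniformity works because $\mathrm{deg}_{\mathrm{max}}$ is a global invariant of $f$ (there are only finitely many critical grand orbits, each with a finite total forward degree) and because $\Omega$ is fixed before $K$ is chosen. A secondary subtlety is the transition at $j=n$: the re-covering of $D^{-n}_r(z)$ by protected disks inside the regular tail is what forces the weaker constant $\mathrm{deg}_{\mathrm{crit}}$ in place of $1$ inside $C(1/2,\cdot)$ in the final bound.
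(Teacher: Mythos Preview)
Your overall two-segment strategy matches the paper's proof: use the global degree bound $\deg_{\max}$ on the post-critical stretch $j\le n$ to make the pullback small in $U^0$, then hand off to Lemma~\ref{1D-lemma3} for the regular tail. The post-critical segment is handled correctly.

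There is a genuine gap in the regular tail, however. After enclosing $D^{-n}_r(z)$ in a protected round disk $D_\rho(w)\subset D_{2\rho}(w)\subset U^0$, you take an unnecessary detour: you invoke Lemma~\ref{1D-lemma2} to cover the preimage in $U^{-1}$ by $N_0(2)$ disks, and then apply Lemma~\ref{1D-lemma3} to each. But Lemma~\ref{1D-lemma3} already outputs a diameter bound of $N_0(2)\cdot C(\tfrac12,1)$ for each piece, so your union of $N_0(2)$ such pieces gives only $N_0(2)^2\cdot C(\tfrac12,1)$, which is not the stated bound $N_0(2)\cdot C(\tfrac12,\deg_{\crit})$. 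The phrase ``absorbing the multiplicity into the constant $C(\tfrac12,1)\le C(\tfrac12,\deg_{\crit})$'' does not close this; the extra factor of $N_0(2)$ is still there.

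The fix is simpler than your route: since $U^0$ itself is \emph{regular} (in the paper's convention ``regular'' means $n\le 0$), apply Lemma~\ref{1D-lemma3} \emph{directly} to the protected disk $D_\rho(w)\subset D_{2\rho}(w)\subset U^0$. This yields $\mathrm{diam}_\Omega D^{-i}_\rho(w)\le N_0(2)\cdot C(\tfrac12,1)$ and $\deg=1$ for all $i\ge 0$, and since $D^{-n-i}_r(z)\subset D^{-i}_\rho(w)$ you immediately get the required bounds for all $j>n$.

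Two smaller points. First, you should justify why $D^{-n}_r(z)$ can be enclosed in such a protected disk: the paper uses that $U^0$ is simply connected, so by Koebe there is a universal lower bound on the $U^0$-Poincar\'e distance from any $w\in U^0$ to the circle $\{|\zeta-w|=\tfrac12\,d(w,\partial U^0)\}$; choosing $K$ so that $C(1/K,\deg_{\max})$ (which bounds $\mathrm{diam}_{U^0}D^{-n}_r(z)$, not merely $\mathrm{diam}_\Omega$) is below this universal constant does the job. Second, your appeal to Baker's Lemma for simple connectivity of $D^{-j}_{K\cdot r}(z)$ is unnecessary in the polynomial setting: connected components of polynomial preimages of disks are automatically simply connected.
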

\begin{proof}
By the previous lemma, we have obtained the required estimates for $K =2$ in regular components, i.e. when $n \le 0$.

Let $n > 0$ and first consider $j \le n$. Since the degree of $f^j : U^{n-j} \rightarrow U^n$ is bounded by $\mathrm{deg}_{\mathrm{max}}$ and the disk $D_{K \cdot r}(z)$ is assumed to lie in $U^n$ it follows that
$$
\mathrm{diam}_{\Omega} D^{-j}_r(z) < C(\frac{1}{K}, \mathrm{deg}_{\mathrm{max}}).
$$
The required diameter and degree bounds follow when $K$ is chosen sufficiently large.

When $j > n$ we cannot assume a uniform degree bound on the maps $f^{j}: U^{n-j} \rightarrow U^{-n}$. However, since
the domain $U^0$ is simply connected, there is a universal bound from below on the Poincar\'e distance from the point $w \in U^0$ to the circle centered at $w$ of radius $\frac{1}{2}d(w,\partial U^0)$. Choosing $K$ such that $\mathrm{diam}_{U^0} D^{-n}_r(z)$ is strictly smaller than this universal bound implies that $D^{-n}_r(z)$ is contained in a disk $D_{\rho}(w)$ for which $D_{2\rho}(w) \subset U^0$. The statement of the previous lemma completes the proof.

\comm{The cases $n = 1$ and $n > 1$ are slightly different. In the former
case we apply exactly the same proof as in the previous lemma. In that
case $D_r^{-1}(z)$ lies in $U^0$, and we can cover $D_r^{-1}(z)$ with
at most $N_0(2)$ disks $D_{r_k}(z_k)$ for which $D_{4r_k}(z_k) \subset
D_{K r}^{-1}(z) \subset U^{n-1} $.
Hence we can apply the previous lemma to the disks $D_{2r_k}(z_k)$ and obtain the same (stronger) bounds obtained in the previous lemma.

So let us now assume that $n > 1$. We again prove the statement by induction on $j$. We note that the disk $D_r^{-1}(z)$ can be covered by at most $N_0$ disks $D_{r_k}(z_k)$ for which
$$
D_{4Kr_k}(z_k) \subset U^{n-1}.
$$
By applying the induction hypothesis to the disks $D_{2r_k}(z_k)$,  we
obtain:
$$
\mathrm{deg}\left(f^j: D^{-j}_{2r_k}(z_k) \rightarrow D_{2r_k}(z_k)\right) \le \mathrm{deg}_{\mathrm{crit}},
$$
and thus that
$$
\mathrm{diam}_\Omega D^{-j}_{2r_k}(z_k) \le C(\frac{1}{2}, \mathrm{deg}_{\mathrm{crit}}).
$$
Recall our earlier observation
$$
\mathrm{diam}_{\Omega} D^{-j}_r(z) < C(\frac{1}{K}, \mathrm{deg}_{\mathrm{max}})
$$
for $j \le n$. By choosing $K$ sufficiently large we can therefore conclude that
$$
\mathrm{deg}\left(f^i: D^{-i}_r(z) \rightarrow D_r(z)\right) \le \mathrm{deg}_{\mathrm{crit}}
$$
for $i \le n$. It follows that for $i \le n$ we obtain
$$
\mathrm{diam}_{\Omega} D^{-i}_r(z) < \mathrm{deg}_{\mathrm{crit}} N_0 C(\frac{1}{2}, \mathrm{deg}_{\mathrm{crit}}).
$$
By the definition of regular wandering domains,
 it follows that $f: D^{-i-1}_r(z) \rightarrow D^{-i}_r(z)$ is univalent, and hence we obtain the same estimates for $i = n+1$. Continuing by induction gives the diameter and degree bounds for all $i = 0, \ldots , j$.}
\end{proof}

\medskip

\subsection{Disks that are not deeply contained.}

We restate and prove the main one-dimensional result using the constant $M = 2K$.

\medskip

\noindent{\bf Proposition \ref{1D-prop1}} \emph{
Let $z \in \Omega$ and let $r > 0$ be such that $D_{2K \cdot r}(z) \subset \Omega$. Then for every $j \in \mathbb N$ we have that
$$
\begin{aligned}
\mathrm{\bf deg}(j): & \; \; \; \; \;\mathrm{deg} \left(f^j: D^{-j}_r(z) \rightarrow D_r(z)\right) \le \mathrm{deg}_{\mathrm{crit}},\\
\mathrm{\bf diam}(j): & \; \; \; \; \; \mathrm{diam}_{\Omega} D^{-j}_r(z) \le \mathrm{diam}_{\mathrm{max}}.
\end{aligned}
$$
}
\begin{proof}
We assume the statement holds for given $j$, and proceed to prove the statement for $j+1$.

Recall that we have already proved the proposition, with a stronger diameter estimate, in the case where $D_{K\cdot r}(z)$ is contained in a wandering Fatou component. Thus, we are left with two possibilities: either $D_r(z)$ is not contained in a wandering Fatou component, or $D_r(z)$ is contained in a wandering component $U^n$ but $D_{K\cdot r}(z)$ is not. We will first prove the induction step for the former case, where $D_r(z)$ is not contained in a wandering domain. The conclusion for the former case will be used in the proof of the latter case.

Suppose $z_0 \in D_r(z) \subset D_{2Kr}(z)$ is not contained in a wandering component. Then, by making $\Omega$ sufficiently thin, it follows that any backward image of $z_0$ can be assumed to lie arbitrarily close to $\partial \Omega$.

Cover $D_r^{-1}(z)$ by at most $N_0(2K)$ disks $D_{r_k}(z_k)$ for which $D_{4Kr_k}(z_k) \subset D^{-1}_{2Kr}(z)$. The induction hypothesis gives that
$$
\mathrm{deg}\left( f^j: D^{-j}_{2r_k}(z_k) \rightarrow D_{2r_k}(z_k)\right) \le \mathrm{deg}_{\mathrm{crit}},
$$
and hence
$$
\mathrm{diam}_\Omega D^{-j}_{r_k}(z_k) \le C(\frac{1}{2}, \mathrm{deg}_{\mathrm{crit}}).
$$
It follows by induction on $i$, for $i = 0, \ldots ,j$, that
$$
\mathrm{diam}_\Omega D_r^{-j-1}(z) \le N_0(2K) \cdot C(\frac{1}{2}, \mathrm{deg}_{\mathrm{crit}}),
$$
and
$$
\mathrm{deg} \left(f^j : D_r^{-j-1}(z) \rightarrow D_r(z)\right) = 1.
$$
Here we used that each $D_r^{-i}(z)$ contains exactly one $i$-th preimage of $z_0$, denoted by $z_0^{-i}$, and by making $\Omega$ sufficiently thin the hyperbolic distance between the preimages of $z_0^{-i}$ can be assumed to be strictly larger than twice the proven diameter bound. Thus, we have completed the proof in the case where $D_r(z)$ is not contained in a wandering domain.

In the remainder of this proof we will therefore assume that $D_r(z)$ is contained in a Fatou component $U^n$, but the larger disk $D_{Kr}(z)$ is not. Let $w \in \partial U^n$ be such that $|z - w|$ is minimal, and write $[z, w] \subset \mathbb C$ for the closed interval, see Figure \ref{figure:disks}.

\begin{figure}[t]
\centering
\includegraphics[width=2.5in]{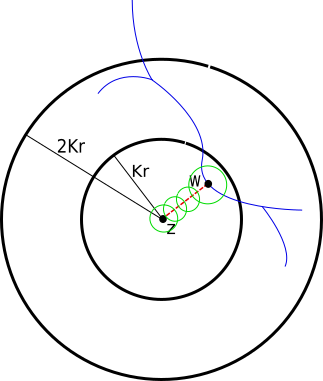}
\caption{The interval $[z,w]$ covered by $N_1$ smaller disks}
\label{figure:disks}
\end{figure}

Since $D_{Kr}(w) \subset D_{2Kr}(z)$ it follows that $D_{Kr}(w) \subset \Omega$. Hence the disk $D_{\frac{r}{2}}(w)$ satisfies the conditions of the previously discussed case, and we obtain the estimates
$$
\mathrm{diam}_{\Omega} D^{-i}_{\frac{r}{2}}(w) \le N_0(2K) \cdot C(\frac{1}{2}, \mathrm{deg}_{\mathrm{crit}}).
$$
The interval $[z, w]$ can be covered by the disk $D_{\frac{r}{2}}(w)$ and a bounded number of disks $D_{s_1}(w_1), \ldots,  D_{s_{N_1}}(w_{N_1})$ satisfying $D_{Ks_\nu}(w_\nu) \subset U^n$ for a universal constant $N_1 \in \mathbb N$. Thus the disks $D_{s_\nu}(w_\nu)$ satisfy the conditions of Lemma \ref{1D-lemma4}, and it follows that
$$
\mathrm{diam}_\Omega D^{-i}_{s_\nu}(w_\nu) \le \mathrm{diam}_{\mathrm{max}}.
$$
Thus, we obtain a bound from above on the hyperbolic distance of each $D^{-i}_r(z)$ to $\partial U^{n-i}$. By choosing $\Omega$ sufficiently thin, the bounds on the hyperbolic distance to the boundary gives arbitrarily small bounds on the Euclidean distance to the boundary, which in turn means that hyperbolic diameter estimates give arbitrarily small bounds on the Euclidean diameters of the disks $D^{-i}_{r_k}(z_k)$. We can conclude the argument by using the same induction on $i$ used in the previously discussed cases.
\end{proof}

\subsection{Consequences}

The obtained degree and diameter estimates imply a number of consequences. The first is the non-existence of wandering domains.

\begin{lemma}
There are no wandering Fatou components.
\end{lemma}
\begin{proof}
Suppose $U$ is a wandering Fatou component. We can construct the domain $\Omega$ as above sufficiently thin so that the Poincar\'e diameter of $U$ can be made arbitrarily large. In particular, we can find a relatively compact $K \subset U$ whose Poincar\'e diameter in $\Omega$ is strictly larger than $\mathrm{diam}_{\mathrm{max}}$. Let $n_j$ such that $f^{n_j}(K)$ converges to a point $p \in J$. Without loss of generality we may assume that $p$ does not lie in a parabolic cycle.  Let $D_r(p)$ be such that $D_{2r}(p) \subset \Omega$. Then $f^{n_j}(K) \subset D_r(p)$ for sufficiently large $j$, which by Proposition \ref{1D-prop1} implies that $\mathrm{diam}(K) < \mathrm{diam}_{\mathrm{max}}$, giving a contradiction.
\end{proof}

Lacking wandering domains the proof of Proposition \ref{1D-prop1} becomes considerably simpler. An immediate consequence is the following.

\begin{corollary}\label{cor:12}
The constant $\mathrm{deg}_{\mathrm{crit}}$ can be taken equal to $1$, and the constant $M$ equal to $2$.
\end{corollary}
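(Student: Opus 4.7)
The plan is to revisit the definitions and the proof of Proposition \ref{1D-prop1} in light of the preceding lemma, and simply observe that nearly every complication was introduced solely to accommodate wandering Fatou components.

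First, for $\mathrm{deg}_{\mathrm{crit}}$: recall it was defined as $\prod_{s=1}^{\nu} d_s$, where $x_1,\ldots,x_\nu$ are the critical points lying \emph{in wandering Fatou components}. Since there are no wandering Fatou components, we have $\nu = 0$, so the empty product gives $\mathrm{deg}_{\mathrm{crit}} = 1$.

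Next, I would revisit the proof of Proposition \ref{1D-prop1} to see where the constant $K$ was used. The role of $K$ was entirely in Lemma \ref{1D-lemma4}, whose purpose was to handle the situation in which $D_r(z)$ sits deeply inside a (post-critical) wandering component $U^n$ and the degree of $f^j$ on preimages of $D_r(z)$ cannot be controlled by a single application of the basic Koebe-type Lemma \ref{basiclemma}. That case is simply vacuous now. Similarly, the second half of the proof of Proposition \ref{1D-prop1}, which split $D_r(z)$ into a part inside a wandering component and a part outside via the segment $[z,w]$, is no longer needed. Thus only the first case of the proof remains, where $D_r(z)$ is not contained in a wandering component.

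With no wandering domains, the inductive argument goes through with $M=2$. Given $D_r(z) \subset D_{2r}(z) \subset \Omega$, Lemma \ref{1D-lemma2} (with $t = 2$) lets us cover $D^{-1}_r(z)$ by at most $N_0(2)$ disks $D_{r_k}(z_k)$ with $D_{4r_k}(z_k) \subset \Omega$. The induction hypothesis for $M=2$ requires $D_{2 \cdot 2 r_k}(z_k) = D_{4 r_k}(z_k) \subset \Omega$, which is exactly what we have. Applying the induction hypothesis to each $D_{2 r_k}(z_k)$ yields the degree bound $1$ and the diameter bound $C(\tfrac12,1)$ on each $D^{-j}_{r_k}(z_k)$; summing over the at most $N_0(2)$ covering disks and using that backward orbits of a point in $J \setminus \{\text{parabolics}\}$ accumulate on $\partial\Omega$ (by thinness of $\Omega$) gives exactly the required bounds with $\mathrm{deg}_{\mathrm{crit}} = 1$ and $\mathrm{diam}_{\max} = N_0(2)\cdot C(\tfrac12,1)$.

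Since the argument is now just a verification that the previously written proof simplifies cleanly, I do not anticipate any real obstacle; the only thing to double-check is that at no step of the surviving first case did we actually require $K>1$ beyond what was needed to control the (now absent) wandering-domain situation.
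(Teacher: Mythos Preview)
Your proposal is correct and takes essentially the same approach as the paper, which simply states the corollary as an immediate consequence of the preceding lemma without spelling out any details. Your elaboration---that $\nu=0$ forces $\mathrm{deg}_{\mathrm{crit}}=1$ as an empty product, and that the constant $K$ from Lemma~\ref{1D-lemma4} and the second case in the proof of Proposition~\ref{1D-prop1} are now vacuous, leaving only the first case with $t=2$---is exactly the verification the paper leaves implicit.
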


\begin{prop}\label{prop:contraction}
Let $z \in J$ not be contained in a parabolic cycle, choose $r> 0$ so that $D_r(z) \subset D_{2r}(z) \subset \Omega$, and let $V^1, V^2, \ldots$ be such that $f(V^1) = D_r$ and $f(V^{n+1}) = V^n$. Then
$$
\mathrm{diam}_\Omega V^n \rightarrow 0.
$$
\end{prop}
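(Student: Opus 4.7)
The plan is to upgrade the uniform hyperbolic-diameter bound from Proposition~\ref{1D-prop1} (applied with $\mathrm{deg}_{\mathrm{crit}}=1$ and $M=2$ via Corollary~\ref{cor:12}) into a vanishing diameter statement via a normal family argument. Since $\Omega$ contains no critical points and $D_{2r}(z)\subset\Omega$ is simply connected, each component $\tilde V^n$ of $f^{-n}(D_{2r}(z))$ containing $V^n$ is mapped univalently onto $D_{2r}(z)$ by $f^n$, and $\mathrm{diam}_\Omega \tilde V^n \le \mathrm{diam}_{\mathrm{max}}$. Let $g_n : D_{2r}(z) \to \tilde V^n$ denote the associated univalent inverse branches. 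Choosing $\Omega$ bounded (by truncating a neighborhood of infinity), the family $\{g_n\}$ is uniformly bounded, hence normal on $D_{2r}(z)$. Suppose for contradiction that $\mathrm{diam}_\Omega V^n \not\to 0$; passing to a subsequence, I may assume $\mathrm{diam}_\Omega V^n \ge \delta > 0$ and $g_n \to g_\infty$ uniformly on compact subsets of $D_{2r}(z)$. By Hurwitz, $g_\infty$ is either univalent or constant.

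\textbf{Ruling out the univalent limit.} If $g_\infty$ is univalent, then $V_\infty := g_\infty(D_{2r}(z))$ is open in $\Omega$. Using the relations $f^m \circ g_n = g_{n-m}$ together with the normality of the shifted families, one obtains $f^m(V_\infty) \subset \Omega$ for every $m \ge 0$, so the forward orbit of $V_\infty$ is uniformly bounded and $V_\infty$ lies in the Fatou set of $f$. On the other hand, each $z_n := g_n(z)$ is an $n$-th preimage of $z \in J$, hence lies in $J$, and since $J$ is closed, $g_\infty(z) = \lim z_n \in J$. This contradicts $g_\infty(z) \in V_\infty \subset \text{Fatou set}$.

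\textbf{Constant limit and the parabolic case.} If $g_\infty \equiv p_\infty$, the uniform convergence on $\overline{D_r(z)}$ forces the Euclidean diameter of $V^n$ to shrink to zero and $V^n$ to Hausdorff-converge to $\{p_\infty\}$. When $p_\infty \in \Omega$, the continuity and local boundedness of $\rho_\Omega$ near $p_\infty$ give $\mathrm{diam}_\Omega V^n \to 0$ immediately, contradicting the standing assumption. Otherwise $p_\infty \in \partial\Omega$, and since $z_n \in J$ converge to $p_\infty$ while by construction $J \cap \partial\Omega$ is contained in the parabolic cycles, $p_\infty$ must be a parabolic periodic point. I will then pass to Fatou coordinates on a repelling petal at $p_\infty$, in which the parabolic iterate $f^q$ acts as translation $\tau\mapsto\tau+1$ and $\Omega$ locally resembles a half-plane. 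Eventually $V^n$ lies in this petal as a Fatou-coordinate translate of a single bounded region, and its hyperbolic diameter in the half-plane model decays like $1/|\tau_n| \to 0$, again contradicting $\mathrm{diam}_\Omega V^n \ge \delta$.

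\textbf{Main obstacle.} The hardest step is the constant-limit-at-a-parabolic case: one must make precise the behavior of $\rho_\Omega$ near a parabolic cusp and check, via the Fatou-Leau flower, that Euclidean collapse forces hyperbolic collapse there as well. By contrast, the univalent case reduces cleanly to the fact that preimages of a Julia point cannot accumulate inside the Fatou set, and the constant-limit-interior case is immediate from the continuity of the Poincar\'e density.
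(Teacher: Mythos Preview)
Your approach coincides with the paper's: take the univalent inverse branches $g_n$, use boundedness to get normality, and show every limit is constant by observing that a nonconstant limit would produce an open set in the filled Julia set containing a point of $J$. The paper's version of ruling out the nonconstant case is phrased slightly differently (``any neighborhood of $q$ contains points in the basin of infinity''), but unpacked it is exactly your forward-boundedness argument: $f^m(g_{n_k}(\zeta))\in V^{n_k-m}\subset\Omega$ for all $m\le n_k$, so $h(\zeta)\in K$ for every $\zeta$.

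Where you go further than the paper is in the passage from ``every limit is constant'' to ``$\mathrm{diam}_\Omega V^n\to 0$''. The paper's proof simply stops at ``$h$ is constant''; it does not address why Euclidean collapse of $V^n$ forces hyperbolic collapse in $\Omega$. You correctly isolate the one case where this is not automatic: when the constant limit $p_\infty$ lies on $\partial\Omega$, which (since $p_\infty\in J$) forces $p_\infty$ to be parabolic. Your instinct to pass to Fatou coordinates on a repelling petal is the right one, and the heuristic $1/|\tau_n|$ decay is morally correct. But the sketch as written is not yet a proof: the assertion that ``$\Omega$ locally resembles a half-plane'' depends on the precise shape of the absorbing region removed from the parabolic basin in Lemma~\ref{1D:Omega}, and the hyperbolic density estimate near a parabolic cusp needs an explicit comparison domain (the complement of $\Omega$ near $p_\infty$ contains both the removed attracting-petal region and, if $\Omega$ is thin, pieces of the basin of infinity). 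One clean way to finish is to observe that $\mathrm{diam}_\Omega V^n$ is nonincreasing in $n$ (since $f\colon f^{-1}(\Omega)\to\Omega$ is an unbranched covering once critical points are gone, hence a local isometry, while $f^{-1}(\Omega)\hookrightarrow\Omega$ contracts), so it suffices to get $\mathrm{diam}_\Omega V^n\to 0$ along \emph{some} subsequence; you can then arrange that subsequence to avoid the parabolic case altogether, since the backward orbit of $z$ accumulates on all of $J$, not just the parabolic cycle.
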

\begin{proof}
By Corollary \ref{cor:12} the maps $f^n: V^n \rightarrow D_r$ are all univalent, hence we can consider the inverse branches $(f^n)^{-1}: D_r \rightarrow V^n$. These inverse branches form a bounded, and thus normal, family of holomorphic maps. Let $h$ be a limit map. Since $z \in J$ and $J$ is invariant, the image $h(D_r)$ must contain a point $q$ in $J$. But since any neighborhood of $q$ contains points in the basin of infinity, $h(D_r)$ cannot contain an open neighborhood of $q$, and hence $h$ is constant.
\end{proof}

The non-existence of rotation domains is an immediate consequence.

\begin{corollary}
The polynomial $f$ does not have any Siegel disks.
\end{corollary}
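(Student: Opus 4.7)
The plan is to derive a contradiction from Proposition~\ref{prop:contraction} using the rotation dynamics inside a Siegel disk. Suppose $U$ is a Siegel disk of period $p$, with a linearizing conjugacy $\phi : U \to \mathbb{D}$ intertwining $f^p|_U$ with an irrational rotation $R_\theta$. The boundary $\partial U$ is contained in $J$ and is uncountable, whereas the set of parabolic periodic points is finite, so I can pick a non-parabolic point $z \in \partial U \cap J$ and a radius $r > 0$ small enough that $D_{2r}(z) \subset \Omega$, which is the hypothesis required by Proposition~\ref{prop:contraction}.

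The key construction is, for each $n$, the unique inverse branch $\psi_n = \phi^{-1} \circ R_{-n\theta} \circ \phi$ of $f^{pn}|_U$, together with the connected component $V^{pn}$ of $f^{-pn}(D_r(z))$ that contains $\psi_n(D_r(z) \cap U)$. Corollary~\ref{cor:12} forces $f^{pn}|_{V^{pn}}$ to have degree one, so it is a biholomorphism onto $D_r(z)$ with holomorphic inverse $g_n : D_r(z) \to V^{pn}$. By construction, and by uniqueness of the biholomorphic inverse branch of $f^{pn}|_U$ inside $U$, the restriction $g_n|_{D_r(z) \cap U}$ agrees with $\psi_n$.

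Now Proposition~\ref{prop:contraction} yields $\mathrm{diam}_\Omega V^{pn} \to 0$. The family $\{g_n\}$ is uniformly bounded and hence normal; after extraction, it converges locally uniformly on $D_r(z)$ to some holomorphic $h$. Repeating the final step of the proof of Proposition~\ref{prop:contraction}---the image of $h$ lies in $\overline{\Omega}$, but any value $h(w)$ is a limit of preimages of $w$ and therefore lies in $J$, near which the basin of infinity is dense---forces $h$ to be constant. On the other hand, passing to a further subsequence along which $n_k \theta$ converges modulo $2\pi$ to some $\alpha$, we have $g_{n_k}|_{D_r(z) \cap U} = \phi^{-1} \circ R_{-n_k \theta} \circ \phi \to \phi^{-1} \circ R_{-\alpha} \circ \phi$ locally uniformly on $D_r(z) \cap U$, and this limit is a rotation in linearizing coordinates, hence non-constant. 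This contradicts the constancy of $h$ on the open set $D_r(z) \cap U$.

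The main technical obstacle is the selection of the component $V^{pn}$ and the verification that $g_n$ really agrees with the Siegel rotation inverse $\psi_n$ on $D_r(z) \cap U$; this hinges on Corollary~\ref{cor:12} (degree one so that the branch exists) together with the uniqueness of a biholomorphic inverse branch of $f^{pn}$ inside the Fatou component $U$. Once this identification is in place, the tension between the $\Omega$-contraction of preimages guaranteed by Proposition~\ref{prop:contraction} and the Poincar\'e-isometric nature of $R_\theta$ on $U$ produces the contradiction directly, with no further estimates required.
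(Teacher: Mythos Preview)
Your proof is correct and follows exactly the line the paper has in mind: pick a non-parabolic boundary point of the hypothetical Siegel disk, apply Proposition~\ref{prop:contraction} to the backward branch determined by the rotation, and observe that the resulting shrinking of $V^{pn}$ is incompatible with the fact that the inverse branches restrict to rotations on $D_r(z)\cap U$. Two cosmetic remarks: you only need $h(z)\in J$ (not ``any value $h(w)$''), and you could skip the normal-family detour entirely by using $\mathrm{diam}_\Omega V^{pn}\to 0$ directly on two points of $W$ whose images under $\psi_{n_k}$ converge to distinct points of $U\cap\Omega$.
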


\begin{corollary}
If $f$ does not have any parabolic cycles then $f$ is hyperbolic.
\end{corollary}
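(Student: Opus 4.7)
The plan is to deduce hyperbolicity from Proposition \ref{prop:contraction} by first promoting its pointwise conclusion to uniform contraction of inverse branches over $J$, and then converting uniform contraction into uniform expansion of a high iterate $f^N$ via the Koebe distortion theorem.

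Without parabolic cycles, Proposition \ref{prop:contraction} applies at every $z \in J$. Since $J$ is compact and $\Omega$ is open with $J \subset \Omega$, one has $\dist(J, \partial \Omega) \ge d_0 > 0$; setting $r_0 = d_0/4$ gives $D_{2r_0}(z) \subset \Omega$ for all $z \in J$. By Corollary \ref{cor:12}, every inverse branch of $f^n$ on $D_{r_0}(z)$ is univalent. Because $f$ is a polynomial, all preimages of bounded disks are contained in a fixed disk $D_R$, so the family of all such inverse branches is normal. If the Euclidean diameters $\diam(g(D_{r_0}(z)))$ did not tend to zero uniformly in $z \in J$, $n$, and branch, one could extract $z_k \to z^* \in J$ and branches $g_k$ of $f^{n_k}$ converging locally uniformly on $D_{r_0/2}(z^*)$ to a nonconstant univalent limit $h$. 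But $h(z^*) = \lim g_k(z^*) \in J$ while the Green function identity $G(g_k(\zeta)) = G(\zeta)/(\deg f)^{n_k} \to 0$ forces $h(D_{r_0/2}(z^*)) \subset K$; since every neighborhood of $h(z^*) \in J = \partial A(\infty)$ meets the basin of infinity $A(\infty)$, this is a contradiction. Hence there exists $N \in \mathbb N$ with $\diam(g(D_{r_0}(z))) \le r_0/4$ for every $z \in J$ and every inverse branch $g$ of $f^N$ on $D_{r_0}(z)$.

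The Koebe $\tfrac14$ theorem applied to $g$ then yields $|g'(z)| \le 1/2$, equivalently $|(f^N)'(w)| \ge 2$ at $w = g(z)$. Since $J$ is totally invariant, every $w \in J$ equals $g(f^N(w))$ for some univalent branch $g$ at $f^N(w) \in J$, so $|(f^N)'(w)| \ge 2$ on all of $J$. This is the definition of hyperbolicity for $f$.

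The main obstacle is the first step of upgrading Proposition \ref{prop:contraction} to full uniformity over $z \in J$, the inverse branch, and the iterate. Uniformity over the branch follows from the normal-families and Green-function dichotomy above; uniformity over the base point rests on the compactness of $J$, which is exactly what the hypothesis of no parabolic cycles unlocks by allowing Proposition \ref{prop:contraction} to be invoked at every Julia point. Once uniformity is in hand, the Koebe-based conversion from contraction of univalent inverse branches to expansion of $f^N$ is standard.
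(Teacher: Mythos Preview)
Your proof is correct and follows essentially the same route as the paper: use the absence of parabolic cycles to get $J\subset\Omega$, invoke Proposition~\ref{prop:contraction} together with compactness of $J$ to obtain a uniform $N$ with all inverse branches of $f^N$ shrinking disks of a fixed radius by a definite factor, and then convert this to $|(f^N)'|\ge 2$ on $J$. The only cosmetic differences are that the paper appeals to compactness and Proposition~\ref{prop:contraction} in one sentence (rather than rerunning the normal-families/Green-function argument explicitly) and cites the Schwarz Lemma where you use Koebe $\tfrac14$; these amount to the same estimate here.
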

\begin{proof}
When $f$ lacks parabolic cycles the set $J$ is contained in $\Omega$. Since $J$ is compact it follows from Proposition \ref{prop:contraction} that there exist $N \in \mathbb N$ and $r >0$ such for any $z \in D$ and any connected component $V^N$ of $(f^n)^{-1}(D_r(z))$ the Euclidean diameter of $V^N$ is less than $r/2$. The Schwartz Lemma therefore implies that $|(f^N)^\prime| \ge 2$ on $J$.
\end{proof}

\begin{corollary}
The polynomial $f$ lies on the boundary of the hyperbolicity locus.
\end{corollary}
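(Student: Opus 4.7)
The plan: Since $f$ has at least one parabolic cycle, $f$ itself is not hyperbolic (a parabolic multiplier lies on the unit circle). Placing $f$ on the boundary of the hyperbolicity locus therefore reduces to producing hyperbolic polynomials of the same degree arbitrarily close to $f$ in coefficient space.

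The first step is to perturb $f$ so that each of its finitely many parabolic cycles turns into an attracting cycle. Working one cycle at a time and passing to an appropriate iterate, I may focus on a parabolic fixed point with multiplier a primitive $q$-th root of unity. Within a generic one-parameter family $f_t$ through $f$ the multiplier of this fixed point depends holomorphically on $t$ and leaves the unit circle; by choosing the direction of perturbation suitably one pushes it into the open unit disk, while the nearby cycle of period $q$ that bifurcates from the parabolic petals is forced to be repelling. Applying this simultaneously to each parabolic cycle of $f$ produces a family $f_t \to f$ with no parabolic cycles for small $t \neq 0$.

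Next I would verify that the hypothesis ``no critical points on the Julia set'' persists under such a perturbation. The critical points of $f_t$ depend continuously on $t$, and Julia sets of polynomials depend upper semicontinuously on parameters: for every $\varepsilon > 0$, $J_{f_t} \subset N_\varepsilon(J_f)$ once $t$ is small enough. Since each critical point of $f$ is separated from $J_f$ by a positive distance, combining the two continuity statements shows that no critical point of $f_t$ lies on $J_{f_t}$ for small $t$. The previous corollary then applies to $f_t$: with neither critical points on its Julia set nor parabolic cycles, $f_t$ is hyperbolic, so hyperbolic polynomials accumulate on $f$.

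The main obstacle is engineering the parabolic perturbation correctly, namely ensuring simultaneously that each parabolic fixed point becomes genuinely attracting and that no new parabolic cycle appears along the chosen family. Globally one should also confirm that the critical orbits previously trapped in the old parabolic basins are absorbed into the new, enlarged attracting basins rather than escaping to $J_{f_t}$; this is manageable because the preceding results guarantee that $f$ has only finitely many Fatou components and no Siegel disks, so the Fatou structure to be preserved under perturbation is under complete combinatorial control.
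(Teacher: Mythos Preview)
Your overall strategy matches the paper's exactly: perturb so that each parabolic cycle splits into an attracting and a repelling cycle, check that the condition ``no critical points on $J$'' persists, and invoke the previous corollary. The paper also cites the Douady--Hubbard bound on the number of parabolic cycles to guarantee finitely many constraints on the perturbation.

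There is, however, a genuine error in your second paragraph. You assert that Julia sets of polynomials depend \emph{upper} semicontinuously on parameters, i.e.\ $J_{f_t}\subset N_\varepsilon(J_f)$ for small $t$. This is false in general: Julia sets are lower semicontinuous (repelling cycles are dense and persist), but upper semicontinuity can fail precisely at parabolic parameters, where parabolic implosion can cause $J_{f_t}$ to jump discontinuously and become much larger than $J_f$. So this step, as written, does not stand.

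The paper handles this by choosing the perturbation more carefully: it perturbs ``in a direction where $J$ changes continuously'', namely the stable side of each parabolic bifurcation where the parabolic point becomes attracting and the bifurcating cycle repelling. Along such a direction $J_{f_t}\to J_f$ in the Hausdorff topology, and then your argument (critical points move continuously, hence stay off $J_{f_t}$) goes through. You essentially flag this issue yourself in your final paragraph when you worry about critical orbits in the old parabolic basins; that concern is exactly the failure of upper semicontinuity, and the fix is not a general principle but the specific choice of perturbation direction.
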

\begin{proof}
By \cite{DH} the number of parabolic cycles is bounded by the degree of $f$. It follows that we can perturb the parameters $f$ slightly in a direction where $J$ changes continuously, and so that all parabolic cycles split into repelling and attracting cycles. If the perturbation is sufficiently small then there are still no critical points on $J$, while the parabolic cycles have disappeared. Hence by the previous Corollary the perturbed function is hyperbolic.
\end{proof}

We note that the bound on the number of parabolic cycles in terms of the degree is not needed if one allows perturbations into the infinite dimensional space of polynomial like maps.

\section{H\'enon maps: Background and preliminaries}

Recall from \cite{FM1989} that the dynamical behavior of a polynomial automorphism of $\mathbb C^2$ is either dynamically trivial, or the automorphism is conjugate to a finite composition of maps of the form
$$
(x,y) \mapsto (p(x) + b \cdot y, x),
$$
where $p$ is a polynomial of degree at least $2$ and $b \neq 0$. We will refer to such compositions as H\'enon maps. Given $R>0$ we define the following sets.
$$
\begin{aligned}
\Delta^2_R &:= \{(x,y)  :  |x|, |y| < R\},\\
V^+ &:= \{(x,y)  :  |x| \ge \max(R, |y|)\}, \; \; \mathrm{and}\\
V^- &:= \{(x,y)  :  |y| \ge \max(R, |x|)\}
\end{aligned}
$$
By choosing $R$ sufficiently large we can make sure that $f(V^+) \subset V^+$, $f^{-1}(V^-) \subset V^-$ and $f(\Delta^2_R) \subset \Delta^2_R \cup V^+$. One can also guarantee that if $(x_0,y_0) \in V^+$ and $(x_1, y_1) = f(x_0,y_0)$ then $|x_1| > 2|x_2|$. Similarly one obtains $|y_{-1}|> 2|y_0|$ for $(x_0,y_0) \in V^-$. It follows that every orbit that lands in $V^+$ must escape to infinity, and every orbit that does not converge to infinity must eventually land in $\Delta^2_R$ in a finite number of steps.

\medskip

We write $K^+$ for the set with bounded forward orbits, $K^-$ for the
set with bounded backward orbits, and $K=K^+ \cap K^-$. As usual we
define the \emph{forward and backward Julia sets} as
$J^\pm= \partial K^\pm$,
and the Julia set as $J = J^+ \cap J^-$. Let us recall the existence of the Green's currents $T^+$ and $T^-$, supported on $J^+$ and $J^-$, and the equilibrium measure $\mu = T^+ \wedge T^-$, whose support $J^\star$ is contained in $J$. Whether $J^*$ always equals $J$ is one of the main open questions in the area, and was previously only known for hyperbolic H\'enon maps \cite{BS1991a}.

\subsection{Wiman Theorem and Substantially dissipative H\'enon maps}

Recall that a subharmonic function $g: \mathbb C \rightarrow \mathbb
R$ is said to have order of growth at most $\rho$
if
$$
g(\zeta) = O(|z|^\rho)\quad  {\mathrm{as}} \quad \zeta \rightarrow \infty.
$$

Given $ E  \in \R$, let us call the set $\{ g < E \} $ {\em subpotential}
(of level $E$),
and its components {\em subpotential components}.

\begin{thm}[Wiman]
Let $g$ be a non-constant subharmonic function with order of growth
strictly less than $\frac{1}{2}$.
Then subpotential components of any level $E$ are bounded.
\end{thm}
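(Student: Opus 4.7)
My plan is to derive Wiman's theorem as a short corollary of the classical \emph{minimum modulus theorem} of Beurling (often called the $\cos\pi\rho$ theorem), combined with a one-line connectivity argument. The real substance lies in the minimum modulus theorem, which I would invoke as a black box; the deduction of the boundedness of subpotential components is then soft.

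Set $B(r) = \max_{|z|=r} g(z)$ and $A(r) = \min_{|z|=r} g(z)$. Since $g$ is non-constant and subharmonic on all of $\C$, the maximum principle (together with the fact that a subharmonic function bounded above on $\C$ is constant) forces $B(r) \to \infty$. The key input I would cite is: for a subharmonic function on $\C$ of order $\rho < \tfrac{1}{2}$,
\[
\liminf_{r \to \infty} \frac{A(r)}{B(r)} \ \ge\ \cos(\pi\rho).
\]
Since $\cos(\pi\rho) > 0$ and $B(r) \to \infty$, it follows that $A(r_n) \to \infty$ along some sequence $r_n \to \infty$.

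Now I would argue by contradiction. Suppose the level set $\{g < E\}$ has an unbounded connected component $U$, and pick $z_0 \in U$ with $r_0 = |z_0|$. For every $r > r_0$ the set $U$ meets the circle $\{|z|=r\}$: $U$ is connected, contains a point in the open disk $\{|z|<r\}$, and being unbounded must contain points outside it, so by connectedness it must cross the bounding circle. At any such crossing point $g < E$, so $A(r) < E$ for every $r > r_0$. This contradicts $A(r_n) \to \infty$, and we conclude $U$ cannot be unbounded.

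The main obstacle, as noted, is the $\cos\pi\rho$ theorem itself, which is classical but non-elementary. If I had to prove it I would follow one of the standard routes, either Beurling's extremal length argument or Heins' symmetrization, reducing to the case of a subharmonic function on a sector of opening $2\pi$ and applying Phragm\'en--Lindel\"of. For the purposes of the paper I would simply invoke it. A minor remark: the sharp constant $\cos(\pi\rho)$ is not actually needed in the argument above; any lower bound forcing $A(r_n)\to\infty$ along some sequence would suffice, which in turn only requires that the growth order be strictly less than $\tfrac{1}{2}$ and not, say, equal to $\tfrac{1}{2}$.
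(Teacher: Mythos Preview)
The paper does not prove Wiman's theorem; it is quoted as a classical black box (with reference to Wiman's 1905 paper) and then applied. So there is no ``paper's own proof'' to compare against, and your strategy of invoking a minimum-modulus result as a black box is exactly in the spirit of how the paper uses the theorem.

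Your argument is correct in outline but contains one misstatement. The $\cos\pi\rho$ theorem gives
\[
\limsup_{r\to\infty}\frac{A(r)}{B(r)}\ \ge\ \cos(\pi\rho),
\]
not $\liminf$. The $\liminf$ version is false: if $g=\log|f|$ for an entire $f$ of small order with zeros $a_n\to\infty$, then $A(|a_n|)=-\infty$, so $\liminf A(r)/B(r)=-\infty$. Fortunately your deduction only uses the existence of \emph{some} sequence $r_n\to\infty$ with $A(r_n)\to\infty$, and this follows immediately from the correct $\limsup$ statement together with $B(r)\to\infty$. With that fix, the connectivity argument (an unbounded open connected set meets every sufficiently large circle, forcing $A(r)<E$ for all large $r$) is clean and the contradiction is genuine. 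As you note, the sharp constant $\cos\pi\rho$ is irrelevant here; Wiman's original 1905 result already gives $A(r_n)\to\infty$ directly for order below $\tfrac12$, so one need not pass through the later Beurling refinement at all.
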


Let us describe how it was
used in the setting of H\'enon maps in recent works of the first author and Dujardin \cite{DL2015}, and in \cite{LP2014}. Suppose that $p \in \mathbb C^2$ is a hyperbolic fixed point, and let $W^s(p)$ be its stable manifold, corresponding to the stable eigenvalue $\lambda$. Then there exists a linearization map $\varphi: \mathbb C \rightarrow W^s(p)$ satisfying $\varphi (\lambda \cdot \zeta) = f(\varphi(\zeta))$. As usual we let $G^\pm$ be the plurisubharmonic functions defined by
$$
G^\pm(z) = \lim_{n \rightarrow \infty} \frac{1}{ ( \deg f )^n} \cdot \log^+\|f^{\pm n}(z)\|.
$$
We have the functional equations
$$
G^\pm(f(z)) = (\mathrm{deg} f)^{\pm 1} \cdot G^\pm(z).
$$
Combining the functional equations for $G^-$ and $\phi$ we obtain that the non-constant subharmonic function $g = G^- \circ \varphi$ satisfies
$$
g(\lambda \cdot \zeta)= G^- \circ f \circ \varphi(\zeta) =
\frac{1}{\deg f} g(\zeta).
$$
Note that $|\lambda| < |\mathrm{Jac} f|$. Hence under the assumption
that $|\mathrm{Jac} f| < \frac{1}{\mathrm{deg} f^2}$ it follows that
$g$ is a subharmonic function of growth strictly less than
$\frac{1}{2}$, and therefore according to Wiman's Theorem
all its subpotential  components 
are bounded.

\medskip

Let us point out that the above discussion also holds when $p$ is a neutral fixed point, i.e. having one neutral and one attracting multiplier. One considers the strong stable manifold with corresponding eigen value $\lambda$, satisfying $|\lambda| = |\mathrm{Jac} f|$. The subharmonic function $g$ still has order of growth strictly less than $\frac{1}{2}$. The idea can also be applied when $p$ is not a periodic point but lies in a invariant hyperbolic set, or under the assumption of a dominated splitting, which will be discussed in the next section.

\medskip

A particular consequence of Wiman's Theorem is that all connected components of intersections of (strong) stable manifolds with $\Delta^2_R$ are bounded in the linearization coordinates. By the Maximum Principle they are also simply connected, hence they are disks. The filtration property of H\'enon maps tells us that every connected component of the intersection of a stable manifold with $\Delta^2_R$ is actually a branched cover over the vertical disk $\Delta_{w}(R)$. This will be used heavily in what follows.

\subsection{Classification of periodic components}

\subsubsection{Ordinary components}

We recall the classification of periodic Fatou components $U$ from
\cite{LP2014}, building upon results of Bedford and Smillie
\cite{BS1991b}.
For a dissipative H\'enon map $f$,
there exist three types of {\em ordinary} invariant%
\footnote{A description of periodic components readily follows.
Note also that  since  $f$ is invertible,
there is no such thing as a ``preperiodic'' Fatou component.}
Fatou components $U$:

\begin{enumerate}
\item[(i)]  {\em Attracting basin:} All orbits in $U$ converge to an attracting  fixed point $p\in U$.
Moreover, $U$ is a Fatou-Bieberbach domain (i.e., it is  biholomorphically equivalent to $\mathbb C^2$).

\item[(ii)] {\em Rotation basin:}
All orbits in $U$ converge to a properly embedded Riemann  surface $\Sigma \subset U$,
which is invariant under $f$ and biholomorphically equivalent to either an
annulus or the unit disk. The biholomorphism from $\Sigma$ to an annulus or disk can be chosen so that it conjugates the action of $f|_\Sigma$ to an irrational rotation. The stable manifolds through points in $\Sigma$ are all embedded complex lines, and the domain $U$ is biholomorphically equivalent to $\Sigma \times \mathbb C$.

\item[(iii)]  {\em Parabolic basin:}
All orbits in $U$ converge to a parabolic fixed point $p  \in \partial U$ with
the neutral eigenvalue equal to $1$. Moreover, $U$ is a Fatou-Bieberbach domain.
\end{enumerate}

In the literature a periodic point whose multipliers $\lambda_1$ and $\lambda_2$ satisfying $|\lambda_1|<1$ and $\lambda_2 = 1$ may be called either semi-parabolic or semi-attracting, depending on context. Since we are working with dissipative H\'enon maps, where there is always at least one attracting multiplier, we chose to refer to these points as \emph{parabolic}, and we use analogues terminology for Fatou components. Similarly, we will call a periodic point with one neutral multiplier \emph{neutral}.

In each  case, we let $A= A_U $ be the {\em attractor} of the
corresponding component (i.e., the attracting or parabolic
point $p$, or the rotational curve $\Si$).

Along with global Fatou components $U$, we will consider \emph{semi-local}
ones,  which are components $U^i$  of the intersection $U \cap \De^2_R$.
(Usually there are infinitely many of them.)  Each $U^i$ is mapped
under $f$ into some component $U^j$, $j= j(i)$, with ``vertical'' boundary $\partial U^i \cap \Delta^2_R$ being mapped into the vertical boundary of $U^j$,
 but the correspondence $i\mapsto j(i)$ is not in general injective.
This dynamical tree of semi-local components resembles closely the
one-dimensional picture.
In particular,  cycles of ordinary  semi-local components can be viewed as
the \emph{immediate basins}  of the corresponding attractors $A_U$.

\msk
\subsubsection{Absorbing domains}

Given a compact subset $Q \subset U$,
let us say that an invariant domain $D\subset U$ is $Q$-{\em absorbing}
if there exist a moment $n\in \N$ such that  $f^n (Q) \subset D$.
If this happens for any $Q$ (with $n$ depending on $Q$) then $D$ is
called {\em absorbing}.

For instance, in the attracting case, any forward invariant neighborhood of the attracting point is absorbing. In the parabolic case,
there exists an arbitrary small  absorbing ``attracting petal''
$P\subset U$ with $\di P \cap \di U = \{ p\}$  (see
\cite{BSU2012}).  To construct a $Q$-absorbing domain in the rotation
case,   take a sufficiently large invariant
subdomain $\Si'\subset \Si$ compactly contained in $\Si$   and let
$$
       D= \bigcup_{z\in \Si'}  W_{\loc}^s (z).
$$

This  implies:

\begin{lemma}\label{absorbing sets}
Let $f$ be a dissipative H\'enon map, and let
$U$ be an ordinary invariant  Fatou component with an attractor $A$.
Then any compact set $Q\subset U$ is contained in a forward invariant
 domain  $W\subset U$ such that $\overline W \cap \di U \subset A$.
(In particular,  $W$ is relatively compactly contained in $U$
in the attracting or rotation cases.)
\end{lemma}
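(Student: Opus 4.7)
The plan is to assemble the three case-by-case constructions exhibited in the preceding discussion (the attracting neighborhood, the attracting petal, and the flow-out of a large invariant subdomain $\Sigma'$ under the local stable foliation) into a single uniform absorbing open set. In all three cases we have at our disposal a forward invariant open set $D \subset U$ with $\overline{D} \cap \partial U \subset A$ that absorbs compact subsets of $U$: in the attracting case any sufficiently small forward invariant neighborhood of $p$ works; in the parabolic case $D$ is an attracting petal $P$ with $\partial P \cap \partial U = \{p\}$ (the content of \cite{BSU2012}); and in the rotation case one first chooses a compactly contained invariant subdomain $\Sigma' \subset \Sigma$ large enough that the orbit of the given $Q$ eventually projects into $\Sigma'$, then sets $D = \bigcup_{z \in \Sigma'} W_{\loc}^s(z)$.

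Given $Q$, pick such a $D$ and an $n \in \N$ with $f^n(Q) \subset D$. Thicken $Q$ to a relatively compact open neighborhood $\tilde Q$ of $Q$ in $U$ so that $f^n(\tilde Q) \subset D$ still holds, and set
\[
W := D \cup \bigcup_{k=0}^{n-1} f^k(\tilde Q).
\]
Then $W$ is open, contains $Q$, and is forward invariant, since $f(D) \subset D$ and $f^n(\tilde Q) \subset D$. The closures $\overline{f^k(\tilde Q)}$ are compact in $U$, and $\overline D \cap \partial U \subset A$ by construction, so $\overline W \cap \partial U \subset A$. The ``in particular'' clause follows because in the attracting and rotation cases $A$ sits in the interior of $U$, which allows $D$, and hence $W$, to be chosen relatively compact in $U$.

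The one technicality is to make $W$ connected, in the strict sense of a ``domain''. Since $U$ is path-connected and $W$ has only finitely many components all sitting inside $U$, one fattens $W$ by adding small open tubes along paths in $U$ joining its components; these tubes can be chosen relatively compact in $U$ and so do not spoil the condition on $\overline W \cap \partial U$. The only substantive inputs are the existence of the petal with $\partial P \cap \partial U = \{p\}$ in the parabolic case and the verification that the sliding foliation $D = \bigcup_{z \in \Sigma'} W_{\loc}^s(z)$ is genuinely absorbing in the rotation case; the rest is routine bookkeeping.
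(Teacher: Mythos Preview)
Your approach is essentially the same as the paper's, which in fact gives no explicit argument at all beyond the phrase ``This implies:'' after laying out the three absorbing sets $D$. Your write-up is considerably more detailed.

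There is one small gap in the connectedness patch. Adding open tubes to join the components of $W$ may destroy forward invariance: nothing guarantees that $f$ of a tube lands back in $W$. A clean repair is to arrange connectedness from the start rather than post hoc. Since $U$ is connected and $D$ is a nonempty open subset, you may choose the thickening $\tilde Q$ to be a \emph{connected} relatively compact open set in $U$ containing both $Q$ and some point of $D$. Then each $f^k(\tilde Q)$ is connected (as $f$ is a biholomorphism) and meets $D$ (because $D$ is forward invariant), so
\[
W = D \cup \bigcup_{k=0}^{n-1} f^k(\tilde Q)
\]
is already connected, and no tubes are needed. With this adjustment the proof is complete.
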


Let us say that a subset $\Om\subset
\De^2_R$ is {\em relatively backward invariant} if
$$
f^{-1}(\Omega) \cap \Delta^2_R \subset \Omega.
$$

\begin{corollary}\label{cor:absorbing}
Given any compact set $Q$ contained in the union of periodic Fatou
components, there exists an open and relatively backward invariant
subset $\Om$ of $\De^2_R$ containing
$$
   \De^2_R \cap ( \mathrm{wandering\ components} \cup J^+ \sm  \overline{ \{ {\mathrm{ parabolic\ cycles}} \}} )
$$
and avoiding $Q$.
\end{corollary}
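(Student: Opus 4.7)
The plan is to take $\Omega$ to be the complement in $\Delta^2_R$ of a relatively closed, forward $f$-invariant set that captures $Q$ inside the periodic basins. First, since $Q$ is compact and the periodic Fatou components are pairwise disjoint open sets whose union covers $Q$, only finitely many of them, say $U_1,\ldots,U_N$, meet $Q$; moreover each $Q\cap U_j$ is simultaneously open and closed in $Q$, hence is a compact subset of $U_j$. Let $n$ be the least common multiple of the periods of $U_1,\ldots,U_N$, and extend the list to $U_1,\ldots,U_M$ so that it consists of full $f$-cycles; in particular $f^n(U_j)=U_j$ for every $j\le M$.

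Next, for each $j\le M$ apply Lemma~\ref{absorbing sets} to the return map $f^n\colon U_j\to U_j$ with compact set $Q\cap U_j$ to obtain a forward $f^n$-invariant domain $W_j\subset U_j$ satisfying $Q\cap U_j\subset W_j$ and $\overline{W_j}\cap\partial U_j\subset A_{U_j}$, where $A_{U_j}$ is the attracting fixed point, rotational curve, or parabolic fixed point associated to $f^n|_{U_j}$. Set
\[
C_0 \;:=\; \bigcup_{k=0}^{n-1}\bigcup_{j=1}^{M} f^k(W_j).
\]
Because $f^n(W_j)\subset W_j$, one has $f(C_0)\subset C_0$; moreover $C_0$ is open, contains $Q$, and is contained in the union of the periodic Fatou components. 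Finally, define
\[
\Omega \;:=\; \Delta^2_R \setminus \overline{C_0}.
\]

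The verification proceeds in three steps. Openness of $\Omega$ in $\Delta^2_R$ is immediate. Relative backward invariance follows from $f(\overline{C_0})\subset \overline{f(C_0)}\subset\overline{C_0}$: if $y\in\Delta^2_R$ and $f(y)\in\Omega$, then $y\notin\overline{C_0}$, i.e.\ $y\in\Omega$. Disjointness from $Q$ is clear since $Q\subset C_0\subset\overline{C_0}$. The main point is that $\Omega$ contains the prescribed set, which reduces to showing that $\overline{C_0}$ is disjoint from the wandering components and from $J^+\setminus\overline{\{\mathrm{parabolic\ cycles}\}}$. Wandering components are open and disjoint both from periodic components and from $J^+$, hence disjoint from $\overline{C_0}$. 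For the intersection with $J^+$, the inclusion
\[
\overline{C_0}\cap J^+ \;\subset\; \bigcup_{k,j} f^k\bigl(\overline{W_j}\cap\partial U_j\bigr) \;\subset\; \bigcup_{k,j} f^k(A_{U_j})
\]
holds; in the attracting and rotational cases $A_{U_j}$ is a compact subset of the open set $U_j$ (forcing the boundary intersection to be empty), while in the parabolic case $A_{U_j}$ is a parabolic periodic point. Thus $\overline{C_0}\cap J^+$ lies in the union of finitely many parabolic cycles, which is contained in $\overline{\{\mathrm{parabolic\ cycles}\}}$. The only mildly delicate step is propagating the boundary control of each $\overline{W_j}$ forward along its finite $f$-orbit and verifying that passing to closures does not introduce additional contributions to $J^+$ beyond parabolic periodic points.
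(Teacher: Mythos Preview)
Your proof is correct and follows essentially the same strategy as the paper's: remove from $\Delta^2_R$ the closures of the forward-invariant absorbing domains $W_j$ furnished by Lemma~\ref{absorbing sets}. You are in fact more careful than the paper in one respect: you explicitly handle periodic components of period greater than one by passing to the return map $f^n$ and then saturating along the $f$-orbit to make $C_0$ genuinely $f$-forward invariant, whereas the paper applies Lemma~\ref{absorbing sets} (stated only for invariant components) to periodic components without comment.

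The one substantive difference is that the paper's $\Omega$ is additionally intersected with the sublevel set $\{G^+<\epsilon\}$, yielding
\[
\Omega = \Delta^2_R \cap \{G^+<\epsilon\} \setminus \bigcup_i \overline{W_i}.
\]
This extra restriction is not required by the statement of Corollary~\ref{cor:absorbing} itself, and your larger $\Omega$ satisfies all the stated conditions. However, the paper's thinner choice is what is actually used downstream (cf.\ Lemma~\ref{lemma:omega}, conditions (i) and (ii)), where one needs $\Omega$ to be contained in an arbitrarily small neighborhood of $J^+$ off the wandering components. Your construction does not give this thinness, so while it proves the corollary as stated, it would not serve as a drop-in replacement for the later arguments without the additional $\{G^+<\epsilon\}$ cut.
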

\begin{proof}\label{neibhd W}
There exist only finitely many periodic Fatou components $U^i$ intersecting $Q$.
For each of them, let $W_i$ be the neighborhood of $(Q\cap U^i)$ from Lemma~\ref{absorbing sets}.
Note that $J^+\cap (\bigcup \overline{W_i})$ is
contained in a finite number of parabolic cycles.
Take now a small $\epsi>0$ and let
$$
       \Om = \De^2_R \cap \{ G^+ < \epsi \} \sm  (\bigcup \overline{W_i}) .
$$
\end{proof}

\msk
\subsubsection{Substantially dissipative maps}

\begin{thm}[\cite{LP2014}]
 For a substantially dissipative H\'enon map,
any periodic Fatou component is ordinary.
\end{thm}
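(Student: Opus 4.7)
The plan is to reproduce the argument of \cite{LP2014}, which extends the recurrent classification of Bedford and Smillie \cite{BS1991b} to the non-recurrent setting using the substantial dissipativity hypothesis. After passing to an iterate of $f$ I may assume $U$ is $f$-invariant, since the three ordinary categories are invariant under iteration. The family $\{f^n|_U\}$ is then normal because $U\subset K^+$.

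First I would dispose of the \emph{recurrent} case, where some orbit in $U$ has an accumulation point inside $U$. The classification of \cite{BS1991b} applies directly and gives that $U$ is either an attracting basin or a rotation basin. This step uses only dissipativity, not substantial dissipativity.

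The heart of the matter is the \emph{non-recurrent} case, where every accumulation point of every orbit lies in $\partial U\subset J^+$. The strategy is to show that every normal limit $h=\lim_{k\to\infty} f^{n_k}|_U$ is constant and that the common value is a parabolic fixed point of $f$, so that $U$ is a parabolic basin. Assume for contradiction that some limit $h$ has rank one, so $h(U)$ contains a nontrivial holomorphic disk $\Sigma\subset J^+$. Pick $p\in\Sigma$ in the support of an invariant probability measure on the $\omega$-limit set of $U$ for which the dissipative iteration produces one strictly negative Lyapunov exponent. Then $p$ has a well-defined global stable manifold $W^s(p)\cong\C$ with linearization $\varphi_p$ satisfying $\varphi_p(\lambda\zeta)=f(\varphi_p(\zeta))$. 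The function $g:=G^-\circ\varphi_p$ is subharmonic and satisfies $g(\lambda\zeta)=(\deg f)^{-1} g(\zeta)$; the substantial dissipativity assumption $|\Jac f|<(\deg f)^{-2}$ forces the order of growth of $g$ to be strictly less than $\tfrac{1}{2}$. Wiman's theorem then makes each connected component of $W^s(p)\cap\Delta^2_R$ a bounded topological disk which, by the filtration property, projects as a branched cover onto a vertical disk in the bidisk. Sweeping $\Sigma$ by these vertical plaques yields a contradiction with $\Sigma\subset J^+$ being a single rank-one holomorphic image arising as a normal limit on $U$.

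Once every normal limit is constant, a standard combination of Hurwitz-type arguments with the invariance of $U$ singles out a common fixed point $q\in\partial U$ on which every orbit in $U$ accumulates, with one contracting and one unit-modulus multiplier. Dissipativity forces the neutral multiplier to be a root of unity; after replacing $f$ by yet another iterate it equals $1$, exhibiting $U$ as a parabolic basin. The principal obstacle is the rank-one ruling-out step: without substantial dissipativity the order-of-growth estimate underlying Wiman's theorem fails, so a rank-one accumulation disk $\Sigma\subset J^+$ cannot be excluded by dissipativity alone, and this is precisely where the hypothesis $|\Jac f|<(\deg f)^{-2}$ is essential.
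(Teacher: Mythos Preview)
The paper does not prove this theorem itself; it is quoted from \cite{LP2014}. The only information the paper gives about the argument is in the proof of Proposition~\ref{per comps}, where it is recalled that substantial dissipativity enters the \cite{LP2014} proof at exactly one point: to rule out a one-dimensional limit set $\Sigma$ that is \emph{contained in the strong stable manifold} of a hyperbolic or neutral fixed point.

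Your outline has the correct large-scale architecture (recurrent case via \cite{BS1991b}; non-recurrent case by analyzing ranks of normal limits; Wiman enters through the order-of-growth bound, which is where $|\Jac f|<(\deg f)^{-2}$ is used). But the decisive step is not actually carried out. You write that ``sweeping $\Sigma$ by these vertical plaques yields a contradiction with $\Sigma\subset J^+$,'' without saying what the contradiction is. If $\Sigma$ were transverse to the stable plaques, one could argue that the saturation is open and the iterates are normal there, contradicting $\Sigma\subset J$; but the delicate case, singled out explicitly in the paper's discussion of Proposition~\ref{per comps}, is when $\Sigma$ lies \emph{inside} a single stable manifold $W^s(p)$. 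Then there is nothing transverse to sweep, and a different argument (in \cite{LP2014} based on Wiman applied to $G^-\circ\varphi_p$, together with the structure of $W^s(p)\cap\De^2_R$) is needed. Your sketch does not touch this case, which is precisely the one where substantial dissipativity is indispensable.

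Two further assertions are unjustified. First, producing a point $p\in\Sigma$ with a global stable manifold via ``an invariant probability measure with one negative Lyapunov exponent'' is not the route taken in \cite{LP2014}; there $p$ is a \emph{fixed} (hyperbolic or neutral) point, obtained from the dynamics on the limit set, and $W^s(p)$ comes from standard invariant-manifold theory. Second, the sentence ``dissipativity forces the neutral multiplier to be a root of unity'' is not correct as stated: dissipativity alone does not exclude an irrationally neutral eigenvalue. In \cite{LP2014} the parabolic conclusion comes from non-recurrence combined with a Snail-Lemma/index type argument, not from the Jacobian bound.
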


\begin{remark}
In fact, for H\'enon maps with dominated splitting,
this classification holds without assuming that dissipation is substantial,
see Proposition \ref{per comps} below.
\end{remark}

\section{Dominated splitting}

\subsubsection{Definition}
We say that a H\'enon map $f$ admits a dominated splitting if there is an invariant splitting of the tangent bundle on $J = J^+ \cap J^-$
\begin{equation}\label{initial line fields}
T_J(\mathbb C^2) = E^s \oplus E^c
\end{equation}
with constants $0<\rho<1$ and $C>0$ such that for every $p \in J$ and any unit vectors $v \in E^s_p$ and $w \in E^c_p$ one has
$$
\frac{\|df^n v\|}{\|df^n w\|} < C \cdot \rho^n.
$$

{\em From now on we will assume that $f$ is dissipative, from which it immediately follows that $E^s$
is stable.}
We cannot conclude that $E^c$ is unstable, though.

\msk

\subsubsection{Cone fields} Let $p \in J$ and let $v \subset E^s_p$ have unit length. Given $0<\alpha<1$ we can define the cone
$$
C_p^s(\alpha) := \{ w \in T_p(\mathbb C^2) : |<w,v>| \ge \alpha \|w\|\}.
$$
It follows from the dominated splitting that we can choose $\alpha$ continuously, depending on $p \in J$, so that
$$
df C_p^s(\alpha(p)) \supset C_{f(p)}^s(r \cdot \alpha(f(p)))
$$
for some $r<1$ which can be chosen independently of $p \in J$. We
refer to the collection of these cones as the (backward) invariant
\emph{vertical cone field} on $J$.

\medskip

Since both $E^s_p$ and $\alpha$ vary continuously with $p$, and
the set $J$ is closed, we can extend the vertical cone field
continuously to $\Delta^2_R$. It follows automatically that the
extension of the cone field to $\Delta^2_R$ is backward invariant for
points lying in a sufficiently small neighborhood $\mathcal{N}(J)$.

Note that all accumulation points of the forward orbit of a point in $J^+$ must lie in $K^- = J^-$, and therefore in $J = J^+ \cap J^-$. Writing $J^+_R = J^+ \cap \Delta^2_R$ as before, it follows from compactness that there exists an $N \in \mathbb N$ such that $f^n(J^+_R) \subset \mathcal{N}(J)$ for all $n \ge N$. Thus we can pull back the vertical cone field to obtain a backwards invariant cone field on a neighborhood of $J^+_R$. We will denote this neighborhood by $\mathcal{N}(J^+_R)$, and refer to it as the region of dominated splitting.

\subsubsection{Strong stable manifolds} \label{stable manifolds}
Let us consider the following completely invariant set:
\begin{equation}\label{VV}
   \VV^+ : = \{ p :  \  \exists \ n_0= n_0 (p) \ f^n p \in \NN (J) \
   {\mathrm for} \ n\geq n_0 \}.
\end{equation}
Let $U(p)$ be a small ball centered at $p$.  For $p\in \VV^+$,
consider a straight complex line through $f^n(p)$ whose tangent space
at $f^n(p)$ is contained in the vertical cone,
and pull back this line by $f^n$, keeping only the connected component
through $p$ in the neighborhood $U(p)$.
By the standard graph transform method, this sequence of holomorphic
disks converges to a complex submanifold $W^s_\loc (p)$, the so-called
{\em local strong stable manifold}  through $p$. By pulling back the
local stable manifolds through $f^n(p)$ by $f^{-n}$
we obtain in the limit the {\em global strong stable manifold}
through $p$, denoted by $W^s(p)$.
In line with our earlier introduced notation we will write $W^s_R(p)$ for the connected component of $W^s(p)\cap \Delta^2_R$ that contains $p$, and refer to it as a \emph{semi-local} stable manifold.

We will refer to the collection of these semi-local strong stable
manifolds as the \emph{(semi-local) dynamical vertical lamination}.

For $p\in \VV^+$, we let $E^s_p$ be the tangent line to $W^s (p)$, which can be also constructed directly as
$$
E^s_p = \bigcap_{n\geq 0} df^{-n} C^s_{f^n p} (\alpha).
$$
The lines $E^s_p$ form the {\em stable line field} over $\VV^+$,
extending the initial stable line (\ref{initial line fields})  field over $J$.

Similarly to $\VV^+$ we can consider
\begin{equation}
\VV^- : = \{ p :  \  \exists \ n_0= n_0 (p) \ f^{-n} p \in \NN (J) \ {\mathrm for} \ n\geq n_0 \}.
\end{equation}
For $p \in \VV^-$ we cannot guarantee the existence of a horizontal center manifold, but there does exist a unique \emph{central line field}, i.e. a tangent subspace whose pullback under $f^n$ is contained in the horizontal cone field for all $n\ge n_0$. For points $p \in \VV^+ \cap \VV^-$ we can consider both the vertical and the central line field. Tangencies between those two line fields play the role of critical orbits. By the dominated splitting these can only occur for orbits that leave and come back to the domain of dominated splitting. A major part of this paper is aimed at obtaining a better understanding of such tangencies.

\subsubsection{Linearization coordinates}

The global strong stable manifolds $W^s(p)$ of points $p$ in the dynamical vertical lamination can be uniformized as follows. Denote by $\pi_p: W^s(p) \rightarrow T_p(W^s(p))$ the projection to the tangent plane. The projection is locally a biholomorphism, as local stable manifolds are graphs over the tangent plane. The size of the local stable manifolds can be taken uniform over all $p$ in the dynamical vertical lamination.
Define $\varphi_p: W^s(p) \rightarrow T_p(W^s(p))$ by
$$
\varphi_p = \lim_{n \rightarrow \infty} [Df^n (p)]^{-1} \circ \pi_{f^n(p)} \circ f^n.
$$
Identifying the tangent plane with $\mathbb C$ we can view $\varphi_p$ as a biholomorphic map from $W^s(p)$ to $\mathbb C$. This identification is canonical up to a choice of argument. The identifications can locally be chosen to vary continuously with $p$. As the tangent planes to the dynamical vertical lamination vary continuously with $p$, and the above convergence to $\varphi_p$ is uniform over $p$ in the dynamical vertical lamination, one can locally obtain a continuous family of linearization maps $\varphi: W^s(p) \rightarrow \mathbb C$.

The composition of the Green's function $G^-$ with the
 linearization map gives a subharmonic function on the $\mathbb
 C$-coordinates of $W^s(p)$, which, provided the neighborhood
 $\mathcal{N}_R(J^+)$ is made sufficiently thin, has order of growth strictly less than $\frac{1}{2}$. Hence for
each point $p \in \mathcal{N}_R(J^+)$ the local stable manifold
$W^s_R(p)$
is a properly embedded disk in $\Delta^2_R$, with the projection to the second coordinate giving branched covers of uniformly bounded degrees.

\subsubsection{Fatou components}

While the substantial dissipativity assumption plays an
important role in the current paper,
the bound on the Jacobian in terms of the degree is not needed for the classification of periodic Fatou components
in the dominated splitting setting:

\begin{prop} \label{per comps}
For a dissipative  H\'enon map with dominated splitting,
any periodic Fatou component is an ordinary component.
\end{prop}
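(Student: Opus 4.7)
The plan is to mirror the classification argument for substantially dissipative H\'enon maps from \cite{LP2014}, with the stable manifold of a periodic point replaced by a strong stable manifold furnished by the dominated splitting. The substantial dissipativity hypothesis in \cite{LP2014} enters only to ensure, via the Wiman bound on order of growth strictly less than $\frac{1}{2}$, that the relevant stable manifolds intersect $\De^2_R$ in bounded, properly embedded disks that are branched covers of uniformly bounded degree over the second coordinate. The excerpt has already observed that under the dominated splitting assumption this Wiman-type bound holds automatically for every strong stable manifold $W^s(p)$ with $p \in \VV^+$, with no further assumption on $|\Jac f|$ beyond dissipativity.

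Concretely, I would fix an invariant Fatou component $U$ and consider the compact forward invariant limit set $A_U \subset \cl U$ of accumulation points of orbits in $U$. If orbits in $U$ are recurrent, I would invoke the recurrent classification of Bedford--Smillie \cite{BS1991b}, which requires only dissipativity and identifies $U$ as an attracting or rotation basin. Otherwise every orbit in $U$ eventually enters $\NN(J)$, forcing $A_U \subset J \cap \VV^+$, so that a continuous family of semi-local strong stable manifolds $W^s_R(p)$ is available through points $p$ in a neighborhood of $A_U$ inside $U$, giving a vertical lamination.

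Passing to the transverse quotient of this lamination turns the dynamics near $A_U$ into a one-dimensional holomorphic map, and I would finish by invoking the classical one-dimensional Fatou classification. The non-recurrence of $U$ prohibits the quotient attractor from being a Siegel disk or an attracting fixed point, forcing a parabolic fixed point with neutral multiplier $1$; combined with the contraction provided by the strong stable leaves, this lifts to show that $U$ is a parabolic basin in the sense of the excerpt.

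The main obstacle is technical rather than conceptual: one must verify that the strong stable lamination extends continuously across all of $A_U$ (not merely across a single periodic cycle as in \cite{LP2014}), that the leaves are genuinely transverse to the would-be center direction along $A_U$, and that the induced quotient dynamics is well defined and holomorphic in a full neighborhood of $A_U$. Once this lamination picture is established, the remaining classification steps transcribe directly from \cite{LP2014}, since the only ingredient of substantial dissipativity actually used there, namely boundedness of stable-manifold slices through Wiman, is already supplied here by the dominated splitting.
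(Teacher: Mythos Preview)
Your proposal takes a substantially different and more laborious route than the paper, and it rests on a misreading of where substantial dissipativity enters in \cite{LP2014}.

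The paper's proof is very targeted. It recalls that in \cite{LP2014} the substantial dissipativity assumption is used at exactly one step: to rule out the possibility that a non-recurrent component has a one-dimensional limit set $\Sigma$ lying inside the strong stable manifold of a hyperbolic or neutral fixed point. Rather than rebuilding the whole classification, the paper replaces only this step by a short dichotomy argument using the dominated splitting. Since $\Sigma\subset J$, every point $q\in\Sigma$ carries a strong stable manifold $W^s(q)$ tangent to $E^s_q$. If $\Sigma$ is transverse to $E^s$ at some $q$, then the union of nearby stable leaves fills an open neighborhood of $q$ on which the iterates are normal, contradicting $q\in J$. If instead $\Sigma$ is everywhere tangent to $E^s$, then $\Sigma$ is an open piece of some $W^s(q)$; backward invariance forces $\Sigma=W^s(q)\cong\C$, which is impossible for a bounded limit set. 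This finishes the proof without ever invoking Wiman or constructing a quotient one-dimensional system.

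By contrast, you propose to re-run the entire non-recurrent classification by laminating a neighborhood of $A_U$ with strong stable leaves and passing to a transverse quotient. You yourself flag the genuine obstacles: continuity of the lamination across all of $A_U$, transversality to the center direction, and well-definedness of the quotient dynamics. None of these are needed in the paper's argument. Moreover, your key premise---that the Wiman bound on the order of growth of $G^-\circ\varphi_p$ holds for all $p\in\VV^+$ from dominated splitting plus mere dissipativity---is not what the paper establishes: the order of growth is governed by the contraction rate on $E^s$, which by Lemma~\ref{newlemma5} is only bounded by $|\Jac f|$, and the paper explicitly reintroduces the substantial dissipativity hypothesis in \S5.1 precisely to push this below $1/d^2$. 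So the foundation you want to stand on is not available without the very hypothesis the proposition is meant to dispense with.
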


\begin{proof}
In \cite{LP2014} the assumption that the H\'enon map is substantially
dissipative plays a role in only an isolated part of the proof, namely
to prove the uniqueness of limit sets on non-recurrent
Fatou components. We note that in order to prove this uniqueness, one
does not need to assume substantial dissipativity for H\'enon maps
admitting a dominated splitting. Recall that the only point in the
proof where substantial dissipativity is used, is to rule out a
one-dimensional limit set $\Sigma$
contained in the strong stable
 manifold of a hyperbolic or neutral fixed point. Suppose that there exists a dominated splitting near $J$, and that such a $\Sigma$ does exist. As was pointed out in \cite{LP2014}, the restriction of $\{f^n\}$ to $\Sigma$ is a normal family. Recall also that $\Sigma$ must lie in $J$, hence
through each point    $q \in \Sigma$ there exists a strong stable manifold $W^s(q)$.
If  $\Si$ is transverse to   the stable field $\{E^s\}$  at some point
$q\in \Si$,
 then the union of the stable manifolds contains an open neighborhood
of $q$, on which the family of iterates is necessarily a normal family. This gives a
contradiction with $\Sigma \subset J$.
On the other hand, if $\Sigma$ is everywhere tangent to the stable field,
then for any $q\in \Si$, it is a domain in the stable manifold $W^s(q)$.
Being backward invariant, $\Si$ must coincide with $W^s(q)$.
However, $W^s(q)$ is conformally equivalent to $\C$, while $\Si$ cannot, giving a contradiction.
\end{proof}

It is a priori not clear that there are only finitely many periodic components. In the substantially dissipative case finiteness is a consequence of our main result.

\msk

\subsubsection{Rates}

We will show now that the rate of contraction on the central line bundle
 is subexponential.

\begin{lemma}\label{newlemma4}
Given any $r_1<1$ there exists a $C>0$ such that for any $p \in J$ and any unit vector $w \in E^c_p$ we have
$$
\|df^n w\| > \frac{1}{C} \cdot {r_1}^n.
$$
\end{lemma}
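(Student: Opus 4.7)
The plan is to argue by contradiction via a Pliss-type averaging argument, extract a point of full uniform contraction, and contradict membership in $J^+$.

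Suppose the conclusion fails, so that there exists $r_1 \in (0,1)$ for which no constant $C$ works. Then we can find sequences $p_k \in J$, unit $w_k \in E^c_{p_k}$, and $n_k \in \N$ with $\|df^{n_k} w_k\| \le r_1^{n_k}/k$, and in particular $n_k \to \infty$. Fix some intermediate rate $r_1' \in (r_1, 1)$. Apply Pliss's Lemma to the backwards sequence $a_i := \log \|df\,|_{E^c_{f^{n_k-i}(p_k)}}\|$ for $i = 1, \ldots, n_k$, whose partial sum from $1$ to $n_k$ telescopes to $\log\|df^{n_k} w_k\| \le n_k \log r_1$; each $a_i$ is bounded above by a constant $A$ depending only on $\sup_{\De^2_R} \|df\|$. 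Pliss's Lemma then supplies an index $j_k \ge \theta n_k$, with $\theta = (\log r_1' - \log r_1)/(A - \log r_1) > 0$, such that the point $q_k := f^{n_k - j_k}(p_k) \in J$ satisfies
\[
\|df^m\,|_{E^c_{q_k}}\| \le (r_1')^m \quad \text{for all } 1 \le m \le j_k.
\]

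Pass to a subsequence with $q_k \to q_\infty \in J$; since $j_k \to \infty$ and the central bundle $E^c$ is continuous on $J$ (by dominated splitting), we obtain in the limit
\[
\|df^m\,|_{E^c_{q_\infty}}\| \le (r_1')^m \quad \text{for every } m \ge 1.
\]
Combining this with the domination inequality $\|df^m\,|_{E^s_{q_\infty}}\| \le C \rho^m \|df^m\,|_{E^c_{q_\infty}}\|$ yields $\|df^m\,|_{E^s_{q_\infty}}\| \le C(\rho r_1')^m$. Because the angle between $E^s_{q_\infty}$ and $E^c_{q_\infty}$ is bounded away from zero (again by dominated splitting), it follows that $\|df^m(q_\infty)\| \le C' (r_1')^m$ on the whole tangent space $T_{q_\infty} \C^2$.

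Choose $N$ so that $\|df^N(q_\infty)\| < 1/4$. By continuity of $df^N$, we have $\|df^N(p)\| < 1/2$ on a neighborhood $U$ of $q_\infty$; shrinking $U$ to a small ball centered at $q_\infty$ we may arrange $f^N(U) \subset U$, so $U \subset K^+$. But then $q_\infty$ lies in the interior of $K^+$, contradicting $q_\infty \in J \subset J^+ = \partial K^+$. The main subtlety in this plan is the Pliss step: one must correctly orient the telescoping to obtain forward contraction from $q_k$ (rather than backward expansion), and ensure $j_k \to \infty$ by choosing the largest admissible index among the $\theta n_k$ good ones. Everything else is continuity and the standard consequence that uniform contraction of the full differential forces an attracting trap.
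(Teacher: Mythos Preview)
Your Pliss-based extraction of a limit point $q_\infty\in J$ with $\|df^m(q_\infty)\|\le C'(r_1')^m$ for every $m\ge 1$ is correct and is essentially the paper's own route (the paper phrases the Pliss step more informally, but the content is the same). The gap is in the last paragraph: from $\|df^N\|<1/2$ on a neighborhood $U$ of $q_\infty$ you \emph{cannot} arrange $f^N(U)\subset U$ by shrinking $U$, because $q_\infty$ is not a periodic point of $f$. The image $f^N(U)$ is a small set centered near $f^N(q_\infty)$, which has no reason to lie close to $q_\infty$; no invariant trapping region is produced this way, and the contradiction with $q_\infty\in\partial K^+$ has not yet been obtained.

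The fix is to use the entire sequence of bounds $\|df^m(q_\infty)\|\le C'(r_1')^m$ rather than a single $N$. Working in the invariant one-dimensional frames $E^s\oplus E^c$ along the orbit of $q_\infty$ (with angle bounded below), a direct inductive estimate shows that for $z$ in a sufficiently small ball $B$ around $q_\infty$ one has $\|f^n(z)-f^n(q_\infty)\|\to 0$; in particular the forward orbit of every $z\in B$ is bounded, so $B\subset K^+$, contradicting $q_\infty\in J^+=\partial K^+$. This shadowing-type estimate is exactly how the paper closes the argument. A minor side remark on your Pliss step: the constant entering $\theta$ should come from a uniform \emph{lower} bound on $\log\|df|_{E^c}\|$ (available since $f$ is a diffeomorphism), not the upper bound $A$; this affects only the explicit value of $\theta$, not the logic.
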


\begin{proof}
Let us for the purpose of a contradiction suppose that for some $r_1 < 1$ there exist for arbitrarily large $n \in \mathbb N$ unit vectors $w_n \in E^c$ with
$$
\|df^n w_n\| < r_1^n.
$$
Let $r_1 < r_2 < 1$. Then there exists an $\epsilon>0$ and for every $n \in \mathbb N$ an integer $k \in \{0, 1,\ldots n-1\}$ such that
$$
\|df^j (df^k w_n)\| < r_2^j \cdot \| df^k w_n\|
$$
for $j \le \epsilon \cdot n$. It follows that for every $m \in \mathbb N$ there exists a unit vector $u_m \in E^c$ for which
$$
\|df^j u_m\| < r_2^j
$$
for $j = 0, \ldots m$. Here $u_m$ can be chosen a multiple of a vector $df^k w_n$.

\medskip

Since the set of unit vectors in $E^c$ is compact, there exists an
accumulation point $w \in  E^c$ of the sequence $(u_m)$. Let $p \in J$ be such that $w \in E^c_p$. By continuity of the differential $df$ it follows that
$$
\|df^j w\| \le r_2^j
$$
for all $j \in \mathbb N$. Since $T_p(\mathbb C^2) = E_p^s\oplus E_p^c$, and by the definition of the dominated splitting, there exists a $C>0$ such that
$$
\|Df^j(z_0) \| < C \cdot r_2^j
$$
for all $j \in \mathbb N$. Here we have used that there is a uniform bound from below on the angle between the vertical and horizontal tangent spaces.

\medskip

Let $\xi>1$ be sufficiently small such that $\xi \cdot r_2 < 1$. By compactness of $\Delta^2_R$ there exists a $\rho>0$ such that if $x, y \in \Delta^2_R$ with $\|x - y\| < \rho$ then
$$
\|f(x) - f(y)\| \le \xi \cdot \|Df(x)\| \cdot \|x - y\|.
$$
Let $z \in \Delta^2_R$ be such that $\|z - p\|< \frac{\rho}{C}$. Then it follows by induction on $n$ that
$$
\|f^n(z) - f^n(p)\| \le \rho (\xi \cdot r_2)^n
$$
for every $n \in \mathbb N$. Hence there is a neighborhood $U$ of $p$ such that
$$
\|f^n(z) - f^n(p)\| \rightarrow 0,
$$
uniformly over all $z \in U$. But then $\{f^{n}\}_{n \in \mathbb N}$ is a normal family on $U$, which contradicts the fact that $p \in J^+$.
\end{proof}

It follows that the exponential  rate of contraction on the stable
subbundle is at least $\delta$.

\begin{lemma}\label{newlemma5}
Given any $r > |\delta|$ we can find $C>0$ such that for any unit vector $v \in E^s$ we have
$$
\|df^n v\| < C \cdot r^n.
$$
\end{lemma}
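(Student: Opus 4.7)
The plan is to leverage the Jacobian identity $|\det(df^n)_p| = |\delta|^n$, which couples the growth of $df^n$ along the stable direction to its growth along the central direction, and then invoke Lemma \ref{newlemma4} to convert the subexponential lower bound on the center into the desired upper bound on the stable.

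First, for $p \in J$ I would pick unit vectors $v \in E^s_p$ and $w \in E^c_p$. Since $E^s$ and $E^c$ are $df$-invariant, $df^n v \in E^s_{f^n p}$ and $df^n w \in E^c_{f^n p}$, and the area formula for the wedge product gives
$$
\|df^n v\|\cdot\|df^n w\|\cdot\sin\theta_n \;=\; |\delta|^n\cdot\sin\theta_0,
$$
where $\theta_k$ denotes the angle between $E^s$ and $E^c$ at $f^k(p)$.

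Next, since $J$ is compact and forward invariant and the splitting depends continuously on the base point, the angles $\theta_n$ stay uniformly bounded away from $0$ (and from $\pi$). Writing $\sigma>0$ for a uniform lower bound on $\sin\theta_n$, this upgrades the identity above to the clean estimate $\|df^n v\|\cdot\|df^n w\|\le \sigma^{-1}\cdot|\delta|^n$, with $\sigma$ independent of $n$ and $p$.

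Finally, given $r>|\delta|$ I would pick $r_1\in(|\delta|/r,1)$ and apply Lemma \ref{newlemma4} to obtain a constant $C_0$ with $\|df^n w\|\ge C_0^{-1}\cdot r_1^n$ for every unit $w\in E^c$ and all $n$. Dividing the two estimates then yields
$$
\|df^n v\| \;\le\; \frac{C_0}{\sigma}\cdot\Bigl(\frac{|\delta|}{r_1}\Bigr)^n \;\le\; C\cdot r^n
$$
for a suitable $C$, which is the desired bound. No serious obstacle arises; the only point requiring comment is the uniform angle bound, which is immediate from compactness of $J$ and continuity of the splitting.
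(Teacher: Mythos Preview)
Your proof is correct and follows essentially the same approach as the paper: both use the determinant relation $\|df^n v\|\cdot\|df^n w\|\le C_1|\delta|^n$ (with $C_1$ coming from the uniform angle bound between $E^s$ and $E^c$ on the compact set $J$) and then divide by the lower bound from Lemma~\ref{newlemma4}. Your version is simply more explicit about the wedge-product identity and the choice of $r_1\in(|\delta|/r,1)$, which the paper leaves implicit.
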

\begin{proof}
Write $v \in E^s_p$, and let $w \in E^c_p$ be a unit vector. The inequality follows immediately from Lemma \ref{newlemma4} and the fact that
$$
\|df^n v\| \cdot \|df^n w\| \le C_1 |\delta|^n,
$$
where the constant $C_1$ depends on minimal angle between the spaces $E^s$ and $E^c$.
\end{proof}

\section{Dynamical lamination and its extensions}

\subsection{Dynamical lamination}

Now let us assume that the map $f$ is substantially dissipative. Then
the rate
 $r$ in Lemma  \ref{newlemma5} can be assumed to
be strictly smaller than    $ 1/d^2 $.

It follows that for any point $p  \in J$,
 the composition $G^- \circ \varphi_p$ is a subharmonic function of
order  bounded by  $\rho < \frac{1}{2}$,
so the  Wiman Theorem can be applied.
It implies that $W^s_R(p)$ is an embedded holomorphic disk, and that the
projection to the second coordinate $\pi_2: W^s_R(p) \rightarrow \Delta_R$ gives a branched covering of finite
degree.

\begin{lemma}\label{boundeddegrees} The degrees of the branched coverings
$\pi_2: W^s_R(p) \rightarrow \Delta_R$
are uniformly bounded, and
$$
          J^+_R = \bigcup_{p \in J} W^s_R(p).
$$
\end{lemma}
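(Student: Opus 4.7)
My plan is to treat the two assertions separately. For the uniform degree bound, the strategy is to apply Wiman's theorem uniformly to the linearizations $g_p := G^- \circ \varphi_p$, and for the set equality, to combine an easy shadowing argument for one inclusion with a local product structure argument for the other.

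For the degree bound, the functional equation $\varphi_{f(p)} \circ f = \mu_p \cdot \varphi_p$, where $\mu_p = Df(p)|_{E^s_p}$, combined with $G^- \circ f = d^{-1} G^-$, gives
$$
g_{f(p)}(\mu_p \zeta) = d^{-1} g_p(\zeta).
$$
Iterating this identity, the order of growth of $g_p$ is bounded by $\log d / \log(1/r)$ for any $r > |\delta|$ as in Lemma \ref{newlemma5}. Substantial dissipativity $|\delta| < 1/d^2$ lets us pick $r < 1/d^2$, so this order is uniformly less than $\tfrac{1}{2}$. Wiman applied to $g_p$ then gives that the component of $\{g_p < E\}$ corresponding to $W^s_R(p)$ is bounded in the linearization chart, making $\pi_2 : W^s_R(p) \to \Delta_R$ a finite branched cover. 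Uniform bounds on the order of growth and on the Wiman constants, together with continuity of $\varphi_p$ on the compact set $J$, yield a uniform degree bound.

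For the inclusion $\bigcup_{p \in J} W^s_R(p) \subset J^+_R$: any $z \in W^s(p)$ shadows the orbit of $p$ exponentially, so $z \in K^+$; and if $z$ lay in the Fatou set, then normality of $\{f^n\}$ near $z$ would, via the shadowing $d(f^n(z), f^n(p)) \to 0$, propagate to force normality near $p$, contradicting $p \in J^+$. Hence $W^s(p) \subset J^+$ and restricting to $\Delta^2_R$ gives $W^s_R(p) \subset J^+_R$.

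The reverse inclusion is the main obstacle. For $q \in J^+_R$, its forward orbit is bounded with all accumulation points in $J$, so $q \in \VV^+$ and $W^s(q)$ is defined. The key claim to establish is a \emph{local product structure} on $J^+$: for $\NN(J)$ small enough,
$$
J^+ \cap \NN(J) \;=\; \bigcup_{p' \in J \cap \NN(J)} W^s_\loc(p').
$$
Granted this, take $N$ large with $f^N(q) \in J^+ \cap \NN(J)$, write $f^N(q) \in W^s_\loc(p')$ for some $p' \in J$, and set $p := f^{-N}(p') \in J$ by invariance of $J$; then $q \in W^s(p)$, and the component of $W^s(p) \cap \Delta^2_R$ containing $q$ is $W^s_R(p)$. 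The product structure itself is produced by constructing the local stable leaves $W^s_\loc(p')$ uniformly over a neighborhood of $J$ via the graph transform in the vertical cone field; the delicate point is the transverse rigidity, i.e. that a point of $J^+$ near $J$ not lying on any such leaf would, under forward iteration, either leave $\NN(J)$ (violating invariance of $J^+$-accumulation in $J$) or contradict the contraction along the stable direction. Verifying this transverse rigidity is, in my view, the main technical difficulty.
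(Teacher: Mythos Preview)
Your approach diverges from the paper's in both parts, and in each case the paper's route is considerably more direct.

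For the uniform degree bound, you appeal to ``uniform Wiman constants'' and continuity of $\varphi_p$ over the compact set $J$, but Wiman's theorem only asserts boundedness of subpotential components, not any quantitative bound, and you do not explain how uniformity in $p$ follows. The paper avoids this entirely with a semi-continuity argument: the degree of $W^s_R(p)$ is lower semi-continuous in $p$ (it can drop at tangencies with $\partial\Delta^2_R$), but by enlarging to $\Delta^2_{R'}$ for some $R'>R$, the degree of nearby leaves in $\Delta^2_R$ is dominated by the (finite) degree of $W^s_{R'}(p)$; a compactness/contradiction argument then finishes.

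For the inclusion $J^+_R \subset \bigcup_{p\in J} W^s_R(p)$, your local product structure and ``transverse rigidity'' machinery is unnecessary, and your argument has a genuine gap: having found $p' \in J$ with $f^N(q)\in W^s_\loc(p')$ and set $p = f^{-N}(p')$, you conclude $q \in W^s_R(p)$, but the pulled-back arc $f^{-N}(W^s_\loc(p'))$ connecting $q$ to $p$ need not remain in $\Delta^2_R$, so $q$ and $p$ may lie in different components of $W^s(p)\cap\Delta^2_R$. The paper instead observes that for any $q\in J^+_R$, the sets $V_n(q)=\{z\in W^s_R(q): f^{-n}(z)\in\Delta^2_R\}$ form a nested sequence of nonempty compact sets (nonemptiness uses the filtration and that $W^s_R(q)$ is a vertical-like disk), so their intersection is nonempty and contained in $K^-=J^-$. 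Thus $W^s_R(q)$ itself contains a point $p\in J^+\cap J^-=J$, which immediately gives $W^s_R(q)=W^s_R(p)$ and hence $q\in W^s_R(p)$, with no need to pass to forward iterates or worry about components.
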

\begin{proof}
Let $q \in J^+_R$.
Note that the sets
$$
V_n (q)  = \{(x,y) \in W^s_R(q) : f^{-n}(x,y) \in \Delta^2_R\}
$$
form a nested sequence of non-empty compact sets,
so they have a non-empty intersection.
Hence each $W^s_R(q)$ intersects $K^- = J^-$. Therefore we have
$$
      J^+_R = \bigcup_{p \in J} W^s_R(p).
$$

Note that the degree of $W^s_R(p)$ depends lower semi-continuously on $p$; the degree may drop at semi-local stable manifolds tangent to the boundary of $\Delta^2_R$. However, when we consider the restriction of such a stable manifold to a strictly larger bidisk $\Delta^2_{R^\prime}$, its degree, which is still finite, is at least as large as the degree of sufficiently nearby stable manifolds restricted to the smaller bidisk $\Delta^2_R$.

To argue that the degrees of the branched coverings are uniformly bounded, suppose for the purpose of a contradiction that there is a sequence $(W^s_R(p_j))$ for which the degrees converge to infinity. Without loss of generality we may assume that the sequence $(p_j)$ converges to a point $p\in J^+_R$. Let $R^\prime>R$. Then for $j$ sufficiently large the degree of $W^s_R(p_j)$ is bounded by the degree of $W^s_{R^\prime}(p)$, which gives a contradiction.
\end{proof}

We will refer to the lamination on $J^+ \cap \Delta^2_R$ given by
these local strong stable manifolds as the \emph{(semi-local) dynamical  lamination}.
In what follows we will
extend this lamination, in a non-dynamical way,   to a larger subset of $\Delta^2_R$.

\subsection{Local and global extensions of the vertical lamination}
\label{loc and glob extensions}

We note that the dynamical vertical lamination discussed previously consists of local leaves $L_R(a)$ that are connected components of global leaves $L(a)$ intersected with $\Delta^2_R$. These leaves all have natural linearization parametrizations that vary continuously with the base point $a$.

Let us recall the $\lambda$-lemma, in this version due to Slodkowski \cite{Slodkowski}.

\begin{lemma}
Let $A \subset \hat{\mathbb C}$. Any holomorphic motion $f: \mathbb D
\times A \rightarrow \hat{\mathbb C}$ of $A$ over $\mathbb D$
extends to a holomorphic motion $\mathbb D \times \hat{\mathbb C} \mapsto \hat{\mathbb C}$ of $\hat{\mathbb C}$ over $\mathbb D$.
\end{lemma}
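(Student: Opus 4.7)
The plan is a two-stage extension: first enlarge $A$ to its closure $\overline{A}$, then extend from $\overline{A}$ to all of $\hat{\mathbb C}$, reducing the latter to a countable sequence of one-point extensions.

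For the first stage, I would invoke the original M\~an\'e--Sad--Sullivan $\lambda$-lemma. Given $a_n \in A$ converging to some $a \in \overline{A}\setminus A$, the holomorphic maps $t \mapsto h(t,a_n)$ from $\mathbb D$ to $\hat{\mathbb C}$ avoid the images of three fixed points of $A$ (assuming $|A|\geq 3$, which may be arranged by adding three marked points), so by Montel they form a normal family. A limit $h(t,a)$ is holomorphic in $t$; Hurwitz's theorem combined with injectivity in the spatial variable for each fixed $t$ shows that distinct limit points produce disjoint graphs, so the extension is again a holomorphic motion. This extended motion is automatically continuous in $(t,z)$ and quasisymmetric in $z$ for each fixed $t$, by the standard Beltrami-coefficient estimates for holomorphic motions.

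For the second stage I would reduce to the case of a countable dense subset of $\hat{\mathbb C}$ and extend one point at a time, then take a normal-family limit. So the core technical step is: given a holomorphic motion defined on a finite set $\{a_1,\ldots,a_n\}$ with graphs $\Gamma_1,\ldots,\Gamma_n \subset \mathbb D \times \hat{\mathbb C}$, and any point $b \notin \{a_1,\ldots,a_n\}$, produce a holomorphic map $\sigma: \mathbb D \to \hat{\mathbb C}$ with $\sigma(0)=b$ and $\mathrm{graph}(\sigma)\cap \Gamma_i=\emptyset$ for all $i$. Slodkowski's key observation is that the complement
\[
\Omega \;=\; \mathbb D \times \hat{\mathbb C} \;\setminus\; \bigcup_{i=1}^n \Gamma_i
\]
fibers over $\mathbb D$ with punctured-sphere fibers, and constructing a section through $(0,b)$ is equivalent to producing a holomorphic disk in $\Omega$ transverse to the projection and passing through the given point. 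I would implement this by noting that the union $\bigcup \Gamma_i$ is polynomially convex in $\mathbb D \times \hat{\mathbb C}$ (after a chart, in $\mathbb D \times \mathbb C$), then invoking the Oka principle / disk-formula techniques developed by Slodkowski (or, alternatively, the Forstneri\v{c} theory of holomorphic sections in Oka fibers) to obtain the desired section.

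The main obstacle is exactly this Oka-type step: showing polynomial convexity of the graph union and then producing the new section through a prescribed point. The classical $\lambda$-lemma of M\~an\'e--Sad--Sullivan, and the Bers--Royden refinement giving an extension to all of $\hat{\mathbb C}$ but only over $\mathbb D_{1/3}$, are comparatively soft arguments; upgrading to the full disk $\mathbb D$ genuinely requires this one-dimensional complex-analytic input. If one wanted to avoid the Oka machinery, an alternative route would go through Chirka's proof, which packages the extension directly via the measurable Riemann mapping theorem with holomorphic parameter dependence (Ahlfors--Bers), after transferring the problem to Beltrami coefficients on a suitable quasidisk model; the hard step then reappears as the construction of a $t$-holomorphic family of Beltrami differentials realizing the prescribed boundary motion.
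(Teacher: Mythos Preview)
The paper does not prove this lemma. It is stated as a recall of Slodkowski's version of the $\lambda$-lemma, with the attribution ``in this version due to Slodkowski \cite{Slodkowski}'', and is then used as a black box in the construction of the artificial vertical lamination (the flow-box extensions in \S\ref{loc and glob extensions}). There is no argument in the paper to compare your proposal against.

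Your outline is a faithful sketch of the standard route to Slodkowski's theorem: extension to $\overline{A}$ via the classical Ma\~n\'e--Sad--Sullivan normal-families/Hurwitz argument, followed by one-point extensions over finite sets using the polynomial-hull/section argument, with a density-and-limit passage at the end. You also correctly flag Chirka's alternative via holomorphic dependence in the measurable Riemann mapping theorem. You are right that the genuinely hard step is producing the new section through a prescribed point, and your proposal does not actually carry that step out; it names the ingredients (polynomial convexity of the union of graphs, Slodkowski's disk/hull techniques) without executing them. So as written this is an accurate roadmap rather than a proof. But again, the paper itself supplies nothing beyond the citation, so you have already gone further than the paper does.
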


Let $L_R(a)$ be a dynamical leaf, with a linearization map $\varphi_a: \mathbb C \rightarrow L(a)$. Recall that by the assumption that $f$ is substantially dissipative we have that $L_R(a) = \varphi_a(D)$, for some bounded simply connected set $D \subset \mathbb C$. Let $D$ be compactly contained in a slightly larger simply connected set $D^\prime$, and let $\xi: \mathbb D \rightarrow D^\prime$ be the Riemann mapping. Define $\psi_a = \varphi_a \circ \xi : \mathbb D \rightarrow L_R(a)$. Then there exists a biholomorphic map $\Psi$ from $\Delta_\epsilon \times \mathbb D \rightarrow \mathbb C^2$ with $\Psi(\zeta, 0) = \psi_a(\zeta)$, mapping to a tubular neighborhood of the ``core'' $L_R(a)$. Consider all dynamical leaves that intersect a small neighborhood $\Psi(\Delta_\delta \times \mathbb D \rightarrow \mathbb C^2)$, where $\delta$ is chosen sufficiently small so that these dynamical leaves are completely contained in $\Psi(\Delta_\epsilon \times \mathbb D \rightarrow \mathbb C^2)$. If $\epsilon$ is sufficiently small then the inverse images under $\Psi$ of these leaves form a collection of pairwise disjoint ``dynamical'' graphs over $\mathbb D$ in $\Delta_\epsilon \times \mathbb D$, thus giving a holomorphic motion of a set $A \subset \Delta_\epsilon$.

By the $\lambda$-lemma the motion extends to a holomorphic motion over $\Delta_\epsilon$. The graphs over $\mathbb D$ that are completely contained in $\Delta_\epsilon \times \mathbb D$ can be mapped back by $\Psi$. By restricting to a slightly smaller vertical disk $D_{1-\eta}$, we can guarantee that all graphs that intersect a sufficiently small neighborhood of the core $\{0\} \times \mathbb D_{1-\eta}$ are completely contained in $\Delta_\delta \times D_{1-\eta}$, and can therefore be mapped back to $\mathbb C^2$ by $\Psi$. We obtain a collection of pairwise disjoint ''graphs'' over $L_R(a)$, filling a neighborhood and all remaining in the neighborhood sufficiently close to $L_R(a)$. Moreover, by construction the newly constructed graphs cannot intersect any dynamical graphs.

We will refer to such an extension as a \emph{flow box}, and to  the leaves as {\em vertical}.
Note that the dynamical leaves were globally defined, while the new leaves in the flowboxes are only defined in $\Delta^2_R$.

By compactness the Euclidean radii of the
tubular neighborhoods of the dynamical leaves can be chosen uniformly, and hence the
dynamical vertical lamination is contained in a finite number of flow boxes. One could apply the $\lambda$-lemma to each of these, but a priori there is no reason why new leaves coming from different flow boxes should not intersect transversely. The main result in this section is Proposition \ref{prop:single}, where a single extension to a neighborhood of the dynamical vertical lamination is constructed. Let us give an outline of the argument before going into details.

\begin{figure}[t!]
\centering
\begin{subfigure}[t]{0.5\textwidth}
\centering
\includegraphics[width=0.9\linewidth]{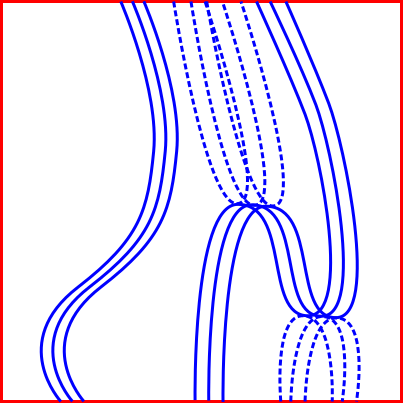}
\end{subfigure}%
~
\begin{subfigure}[t]{0.5\textwidth}
\centering
\includegraphics[width=0.9\linewidth]{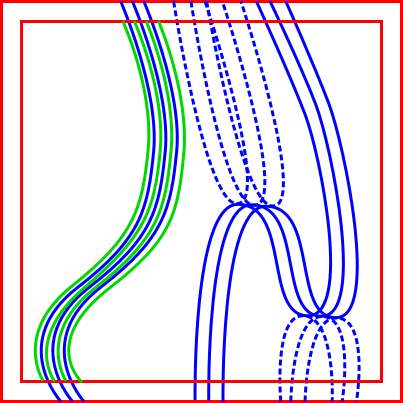}
\end{subfigure}
\caption{Local extension of the lamination on slightly smaller bidisk}
\label{figure:lamination}
\end{figure}

The extension of the lamination will be constructed by applying the
$\lambda$-lemma to a finite number of tubular neighborhoods,
each time taking into account the leaves that have been considered in
previous steps.

A difficulty is that the leaves that we construct in a local extension are not global, they only are defined in some the tubular neighborhood. In particular, even if we can guarantee that all new leaves, are graphs over the core of other tubular neighborhoods they intersect, they may not be graphs over the entire core, see Figure \ref{figure:extension2}. We can deal with this by starting with a strictly larger bidisk $\Delta^2_{R^\prime}$, and reducing the radius $R^\prime$ after each local extension by twice the radius of the tubular neighborhood. The goal is therefore to reduce the radius by at most the difference $R^\prime - R$ we start with.

Starting with an even larger constant $R^\prime$ is not of help, as that would affect the size and geometry of the flowboxes. Just reducing the radii of the flow boxes seems useless as well, as that would increase the number of flow boxes needed. The solution is to carry out the $\lambda$-lemma on large numbers of pairwise disjoint flow boxes simultaneously. By a covering lemma a la Besicovitch it follows that we can finish the process in a number of steps $N$ that is independent of the radii of the flow boxes. By starting with a slightly larger bidisk $\Delta^2_{R^\prime}$ and choosing the radii $\epsilon$ so that $N \cdot 2\times \epsilon < R^\prime - R$ we will obtain the desired extension.

The version of Besicovitch Covering Theorem that we will use relies on the fact that the dynamical vertical lamination is Lipschitz, which follows from the following:

\begin{lemma}\label{lemma:prep1}
The holonomy maps of the dynamical vertical lamination are $C^{1+\epsilon}$ smooth.
\end{lemma}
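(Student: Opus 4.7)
The plan is to reduce the lemma to a standard bunching criterion for regularity of stable holonomies in partially hyperbolic/dominated splitting settings, and to verify that bunching by combining the contraction estimates already established in Lemmas \ref{newlemma4} and \ref{newlemma5} with substantial dissipativity.

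The first step is to quantify the spectral gap between $E^s$ and $E^c$ over $J$. By substantial dissipativity together with Lemma \ref{newlemma5} we may fix a rate $r < 1/d^2 < 1$ and a constant $C>0$ with
$$
\|df^n|_{E^s}\| \le C\, r^n \qquad \text{for all } n \ge 0,
$$
uniformly on $J$. By Lemma \ref{newlemma4}, for any $r_1 < 1$ (chosen arbitrarily close to $1$) there exists $C'>0$ so that $\|df^n w\| \ge (1/C') r_1^n$ for every unit $w \in E^c$, which is equivalent to
$$
\|(df^n|_{E^c})^{-1}\| \le C'\, r_1^{-n}.
$$
For any small $\alpha > 0$ we then obtain
$$
\|df^n|_{E^s}\| \cdot \|(df^n|_{E^c})^{-1}\|^{1+\alpha} \le C\,(C')^{1+\alpha}\,\bigl(r\, r_1^{-(1+\alpha)}\bigr)^n.
$$
Since $r < 1$ is fixed while $r_1$ can be taken arbitrarily close to $1$, one can choose $r_1$ and $\alpha>0$ so that $r\, r_1^{-(1+\alpha)} < 1$. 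Thus the product decays exponentially in $n$, uniformly over $J$; this is the $(1+\alpha)$-bunching condition for the stable bundle.

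The second step is to invoke the classical theorem on smoothness of strong stable foliations under bunching (Hirsch--Pugh--Shub, and its refinements by Pugh--Shub--Wilkinson and Pesin) which asserts that if the $(1+\alpha)$-bunching condition holds for a $C^\infty$ (here holomorphic) dominated splitting, then the strong stable lamination admits $C^{1+\alpha}$ holonomies between transversals. In our setting the leaves are holomorphic (uniformized by the linearizations $\varphi_p$) and vary continuously with the base point, so the only nontrivial input required by this general theorem is precisely the bunching estimate verified above. Concretely, the holonomy $h\colon T_1 \to T_2$ between two transversals is obtained as a convergent telescoping series of correction terms coming from the pullback of a transverse projection by $f^n$; the bunching controls the $C^{1+\alpha}$ norm of each term as a geometric series in $(r\, r_1^{-(1+\alpha)})^n$.

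The main obstacle is not the bunching computation itself but ensuring that all estimates hold uniformly over $J$, since the leaves $W^s_R(p)$ have uniformly bounded (but not constant) geometric complexity (bounded degrees of the projection $\pi_2$, by Lemma \ref{boundeddegrees}). Compactness of $J$ together with the continuity of the splitting $E^s \oplus E^c$ and of the linearizations gives the required uniformity, so that the telescoping series defining the holonomy derivatives converges uniformly and the resulting $C^{1+\alpha}$ bound is independent of the chosen base leaf. This yields the $C^{1+\epsilon}$ regularity of the holonomies (for some $\epsilon = \alpha > 0$ determined by the ratio $r/r_1^{1+\alpha}<1$), completing the lemma.
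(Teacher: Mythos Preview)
Your approach is correct and reaches the same conclusion via a genuinely different route. You reduce to the standard bunching criterion for regularity of strong-stable holonomies and verify it using the rate estimates of Lemmas~\ref{newlemma4} and~\ref{newlemma5}; the paper itself remarks immediately after its proof that this works, the center-bunching condition being trivially satisfied when the center direction is a single complex line. The paper instead gives a direct, self-contained complex-analytic argument in the style of~\cite{L1999}: it pushes a disk of radius $r$ in one transversal forward by $f^n$ with $n$ chosen so that $\|df^n|_{E^s}\|\sim r$, applies the Koebe Distortion Theorem to the resulting univalent maps, observes that at the forward time the holonomy is an $O(r)$-perturbation of the identity, and then pulls back via a modulus-of-annuli estimate to bound the dilatation on disks of radius $r^2$ by $1+r^{1-O(r)}$, yielding $C^{1+\epsilon}$ for every $\epsilon<\tfrac12$. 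Your route is shorter once one accepts the black-box citation to~\cite{PSW1997,PSW2000}; the paper's route avoids that dependence, stays within purely complex-analytic tools, and produces the explicit bound $\epsilon<\tfrac12$. One minor remark: substantial dissipativity is not actually needed for this lemma---your bunching inequality only requires $r<1$ in Lemma~\ref{newlemma5}, which already follows from ordinary dissipativity.
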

\begin{proof}
The analogous statement in the uniformly hyperbolic case was proved in
\cite{L1999}, with a similar proof. It is sufficient to prove that
under the holonomy maps induced by the dynamical vertical lamination,
the dilatation of images of small disks of radius $r>0$ is $1+ r^{\epsilon} $, for some $\epsilon>0$.

We consider holonomy between horizontal transversals through two
points $z, w$ with $w \in W^s_{loc}(z)$.
Let $\eta>0$ be such that any horizontal disk through any point $f^n(z)$ or $f^n(w)$ of radius at most $\eta$ is a graph over the horizontal tangent line.

Let $D(z)$ and $D(w)$ be horizontal transversal disks, let $0<r<\eta$, and let $\Delta_r(z) \subset D(z)$ be a graph over the disk of radius $r$. We choose $n\in \mathbb N$ be so that
$$
\|df^n|_{E^s_z}\| \sim \frac{r}{\eta}.
$$

Note that
$$
\|df^n|_{E^c_z}\| \cdot \|df^n|_{E^s_z}\| \sim |\mathrm{Jac}(f)|^n,
$$
and thus decreases exponentially fast. It follows that for $r$ sufficiently small,
the disks $\Delta^n_r(z) = f^n \Delta_r(z)$ have size $< \eta$, so
they are graphs over the horizontal tangent line at $f^n(z)$.
Hence the composition of $f^n : \Delta_r(z) \rightarrow \Delta^n_r(z)$ with the respective projections to and from the respective horizontal tangent lines at $z$ and $f^n(z)$ produces a univalent function.

Consider the disks $\Delta_{r^2}(z) \subset \Delta_r(z)$. By the Koebe Distortion Theorem, the dilatation of $\Delta^n_{r^2}(z)$ is bounded by $1+O(r)$. By our choice of $n$ it follows that $\|z^n - w^n\|$ is of order $r$. Hence the holonomy from $\Delta^n_r(z)$ to its image $\Delta^n_r(w)$ is quasiconformal of order $r$, and the dilatation of $\Delta^n_{r^2}(w)$ is bounded by $1+O(r)$.

The modulus $-\log(r)$ of the annulus $\Delta_r(z) \sm \Delta_{r^2}(z)$ is preserved under conformal maps, hence $\mathrm{Mod}(\Delta^n_r(z) \sm \Delta^n_{r^2}(z)) = -\log(r)$. Since the distance between the disks $\Delta_r(z)$ and $\Delta_r(w)$ is of order $r$, the holonomy map from one to the other changes the modulus of the respective annuli by at most a factor of order $1-r$, hence
$$
\mathrm{Mod}(\Delta^n_r(w) \sm \Delta^n_{r^2}(w)) = \geq ((1-O(r) )
\cdot \log(\frac{1}{r}) ) = \geq \log(\frac{1}{r^{1- O(r) }}).
$$
Therefore,
we can again apply the Koebe Distortion Theorem to conformal map
$f^{-n} : \Delta^n_r(w) \rightarrow \Delta_r(w)$,
and it follows that the dilatation of $\Delta^n_{r^2}(w)$ is bounded
by $1 + r^{1-O(r) } $.

Note that restricted to the dynamical vertical lamination, the
holonomy maps that we considered by mapping back and forth by $f^n$
are all equal, hence the dilatation bound of $1+ r^{1-O(r) } $ applies to
the holonomy in the original flow box as well. Letting $r^2 = \rho$,
it follows that the dilatation on a disk of radius $\rho$ is
$1+\rho^\epsilon $, for $\epsilon>0$ that can in fact be chosen arbitrarily close to $\frac{1}{2}$. This completes the proof.
\end{proof}

In the real differentiable setting there have been a number of results regarding the smoothness of holonomy maps in the partially hyperbolic setting, see for example (\cite{PSW1997,PSW2000}). Often smoothness holds when the center eigen values are sufficiently close to each other, i.e. satisfy some ``center-bunching condition''. Such condition is trivially satisfied when the center direction is one-dimensional, or as here, in the conformal setting.

\medskip

We note that the above proof shows that the holonomy $C^{1+\epsilon}$ on the dynamical vertical lamination for any $\epsilon< \frac{1}{2}$. We will not use this estimate. In fact, we will only use that the dynamical vertical lamination is Lipschitz.

Note that in later steps of the procedure, after having already
found a partial extension of the dynamical vertical lamination by a number of
applications of the $\lambda$-lemma, the lamination under
consideration may no longer be Lipschitz. However, we will see that this does not
present difficulties when only the leaves in sufficiently small neighborhoods of dynamical leaves are kept.

\begin{defn} Since the holonomy maps of the dynamical vertical lamination are Lipschitz, there exists a constant $k>0$, independent of $\epsilon>0$ for $\epsilon$ sufficiently small, such that any dynamical leaf that intersects an $\epsilon$-tubular neighborhood of a dynamical leaf must be contained in the corresponding $(k\cdot \epsilon$)-tubular neighborhood. Note that $\epsilon$ and $k\cdot \epsilon$ refer to the \emph{Euclidean} radius of the tubular neighborhoods in $\mathbb C^2$.
\end{defn}

For given $\epsilon>0$ we will consider tubular neighborhoods of three different radii: $\epsilon$, $k \cdot \epsilon$ and $k^2\epsilon$. Given a collection of leaves $\{L_R(a_i)\}$, we will denote the tubular neighborhood of radius $r_i$ centered at $L_R(a_i)$ as $T_i(r_i)$. In what follows we consider tubular neighborhoods in different bidisks $\Delta^2_{R^\prime}$, where $R^\prime > R$ decreases in each step. Without loss of generality we may assume that the constant $k>0$ defined above will be sufficiently large for the maximal bidisk $\Delta^2_{R^\prime}$ as well. We will write $T_i(r_i, R^\prime)$ to clarify the radius $R^\prime$ of the bidisk we consider.

Let us fix a straight horizontal line
$$
\mathbb L_0 = \{(x,y) : y =y_0, \}
$$
for some $|y_0|< R$.

\begin{lemma}\label{lemma:prep2}
The dynamical vertical lamination has only finitely many horizontal tangencies in $\mathbb L_0$.
\end{lemma}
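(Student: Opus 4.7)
My plan is to combine the uniform per-leaf bound on branch points of $\pi_y$ given by Lemma~\ref{boundeddegrees} with the transversal $C^{1+\epsilon}$ regularity of the dynamical vertical lamination (Lemma~\ref{lemma:prep1}) and a complex-codimension-dimension count.

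First I would identify horizontal tangencies on a leaf with critical points of $\pi_y$: since the kernel of $d\pi_y$ is the horizontal direction $\operatorname{span}(e_x)$, a point $q \in W^s_R(p)$ is a horizontal tangency if and only if $d\pi_y$ vanishes on $T_q W^s_R(p)$. By Lemma~\ref{boundeddegrees} each such proper projection has degree at most a uniform constant $D$, so each semi-local leaf contains at most $D - 1$ horizontal tangencies in all of $\Delta^2_R$.

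Next I would consider the global tangency set $T := \{q \in J^+_R : E^s_q \text{ is horizontal}\}$, which is closed in $J^+_R$ by continuity of the stable line field, hence $T \cap \mathbb L_0$ is compact. Finiteness then reduces to discreteness. Working in a flow box of the lamination, $E^s$ is holomorphic along each leaf and $C^{1+\epsilon}$ transversally; the condition ``$E^s$ is horizontal'' is a single complex equation on the tangent line field (real codimension two), so $T$ is locally a real codimension-two subset of the 4-real-dimensional ambient. Its intersection with the complex line $\mathbb L_0$ (also real codimension two) is then of real dimension zero, hence discrete, and so finite by compactness of $\mathbb L_0 \cap J^+_R$.

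The main obstacle is to make the transversality rigorous given that the lamination is only $C^{1+\epsilon}$ (not holomorphic) transversally. I expect to resolve this either by using the freedom to choose $y_0$ in the statement (``for some $|y_0| < R$'') to ensure generic transversality of $\mathbb L_0$ with the tangent locus on each individual dynamical leaf, or by covering the compact set $\mathbb L_0 \cap J^+_R$ by finitely many flow boxes and observing that distinct dynamical leaves meet $\mathbb L_0$ in disjoint points, so only finitely many leaves per flow box can contribute a tangency on $\mathbb L_0$; combined with the per-leaf cap of $D - 1$, this yields the claimed global finiteness.
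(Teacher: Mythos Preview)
Your proposal has a genuine gap in the discreteness step. The codimension argument does not go through: the tangency set $T$ lives inside $J^+_R$, which is a fractal with empty interior, not a manifold. Saying that $T$ has ``real codimension two in the ambient $\C^2$'' is vacuously true of any subset of $J^+_R$ and gives no information about $T\cap\mathbb L_0$. You cannot run a dimension count because there is no ambient manifold structure on $J^+_R$ transverse to the leaves, only a Cantor-like transversal. Your fallback --- that ``only finitely many leaves per flow box can contribute a tangency on $\mathbb L_0$'' --- is precisely what you need to prove, so invoking it is circular; the per-leaf cap $D-1$ from Lemma~\ref{boundeddegrees} is correct but useless without an independent bound on the number of contributing leaves.

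The paper's argument supplies the missing idea: extend the dynamical lamination to a full flow box via the $\lambda$-lemma, so that the holonomy from $\mathbb L_0$ to a small transversal disk becomes a \emph{quasi-regular} map between open planar domains. Quasi-regular maps are open and discrete, and their branch set is discrete; the branch points are exactly the tangencies of $\mathbb L_0$ with the (extended) lamination. Discreteness plus compactness of $J^+_R\cap\mathbb L_0$ then gives finiteness. The point is that after the $\lambda$-lemma you are working with a genuine lamination of an \emph{open} set, which restores enough two-dimensional structure for an analytic (rather than dimension-theoretic) argument. Your $C^{1+\epsilon}$ regularity observation is in the right spirit but not strong enough by itself; quasi-regularity of the holonomy is the correct replacement.
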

\begin{proof}
Apply the $\lambda$-lemma to a given tubular neighborhood of a dynamical leaf, and consider the holonomy map from $\mathbb L_0$ to a small disk transverse to the lamination. Such holonomy maps are quasi-regular, hence critical points are isolated. The critical points are exactly given by the tangencies to $\mathbb L_0$, thus finiteness follows from compactness of $J^+_R$.
\end{proof}

We may assume, by either increasing $R$ or by changing $y_0$, that if a global leaf has more than one tangency with $\mathbb L_0$, then those tangencies are all contained in a single semi-local leaf. This is not necessary for what follows but makes the statement and proof of the Tubular Covering Lemma below more convenient.

\medskip

We first consider a planar covering lemma. Let $K \subset \mathbb C$ be compact, let $\gamma > 1$ and $s \in \mathbb N$. For each $\alpha \in K$ let $\Lambda(\alpha) \subset K$ be a set of order at most $s$, defining an equivalence relation, i.e. $\alpha \in \Lambda(\beta)$ if and only if $\beta = \Lambda(\alpha)$.

We assume that the sets $\Lambda(\alpha)$ vary lower semi-continuously with $\alpha$, i.e.
$$
\Lambda(\alpha) \subset \liminf_{\alpha_j \rightarrow \alpha} \Lambda(\alpha_j).
$$
We assume moreover that
$$
\widehat{\Lambda(\alpha)} = \limsup_{\beta \rightarrow \alpha} \Lambda(\beta)
$$
is also finite and of order at most $s$. Finally, we assume that there exists a Lipschitz constant $C>1$ for the equivalence relation. That is, for $\delta>0$ sufficiently small and $\alpha \in K$, the set
$$
\bigcup_{\beta \in D_{\delta}(\alpha)} \Lambda(\beta)
$$
is contained in at most $s$ disks of radius $C \cdot \delta$. Here we write $D_\delta(\alpha)$ as usual for the disk centered at $\alpha$ of radius $\epsilon$.

We define the sets
$$
E(\alpha,\epsilon) = \bigcup_{\beta \in \Lambda(\alpha)} D_{\gamma \cdot \epsilon}(\beta).
$$

\begin{lemma}[Planar Covering Lemma]
There exists $N$ such that for every sufficiently small $\epsilon>0$ the set $K$ can be covered by a finite collection $\{D_\epsilon(\alpha)\}$, whose set of centers $A$ can be partitioned into subcollections $A_1, \ldots, A_N$, such that for every $j = 1, \ldots , N$ and every $\alpha, \beta \in A_j$, the sets $E_\epsilon(\alpha)$ and $E_\epsilon(\beta)$ are disjoint.
\end{lemma}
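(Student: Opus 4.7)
The plan is to reduce the statement to a uniform bound on the maximum degree of a ``conflict graph'' on the centers and then conclude by greedy coloring, in the spirit of the Besicovitch Covering Theorem. First I would choose the covering via a Vitali-type selection: let $A\subset K$ be maximal subject to the condition that the half-size disks $\{D_{\epsilon/2}(\alpha)\}_{\alpha\in A}$ are pairwise disjoint. Maximality forces $K\subset\bigcup_{\alpha\in A}D_\epsilon(\alpha)$, and comparing areas of the disjoint disks $\{D_{\epsilon/2}(\alpha):\alpha\in A\cap D_r(z)\}$ against $D_{r+\epsilon/2}(z)$ yields the packing bound
\[
\#\set{\alpha\in A:\alpha\in D_r(z)}\le C_0\,(r/\epsilon)^2
\]
for an absolute constant $C_0$ and $r\ge \epsilon/2$.

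Next, define a graph $G$ on vertex set $A$ by declaring $\alpha\sim\beta$ iff $E(\alpha,\epsilon)\cap E(\beta,\epsilon)\ne\emptyset$. Unpacking the definition of $E$, this happens exactly when there exist $\alpha'\in\Lambda(\alpha)$ and $\beta'\in\Lambda(\beta)$ with $|\alpha'-\beta'|\le 2\gamma\epsilon$. The lemma is equivalent to asserting that $G$ admits a proper coloring by a number of colors $N$ independent of $\epsilon$. Since any graph of maximum degree $\Delta$ is $(\Delta+1)$-colorable by a one-pass greedy algorithm, it suffices to bound $\deg_G(\alpha)$ uniformly.

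To this end, fix $\alpha\in A$. For each of the at most $s$ points $\alpha'\in\Lambda(\alpha)\subset K$, apply the Lipschitz hypothesis at the base point $\alpha'$ with $\delta=2\gamma\epsilon$ (valid once $\epsilon$ is sufficiently small): the set $\bigcup_{\beta'\in D_{2\gamma\epsilon}(\alpha')}\Lambda(\beta')$ is contained in a union of at most $s$ disks of radius $2C\gamma\epsilon$. By the symmetry of the equivalence relation, whenever $\beta$ is a neighbor of $\alpha$ in $G$ through a witness pair $(\alpha',\beta')$, one has $\beta\in\Lambda(\beta')$, so $\beta$ lies in this union of $s$ disks. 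The packing bound from the first paragraph bounds the number of centers of $A$ in each such disk by $C_0(2C\gamma)^2$; summing over the $s$ disks and the at most $s$ choices of $\alpha'$ produces the uniform estimate
\[
\deg_G(\alpha)\le \Delta:=s^2\,C_0\,(2C\gamma)^2,
\]
independent of $\alpha\in A$ and of small $\epsilon>0$. Setting $N=\Delta+1$ and letting $A_j$ consist of the vertices assigned color $j$ then yields the required partition.

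The only delicate point I anticipate is the correct use of symmetry together with the Lipschitz hypothesis when transferring information from $\beta'$ back to $\beta$: this is legitimate because $\Lambda(\alpha)\subset K$, so the Lipschitz condition may be applied at any auxiliary base point $\alpha'\in\Lambda(\alpha)$, and the constant $C$ is uniform on $K$. The lower semi-continuity of $\Lambda$ and the finiteness of the closure $\widehat{\Lambda(\alpha)}$ are not invoked in the coloring step itself, but they stand behind the uniform Lipschitz constant used above.
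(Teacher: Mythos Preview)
Your argument is correct and follows essentially the same route as the paper's own proof: construct an $\epsilon$-separated net in $K$ (the paper does this by repeatedly picking a point not yet covered; you do it by a Vitali maximal family of half-radius disks---both yield the same packing bound), use the Lipschitz hypothesis together with the symmetry of $\Lambda$ to confine the conflicting centers for a given $\alpha$ into at most $s^2$ disks of radius $O(C\gamma\epsilon)$, count centers in those disks via packing, and then greedily color the conflict graph with $\Delta+1$ colors. The constants and the structure match the paper's proof essentially line for line.
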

\begin{proof}
Recursively construct finite collection $A = \{\alpha\}$ for which the disks $D_\epsilon(\alpha)$ cover $K$, by at each step selecting a center $\alpha$ that is not yet contained in the previous disks. It follows that there is an upper bound, independent of $\epsilon$, on the number of centers $\alpha \in A$ contained in any disk of radius $\epsilon$. For $\gamma \cdot \epsilon < \delta$ it follows from the Lipschitz bound $C$ that given $\alpha \in K$, the set of points $\beta \in K$ for which $E_\epsilon(\beta)$ and $E_\epsilon(\alpha)$ intersect is contained in at most $s^2$ disks of radius $2 C \cdot \gamma \epsilon$. Therefore there is also an upper bound, again independent of $\epsilon$, on the number of centers in $A$ contained in those larger disks.

It follows that for any $\alpha \in A$ the number of centers $\beta \in A$ for which $E_\epsilon(\beta) \cap E_\epsilon(\alpha) \neq \emptyset$ is bounded by a constant $M$ that does not depend on $\epsilon$. Recursively partition $A$ into sets $A_1, \ldots , A_N$, at each step taking a maximal number of the remaining centers $\alpha \in A$ for which the sets $E_\epsilon(\alpha)$ are pairwise disjoint. This process must end in at most $N \le M + 1$ steps.
\end{proof}

We stress that the bound $N$ is allowed to depend on $C$, $\gamma$ and $s$.

\begin{lemma}[Tubular Covering Lemma]\label{lemma:prep3}
Let $R^\prime > R$. Given $\epsilon_1>0$ sufficiently small, there exists an $N \in \mathbb N$ such that for any sufficiently small $\epsilon_2 > 0$ we can cover the dynamical vertical lamination in $\Delta^2_R$ with finitely many tubular neighborhoods $\{T_i(r_i)\}$ centered at dynamical leaves, satisfying the following:
\begin{enumerate}
\item[(i)] The finite set of tubular neighborhoods can be partitioned into $N$ collections $A_1, \ldots, A_{N}$.
\item[(i)] The tubular neighborhoods in $A_1$ have radius $r_i = \epsilon_1$, and all other tubular neighborhoods have radius $r_i = \epsilon_2$.
\item[(ii)] For $\alpha = 1, \ldots , N$ and $T_i(r_i), T_j(r_j) \in A_\alpha$ one has
$$
T_i(k^2 \cdot r_i, R^\prime) \cap T_j(k^2 \cdot r_j, R^\prime) = \emptyset.
$$
\end{enumerate}
\end{lemma}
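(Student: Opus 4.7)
The plan is to reduce the covering problem to the Planar Covering Lemma applied to the trace $J^+_R \cap \mathbb L_0$ of the dynamical lamination on the horizontal slice, after first absorbing the finitely many horizontal tangencies into a separate ``exceptional'' collection $A_1$ of $\epsilon_1$-tubes.

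\emph{Exceptional collection.} By Lemma \ref{lemma:prep2} the dynamical vertical lamination has only finitely many tangencies with $\mathbb L_0$, and by our standing assumption these lie on finitely many semi-local dynamical leaves $L_1, \ldots, L_M$. Choose $\epsilon_1 > 0$ small enough that the tubular neighborhoods $T_i(k^2 \epsilon_1, R^\prime)$ are pairwise disjoint in $\Delta^2_{R^\prime}$; this is possible because they are finite in number. Set $A_1 = \{T_i(\epsilon_1)\}_{i=1}^M$. By the defining property of $k$, any dynamical leaf that meets $T_i(\epsilon_1/2)$ is contained in $T_i(k\cdot\epsilon_1/2) \subset T_i(\epsilon_1)$, so all such leaves are covered by $A_1$.

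\emph{Planar reduction.} Let $K = (J^+_R \cap \mathbb L_0) \setminus \bigcup_i T_i(\epsilon_1/2)$; this is a compact subset of $\mathbb L_0$ on which the dynamical lamination is transverse to $\mathbb L_0$. By Lemma \ref{boundeddegrees} each dynamical leaf meets $\mathbb L_0$ in at most $s$ points, where $s$ is the uniform bound on the degree of the branched covers $\pi_2 : W^s_R(p) \to \Delta_R$. Declare $\alpha \sim \beta$ on $K$ iff they lie on a common dynamical leaf; then $|\Lambda(\alpha)| \leq s$, and the upper semi-continuous closure $\widehat{\Lambda(\alpha)}$ still has at most $s$ elements because additional intersections could only appear at tangencies, which have been excluded from $K$. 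The Lipschitz character of the dynamical lamination (Lemma \ref{lemma:prep1}) provides a uniform Lipschitz constant $C$ for this relation, independent of $\epsilon_2$. Fix $\gamma$ large enough (depending only on $k$ and on the uniform transversality of leaves to $\mathbb L_0$ away from tangencies, hence on $f$ alone) so that the translation between planar and tubular distances described below works. The Planar Covering Lemma then supplies an integer $N_0 = N_0(\gamma, C, s)$ such that for every sufficiently small $\epsilon_2 > 0$ we can cover $K$ by disks $\{D_{\epsilon_2}(\alpha)\}_{\alpha \in A}$ partitioned as $A = A_2^\prime \sqcup \cdots \sqcup A_{N_0 + 1}^\prime$, with the planar sets $E(\alpha, \epsilon_2)$ pairwise disjoint inside each $A_j^\prime$.

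\emph{Lifting and disjointness.} For each $\alpha \in A$ let $T_\alpha(\epsilon_2)$ be the $\epsilon_2$-tubular neighborhood of the dynamical leaf $L(\alpha)$ through $\alpha$, and set $A_j = \{T_\alpha(\epsilon_2) : \alpha \in A_{j-1}^\prime\}$ for $j = 2, \ldots, N_0+1$, with $N = N_0 + 1$. Every dynamical leaf meets $\mathbb L_0$ (its projection to $\Delta_R$ is surjective), so it is covered either by some element of $A_1$ or, via the defining property of $k$, by some $T_\alpha(\epsilon_2)$ with $\alpha \in A$. To verify (iii) for $A_j$ with $j \geq 2$, suppose $T_\alpha(k^2 \epsilon_2, R^\prime) \cap T_\beta(k^2 \epsilon_2, R^\prime) \neq \emptyset$; then $L(\alpha)$ and $L(\beta)$ come within Euclidean distance $2k^2 \epsilon_2$ somewhere in $\Delta^2_{R^\prime}$, and applying the defining Lipschitz property of $k$ to $L(\beta)$ forces $L(\beta) \subset T_\alpha(2k^3 \epsilon_2, R^\prime)$; away from tangencies, the map $L(\alpha) \cap T_\alpha(2k^3\epsilon_2) \to \mathbb L_0$ given by vertical projection is bi-Lipschitz with uniform constant, so some $\beta' \in \Lambda(\beta)$ lies within $\gamma\epsilon_2$ of some $\alpha' \in \Lambda(\alpha)$, contradicting the planar disjointness of $E(\alpha,\epsilon_2)$ and $E(\beta,\epsilon_2)$. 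The main obstacle, and the reason for the two-scale setup, is precisely controlling this tubular-to-planar translation uniformly: near a tangency with $\mathbb L_0$ the vertical projection from a leaf to $\mathbb L_0$ has an unbounded Lipschitz constant, which is why the neighborhoods of tangency leaves must be placed in the separate collection $A_1$ at the larger scale $\epsilon_1$.
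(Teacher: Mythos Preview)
Your approach is essentially the same as the paper's: isolate the finitely many tangency leaves into $A_1$ at scale $\epsilon_1$, then reduce the remaining problem to the Planar Covering Lemma on the trace $K \subset \mathbb L_0$, using the Lipschitz holonomy and the uniform degree bound.

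Two points of imprecision are worth fixing. First, the equivalence classes $\Lambda(\alpha)$ should record the intersections of the semi-local leaf with $\mathbb L_0$ \emph{in the larger bidisk $\Delta^2_{R'}$}, not in $\Delta^2_R$; otherwise your disjointness argument, which takes place in $\Delta^2_{R'}$, can miss intersection points of $L(\beta)$ with $\mathbb L_0$ that lie in $\Delta^2_{R'}\setminus\Delta^2_R$. Second, the phrase ``the map $L(\alpha)\cap T_\alpha(2k^3\epsilon_2)\to\mathbb L_0$ given by vertical projection'' is garbled: $L(\alpha)$ is the core, so the intersection is just $L(\alpha)$, and ``vertical projection'' is undefined here. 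What you actually need (and what the paper spells out) is that for $\epsilon_2$ small enough the slice $T_\alpha(k^2\epsilon_2,R')\cap\mathbb L_0$ breaks into components, one around each point of $\Lambda(\alpha)$, each contained in a disk of radius $\ell\cdot\epsilon_2$ for a constant $\ell$ coming from the transversality angle; this is exactly the translation between tubular and planar distances you allude to, and it yields $\gamma=\ell^2 k^2$. With these corrections your argument goes through and matches the paper's.
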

\begin{proof}
Note that each leaf of the dynamical vertical lamination must pass through the line $\mathbb L_0$, so it is sufficient to consider tubular neighborhoods that cover the intersection of the dynamical vertical lamination with $\mathbb L_0$.

By lemma \ref{lemma:prep2}, there are only finitely many semi-local
leaves with horizontal tangencies in $\mathbb L_0$. We may assume that
$\epsilon_1 >0$ is sufficiently small such that the corresponding
tubular neighborhoods $T_i(k^2 \cdot \epsilon_1, R^\prime)$ do not
intersect. Let $A_1$ be the set of corresponding tubular neighborhoods
$T_i(\epsilon_1)$.
{\em From now on we consider tubular neighborhoods centered at leaves not contained in these finitely many tubular neighborhoods.}

We claim that we are left with the situation of the Planar Covering Lemma. The set $K$ is the intersection of the remaining dynamical vertical lamination with $\mathbb L_0$. The equivalence classes $L(\alpha)$ are given by the intersection points of semi-local dynamical leaves in the bigger bidisk $\Delta^2_{R^\prime}$.

Recall from Lemma \ref{boundeddegrees} that the semi-local leaves are branched covers with uniformly bounded degrees. The lower semi-continuity, and upper bound on $\widehat{L(\alpha)}$ follow as in the proof of Lemma \ref{boundeddegrees}.

We can choose $\epsilon_2>0$ sufficiently small so that for any tubular neighborhood $T_i(\epsilon_2, R)$ not contained in one of the tubular neighborhoods in $A_1$, the intersection $T_i(k^2 \cdot \epsilon_2, R^\prime) \cap \mathbb L_0$ consists of a finite number of connected components, each containing an intersection point of the core leaf.

Each connected component of the intersection closely resembles an ellipse, whose direction and eccentricity is determined by the tangent vector of the leaf at the corresponding intersection point with $\mathbb L_0$. Since we consider only sufficiently small tubular neighborhoods of points bounded away from the tangencies in $\mathbb L_0$, the eccentricity of these ellipses is bounded. In other words, there exists $\ell>1$ independent of $\epsilon_2$ sufficiently small, such that each component of each $T_i(k^2 \cdot \epsilon_2, R^\prime) \cap \mathbb L_0$ is contained in a disk of radius $\ell \cdot \epsilon_2$, and contains the concentric disk of radius $\frac{1}{\ell} \epsilon_2$.

Thus, each intersection $T_i(\epsilon_2) \cap \mathbb L_0$ contains a disk of radius $\frac{1}{\ell} \epsilon_2$ centered at a point $\alpha \in K$, while the intersection of $T_i(k^2 \epsilon_2, R^\prime)$ is contained in a bounded number of disks of radius $\ell k^2 \epsilon_2$, thus we are in the situation of the Planar Covering Lemma for $\gamma = \ell^2k^2$. The existence of the Lipschitz constant $C$ follows from the fact that the holonomy maps are Lipschitz and the bound from below on the angle between the transversal $\mathbb L_0$ and the remaining dynamical vertical lamination.

The existence of the partition $A_1,\ldots, A_N$ therefore follows from the Planar Covering Lemma. The constants $C$ and $\gamma$ depend on $\epsilon_1$, hence so does $N$, but $N$ is independent of $\epsilon_2$.
\end{proof}

\begin{figure}[ht]
\centering
\begin{subfigure}[t]{0.5\textwidth}
\centering
\includegraphics[width=0.9\linewidth]{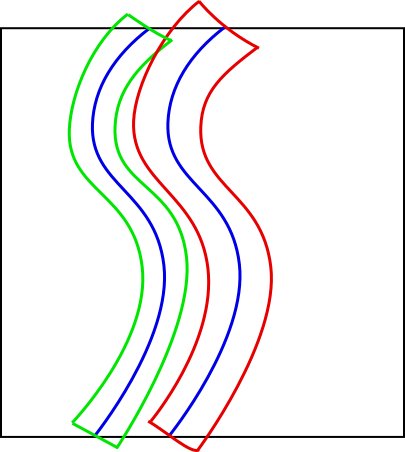}
\caption{Two tubular neighborhoods}
\label{figure:extension2}
\end{subfigure}%
~
\begin{subfigure}[t]{0.5\textwidth}
\centering
\includegraphics[width=0.9\linewidth]{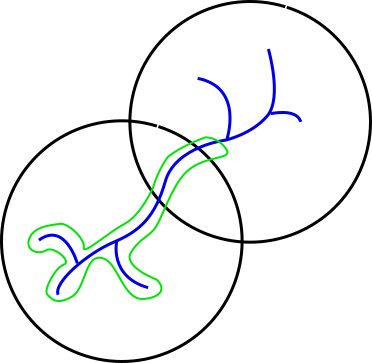}
\caption{From above: thin neighborhood}
\label{figure:extension1}
\end{subfigure}
\label{figure:extension}
\end{figure}

\begin{prop}\label{prop:single}
The dynamical vertical lamination in $\Delta^2_R$ can be extended to an open neighborhood.
\end{prop}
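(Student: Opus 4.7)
The plan is to iteratively apply the $\lambda$-lemma to batches of pairwise disjoint tubular neighborhoods supplied by the Tubular Covering Lemma, proceeding through the partition $A_1, \ldots, A_N$ in sequence. In each round $\alpha$, for every tubular neighborhood $T_i(r_i) \in A_\alpha$ I consider all ``leaves'' already present (dynamical plus those produced in previous rounds) that meet a small neighborhood of the core $L_R(a_i)$. Because the neighborhoods in $A_\alpha$ are pairwise disjoint even when inflated by the factor $k^2$, the extensions produced in the different flow boxes of $A_\alpha$ cannot interfere with one another, so they can all be carried out simultaneously. Inside a single flow box I parametrize as in Section~\ref{loc and glob extensions}: pull the existing leaves back to graphs over the vertical disk via the biholomorphism $\Psi$, apply Slodkowski to extend this holomorphic motion to all of the vertical disk, and push forward to get a foliation of a slightly shrunken tubular neighborhood by pairwise disjoint vertical disks that agrees with everything built before.

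The main obstacle is that the leaves produced in round $\alpha$ are only locally defined and may fail to be graphs over the \emph{entire} core of a flow box $T_j(r_j) \in A_\beta$ encountered in a later round $\beta > \alpha$; see Figure~\ref{figure:extension2}. This is handled by the ``shrinking $R^\prime$'' device already indicated in the preliminary discussion: work from the outset in the slightly larger bidisk $\Delta^2_{R^\prime}$ with $R^\prime > R$, and after each round decrease the ambient radius by $2k^2 \epsilon$, where $\epsilon$ is the relevant flow-box radius for that round. By the definition of the Lipschitz constant $k$, any leaf produced inside $T_i(\epsilon, R^\prime)$ is confined to the $k^2 \epsilon$-inflation of the core; after the radius reduction, every previously constructed leaf that is still in sight is automatically a complete graph over the core of every flow box used in the next round. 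Choosing $\epsilon_1$ sufficiently small for round $A_1$, and then $\epsilon_2$ so small that
\[
2 k^2 \epsilon_1 + 2 (N-1)\, k^2 \epsilon_2 < R^\prime - R,
\]
guarantees that after all $N$ rounds the remaining ambient bidisk still contains $\Delta^2_R$. Crucially, the Tubular Covering Lemma supplies $N$ \emph{independently} of $\epsilon_2$, so this smallness can be achieved.

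Two observations finish the argument. First, the compatibility of the extensions across rounds is automatic: within a single round disjointness forbids any interaction, while across rounds the $\lambda$-lemma in round $\beta$ is applied to a set $A$ that includes all leaves constructed in rounds $< \beta$, so the Slodkowski extension is forced to respect them. Second, the hypotheses of the Planar/Tubular Covering Lemmas genuinely apply to the lamination-so-far only on the dynamical part, since after a few rounds the inflated lamination is no longer Lipschitz; this is not an issue because the Tubular Covering Lemma is invoked once, for the purely dynamical lamination, and determines the full partition $A_1, \ldots, A_N$ before any extension is performed. The resulting foliations in the finitely many flow boxes glue to a single lamination on an open neighborhood of the dynamical vertical lamination in $\Delta^2_R$, proving the proposition.
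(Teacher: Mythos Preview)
Your overall architecture matches the paper's: invoke the Tubular Covering Lemma once on the dynamical lamination to get the partition $A_1,\dots,A_N$, then run the $\lambda$-lemma round by round while shrinking the ambient radius $R'$. However, there is a genuine gap. You assert that ``by the definition of the Lipschitz constant $k$, any leaf produced inside $T_i(\epsilon,R')$ is confined to the $k^2\epsilon$-inflation of the core,'' and that after shrinking $R'$ every previously constructed leaf is ``automatically a complete graph over the core of every flow box used in the next round.'' But the constant $k$ was defined from the Lipschitz holonomy of the \emph{dynamical} lamination only; leaves manufactured by Slodkowski's extension carry no Lipschitz control and can oscillate relative to the dynamical leaves. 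Shrinking $R'$ by $2k^2\epsilon$ guarantees only that a new leaf has enough \emph{vertical} extent to project onto all of the core of a later box $T_j$; it does not force the leaf to remain inside $T_j(k^2\epsilon_2)$ transversally. A new leaf can enter and exit $T_j(k^2\epsilon_2)$ repeatedly, so $\Psi_j^{-1}(\text{leaf})$ need not be a graph over $\mathbb D$, and then it cannot be fed into the holomorphic motion for round $\beta$. If you simply omit such leaves from the input, the Slodkowski extension may produce new leaves that cross them.

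The paper closes this gap with an additional step you omit: after each round one discards all newly constructed leaves except those meeting a sufficiently \emph{thin} neighborhood of the dynamical lamination (see Figure~\ref{figure:extension1}). Because the holonomy of the extended lamination is still continuous, by taking this neighborhood thin enough one forces every surviving new leaf to inherit the containment property of the nearby dynamical leaf: if it meets $T_j(\epsilon_2)$ it lies in $T_j(k\epsilon_2)$, and if it meets $T_j(k\epsilon_2)$ it lies in $T_j(k^2\epsilon_2)$. Only then is it legitimate to include the previously built leaves in the holomorphic motion for the next round. Your final paragraph correctly notes that the non-Lipschitz issue does not affect the \emph{covering} step, but that is not where the difficulty lies; it lies in re-entering the $\lambda$-lemma, and the thinning is what makes that possible.
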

\begin{proof}
We consider $R^\prime>R$ and apply the previous lemma to the dynamical vertical lamination in $\Delta^2_{R^\prime}$. Let $2k^2 \cdot \epsilon_1 < (R^\prime - R)/2$,  let $N$ be as in the previous lemma, and let $\epsilon_2>0$ be sufficiently small such that
$$
2 k^2 \cdot \epsilon_2 \cdot N < (R^\prime - R)/2,
$$
where as before $k$ is an upper bound for the Lipschitz constant of the holonomy maps. We can cover the dynamical vertical lamination in $\Delta^2_R$ by tubular neighborhoods as in the previous lemma, and write $A_1, \ldots , A_{N}$ for the partition into pairwise disjoint tubular neighborhoods. We may assume that $\epsilon_1$ and $\epsilon_2$ are chosen sufficiently small such that the $\lambda$-lemma can be applied to each tubular neighborhood of radius $k\cdot e_i$, for $i = 1,2$.

We first apply the $\lambda$-lemma to each of the tubular neighborhoods $T_i(k^2 \cdot \epsilon_1, R^\prime)$ in $A_1$, keeping only the leaves that intersect the tubular neighborhood of radius $k\cdot \epsilon_1$. Since the tubular neighborhoods of radius $k^2 \cdot e_1$ are pairwise disjoint, it follows that the new leaves are all pairwise disjoint as well.

Note that while the dynamical leaves were global leaves, the new leaves are semi-local, and contained in the tubular neighborhoods in $A_1$.  In order to guarantee that the new leaves are still graphs over the entire cores of the tubular neighborhoods in $A_2, \ldots, A_N$ that they intersect, we reduce the radius of the bidisk $\Delta^2_{R^\prime}$ by $2k^2 \cdot \epsilon_1$.

Note that the laminations constructed using the $\lambda$-lemma may not be Lipschitz. In order to guarantee preserve the modulus of continuity $k$ for each of the selected tubular neighborhoods that will be used in later steps, we keep only the newly constructed leaves that intersect a sufficiently thin neighborhood of the dynamical vertical lamination, see the two tubular neighborhoods illustrated in Figure \ref{figure:extension1}, where the dynamical leaves are represented by the blue continuum, and only the new leaves in the small green neighborhood are kept. Since the holonomy maps will still be continuous, choosing a thin enough neighborhood of the dynamical vertical lamination will still guarantee that the leaves that intersect the tubular neighborhoods of radii $\epsilon_2$ and $k\cdot \epsilon_2$ will still be contained in the corresponding tubular neighborhoods of radii $k \cdot \epsilon_2$ and $k^2 \cdot \epsilon_2$ respectively.

We continue with the tubular neighborhoods in $A_2, A_3, \ldots , A_{N-1}$, and finally to those in $A_N$, each time following the same procedure as above: first apply the $\lambda$-lemma to each tubular neighborhood of radius $k^2 \cdot \epsilon_2$ in the current bidisk, keeping only those leaves that intersect the tubular neighborhood of radius $k \cdot \epsilon_2$, then decrease the radius of the bidisk by $k^2 \cdot \epsilon_2$, and finally only keeping the newly constructed leaves in a very thin neighborhood of the dynamical vertical lamination in order to maintain the modulus of continuity.

By our choice of $\epsilon_2$ we end up with the required extended lamination on a bidisk of radius at least $R$.
\end{proof}

We will refer to this extension of the dynamical vertical lamination
as the \emph{artificial vertical lamination}, and denote it by
$\mathcal{L}$. By slight abuse of terminology, we will also write
$\mathcal{L}$ for the union of the vertical leaves, which gives a
neighborhood of $J^+_R$. We may assume that the dynamical vertical
lamination is extended to a thin enough neighborhood such that
$\mathcal{L}$ is contained in the region where the dominated splitting
is defined, and by continuity we may assume that its tangent bundle
lies in the vertical cone field.
{\em From now on we write $\mathcal{N}(J^+_R)$ for the region where both the cone field and the artificial vertical lamination are defined.}

\subsection{Adjusting the artificial vertical lamination on wandering domains}
\label{adjustment}

Note that there is no reason for the artificial vertical lamination $\mathcal{L}$
to be invariant under $f$. Here we discuss how to define and modify
the lamination on the (hypothetical) wandering Fatou components.

\begin{figure}[ht]
\centering
\includegraphics[width=0.6\linewidth]{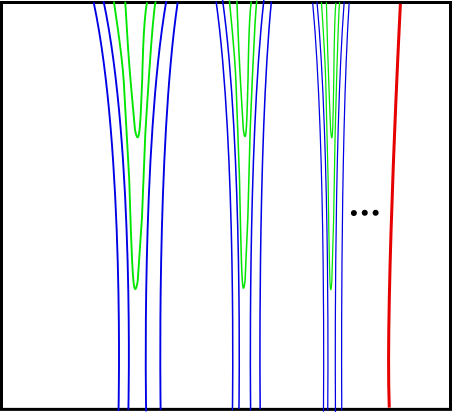}
\caption{\label{wandering4} The pullback is not continuous in the limit}
\end{figure}

Recall the set $\VV^+$ (\ref{VV}) foliated by stable manifolds $W^s(z)$.
Let $U$ be a wandering Fatou component of $f$.
As $ f^n z \to J$ for any $z\in U$,  we have: $U\subset \VV^+$.
So, $U$ is foliated by strong stable manifolds; we call it
the  {\em dynamical foliation}  $\FF_U$  of $U$.
Putting these foliations together, we obtain the  invariant dynamical foliation
on the union $\UU$  of all wandering components.

However, in general, this foliation
cannot be  extended to the closure of this union (see Figure \ref{wandering4}).
To deal with this problem, we combine this dynamical foliation on
some ``semi-local'' wandering components  with
the non-dynamical  extension on the others, to obtain
{\em a lamination of $\UU\cap \Delta^2_R$ which is invariant everywhere
except finitely many semi-local wandering components.}


\begin{figure}[t!]
\centering
\begin{subfigure}[t]{0.5\textwidth}
\centering
\includegraphics[width=0.9\linewidth]{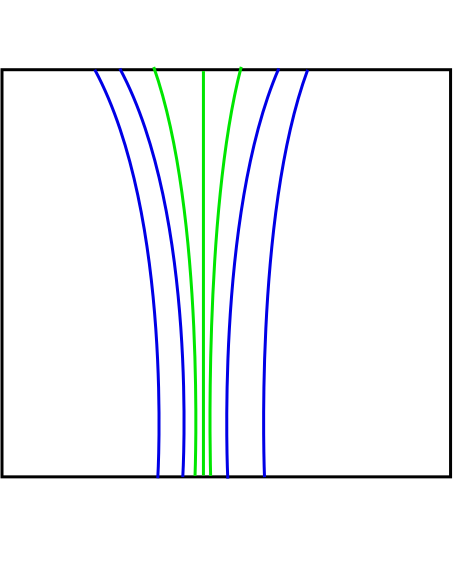}
\caption{Artificial vertical lamination}
\end{subfigure}%
~
\begin{subfigure}[t]{0.5\textwidth}
\centering
\includegraphics[width=0.9\linewidth]{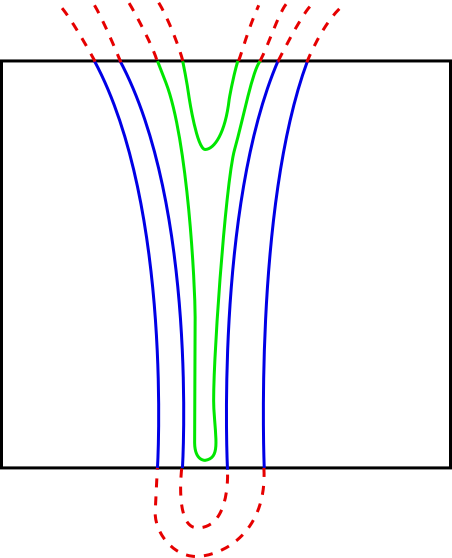}
\caption{Dynamical vertical lamination}
\end{subfigure}
\caption{\label{wandering} Conflicting laminations in a wandering component}
\end{figure}

\medskip

 A {\em semi-local wandering component}  $V$ is a
connected component of $U \cap \Delta^2_R$.
Note that there are at most finitely many semi-local wandering components $V \subset U \cap \Delta^2_R$  not
contained in $\mathcal{N}(J^+_R)$.
We refer to such a component $V$ as a \emph{component with hole}.

Let $V$ be a semi-local component with hole, let $V^{-1}$ be a connected
component of $f^{-1}(V) \cap \Delta^2_R$, and assume that neither
$V^{-1}$ nor any connected component of $f^{-n}(V^{-1}) \cap
\Delta^2_R$ has a hole. Then $V^{-1}$ is contained in $\mathcal{L}$
and hence foliated by vertical leaves. Note that vertical leaves in
$\mathcal{L}$ sufficiently close to the (vertical) boundary of
$V^{-1}$ are necessarily \emph{dynamical} leaves, and recall that the
dynamical vertical lamination is invariant under $f$. We modify the vertical
lamination $\mathcal{L}$ by pulling back the leaves in $V^{-1}$ to all
components of $f^{-n}(V^{-1})\cap \Delta^2_R$ for all $n \ge 2$.

If $V^{n}$ lies in $\mathcal{N}(J^+_R)$ for all $n \ge n_0$, then the artificial vertical lamination on $V^{n_0}$ is dynamical, and we can pull back the \emph{dynamical} lamination on $V^{n_0}$ to all components $V^j$ with $0 \le j \le n_0$. See Figure \ref{wandering} for a sketch of the two conflicting laminations that one obtains by pulling back the dynamical vertical lamination to $V^{-1}$. Note that the artificial vertical lamination near the boundary of the component is dynamical, and is therefore identical in both pictures.

By following this procedure for all grand orbits of Fatou components with holes, we obtain a lamination that is invariant on all but finitely many components, and for each bi-infinite orbit of components there is at most one step in which the lamination is not invariant. To be more precise, if ${V^n}_{n \in \mathbb Z}$ is a sequence of semi-local wandering components with $f(V^n) \subset V^{n+1}$, then there is at most one $n \in \mathbb Z$ for which the leaves of the artificial vertical lamination in $V^n$ are not mapped into leaves of the lamination in $V^{n+1}$, and this can only occur when $V^j$ lies in the region of dominated splitting for $j \le n$ but not for $j = n+1$.


\subsection{Choice of horizontal line}

In the    one-dimensional
argument we considered iterated inverse images of
a given disk $D_r(z)$.
This is problematic in the H\'enon case.
The reason is that such preimages are very likely to land at least partially outside of the bidisk $\Delta^2_R$.
 Instead, we will start with a flat horizontal complex line $\mathbb L_0 $,
map it forward by $f^n$, consider a small disk $V^0$ inside $f^n(\mathbb L_0) \cap \Delta^2_R$,
 and consider the pullbacks $V^1, \ldots V^n$ of this disk.
Of course instead of working with a full horizontal line $\mathbb L_0$ it is equivalent to work with a disk of radius $R$.

We will now make a suitable choice for the horizontal line:

\begin{lemma}\label{good hor line}
There exists $y_0 \in \mathbb C$  with $|y_0| < R$ such  that the
artificial vertical lamination of
$J^+$ is transverse to $\{y = y_0\}$.
\end{lemma}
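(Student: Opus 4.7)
The plan is to rephrase the transversality condition analytically and then show the ``bad'' set of $y_0$ is small. A leaf $L$ of $\mathcal{L}$ fails to be transverse to $\{y=y_0\}$ at a point $p \in L \cap \{y=y_0\}$ exactly when $T_pL$ is horizontal, i.e.\ when $p$ is a critical point of $\pi_2|_L$ with critical value $y_0$. So I want to find $y_0 \in \Delta_R$ that is not a critical value of $\pi_2$ restricted to any leaf of $\mathcal{L}$.

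First I would cover a compact neighborhood of $J^+_R$ by finitely many of the flow boxes used in the construction of $\mathcal{L}$ in Proposition \ref{prop:single}. In each flow box $B$, by Slodkowski's $\lambda$-lemma the leaves form a holomorphic family of graphs parameterized by a one-complex-dimensional transversal. The condition that the leaf through $p$ has horizontal tangent at $p$ is a single holomorphic equation (the vanishing of the $y$-derivative along the leaf), so the tangency locus $T_B \subset B$ is a complex analytic subvariety of complex dimension at most one. The set $\pi_2(T_B)\subset \Delta_R$ is then the collection of bad $y$-values coming from tangencies in $B$.

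Next I would argue that $\pi_2(T_B)$ is a discrete subset of $\Delta_R$ for each $B$. The map $\pi_2|_{T_B}$ has finite fibers: by Lemma \ref{lemma:prep2} the dynamical vertical lamination has only finitely many tangencies with any given $\{y=y_0\}$, and the same quasi-regularity argument extends via the $\lambda$-lemma to the holomorphic family of leaves in $B$. Thus $\pi_2|_{T_B}: T_B \to \Delta_R$ is a holomorphic map with finite fibers between one-complex-dimensional analytic sets, so its image is either discrete or an open subset of $\Delta_R$. The latter possibility is ruled out because each leaf $L$ is a branched cover of $\Delta_R$ of uniformly bounded degree (Lemma \ref{boundeddegrees}), limiting the total number of critical values swept out as the leaf parameter varies holomorphically, while Lemma \ref{lemma:prep2} prevents accumulation along any horizontal line.

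Finally, since there are only finitely many flow boxes, the bad set $\bigcup_B \pi_2(T_B)$ is a finite union of discrete sets, hence has empty interior in $\Delta_R$. Picking any $y_0$ in $\Delta_R$ outside this set gives the desired horizontal transversal: every leaf of $\mathcal{L}$ meeting $\{y=y_0\}$ does so with a non-horizontal tangent, so its tangent space together with the horizontal tangent space of $\{y=y_0\}$ spans $\mathbb{C}^2$. The main obstacle is the discreteness step, where I must use the holomorphic family structure provided by the $\lambda$-lemma together with the finite-fiber information from Lemma \ref{lemma:prep2} to rule out components of $T_B$ that project surjectively onto $\Delta_R$.
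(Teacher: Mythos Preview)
Your approach has a genuine gap at the analyticity step. Slodkowski's $\lambda$-lemma does \emph{not} give a holomorphic family of leaves in the transverse parameter: in a flow box the leaf through a parameter $a$ is holomorphic in the base (leaf) variable $\zeta$, but the dependence on $a$ is only continuous (indeed quasiconformal). Consequently the tangency condition $\partial_\zeta\big(\pi_2\circ\Phi(\zeta,a)\big)=0$ is holomorphic in $\zeta$ but merely continuous in $a$, and the tangency locus $T_B$ is \emph{not} a complex analytic subvariety. Without that structure your dichotomy ``discrete or open'' for $\pi_2(T_B)$ has no footing.

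Even granting analyticity, the discreteness argument would fail. A finite-fibered holomorphic map from a one-dimensional variety to $\Delta_R$ generically has \emph{open} image by the open mapping theorem; your attempt to exclude this case (``limiting the total number of critical values swept out'') does not work, since as the leaf parameter ranges over a one-complex-dimensional family the critical values of $\pi_2|_L$ sweep out a one-dimensional set. Lemma~\ref{lemma:prep2} only bounds the \emph{fibers} of $\pi_2|_{T_B}$, not its image.

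The paper's argument is quite different and avoids analyzing the full critical-value set. One starts with an arbitrary $y_0$ and observes (as in Lemma~\ref{lemma:prep2}) that there are only finitely many tangencies with the lamination. The key point is that the total number of tangencies with $\{y=y_0\}$, counted with multiplicity, is locally constant in $y_0$; and since every leaf in $J^+$ has nearby leaves in the Fatou set, one can perturb $y_0$ slightly so that a given tangency is transferred from a $J^+$-leaf to a Fatou leaf. Repeating this for each of the finitely many tangencies in $J^+$ yields the desired $y_0$. Note that the lemma only requires transversality with the leaves of $J^+$, not with all of $\mathcal{L}$; tangencies with Fatou leaves are allowed.
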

\begin{proof}
For each horizontal complex plane, the tangencies
of this plane with the artificial vertical lamination are isolated, thus, by
restricting the neighborhood of $J^+$ if necessary, there are at most
finitely many tangencies. We can remove the tangencies one by one by
making arbitrarily small perturbations for which the tangencies are
transferred to nearby leaves in the Fatou set. More precisely, if a
leaf is tangent to a horizontal plane, then each nearby vertical leaf
is tangent to some nearby horizontal plane. Locally the number of
tangencies, counted with multiplicities, is constant.
Thus, we can take any 
nearby leaf in the Fatou set, take the horizontal plane for which that
leaf is tangent, and reduce the number of tangencies in $J^+$ by at
least one.
 After a finite number of perturbations we obtain
a desired horizontal  line  $\{y = y_0\}$.
\end{proof}

\begin{defn}\label{ellnot}[\emph{choice of $\mathbb L_0$}]
From now on we fix $y_0$ so that the dynamical lamination of
$J^+$ is transverse to $\mathbb L_0$, and so that $\mathbb L_0$
does not contain any  parabolic
periodic points.

It follows that the line $\mathbb L_0$ is transverse to the artificial vertical lamination in a sufficiently small neighborhood of $J^+$.
\end{defn}

\comm{
In what follows,
we will consider small "protected" holomorphic disks in $f^n \mathbb L_0 \cap
\Delta_R^2$ and pull them back to $\mathbb L_0$. 
 We will show that the diameters of these  pullbacks
stay bounded, and at the same time obtain a bound on their degrees.
Here the \emph{degree} means the maximal number of intersections
with semi-local leaves of the artificial vertical lamination,
while the \emph{diameter} refers to the Kobayashi diameter in the domain
$\Om$ constructed above,
or some equivalent notion of diameter.
As in one variable, bounds on the diameter will imply bounds on
the degree, and vice versa.
}

\section{Uniformization of wandering components}

\comm{***
Here we discuss degree bounds on semi-local wandering domains. Recall that we are considering a finite orbit of holomorphic disks $V_r^{-j}, \ldots, V_r^0(z)$, in other words
$$
f(V_r^{-i-1}(z)) = V_r^{-i}(z)
$$
for all $i= 0, \ldots, j-1$.
The original disk $V_r^{-j}$ is assumed to lie in the horizontal plane
$\mathbb L_0$, and the final disk $V_r^{0}$ is chosen as a lift
of some transverse disk $D_r(z)$.

In most of this section we will assume that each $V_r^{-i}$ is
contained in a semi-local wandering domain $U^{-i}$,
 i.e. a connected component of the intersection of a wandering domain with $\Delta^2_R$. It follows that $f(U^{-i-1}) \subset U^{-i}$ for each $i$.

We will discuss two different concepts of degree. Recall that the degree of the disk $V_r^i(z)$ is defined as the maximal number of intersections with a semi-local leaf of the artificial vertical lamination. We are particularly interested in what we can say about $\mathrm{deg}(V_r^0(z))$ when the disks $V_r^{-j}, \ldots, V_r^0(z)$ are all assumed to have sufficiently small hyperbolic transverse diameter. We will prove that under these assumptions there exist a uniform bound on the degrees of the disks $V_r^{-i}$. In line with the one-dimensional argument this bound will be denoted by $\mathrm{deg}_{\crit}$.

A related but different concept of degree, only defined on semi-local
wandering domains, is the degree of the map $f^j: U^{-j} \rightarrow
U^{0}$, defined as the maximal number of semi-local vertical leaves in
$U^{-j}$ that are mapped into a single vertical leaf in $U^0$. We will
prove in this section is that there exists a bound on the degree of $f^j : U^{-j}  \rightarrow
U^0$ that only depends on the component $U^{-j}$, not on the choice of $U^0$.

In the one-dimensional setting the corresponding degree bound is obvious for polynomials,
while for rational functions in one variable it follows from Baker's Lemma.
In line with the one-dimensional argument we will denote the upcoming degree
 bound for H\'enon maps by $\mathrm{deg}_{\max}$.
****}

In this section we will show that any wandering component $U$ can be
uniformized by the straight cylinder $\D\times \C$ in such a way that
the dynamical foliation of $U$ becomes vertical.
It will imply a bound for the (appropriately understood) degrees
 of the maps $f^n |\, U$.

\subsection{Contraction}

\begin{lemma}\label{shrinking}
    For any wandering component $U$,
the derivatives $\|d f^n \| $ converge to $0$ uniformly on compact
subsets of $U$.
\end{lemma}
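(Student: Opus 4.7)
The plan is: by Montel, $\{f^n\}$ is precompact in the compact-open topology on the Fatou component $U$, so it will suffice to show that every accumulation $g = \lim_k f^{n_k}$ is constant on $U$; Cauchy estimates applied to the normal family will then yield $\|df^n\| \to 0$ uniformly on compact subsets. Since $U$ is wandering, the $\omega$-limit of any $z \in U$ cannot meet a Fatou component---else $f^{n_k}(U)$ would equal that component for large $k$, contradicting the distinctness of the iterates of a wandering component---so $\omega(z) \subset J$, and hence $g(U) \subset J$.

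From $\det df^{n_k} = \Jac(f)^{n_k} \to 0$ it follows that $\det dg \equiv 0$, so $g$ has rank at most one everywhere; rank two is excluded because $J$ has empty interior in $\mathbb C^2$. To rule out rank one, suppose $dg(z_0) \ne 0$ at some $z_0 \in U$. Exponential contraction of the strong stable leaf through $z_0$ forces $g$ to be constant on $W^s(z_0)\cap U$, so $E^s_{z_0} \subset \ker dg(z_0)$ and the image $C = g(U)$ is a holomorphic curve through $q_0 := g(z_0) \in J$ with a well-defined tangent line $L_{q_0} \subset T_{q_0}\mathbb C^2$.

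If $L_{q_0} \subset E^s_{q_0}$, then $C$ is an open piece of $W^s(q_0)$ contained in $W^s(q_0) \cap K^-$. Since $f^{-n}$ maps stable manifolds biholomorphically to stable manifolds, the area of $f^{-n}(C) \subset W^s(f^{-n}(q_0))\cap K^-$ equals the area of $C$ multiplied by $\|df^{-n}|_{E^s}\|^2 \ge (Cr^n)^{-2}$ for any $r > |\Jac f|$ (Lemma \ref{newlemma5}), hence grows exponentially. On the other hand, substantial dissipativity makes $G^-\circ\varphi_p$ subharmonic of order strictly less than $\tfrac12$ on $W^s(p)\cong\mathbb C$, so by Wiman's theorem $W^s(p)\cap\{G^- < \epsilon\}$ is bounded in the linearization coordinate uniformly in $p \in J$; letting $\epsilon \to 0$ gives a uniform area bound on $W^s(p) \cap K^-$, contradicting the exponential area growth of $f^{-n}(C)$.

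The remaining case, where $L_{q_0}$ has a nonzero $E^c$-component (so $C$ is transverse to the strong stable foliation at $q_0$), is the main obstacle. My plan is to invoke the laminar description $J^+ \cap \Delta^2_R = \bigcup_{p \in J} W^s_R(p)$ (Lemma \ref{boundeddegrees}), extended to the artificial vertical lamination of Section 5, and argue that any holomorphic disk contained in $J^+$ must lie entirely in a single leaf---which reduces matters to the previous sub-case. Establishing this rigidity statement for holomorphic disks inside the artificial vertical lamination is the delicate step; once handled, $g$ is necessarily constant on the connected set $U$, proving the lemma.
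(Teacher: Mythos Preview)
Your approach—show every subsequential limit $g=\lim f^{n_k}$ is constant by analyzing the possible rank of $dg$—is different from the paper's and is in the spirit of the argument used for Proposition~\ref{per comps}. However, as written it has two real gaps.

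\textbf{Case 2 is left open, but is in fact the easy case.} You defer the ``rigidity'' statement, but it follows immediately from Lemma~\ref{boundeddegrees}: since $C=g(U)\subset J\subset J^+$, through every $q\in C$ passes a semi-local stable disk $W^s_R(q)\subset J^+$. If $C$ is transverse to $E^s$ at some point, these disks fill an open subset of $J^+$, contradicting that $J^+=\partial K^+$ has empty interior. (This is exactly the transverse sub-case in the proof of Proposition~\ref{per comps}.) Your dichotomy should be ``$C$ tangent to $E^s$ at \emph{every} point'' versus ``transverse at \emph{some} point''; tangency at the single point $q_0$ does not force $C\subset W^s(q_0)$.

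\textbf{Case 1: the uniform area bound does not follow from Wiman.} You claim $W^s(p)\cap\{G^-<\epsilon\}$ is bounded in linearization \emph{uniformly} in $p\in J$. But the functional equation $h_p(\zeta)=d\cdot h_{f(p)}(\lambda_p\zeta)$ gives $\{h_{f^{-n}(q_0)}=0\}=\lambda_n^{-1}\{h_{q_0}=0\}$, so these zero sets blow up in linearization at exactly the same rate as $f^{-n}(C)$—no contradiction arises this way. What \emph{does} work is to bound the Euclidean area of $W^s_R(p)\subset\Delta^2_R$ in $\mathbb{C}^2$: since $W^s_R(p)$ is a degree-$\le D$ branched cover of $\Delta_R$ (Lemma~\ref{boundeddegrees}) with tangent in the vertical cone, its $\mathbb{C}^2$-area is uniformly bounded; meanwhile the $\mathbb{C}^2$-area of $f^{-n}(C)$ grows like $\|df^{-n}|_{E^s}\|^2\to\infty$.

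\textbf{Comparison with the paper.} The paper's argument is much shorter and avoids all of this. It argues directly: if $\|df^{n_m}|_{E^c}\|\ge\epsilon$ along a sequence, then (by normality and bounded distortion) small horizontal disks $D^m\subset U$ have images $f^{n_m}(D^m)$ that are horizontal disks of definite size; saturating by local stable manifolds yields balls of definite radius, one in each wandering iterate $f^{n_m}(U)$. These balls are pairwise disjoint yet all lie in $\Delta^2_R$, a contradiction. No analysis of the limit map, no Wiman, no case split.
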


\begin{proof}
Replacing $U$ with its iterated image, if needed,
we can ensure that all the images $U^n= f^n(U)$
lie in the domain of dominate splitting. Hence $U$ is filled with
global strong stable manifolds $W^s(z)$.

Arguing by contradiction,  we can find a sequence of unit vectors $v^m \in E^c(z^m )$
converging to a vector $v \in E^c(z)$, $z\in U^0$, and a sequence of
moments $n_m\to \infty$ such that
\be\label{lower bound}
   \| df^{n_m} (v^m) \| \geq \epsi >0.
\ee
Take a small horizontal disk $D\ni z$ tangent to $v$, and
 find a sequence of horizontal disks $D^m \ni z^m$ tangent to the
 $v^m$ and converging to $D$. Since the family of iterates is normal
 near $z$, the derivatives $d(f^n| \, D_m) $  have a uniformly
 bounded distortion. Together with (\ref{lower bound}),   this  implies
$$
       \| df^{n_m} (w) \|\geq \epsi'>0, \quad  \forall\  w\in T D^m,
$$
so the images  $f^{n_m } (D^m) $ are horizontal disks of definite
size.  Hence the local stable manifolds through each of them  fill a ball of definite
radius.
This contradicts the fact that these infinitely many balls must be disjoint yet bounded.
\end{proof}


Recall the set $\VV^+$ (\ref{VV}) foliated by the strong stable manifolds
$W^s (z) $.
For $z \in \VV^+$, let $\phi_z : \C \ra W^s(z)$ be a uniformization of $W^s(z)$, normalized so that $\phi_z (0) = z$ and $ \|d\phi_z (0)\| = 1$. We note that $\phi_z$ is unique up to multiplication in $\mathbb C$ by a constant $e^{i \theta}$.
Hence for  $\zeta = \phi_z(u) \in W^s(z) $ we can define the (asymmetric)  \emph{intrinsic distance} as
$$
     \dist^i (z, \zeta) = |u|,
$$
which is independent of the choice of $\phi_z$.

\begin{lemma}\label{intrinsic vs asymp}
{\rm (i)} For $\zeta\in W^s_\loc (z)$, we have:
$
\dist^i (z, \zeta) \asymp \| z - \zeta\|;
$

\ssk\nin
{\rm (ii)}   There exists an $\epsi>0$  with the following property:
 For  any $\zeta\in W^s(z) \sm W^s_\loc (z) $, there exists
$n\asymp 1+ \log^+   (  \epsi^{-1}  \dist^i (z, \zeta) ) $ such that
$
        \| f^n z - f^n \zeta \| \geq \epsi.
$
\end{lemma}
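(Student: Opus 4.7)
The plan is to exploit the functional equation satisfied by the uniformizations $\phi_z$, combined with the exponential stable contraction from Lemma \ref{newlemma5} and a per-step lower bound on $\|df|_{E^s}\|$ that comes from invertibility.

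For part (i), since $d\phi_z(0)$ is a unit vector in $T_z W^s(z) = E^s_z$, the comparison $\dist^i(z,\zeta) \asymp \|z-\zeta\|$ on $W^s_\loc(z)$ reduces to a uniform distortion estimate for $\phi_z$ near $0$. The local strong stable manifolds constructed in \S\ref{stable manifolds} have uniform Euclidean size and are graphs over $E^s_z$ with uniformly bounded $C^1$-norm. Hence $\phi_z$ restricted to a uniform disk $D_{r_0}(0)\subset\mathbb C$ is univalent with $\phi_z(0)=z$, $\|d\phi_z(0)\|=1$, and image of bounded Euclidean size, so the Koebe distortion theorem gives $|u| \asymp \|\phi_z(u)-z\|$ uniformly in $z\in J$ on a smaller uniform disk.

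For part (ii), the key observation is that $\phi_{f(z)}^{-1}\circ f\circ \phi_z:\mathbb{C}\to\mathbb{C}$ is an entire biholomorphism fixing $0$, hence multiplication by a constant $a_z\in\mathbb C^*$ with $|a_z|=\|df(z)|_{E^s_z}\|$. Iterating gives
\[
\dist^i(f^n z,\, f^n\zeta) \;=\; \|df^n(z)|_{E^s_z}\|\cdot\dist^i(z,\zeta).
\]
By Lemma \ref{newlemma5} the factor $\|df^n(z)|_{E^s_z}\|$ decays exponentially at some rate $r'<1$. On the other hand, the identity $\|df|_{E^s}\| = |\Jac f|/(\|df|_{E^c}\|\sin\theta)$, together with uniform upper bounds on $\|df|_{E^c}\|$ and a uniform lower bound on the angle $\theta$ between $E^s$ and $E^c$, provides a uniform lower bound $\mu>0$ on the per-step factor $\|df|_{E^s_w}\|$. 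Thus $\|df^n|_{E^s_z}\|$ decreases with consecutive ratios in $[\mu, r']$.

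Let $\eta$ be a uniform lower bound on the intrinsic radius of $W^s_\loc$ (supplied by part (i)), and set $R=\dist^i(z,\zeta)>\eta$. Pick $n$ to be the smallest integer with $\|df^n(z)|_{E^s_z}\|\cdot R \leq \eta$; the bounded-ratio property forces the intrinsic distance at step $n$ into $(\mu\eta,\eta]$, so $f^n\zeta\in W^s_\loc(f^n z)$, and applying part (i) at the basepoint $f^n z$ yields $\|f^n z-f^n\zeta\|\geq c\mu\eta$ for some absolute $c>0$. Taking $\epsilon$ proportional to $\mu\eta$ gives the separation, and the exponential decay yields $n\asymp 1+\log^+(R/\eta) \asymp 1+\log^+(\epsilon^{-1}\dist^i(z,\zeta))$. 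The main technical subtlety is ensuring that the Koebe-type distortion estimate in part (i) is genuinely uniform in $z\in J$, which ultimately rests on the uniform graph-transform bounds built into the construction of $W^s_\loc$.
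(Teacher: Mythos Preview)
Your proof is correct and follows essentially the same route as the paper's (very terse) argument: both use the functional equation $f^n\circ\phi_z(u) = \phi_{f^n z}(\lambda_n u)$ with $|\lambda_n| = \|df^n|_{E^s_z}\|$ to contract the intrinsic distance down to order $\epsilon$, then invoke part (i). Your explicit per-step lower bound $\mu$ on $\|df|_{E^s}\|$ (which follows most cleanly from the uniform bound on $\|(df)^{-1}\|$ on $\Delta^2_R$; the Jacobian identity you wrote is off by an angle-ratio factor, though the conclusion stands) is exactly what ensures the intrinsic distance does not overshoot below $\mu\eta$ at the stopping time---a point the paper leaves implicit.
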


\begin{proof} (i)
The linearizing maps $\phi_z$ are locally bi-Lipschitz with a
constant continuously depending on $z$, which implies the first assertion.

\msk (ii)
Let us select
$\epsi >0 $ so that each $\phi_z$ maps the disk $ \De_\epsi$ into $W^s_\loc(z)$.
Since
\be\label{uniformlization rule}
f^n \circ \phi_z(u)  = \phi_{z_n}(\la_n u)  \quad
{\mathrm {with}}\  \la_n = \| df^n_z |\, E^s \| ,
\ee
the intrinsic distance is contracted at an exponential rate.
Hence the number of iterates it takes for it to become of order $\epsilon$ depends logarithmically on $\mathrm{dist}^i(z,\zeta)$.
Application of (i) concludes the proof.
\end{proof}

\subsection{Global transversals}

Recall from \S \ref{adjustment} that
given a wandering component $U$,
$\FF_U$ stands for the dynamical vertical lamination of  $U$
by the global stable manifolds $W^s (z) \isom \C$.

Let us say that $D$ is a \emph{global transversal} to a
wandering component $U$ if

\ssk\nin (T1)
 $D$ is  a non-singular holomorphic disk
properly embedded into $U$;

\ssk\nin (T2)
$D$ is relatively compactly contained in a non-singular holomorphic
curve $D'$;

\ssk\nin (T3)
 For any $z\in \bar D$, the curve  $D'$ is transverse to   the stable
 line $E^s_z$ (see \S \ref{stable manifolds}).

\begin{lemma}\label{horizontality}
  If $D$ is a global transversal to a
wandering component $U$,
then for $n$ large enough the images $f^n (D)$ are horizontal with
respect to the cone field.
\end{lemma}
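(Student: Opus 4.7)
The strategy is to combine two ingredients: (a) the wandering property, which forces forward orbits to enter the region $\mathcal{N}(J^+_R)$ where the cone field is defined; and (b) the dominated-splitting estimate, which tilts any tangent direction transverse to $E^s$ toward the central direction under iteration. The transversality condition (T3) provides the uniform lower bound on the angle between $T_z D'$ and $E^s_z$ that makes the dominated-splitting argument uniform in $z\in \bar D$.

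\textbf{Step 1 (orbits land in $\mathcal{N}(J^+_R)$).} I would first show that there is an $N_0$ with $f^n(\bar D)\subset \mathcal{N}(J^+_R)$ for all $n\ge N_0$. For $z$ in the open disk $D\subset U\subset \VV^+$ this is immediate, since $f^n z\to J$. For $z\in\partial D\subset \partial U\subset J^+$ the orbit is already in $J^+$, and hence lies in $J^+_R\subset\mathcal{N}(J^+_R)$ after at most a bounded number of iterates bringing it into $\Delta^2_R$. Compactness of $\bar D\subset D'$ and continuity of the iterates then upgrade these pointwise statements to a uniform $N_0$.

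\textbf{Step 2 (tangent directions tilt horizontal).} For each $z\in\bar D$ write $T_z D=\mathbb C w_z$. Since $\bar D\subset\VV^+$, the stable line $E^s_z$ is defined and varies continuously on $\bar D$; by (T3) the angle between $T_z D'$ and $E^s_z$ has a uniform lower bound $\theta_0>0$. Choose a continuous complement to $E^s$ on $\bar D$ (for instance the horizontal component of the cone field) and decompose $w_z=v^s_z+v^c_z$ with $v^s_z\in E^s_z$ and $\|v^c_z\|\ge c_0>0$ uniformly. Applying $df^n$ and invoking the dominated splitting,
\[
\frac{\|df^n v^s_z\|}{\|df^n v^c_z\|}\ \le\ C\rho^n\cdot\frac{\|v^s_z\|}{\|v^c_z\|}\ \longrightarrow\ 0
\]
uniformly over $z\in \bar D$. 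Since the vertical cone field is backward invariant on $\mathcal{N}(J^+_R)$, its (open) complement is forward invariant; moreover any vector whose stable component is sufficiently dominated by its complementary component lies in this horizontal cone. Hence there exists $N_1$ such that $df^n w_z$ lies in the horizontal cone at $f^n z$ for all $z\in\bar D$ and all $n\ge N_1$. Taking $n\ge\max(N_0,N_1)$ yields the lemma.

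\textbf{Main obstacle.} The delicate point is obtaining \emph{uniform} behaviour on all of $\bar D$ rather than only on compact subsets of $D\cap U$. Lemma~\ref{shrinking} only gives uniform contraction of $\|df^n\|$ on compacts of $U$, so it does not directly govern the boundary $\partial D\subset\partial U$. The resolution is that the stable line field $E^s$, the cone field, and the dominated-splitting estimate all extend continuously across $\partial D$ into $J^+$, and (T3) is by hypothesis imposed on the entire compact set $\bar D$; this is precisely what allows Step 2 to be carried out uniformly, and what lets the boundary points bypass Lemma~\ref{shrinking} in Step 1.
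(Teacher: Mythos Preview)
Your proposal is correct and follows the same approach as the paper: use (T3) on the compact set $\bar D$ to obtain a uniform lower bound on the angle between $T_zD$ and $E^s_z$, then invoke the dominated splitting/cone-field mechanism to conclude that a bounded number of iterates suffices to bring every tangent line into the horizontal cone. The paper's proof is a two-sentence version of exactly this; your Steps~1--2 and the ``main obstacle'' paragraph simply make explicit the compactness and uniformity considerations that the paper leaves implicit.
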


\begin{proof}
By assumption (T3),
 the angle between the tangent line  $T_z D$ and the stable line $E^s_z$
is bounded below by some $\alpha>0$ independent of $z\in D$.
Hence it takes a bounded amount of iterates  to bring $T_z D$ to a
horizontal cone.
\end{proof}

\begin{lemma}\label{existence}
Let $U$ be a wandering component, let $z \in U \cap \Delta^2_R$. Then $W^s(z) \subset U$ intersects a global transversal.
\end{lemma}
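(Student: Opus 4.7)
The plan is to construct a global transversal near a sufficiently late iterate $\tilde z = f^n(z)$, then pull it back by the biholomorphism $f^{-n}$ of $\mathbb{C}^2$, which preserves $E^s$ and carries $W^s(\tilde z)$ onto $W^s(z)$. By Lemma~\ref{shrinking}, $\|df^n\| \to 0$ uniformly on compacts of $U$ and $f^n(z) \to J$; in particular, for $n$ large enough, $\tilde z$ lies in $\mathcal{N}(J^+_R)$, where the vertical cone field, the stable line field $E^s$ (contained in the vertical cone), and the artificial vertical lamination $\mathcal{L}$ are all defined. Thus the problem reduces to producing a global transversal in the semi-local wandering component $U^n$ of $f^n(U) \cap \Delta^2_R$ containing $\tilde z$ that meets $W^s(\tilde z)$.

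I would build this transversal as a horizontal slice. By Lemma~\ref{good hor line}, one obtains a horizontal line $\mathbb{L} = \{y = y_*\}$ transverse to $\mathcal{L}$ throughout a neighborhood of $J^+$; the perturbation argument in that lemma permits $y_*$ to be taken arbitrarily close to any prescribed value, so I would take $y_*$ close to the $y$-coordinate of $\tilde z$. Since the tangent to $W^s(\tilde z)$ lies in the vertical cone, $W^s(\tilde z)$ is locally a graph over the $y$-coordinate near $\tilde z$, and hence $W^s(\tilde z) \cap \mathbb{L}$ contains a point $\tilde z'$ close to $\tilde z$, in particular lying inside $U^n$. I would then take $\tilde D \subset \mathbb{L} \cap U^n$ to be a simply connected open neighborhood of $\tilde z'$ properly embedded in $U^n$ (its boundary in $\mathbb{L}$ lying on $\partial U^n \subset J^+$), and thicken it slightly within $\mathbb{L}$ to an open $\tilde D'$ with $\overline{\tilde D}$ relatively compact in $\tilde D'$. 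Condition (T3) is automatic for $\tilde D'$ on $\overline{\tilde D} \subset \mathcal{N}(J^+_R)$, since the tangent to $\mathbb{L}$ is horizontal while $E^s$ lies strictly in the vertical cone.

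Setting $D := f^{-n}(\tilde D)$ and $D' := f^{-n}(\tilde D')$ then yields the desired global transversal to $U$: conditions (T1) and (T2) transfer via the biholomorphism $f^{-n}$, and (T3) persists because $f^{-n}$ carries $E^s$ to itself. Finally, $f^{-n}(\tilde z') \in D$ lies on $W^s(z) = f^{-n}(W^s(\tilde z))$, so $W^s(z)$ meets $D$. The main subtlety I anticipate is extracting the disk $\tilde D$ so that it is simultaneously simply connected and properly embedded in $U^n$: the connected component of $\mathbb{L} \cap U^n$ through $\tilde z'$ is bounded and open in $\mathbb{L} \cong \mathbb{C}$ but need not be simply connected. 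I expect this to be handled either by a further generic choice of $y_*$ making the component a topological disk, or by passing to a simply connected subdomain whose boundary in $\mathbb{L}$ still meets $\partial U^n$; this is a topological bookkeeping step rather than a conceptual obstacle.
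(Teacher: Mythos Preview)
Your approach coincides with the paper's: push forward until $f^n(z)\in\mathcal{N}(J^+_R)$, take the component of a transverse horizontal line inside $f^n(U)$ through the stable manifold of $f^n(z)$, thicken slightly, and pull back by $f^{-n}$. The only variation is how you guarantee that $W^s(f^n z)$ meets the horizontal line. The paper uses the \emph{fixed} line $\mathbb{L}_0$ and appeals to substantial dissipativity: the semi-local stable manifold $W^s_R(f^n z)$ is a proper branched cover of the vertical disk $\Delta_R$ (via Wiman's Theorem), hence meets \emph{every} horizontal slice $\{y=c\}$ with $|c|<R$. You instead choose $y_*$ near the $y$-coordinate of $\tilde z$ and use only the local graph property of $W^s$ over the $y$-axis. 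Both routes are valid; yours avoids the appeal to Wiman at this step, while the paper's keeps the distinguished line $\mathbb{L}_0$ fixed throughout (which matters for later arguments).

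Your one genuine gap is the simple connectivity of $\tilde D$. Neither of your suggested workarounds is correct: a generic choice of $y_*$ gives no control over the topology of the slice, and replacing the component by a strictly smaller simply connected subdomain would place part of $\partial\tilde D$ inside $U^n$ rather than on $\partial U^n\subset J^+$, destroying the proper embedding required by (T1). The paper settles this in one line: since $f^n(U)$ is a Fatou component, the maximum principle (applied to the subharmonic function $G^+|_{\mathbb{L}}$, which vanishes on the boundary of the slice) forces any would-be bounded complementary component of $D_n$ in $\mathbb{L}$ back into $f^n(U)$, a contradiction. Thus the full connected component of $\mathbb{L}\cap f^n(U)$ through $\tilde z'$ is already a disk, and that is what you should take for $\tilde D$.
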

In particular,  any wandering domain contains a global transversal.
\begin{proof}
By Lemma \ref{good hor line}, there exists a horizontal line $\mathbb L_0$
transverse to the dynamical vertical lamination on $J^+$. For large $n \in \mathbb N$ connected components of $f^n(U) \cap \Delta^2_R$ will be contained in an arbitrarily small neighborhood of $J^+$, and hence the dynamical vertical lamination in those components is transverse to $\mathbb L_0$. Since $f$ is substantially dissipative, the semi-local strong stable manifold $w^s_R(f^n(z))$ intersects $\mathbb L_0$ in a connected component $D_n \subset \mathbb L_0 \cap f^n(U)$.

Since $f^n(U)$ is a Fatou component, the maximum principle implies that $D_n$ is simply connected. Let $D_n^\prime \subset \mathbb L_0$ be a slightly larger domain where the dynamical vertical lamination is still transverse to $\mathbb L_0$. Pulling back by $f^n$ gives the required $D \subset D^\prime$.
\end{proof}

Select a continuous unit vector field $v(z)\in E^s_z$ on $D^\prime$, and for $z\in D$,
let  $\phi_z : \C \ra W^s(z)$ be the uniformization of  $W^s(z)$ normalized so that $\phi_z (0) = z$ and $\phi^\prime_z  (0) = v(z) $.
Then we obtain a continuous map
\be\label{Phi}
\Phi:  D \times \C \ra  U, \quad ( z, u ) \mapsto \phi_z (u).
\ee
Our goal is to prove that $\Phi$ is a homeomorphism.

\subsection{Holonomy group}

Let $D$ and $\Delta$ be global transversals to $U$, let $z \in D_1$, and suppose that the global stable manifold $W^s(z)$
intersects $D_2$ in point $w \Delta$. Then holonomy induces a
map $h$ from a neighborhood of $z$ in $D$ to a neighborhood of $w$ in $\Delta$.

\begin{lemma}\label{holonomy}
The map $h$ admits a unique extension along any path
$\gamma$ in $D_1$.
\end{lemma}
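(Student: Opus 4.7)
The plan is to use the classical open-closed continuation method. Let $T \subset [0,1]$ be the set of $t$ for which a unique continuous lift $h_\gamma \colon [0,t] \to D_2'$ exists with $h_\gamma(0) = w$ and $h_\gamma(s) \in W^s(\gamma(s))$ for every $s \in [0,t]$. I would show $T = [0,1]$; uniqueness then follows from local uniqueness plus connectedness of $[0,1]$.

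Openness at $t_0 \in T$ with $w_0 := h_\gamma(t_0)$ comes directly from (T3) and the continuity of the global strong stable lamination on $\VV^+ \supseteq U$ (from the graph-transform construction in \S\ref{stable manifolds}): the stable line $E^s_{w_0}$ is transverse to $T_{w_0} D_2'$, so the implicit function theorem extends $h_\gamma$ uniquely on a neighborhood of $t_0$.

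The hard part will be closedness. Given $t_n \to t_*$ with $t_n \in T$, I need a subsequential limit of $w_n := h_\gamma(t_n)$ lying on $W^s(\gamma(t_*))$ in the region where (T3) applies; openness at $t_*$ then finishes. The control is through the intrinsic distance $\phi(t) := \mathrm{dist}^i(\gamma(t), h_\gamma(t))$ on $W^s(\gamma(t))$, which is continuous on $[0, t_*)$. Assume, for contradiction, that $\phi(t_n) \to \infty$ along a subsequence. Lemma \ref{intrinsic vs asymp}(ii) then produces iteration times $m_n \to \infty$ with $\|f^{m_n}(\gamma(t_n)) - f^{m_n}(w_n)\| \ge \varepsilon$. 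On the other hand, a standard bootstrap, using Lemma \ref{intrinsic vs asymp}(i) to turn bounded intrinsic distance into bounded Euclidean distance, together with the fact that $\bar D_2$ is relatively compact in $D_2' \cap U$, shows that the entire image $\{w_n\}$ stays in a fixed compact set $K \subset U$ containing $\gamma([0, t_*])$. Lemma \ref{shrinking} then forces $\mathrm{diam}(f^m(K)) \to 0$, contradicting the $\varepsilon$-lower bound. Hence $\phi$ stays bounded, the $w_n$ lie in a compact subset of $U$, and a subsequential limit $w_* \in W^s(\gamma(t_*)) \cap D_2'$ exists by continuity of the foliation, allowing openness at $t_*$ to extend $h_\gamma$ past $t_*$ and contradicting $T \ne [0,1]$.
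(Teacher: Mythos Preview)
Your argument has a real gap at the closedness step. The claim that ``$\bar D_2$ is relatively compact in $D_2' \cap U$'' is false: by property (T1) the transversal $\Delta$ is \emph{properly} embedded in $U$, so its closure meets $\partial U$. Thus the $w_n = h_\gamma(t_n)$ may escape to $\partial U$, and you have no compact $K \subset U$ on which to invoke Lemma~\ref{shrinking}. Your ``bootstrap'' via Lemma~\ref{intrinsic vs asymp}(i) cannot repair this: part (i) only compares intrinsic and Euclidean distance for points on the \emph{local} stable manifold, whereas your contradiction hypothesis is precisely that $\phi(t_n)=\dist^i(\gamma(t_n),w_n)\to\infty$, so the $w_n$ are far outside $W^s_\loc(\gamma(t_n))$. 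In short, the scenario $w_n \to \partial U$ with $\phi(t_n)\to\infty$ is exactly what must be excluded, and nothing in your outline does so.

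The paper's proof sidesteps this compactness issue entirely. Rather than trying to trap the $w_n$ in a compact subset of $U$, it pushes the whole configuration forward by $f^n$ and uses Lemma~\ref{horizontality}: for large $n$ both $f^n(D)$ and $f^n(\Delta)$ are horizontal with respect to the cone field. Along the pushed-forward path $\gamma_n$ (which has length $\to 0$ by Lemma~\ref{shrinking} applied to the compact set $\gamma([0,1])\subset D\subset U$), the pushed-forward holonomy $h_n$ climbs from height $\length \de^0_n \to 0$ to a definite height $\geq \epsi$, forcing a large average slope. This contradicts the uniform transversality of $D_n$ and $\Delta_n$ to the stable direction. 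The key point is that this argument only needs compactness of $\gamma([0,1])$ in $U$, which is automatic, and never needs to control where the $w_n$ sit inside $\Delta$.
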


\begin{proof}
Assume there exists a path  $\gamma: [0, 1]  \ra D$,
$z^0 = \gamma(0) $,
such that  $h$ extends along $\gamma : [0,1)\ra D$   but does not extend to
$z^1 = \gamma(1)\in D$.
We let
$$
z^t = \gamma(t), \quad  h(z^t ) = \Phi (z^t, u^t),  \quad  t\in [0,1).
$$
Since $h$ does not extend to $z^1$ it follows that
$$
  \dist^i (z^t  , h( z^t  ) ) = |u^t|  \to \infty\
    {\mathrm {as}} \ t\to 1.
$$
Otherwise we would have a subsequence $t_k\to 1$
with bounded $u^{t_k} $. Then we could take a limit point $u^1$
of the $u^{t_k}$ and obtain a local holonomy from $z^1$ to
$h(z^1) = \Phi (z^1, u^1)$. This local holonomy must then agree with the holonomy along $\gamma$ for $t$ close to $1$, giving a contradiction.

For $t\in [0,1)$, let $\de^t$ be the ``intrinsically straight''
 path in $W^s (z^t) $ connecting
$z^t$ with  $h(z^t)$:
$$
       \de^t : [ 0, u^t]  \ra W^s (z^t ), \quad \de^t (\cdot) =
       \Phi(z^t , \cdot).
$$
Let $D_n= f^n(D)$,  $\gamma_n= f^n(\gamma)$, $z^t_n = f^n(z^t)$, $\de^t_n = f^n (\de^t)$,
and let $h_n=  f^n \circ h \circ f^{-n}$ be the push-forward holonomy
defined on $\gamma_n: [0, 1) \ra D_n $.
By  Lemma \ref{shrinking},
$$
   \length \gamma_n \to 0 \ {\mathrm{and}}\
     \length \de^0_n   \to 0 \
   {\mathrm{  as}}\  n\to \infty,
$$
where $\length $ stands for the Euclidean length.
In particular, the path $\de_n^0 $ lies in $W^s(z^0_n)$ for
$n$ sufficiently big. On the other hand, there exists an $\epsi>0$ such
that for any given $n$  and $t\in [1-\eta_n, 1) $ with $\eta_n\to 0$,
$\length \de^t_n  \geq  \epsi  $.  Let us select the smallest $t= t_n$ with this
property.

Let us use a local coordinate system near $z_n^0$
with axes $E^c$ and $E^s $ at that point.
 Then the holonomy $h_n$ on the short  path  $\gamma_n (t)  ) $, $0\leq t\leq t_n$,
quickly goes from  a small height (equal to $ \length \de_n^0$) to a
definite height (of order $\epsi$),
so it has a big average slope. It follows that somewhere either $\Delta_n$ or $D_n$
 must have a  small angle with the stable direction, contradicting the property that for large $n$
they are horizontal with respect to the cone field.
\end{proof}

\begin{corollary}\label{globalholonomy}
Any local holonomy map $h$ extends to a  homeomorphism $D\ra \Delta$.
\end{corollary}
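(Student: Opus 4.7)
The plan is to upgrade the path-extension statement of Lemma~\ref{holonomy} to a global well-defined map via the monodromy theorem, and then to obtain bijectivity and bicontinuity by a symmetric argument applied to the inverse holonomy. First, observe that the transversal $D$, being a non-singular holomorphic disk, is simply connected. Fix a base point $z_0 \in D$ and its local holonomy image $w_0 = h(z_0) \in \Delta$. For any $z \in D$, choose a path $\gamma: [0,1] \to D$ from $z_0$ to $z$, and let $h_\gamma$ denote the extension of $h$ along $\gamma$ guaranteed by Lemma~\ref{holonomy}; define $\tilde h(z) := h_\gamma(z)$. Any two such paths $\gamma_0, \gamma_1$ are homotopic in $D$ rel.~endpoints (since $D$ is simply connected), and the local uniqueness of holonomy plus the path-extension lemma applied to every intermediate path in the homotopy shows that $h_{\gamma_0}(z) = h_{\gamma_1}(z)$. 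Hence $\tilde h: D \to \Delta$ is well defined, and it extends the original $h$.

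Next, I would verify that $\tilde h$ is continuous. This is immediate: on a neighborhood of any $z \in D$, $\tilde h$ coincides with a local holonomy map between the transversals $D$ and $\Delta$, which is holomorphic (and in particular continuous), since it is obtained by intersecting nearby stable leaves with the transversal $\Delta$ and the stable lamination is transversally continuous on a neighborhood of $W^s(z)$ in $U$.

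To obtain a two-sided inverse, I would apply the entire construction with the roles of $D$ and $\Delta$ exchanged: $\Delta$ is also a global transversal to $U$, and the local holonomy $h^{-1}$ from a neighborhood of $w_0 \in \Delta$ to a neighborhood of $z_0 \in D$ satisfies the hypotheses of Lemma~\ref{holonomy} with the roles reversed. Thus there is a globally defined continuous extension $\widetilde{h^{-1}}: \Delta \to D$. Near $z_0$ the compositions $\widetilde{h^{-1}} \circ \tilde h$ and $\tilde h \circ \widetilde{h^{-1}}$ agree with the identity, and since both compositions are continuous holonomy maps along paths, the local uniqueness of holonomy extension (Lemma~\ref{holonomy}) forces them to be the identity on all of $D$ and all of $\Delta$ respectively. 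Therefore $\tilde h$ is a bijection with continuous inverse, i.e.\ a homeomorphism $D \to \Delta$.

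The main potential obstacle is the existence of the global extension, i.e.\ making sure that the inductive extension along a path never ``runs off to infinity'' in the intrinsic metric on a stable leaf, which would prevent the monodromy argument from applying. But this is exactly what Lemma~\ref{holonomy} rules out, so once that lemma is in hand, the corollary reduces to the standard monodromy package together with the symmetric argument for the inverse.
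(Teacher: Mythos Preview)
Your proposal is correct and follows essentially the same route as the paper: use simple connectivity of $D$ together with the unique path-extension from Lemma~\ref{holonomy} and the Monodromy Theorem to get a global continuous map, then apply the same argument to $h^{-1}$ (with the roles of $D$ and $\Delta$ swapped) to conclude it is a homeomorphism. One minor inaccuracy: the local holonomy need not be \emph{holomorphic} (only quasiconformal, as exploited later in the section), but you only use continuity, which is fine.
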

\begin{proof}
Since $D$ is simply connected and any $h$ extends uniquely along all paths, the usual argument of the Monodromy Theorem implies that
$h$ extends uniquely to a global continuous map $D\ra \Delta$.
Since the same is true for $h^{-1}$, it is a homeomorphism.
\end{proof}

In particular, we note that when $\Delta = D$, the holonomy maps form a group $\HH_D$ of homeomorphisms of $D$.

Denote by $W^s(D)$ the union of the strong stable manifolds through $D$.

\begin{lemma}
Let $D$ be a global transversal to a
 wandering component $U$.
Then  $U = W^s(D)$.
\end{lemma}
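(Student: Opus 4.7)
The plan is to establish the nontrivial inclusion $U \subseteq W^s(D)$; the reverse inclusion $W^s(D) \subset U$ is immediate, since for $z \in D \subset U$ any point of $W^s(z)$ has the same forward asymptotic behaviour as $z$ and therefore lies in the Fatou component $U$. Set $A := W^s(D) \cap U$. I will show that $A$ is nonempty, open, and closed in $U$; since $U$ is connected (being a Fatou component), this forces $A = U$. Nonemptiness is clear because $D \subset A$. The key tool for both openness and closedness will be Corollary \ref{globalholonomy}, which promotes any local holonomy between global transversals to a global homeomorphism.

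For openness at a point $z \in A$, I would first construct a small global transversal $\Delta$ through $z$: take a small holomorphic disk through $z$ transverse to $E^s_z$, chosen so small that $\bar\Delta \subset U$ (giving (T1)), that $\Delta$ is relatively compactly contained in a slightly larger holomorphic disk $\Delta'$ (giving (T2)), and that $\Delta'$ remains transverse to $E^s$ on all of $\bar\Delta$ by continuity of the stable line field (giving (T3)). Since $z \in \Delta \cap A$, local holonomy from a neighborhood of $z$ in $\Delta$ to a neighborhood of $W^s(z) \cap D$ in $D$ is defined, so by Corollary \ref{globalholonomy} it extends to a homeomorphism $\Delta \to D$. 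In particular every $\zeta \in \Delta$ satisfies $W^s(\zeta) \cap D \ne \emptyset$, i.e.\ $\Delta \subset A$. Because $A$ is by construction a union of strong stable leaves and the local foliation near $z$ has flow-box product structure over $\Delta$, a full open neighborhood of $z$ in $U$ lies in $A$.

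For closedness, suppose $z_n \in A$ and $z_n \to z \in U$; choose a small global transversal $\Delta$ through $z$ as above. By continuity of the stable foliation in a flow box around $z$, for large $n$ the leaf $W^s(z_n)$ meets $\Delta$ at a point $\zeta_n$ close to $z$, and since $W^s(\zeta_n) = W^s(z_n)$ meets $D$ (as $z_n \in A$), some point of $\Delta$ has its stable leaf intersecting $D$. Corollary \ref{globalholonomy} then produces a holonomy homeomorphism $\Delta \to D$, and evaluating it at $z \in \Delta$ yields $W^s(z) \cap D \ne \emptyset$, i.e.\ $z \in A$.

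The main point to verify carefully is that a small disk $\Delta$ through an arbitrary $z \in U$ really can be arranged to satisfy (T1)--(T3); once this is in place the argument is short because all the real work has already been invested in Lemma \ref{holonomy} and Corollary \ref{globalholonomy}. A potential secondary worry is whether the intrinsic distance $|u_n|$ (with $w_n = \phi_{\zeta_n}(u_n)$) could blow up as $\zeta_n \to z$, which would force $w_n$ to escape $D$; but this is precisely the failure of holonomy extension ruled out by Lemma \ref{holonomy}, so invoking its corollary bypasses the issue entirely.
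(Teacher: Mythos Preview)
Your overall open--closed strategy is sound and close in spirit to the paper's argument, but there is a genuine gap in the construction of your auxiliary disk $\Delta$. Condition (T1) requires $\Delta$ to be \emph{properly embedded} in $U$, i.e.\ the inclusion $\Delta\hookrightarrow U$ is a proper map, so that $\partial\Delta\subset\partial U$. Your choice ``$\bar\Delta\subset U$'' is exactly the opposite: it makes $\Delta$ relatively compact in $U$, so (T1) fails. Consequently your $\Delta$ is \emph{not} a global transversal, and you are not entitled to invoke Corollary~\ref{globalholonomy} as stated; in particular the assertion that holonomy gives a \emph{homeomorphism} $\Delta\to D$ cannot be right, since $\Delta$ is a tiny disk while $D$ is spread across all of $U$.

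The paper handles closedness differently, precisely to avoid this issue: given $z\in U\cap\overline{W^s(D)}$, it first pushes forward to $z_n=f^n(z)$ for large $n$, and then uses Lemma~\ref{existence} to produce a \emph{genuine} global transversal $\Delta$ (a component of $\mathbb L_0\cap f^n(U)$) meeting $W^s_R(z_n)$. Now both $\Delta$ and $D_n=f^n(D)$ satisfy (T1)--(T3), so Corollary~\ref{globalholonomy} legitimately yields a holonomy homeomorphism $\Delta\to D_n$, hence $W^s(\Delta)=W^s(D_n)\ni z_n$.

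Your approach is in principle salvageable: what you actually need is only that holonomy from your local $\Delta$ into $D$ extends along paths in $\Delta$, and a careful reading of the proof of Lemma~\ref{holonomy} shows that (T1) for the \emph{source} is not really used---what matters is (T2)+(T3) for both (to get eventual horizontality via Lemma~\ref{horizontality}) together with (T1) for the \emph{target} $D$ (so that $D$ is closed in $U$ and a bounded intrinsic distance forces the holonomy image to stay in $D$). But this requires you to reopen the proof of Lemma~\ref{holonomy} rather than cite its corollary, and you should drop the word ``homeomorphism'' in favor of ``holonomy map $\Delta\to D$''. As written, the proposal has a gap at exactly the point you yourself flagged as ``the main point to verify carefully.''
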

\begin{proof}
It is immediate that $W^s(D)$ is an open subset of $U$. Let $z \in U \cap \overline{W^s(D)}$. Let $n \in \mathbb N$ such that $f^n(U)\cap \Delta^2_R$ is contained in $\VV^+$. By Lemma \ref{existence} there exists a global transversal $\Delta \subset L$ intersecting $W^s_R(z_n)$, or equivalently $z_n$ is contained in the semi-local strong stable manifold of some $w\in \Delta$. Then $W^s(\Delta)$ contains a neighborhood of $z_n$, and hence intersects $f^n(D)$. Thus, we obtain a local holonomy map from $\Delta$ to $f^n(D)$, which by Corollary \ref{globalholonomy} extends to a homeomorphism $h:\Delta \rightarrow D_n$. It follows that $W^s(D_n) = W^s(\Delta)$, implying $z_n \in W^s(D_n)$. We conclude that $z \in W^s(D)$, completing the proof.
\end{proof}

\subsection{Uniformization}

\begin{prop}
Let $D$ be a global transversal to a
wandering component $U$.
Then any stable manifold intersects $D$ in at most one point.
\end{prop}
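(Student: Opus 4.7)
The plan is to argue by contradiction: suppose there exist $z_1 \neq z_2$ in $D$ with $z_2 \in W^s(z_1)$, and use the contraction along strong stable leaves together with the horizontality of the forward images $f^n(D)$ to force $z_1 = z_2$.

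First, write $z_2 = \phi_{z_1}(u)$ with $u \neq 0$, so $\dist^i(z_1, z_2) = |u|$. Using the conjugation identity (\ref{uniformlization rule}), the iterated images satisfy
\[
  f^n(z_2) = \phi_{f^n(z_1)}(\lambda_n u), \qquad \lambda_n = \| df^n_{z_1}|\, E^s\|.
\]
By Lemma \ref{shrinking}, the derivatives $\|df^n\|$ converge to $0$ uniformly on compact subsets of $U$, so in particular $\lambda_n \to 0$. Hence $\dist^i(f^n(z_1), f^n(z_2)) \to 0$, and by Lemma \ref{intrinsic vs asymp}(i) the Euclidean distance also tends to $0$, and $f^n(z_2) \in W^s_{\loc}(f^n(z_1))$ for all sufficiently large $n$.

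Next, apply Lemma \ref{horizontality}: for $n$ large enough, $f^n(D)$ is horizontal with respect to the cone field, i.e.\ its tangent space at every point lies in the horizontal cone. On the other hand, $W^s_{\loc}(f^n(z_1))$ is tangent to the stable line field, which lies in the vertical cone, and $f^n(z_1) \in f^n(D) \cap W^s_{\loc}(f^n(z_1))$. By the opening angle between the horizontal and vertical cones there is a uniform lower bound on the angle of intersection, so locally $f^n(D)$ is a horizontal graph and $W^s_{\loc}(f^n(z_1))$ is a transverse vertical graph. Their intersection near $f^n(z_1)$ consists of the single point $f^n(z_1)$.

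Since $f^n(z_2)$ belongs to $f^n(D) \cap W^s_{\loc}(f^n(z_1))$ and is arbitrarily close to $f^n(z_1)$ for $n$ large, we conclude $f^n(z_2) = f^n(z_1)$. Injectivity of $f$ then gives $z_1 = z_2$, contradicting our assumption. The only delicate step is to verify that $f^n(z_2)$ genuinely lies in $f^n(D)$ and in the local stable manifold simultaneously, which is where Lemma \ref{shrinking} and the uniformization identity (\ref{uniformlization rule}) are essential; everything else is a transversality argument in the cone fields.
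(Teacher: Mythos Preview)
Your argument has a genuine gap at the transversality step. You assert that because $f^n(D)$ is horizontal and $W^s_{\loc}(f^n(z_1))$ is vertical, their intersection ``near $f^n(z_1)$'' is the single point $f^n(z_1)$. But ``near'' in $\mathbb{C}^2$ is not the same as ``near'' in the intrinsic topology of $f^n(D)$. Even though $f^n(D)$ is everywhere horizontal, it need not be a single graph over the horizontal direction in a ball around $f^n(z_1)$: several horizontal sheets of $f^n(D)$ can pass through that ball. The point $f^n(z_2)$ lies on the sheet coming from a neighborhood of $z_2$ in $D$, not from a neighborhood of $z_1$, and transversality alone gives no reason these sheets coincide. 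Concretely, the \emph{hyperbolic} distance in $f^n(D)$ between $f^n(z_1)$ and $f^n(z_2)$ equals the fixed hyperbolic distance in $D$ between $z_1$ and $z_2$; only the Euclidean diameter shrinks. A horizontal holomorphic disk of small Euclidean diameter can still cover its horizontal projection with degree $>1$ and hence meet a vertical leaf in several points (for instance, the image of $t\mapsto(\delta e^{Mt},\,\epsi\,\delta e^{Mt})$ for $M$ large and $\delta$ small).

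The paper's proof is substantially more elaborate precisely to circumvent this issue. Instead of iterating forward, it studies the holonomy group $\HH_D$ acting on the \emph{fixed} disk $D$: it shows the action is uniformly quasiconformal, free, and properly discontinuous, then invokes the Measurable Riemann Mapping Theorem to conjugate a nontrivial $h\in\HH_D$ to a M\"obius map of $\D$. Denjoy--Wolff forces an $h$-orbit $(z_n)$ to escape to $\partial D$ with $\|z_n-z_{n+1}\|\to 0$. The key point is that any limit $q$ lies in $\overline{D}\subset D'$, and since $\overline{D}$ is \emph{compact in $D'$} (assumption (T2)), convergence in $\mathbb{C}^2$ forces convergence in $D'$; thus $z_n$ and $z_{n+1}$ eventually lie on a single local sheet of $D'$ near $q$. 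Only then does transversality of $D'$ to $E^s_q$ (assumption (T3)) yield the contradiction. Your shortcut skips exactly the mechanism---pushing to $\partial D$ via the holonomy orbit and using compactness of $\overline{D}$ in the fixed ambient curve $D'$---that guarantees the single-sheet behavior required for the final transversality argument.
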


\begin{proof}
Suppose for the purpose of a contradiction that a strong stable
manifold intersects $D$ in two distinct points,
and denote the induced holonomy homeomorphism  by $h: D\rightarrow D$.
Let us consider push-forward holonomies
$$
   h_n= f^n\circ h\circ f^{-n} : D_n \ra D_n, \quad
     {\mathrm{where}}\ D_n= f^n(D).
$$
For  any point $z\in D$ and $n$ big enough (depending on $z$),
it is the holonomy along the local stable foliation in some flow box $B_i$  containing $z_n=f^nz$.
By the $\lambda$-lemma \cite{MSS},
$h_n$  is locally quasiconformal (``qc'') near $z_n$.
Since a biholomorphic map $f^n$  does not change the dilation,
$h$ is locally qc near $z$, with the same local dilatation
(depending only on $B_i$ but not on $h$, $z$  and $n$). Since there are only finitely many flow boxes $B_i$
covering the whole domain of the dominated splitting,
these local dilatations are uniformly bounded for all $h\in \HH\equiv \HH_D$ and  $z\in D$.
Hence each $h\in \HH$ is globally qc on $D$ with uniformly bounded
dilatation, so the holonomy group $\HH$  acts uniformly qc on $D$.

Furthermore,  $\HH$ acts freely on $D$
since fixed points of the action would be tangencies between the
stable foliation and $D$.

Moreover, for any point $z \in D$,
the intersection $W^s(z) \cap D$ is discrete in the intrinsic topology
of $W^s(z)$. Otherwise, there would exist distinct  points
$w^m = \phi_z  (u^m )  \in D $ with bounded $u^m$.
Then we could  select a subsequence converging to a point
$w  = \phi_z (u)\in \overline D \subset D' $,
which would be a non-isolated point of the intersection
$W^s(z) \cap D'$.

In fact, this discreteness is
uniform in the following sense:
For any $M$ and any $z\in \overline D$,
$\dist^i (z, hz) > M$ for all but finitely many holonomy
homeomorphisms $ h\in \HH$.
Indeed, if there is
sequence $w^m =  h_m (z^m) = \Phi(z^m , u^m)$ with bounded $u^m$,
then we can select a converging subsequence
$u^m \to u $,  $z^m\to z\in\overline D$, $w^m\to w\in \overline D$ so that
$w= \Phi(z, u) = h(z)$ for some $h\in \HH$.
Then $h(z^m ) = \Phi(z^m , t^m) $ with $t^m \to u$, and hence
$|t^m- u^m|\to 0$.
But for $m$ big enough, both $\Phi(z^m , t^m) $  and $w^m= \Phi(z^m ,u^m)$
lie in the same flow box $B$ around $w$.
It follows that they lie in the same local leaf of $B$.
Since the latter intersects $\overline D$ at  a single point,
we conclude that $w^m=  h(z^m)$,
and hence $h_m=h$ (for all big enough $m$).

Let us now show that $\HH$  acts properly discontinuously on $D$,
i.e, for any two  neighborhoods $Z$ and $W$
compactly contained in $D$, we have $h(Z)\cap W =\emptyset$ for all but finitely
many $h\in \HH$. Indeed, assume there is  a sequence of distinct
$ h^m \in \HH$
and of points $z^m \in Z$, $w^m = h^m (z^m) \in W$.
As we have just shown,  $\dist^i (z^m , w^m) \to \infty$.
Now we can apply our usual argument to arrive to a contradiction.
Namely, there exist moments $n_m\to \infty$ that bring the  points $z^m$ and
$w^m$ to the same local stable manifold, implying that
 $ \| f^{n_m} (z^m) - f^{n_m} ( w^m) \| \asymp 1$,
which contradicts to Lemma~\ref{shrinking}.

Hence the quotient $S= D/\HH_D$ is a qc surface
(i.e., a surface endowed with qc local charts with uniformly bounded dilatation).
Taking any conformal structure (a Beltrami differential) $\mu$ on $S$
and pulling it back to $D$, we obtain an $\HH$-invariant conformal structure
on $D$. By the Measurable Riemann Mapping Theorem,
there exists a qc map $\psi: D \ra \D$ such that
$g= \psi\circ h \circ \psi^{-1}$ is  M\"obius for any $h\in \HH_D$.

Let $\zeta\in \mathbb D$ and denote its orbit by $\zeta_n = g^n(\zeta)$.
Then   the hyperbolic distance between $\zeta_n$ and $\zeta_{n+1}$
is independent of $n$ since it  is preserved under holomorphic automorphisms.
Let  $z_n = \psi^{-1}(\zeta_n) \in D$.
Since $\psi$ is quasiconformal, it is a quasi-isometry, that is,
$\psi$ expands the hyperbolic distance by a bounded factor for scales bounded away from zero.
 It follows that the hyperbolic distance between $z_n$ and $z_{n+1}$ is bounded for all $n$.

Since $g$ does not have fixed point in $\D$,
 the $\zeta_n$ converge to a Denjoy-Wolff point in $\partial\mathbb  D$.
Hence the sequence $(z_n)_{n \in \mathbb N}$ escapes to the boundary $\di D$.
Since near the boundary the hyperbolic metric of $D$  explodes relatively the
Euclidean metric of $D'$, we conclude that
\be\label{dist goes to 0}
\| z_n - z_{n+1} \|\to 0.
\ee

Note that all the points $z_n$ lie in the same stable manifolds
$W^s(z)$,
so we can measure the intrinsic distance  between them.
Property (\ref{dist goes to 0}), together with   Lemma~\ref{intrinsic vs asymp},
imply that  $\dist^i (z_n, z_{n+1}) \to 0$. It follows that any limit
point $ q \in\overline D\subset  D' $  for $(z_n)$ is a tangency between
$D'$ and $ W^s(q)$, and this contradiction completes the proof.
\comm{***
Now consider a subsequence $y_{n_j}$ converging to $p \in \partial W$. Then it follows that the subsequence $y_{n_j+1}$ converges to $p$ as well. Since the inner radius of $f^k(W)$ converges to zero as $k \rightarrow \infty$, it follows that
$$
\|f^k(y_{n_j}) - f^k(y_{n_j+1})\| < \epsilon
$$
for all $j \ge 1$ and all $k$ sufficiently large. Lemma \ref{degree-lemma1} therefore implies that for $k$ sufficiently large the pairs $f^k(y_{n_j})$ and $f^k(y_{n_j+1})$ must lie in local stable manifolds for all $j$. Fix such $k$. As $f^k(y_{n_j})$ and $f^k(y_{n_j+1})$ approach $f^k(p)$ as $j \rightarrow \infty$, and these pairs lie on local stable manifolds, we can pull back by $f^k$ and conclude that $y_{n_j}$ and $y_{n_j+1}$ lie on local stable manifolds for $j$ sufficiently large. This is in contradiction with our earlier observation that  $\overline{W}$ is transverse to the dynamical vertical lamination, and the proof is complete.
********************}
\end{proof}

\begin{remark}
  The above application of the Measurable Riemann Mapping Theorem is a
  special case  of Sullivan's Theorem concerning  uniformly qc group
  actions \cite{S2}, \cite{Tukia}.
\end{remark}

Since stable manifolds in $U^0$ intersect $D$ in a unique point,
we conclude:

\begin{corollary}\label{global straightening}
Let $D$ be a global transversal to a
wandering component $U$.
  Then the  uniformization $\Phi: D\times \C \ra U$ is a vertically
  holomorphic  homeomorphism.
\end{corollary}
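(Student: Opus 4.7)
The plan is to verify the three features of $\Phi$---vertical holomorphy, bijectivity, and being a homeomorphism---separately, and then assemble them.

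\emph{Vertical holomorphy and bijectivity.} For each fixed $z \in D$, the map $u \mapsto \Phi(z,u) = \phi_z(u)$ is by construction the uniformization of $W^s(z)$, hence holomorphic in $u$. For injectivity: if $\Phi(z_1,u_1) = \Phi(z_2,u_2)$, the common image lies in $W^s(z_1) \cap W^s(z_2)$, so these stable manifolds coincide. By the preceding proposition, a single stable manifold meets $D$ in at most one point, so $z_1 = z_2$, and univalence of $\phi_{z_1}$ then yields $u_1 = u_2$. Surjectivity is immediate from the identity $U = W^s(D)$ established just above: every $p \in U$ lies in some $W^s(z)$ with $z \in D$, and $\phi_z$ maps $\C$ onto $W^s(z)$.

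\emph{Continuity of $\Phi$.} The essential point is that $z \mapsto \phi_z$ is continuous into the space of holomorphic maps $\C \ra \C^2$, uniformly on each compact disk $\{|u| \le r\}$. Continuity near $u = 0$ is standard: the local stable manifolds $W^s_\loc(z)$ depend continuously on $z$ as graphs over $E^s_z$, the stable line field is continuous along $\overline{D} \subset D'$ by assumption (T3), and the normalizing unit section $v(z)$ was chosen continuous. To propagate this to arbitrary $r$, I would invoke the functional equation
\[
\phi_z(u) \;=\; f^{-n}\!\left(\phi_{f^n(z)}\bigl(\lambda_n(z)\,u\bigr)\right), \qquad \lambda_n(z) = \|df^n_z|E^s\|.
\]
By Lemma~\ref{shrinking}, $\lambda_n(z) \to 0$ uniformly on compact subsets of $D$, so for any given $r>0$ one can choose $n$ so large that $\lambda_n(z)\cdot r$ is uniformly small. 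Then $\phi_{f^n(z)}$ is being evaluated in the local regime where continuity in the base point is known, and continuity of $z \mapsto f^n(z)$ together with the fixed biholomorphism $f^{-n}$ propagates this back to $\phi_z$ on the disk of radius $r$.

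\emph{From continuous bijection to homeomorphism.} Both $D \times \C$ and $U$ are connected complex $2$-manifolds (real dimension $4$), and $\Phi$ is a continuous injection between them. By Brouwer's Invariance of Domain $\Phi$ is open, hence a homeomorphism onto its image, which is $U$ by surjectivity. Since $\Phi$ is vertically holomorphic by the first step, this gives the corollary.

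\emph{Main obstacle.} The most delicate ingredient is the uniform-in-$z$ local continuity of $\phi_{f^n(z)}$ at iterates that may leave $D$. This is handled by the fact that $f^n(U) \cap \Delta^2_R$ lies in $\NN(J^+_R)$ for $n$ large (by Lemma~\ref{horizontality} and the argument of \S\ref{stable manifolds}), where local stable manifolds have uniformly positive size and vary continuously with the base point. The functional equation then transfers this uniform local continuity back to $\phi_z$ at every compact scale.
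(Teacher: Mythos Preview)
Your argument is correct and follows the same route the paper has in mind. In the paper, continuity of $\Phi$ was asserted already when $\Phi$ was introduced (just before the holonomy discussion), so the corollary is stated as an immediate consequence of the proposition that each stable manifold meets $D$ at most once, together with the earlier lemma $U=W^s(D)$; the passage from continuous bijection to homeomorphism is left implicit. You have simply supplied the details the paper skipped: a proof of continuity via the functional equation for the linearizers, and an explicit appeal to Invariance of Domain.

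Two small remarks. First, the reference to Lemma~\ref{shrinking} for $\lambda_n(z)\to 0$ is slightly misplaced: that lemma controls the full derivative $\|df^n\|$, whereas the exponential decay of $\lambda_n=\|df^n_z|\,E^s\|$ comes directly from the stable contraction (Lemma~\ref{newlemma5}); either way the conclusion you need holds, and in fact uniformly on $\overline D$. Second, in the functional equation the normalization of $\phi_{f^n(z)}$ is the intrinsic one ($\|\phi_{f^n(z)}'(0)\|=1$, defined up to a unimodular factor) rather than via the vector field $v$, which was only chosen on $D'$; this introduces a harmless unimodular twist that does not affect the continuity argument on compacta.
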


\subsection{Degree bound}

Let $U^0$ be a semi-local wandering component of a wandering
component $U$, and let $U^n$ be the component of $f^n(U)\cap \Delta^2_R$ containing
$f^n (U^0)$. we define the degree of $f^n : U^0 \ra U^n$
as the maximal number of semi-local dynamical leaves in
$U^0 $ that are mapped into a single semi-local dynamical leaf in $U^n $.

\begin{lemma}\label{lemma:forwarddegree}
For any semi-local wandering component $U^0$,
the degree of $f^n: U^0 \rightarrow U^n$ is uniformly bounded over all
$n$ (with a bound depending on $U^0$).
\end{lemma}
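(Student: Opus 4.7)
The plan is to leverage the uniformization established in Corollary \ref{global straightening}. Fix a global transversal $D \subset U$ and the induced homeomorphism $\Phi : D \times \C \to U$, under which strong stable manifolds become vertical fibers $\{d\}\times \C$. A semi-local dynamical leaf in $U^0$ corresponds exactly to a connected component of the slice $\tilde U^0 \cap (\{d\}\times \C)$, where $\tilde U^0 := \Phi^{-1}(U^0)$. The key observation is that $f^n$ bijectively maps strong stable manifolds to strong stable manifolds; hence any two semi-local leaves in $U^0$ lying on different stable manifolds have their $f^n$-images on different stable manifolds and therefore lie in distinct semi-local leaves of $U^n$. Consequently, the degree of $f^n : U^0 \to U^n$ is bounded above by
\[
N(U^0) \;:=\; \sup_{p \in U} \#\bigl\{\text{connected components of } W^s(p) \cap U^0\bigr\},
\]
a quantity depending only on $U^0$ and independent of $n$.

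It remains to verify that $N(U^0)<\infty$. Since the count depends only on the stable manifold $W^s(p)$, and every stable manifold in $U$ meets $D$ in a unique point, the supremum may be restricted to $p$ in the relatively compact $\overline D$. Since $W^s(p) \cap U^0 \subset W^s(p) \cap \De^2_R$, it suffices to bound the number of components of $\phi_p^{-1}(\De^2_R) \subset \C$, where $\phi_p : \C \to W^s(p)$ is the linearization. Under substantial dissipativity the subharmonic function $G^- \circ \phi_p$ has order of growth strictly less than $\frac{1}{2}$; combining the Wiman Theorem with the fact that $G^-$ tends to infinity along $V^-$ shows that $\phi_p^{-1}(\De^2_R)$ is contained in a bounded disk in $\C$, uniformly in $p \in \overline D$.

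To count the components, I would observe that each component of $\phi_p^{-1}(\De^2_R)$ corresponds to a semi-local leaf and, by Lemma \ref{boundeddegrees}, yields at least one preimage of a generic value $y_0 \in \De_R$ under $\pi_2 \circ \phi_p$. Since $G^-$ grows like $\log|y|$ in $V^-$, the entire function $\pi_2 \circ \phi_p$ has order strictly less than $\frac{1}{2}$; its maximum modulus on the bounded region produced above is uniformly bounded for $p \in \overline D$ by continuity of $\phi_p$ in $p$. Jensen's formula then bounds the number of zeros of $\pi_2 \circ \phi_p - y_0$ in that disk uniformly in $p$, yielding $N(U^0)<\infty$.

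The main obstacle is precisely this uniform-in-$p$ estimate. The uniformization reduces the two-dimensional degree question to a one-parameter family of one-dimensional counting problems, but one must check that the linearizations $\phi_p$ (and therefore $G^- \circ \phi_p$ and $\pi_2 \circ \phi_p$) depend continuously on $p \in \overline D$; this is inherited from the continuous choice of normalization built into the transversal $D$. With that continuity in hand, the Wiman--Jensen argument goes through and the $n$-independent degree bound follows.
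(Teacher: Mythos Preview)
Your first reduction is correct and efficient: since $f^n$ carries distinct global stable manifolds to distinct global stable manifolds, any collection of semi-local leaves in $U^0$ landing in a single semi-local leaf of $U^n$ must lie on one common $W^s(p)$. Hence the degree of $f^n:U^0\to U^n$ is bounded by
\[
N(U^0)=\sup_{p}\#\{\text{semi-local leaves of }W^s(p)\text{ contained in }U^0\},
\]
a quantity that is visibly independent of $n$. This is a clean observation.

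The gap is in the second step. You replace $W^s(p)\cap U^0$ by $W^s(p)\cap\De^2_R$ and then try to show $\phi_p^{-1}(\De^2_R)$ lies in a fixed bounded disk via Wiman. This enlargement loses too much: a stable manifold can re-enter $\De^2_R$ infinitely often (for $p\in J^+$ this is generic, and for $p\in U$ the manifold $W^s(p)\subset U$ may meet infinitely many semi-local components $U^i$ of $U\cap\De^2_R$). Wiman only tells you that each subpotential \emph{component} of $G^-\circ\phi_p$ is bounded, not that there are finitely many or that their union sits in a single disk; so the Jensen count on ``the bounded disk'' never gets started. The difficulty you flag (``uniform-in-$p$'') is real, but the obstruction appears already for a single $p$.

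The paper sidesteps this entirely. Instead of counting leaves of $W^s(p)$ in $U^0$, it slices $U^0$ by the horizontal line $\mathbb L_0$, pushes the slice $L^0$ forward to $L^n=f^n(L^0)\subset U^n$, and observes that in the uniformization $\Phi_n:D^n\times\C\to f^n(U)$ the quantity to bound is the degree of the branched cover $\pi_1\circ\Phi_n^{-1}:L^n\to D^n$. By Riemann--Hurwitz this degree equals $1$ plus the number of tangencies of $L^n$ with the dynamical vertical foliation; since $f$ preserves that foliation, this number equals the number of tangencies of $L^0$ with the foliation in $U^0$, which in turn is bounded by the fixed (finite) number $t$ of tangencies of $\mathbb L_0$ with the artificial vertical lamination. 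No growth estimates on $\phi_p$ are needed.

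If you want to rescue your approach, note that the Riemann--Hurwitz step with $n=0$ already proves $N(U^0)\le 1+t$: each semi-local leaf in $U^0$ meets $L^0$, and the number of intersections of $L^0$ with a single stable manifold is exactly the degree of $\pi_1\circ\Phi^{-1}|_{L^0}$. So your $N(U^0)$ is finite, but the proof of finiteness is the paper's argument rather than Wiman--Jensen.
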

\begin{proof}
By replacing $U^0$ with an appropriate $U^m$,
we can ensure that all the domains $U^n$, $n\geq 0$,
are contained in the neighborhood $\NN(J^+)$,
so the dynamical and the artificial vertical laminations coincide on these domains. Note that degrees of compositions are sub-multiplicative, so a bound on the degrees of the maps $f^n: U^m \rightarrow U^{m+n}$ implies a bound on the degrees of the maps $f^{m+n}: U^0 \rightarrow U^{m+n}$.

Let us consider  a horizontal line $L= \mathbb L_0$,
and let $t$ be the number of tangencies between $\mathbb L_0$ and the artificial vertical lamination.
Let $ L^0 := U^0\cap L$, and let $L^n := f^n(L^0)$.

As each semi-local dynamical leaf of  $U^0$ intersects $L^0$,
the degree  of $f^n : U^0 \ra U^n$ is bounded by the maximal number of
intersections between $ L^n $ and the vertical leaves of $U^n$.
Since  the artificial vertical lamination of $U^n$ coincides with the
(invariant) dynamical vertical lamination,
the degree of $f^n : U^0 \ra U^n$ is bounded by the maximal number of
intersections between $ L^n $ and dynamical leaves of $f^n(U)$.

Let $D^n$ be a global transversal to $f^n (U)$ and let
$\Phi_n: D^n \times \C \ra f^n(U)$ be the corresponding uniformization.
Then the maximal number of
intersections between $ L^n $ and dynamical leaves of $f^n(U)$
is equal to the degree $d$ of the horizontal projection $\Phi_n^{-1} (L^n)
\ra D^n$. This projection is a branched covering since $L_n$ is
properly embedded into $U^n$.
By the  Riemann-Hurwitz  formula, $d$ equals at least one plus the number of tangencies  (counted with multiplicities)
between $L_n$ and  the dynamical foliation.
But the latter is preserved by the dynamics, so it equals the
number of tangencies between $L^0$ and the dynamical foliation of $U^0$,
which is bounded by $t$. The conclusion follows.
\end{proof}

\section{Horizontal lamination and $\deg_\crit$}

We let $U^0$ be a semi-local wandering Fatou component, and for $n \in
\mathbb N$ we write $U^{-n}$ for a semi-local wandering components
satisfying $f^n(U^{-n}) \subset U^0$.

\medskip
{\bf Assumption A.}
 {\em Let us say that a semi-local wandering component $U^0$ satisfies Assumption A
if  all possible choices of the domains $U^{-n}$, $n\geq 0$,
are contained in $\mathcal{N} ( J^+_R) $ (defined at the end of \S\ref{loc and glob extensions}),
and thus in the domain of dominated splitting.}

\smallskip
In what follows, through Corollary \ref{bound for A},
we will assume that $U^0$ satisfies Assumption $A$.

\medskip
We write $W^{-n}$ for a connected component of $U^{-n} \cap \mathbb
L_0$, and consider holomorphic disks $f^n(W^{-n}) \subset U^0$,
ranging over all $n \in \mathbb N$ and all choices of $U^{-n}$ and $W^{-n}$.
Since the horizontal line $\mathbb L_0$ is chosen so that it is
transverse to the artificial vertical lamination near $J^+$,
for $n$ sufficiently large the tangent spaces to the holomorphic disks
$f^n(W^{-n})$ are contained in the horizontal cone field.
 In fact, by taking $n$ sufficiently large we may assume that the horizontal cone field is arbitrarily thin.

Let $z \in U^0$, and consider small bidisks $\Delta^2_\rho(z) \subset \Delta^2_r(z) \subset U^0$, with respect to affine coordinates contained in the horizontal respectively vertical cone field, and with boundary bounded away from $J^+$. For $\rho \ll r \ll 1$  any connected component of $f^n(W^{-n}) \cap \Delta^2_r(z)$ intersecting $\Delta^2_\rho(z)$ is a horizontal graph. The collection of these graphs form a normal family, hence any sequence has a subsequence that converges locally uniformly. We consider the Riemann surfaces $\mathcal{S}_\nu$ that are locally given as uniform limits of these horizontal graphs.

\begin{lemma}
If two limits $\mathcal{S}_1$ and $\mathcal{S}_2$ intersect, then they are equal.
\end{lemma}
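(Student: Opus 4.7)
The plan is to work in local coordinates near $p$ adapted to the horizontal and vertical cone fields, so that the vertical (stable) direction is parallel to the $y$-axis. Both $\mathcal{S}_1$ and $\mathcal{S}_2$ are then horizontal graphs $\mathcal{S}_i = \{y = \gamma_i(x) : x \in \Delta\}$ with $\gamma_1(x_0) = \gamma_2(x_0) = y_0$, and likewise each approximating $g_k = f^{n_k}(W_1^{-n_k}) \cap \Delta_r^2(z)$, $h_k = f^{m_k}(W_2^{-m_k}) \cap \Delta_r^2(z)$ is a holomorphic graph $y = \tilde\gamma_{g_k}(x)$, resp.\ $y = \tilde\gamma_{h_k}(x)$, converging locally uniformly to $\gamma_1$, resp.\ $\gamma_2$. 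Assuming for contradiction that $\gamma_1 \not\equiv \gamma_2$, the holomorphic function $\gamma_1-\gamma_2$ has an isolated zero at $x_0$, so Hurwitz's theorem produces zeros $x^{(k)} \to x_0$ of $\tilde\gamma_{g_k} - \tilde\gamma_{h_k}$; equivalently, $g_k$ and $h_k$ meet at points $r_k \to p$.

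Each such intersection point $r_k$ is simultaneously $f^{n_k}(a_k)$ and $f^{m_k}(b_k)$ for uniquely determined $a_k \in W_1^{-n_k} \subset \mathbb{L}_0$ and $b_k \in W_2^{-m_k} \subset \mathbb{L}_0$. Taking WLOG $n_k \le m_k$ and using injectivity of $f^{n_k}$, one gets $a_k = f^{m_k - n_k}(b_k)$, so $b_k$ lies in the discrete analytic set $\mathbb{L}_0 \cap f^{-(m_k - n_k)}(\mathbb{L}_0)$. I split into two cases according to whether the exponents $m_k - n_k$ stay bounded or not. If, along a subsequence, $m_k - n_k = c$ is constant, then $\mathbb{L}_0 \cap f^{-c}(\mathbb{L}_0)$ is a finite set, so a further subsequence has $b_k \equiv b$ constant. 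Then $r_k = f^{m_k}(b) \to p \in U^0$ with distinct $m_k \to \infty$, meaning the forward orbit of $b$ visits $U^0$ infinitely often; but $U^0$ lies in the wandering component $U$, whose iterates $f^n(U)$ are pairwise disjoint, so any orbit can meet $U^0$ at most once --- contradiction.

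In the complementary case $m_k - n_k \to \infty$, I use the uniformization of the wandering component $U$ as $D \times \mathbb{C}$ provided by Corollary \ref{global straightening}, together with the contraction Lemma \ref{shrinking} and the uniform horizontal degree bound of Lemma \ref{lemma:forwarddegree}, to show that $a_k$ and $b_k$ must become arbitrarily close on $\mathbb{L}_0$ in the limit; this collapses $W_1^{-n_k}$ and $W_2^{-m_k}$ onto the same local piece of $\mathbb{L}_0$, contradicting the assumption that the resulting graphs $g_k, h_k$ limit to \emph{distinct} surfaces $\mathcal{S}_1 \neq \mathcal{S}_2$ near $p$. The main obstacle lies precisely in this second case: the wandering components $V_{m_k} = f^{-m_k}(U)$ that contain $b_k$ vary with $k$, so Lemma \ref{shrinking}'s contraction estimate (formulated for a single fixed wandering component) does not apply off the shelf, and extracting uniform contraction in $k$ requires working throughout in the uniformization of $U$ and exploiting the bounded-degree estimate to control the vertically fibered structure of the maps $f^{n_k}: V_{n_k} \to U$.
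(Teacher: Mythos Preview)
Your setup through Hurwitz's theorem is fine and matches the paper. Case~1 (bounded $m_k-n_k$) is correct; in fact, once $b_k\equiv b$ is constant along a subsequence you get the contradiction more quickly than you state: $b$ would lie in $W_2^{-m_k}\subset U^{-m_k}\subset f^{-m_k}(U)$ for at least two distinct values of $m_k$, but those preimages of the wandering component $U$ are pairwise disjoint.

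The genuine gap is Case~2, which you flag yourself. Your plan there --- showing that $a_k$ and $b_k$ become arbitrarily close on $\mathbb L_0$, so that $W_1^{-n_k}$ and $W_2^{-m_k}$ ``collapse onto the same local piece of $\mathbb L_0$'' --- cannot work as stated: the points $a_k\in U^{-n_k}$ and $b_k\in U^{-m_k}$ lie in \emph{disjoint} semi-local components whenever $n_k\ne m_k$, so they never sit in a common piece of $\mathbb L_0$, let alone converge to a common point there. The uniformization and degree bound you invoke control the fibred structure of a single map $f^n:U^{-n}\to U^0$, not the relation between $U^{-n_k}$ and $U^{-m_k}$; nothing in your outline bridges that.

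The paper avoids the case split altogether with a direct rates argument. Given an intersection of $f^{n_j}(W^{-n_j})$ and $f^{m_l}(W^{-m_l})$ near $z_0$ with $n_j>m_l$, pull it back by $f^{m_l}$ to a point $\zeta_{m_l}\in f^{n_j-m_l}(W^{-n_j})\cap W^{-m_l}$. After a \emph{fixed} number $N$ of further iterates (independent of $j,l$) both disks through $f^N(\zeta_{m_l})$ are horizontal graphs. Now use the dominated splitting quantitatively: $f$ contracts exponentially in the vertical direction, while by Lemma~\ref{newlemma4} the horizontal direction is contracted at most sub-exponentially. Pushing forward by $f^{m_l-N}$ therefore makes the vertical separation of the two graphs near $f^{m_l}(\zeta_{m_l})$ shrink exponentially in $m_l$. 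Letting $l\to\infty$ gives $\varphi\equiv\psi$ locally, and since both $\mathcal S_i$ are properly embedded holomorphic disks they then coincide globally. No structural facts about wandering domains beyond the basic setup are needed.
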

\begin{proof}
Assume for the purpose of a contradiction that $\mathcal{S}_1$ and $\mathcal{S}_2$ intersect in a point $z_0$, but that they locally do not coincide. Let us assume that $\mathcal{S}_1$ and $\mathcal{S}_2$ are locally given as limits of graph in respectively $f^{n_j}(W^{-n_j})$ and $f^{m_l}(W^{-m_l})$. In local coordinates $\Delta^2_R(z_0)$ we can write $\mathcal{S}_1 = \{y = \varphi(x)\}$ and $\mathcal{S}_1 = \{y = \psi(x)\}$.

It follows that for large enough $j$ and $l$ the horizontal graphs in $f^{n_j}(W^{-n_j})$ and $f^{m_l}(W^{-m_l})$ intersect at a point near $z_0$.

Let $j$ and $l$ be large with, say, $n_j$ larger than $m_l$. Then it follows that there exists a point
$$
\zeta_{m_l} \in f^{n_j - m_l}(W^{-n_j}) \cap W^{-m_l}
$$
for which $f^{m_l}(\zeta_{m_l})$ lies near $z_0$. As discussed above,
for some large fixed $N$ independent of $l$ and $j$ the local graphs
in $f^N\circ f^{n_j - m_l}(W^{-n_j})$ and $f^N(W^{-m_l})$ are both horizontal.
 Given that the map $f$ acts as an exponential contraction in the  vertical direction,
while being at most sub-exponentially contracting in the horizontal direction (by Lemma \ref{newlemma4}),
it follows that near the point $f^{m_l}(\zeta_{m_l})$ the distance
between the horizontal graphs in $f^{n_j}(W^{-n_j})$ and
$f^{m_l}(W^{-m_l})$ shrinks exponentially fast as $l \rightarrow \infty$.
Therefore,  the limits $\varphi(\mathbb D_r)$ and $\psi(\mathbb
D_{r^\prime})$ coincide locally. Since they are both proper holomorphic disks, they must coincide globally, which gives a contradiction.
\end{proof}

We will refer to the collection of Riemann surfaces $\mathcal{S}_\nu$
as the \emph{horizontal lamination} in $U^0$.
It is clear that this lamination is contained in the backward Julia set $J^-$.
Since $U^0$ is Kobayashi hyperbolic, each leaf is a hyperbolic Riemann
surface.
The leaves are locally given as limits of horizontal graphs, hence are themselves also horizontal.



\begin{lemma}
The horizontal and the vertical laminations do not share leaves.
\end{lemma}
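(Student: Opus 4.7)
The plan is to derive a contradiction from the dominated splitting: leaves of the vertical lamination have tangent spaces in the vertical cone field, while leaves of the horizontal lamination have tangent spaces in the disjoint horizontal cone field, so no leaf can belong to both.

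First I would recall that by the dominated splitting on $\NN(J^+_R)$, the vertical cone field $C^s(\alpha)$ and the horizontal (center) cone field $C^c(\beta)$ can be chosen so that their intersection contains only the zero direction. This is exactly the content of having $T_J(\mathbb C^2) = E^s \oplus E^c$ with continuous transverse cones around each summand.

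Second, the vertical leaves through $U^0$ have tangent spaces contained in $C^s(\alpha)$. Under Assumption A every iterate $U^{-n}$ lies in $\NN(J^+_R)$, where the artificial vertical lamination (and thus the dynamical sub-lamination of strong stable manifolds through $U^{-n}$) has tangent bundle in the vertical cone field, as arranged at the end of Section 5.2. Pushing forward to $U^0$ preserves this property since the strong stable line field $E^s$ is invariant and tangent to $W^s(p)$.

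Third, I would check that the leaves $\mathcal S_\nu$ of the horizontal lamination have tangent spaces in $C^c(\beta)$. By construction $\mathcal S_\nu$ is, in a local bidisk $\Delta^2_r(z)\subset U^0$, a uniform limit of horizontal graphs coming from connected components of $f^n(W^{-n})\cap \Delta^2_r(z)$. By the discussion preceding the lemma, for $n$ large these graphs have tangent spaces in an arbitrarily thin horizontal cone field. Uniform convergence of holomorphic graphs implies, via Cauchy estimates, uniform convergence of their derivatives; since the relevant cone condition is a closed condition on the slope of the graph, it passes to the limit, so the tangent spaces of $\mathcal S_\nu$ lie in $C^c(\beta)$.

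Putting these together: if some leaf of the vertical lamination coincided, even locally, with some $\mathcal S_\nu$, then at any point $z$ of this common leaf the tangent line $T_zL$ would lie simultaneously in $C^s_z(\alpha)$ and in $C^c_z(\beta)$, forcing $T_zL=0$, a contradiction. The only step that requires any care is the passage to the limit for the horizontal lamination, but this is a standard normal-family argument and the cone conditions are preserved because they are closed; I do not foresee a genuine obstacle.
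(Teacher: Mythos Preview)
Your argument has a genuine gap in step~2, where you claim that the dynamical vertical foliation $\FF_{U^0}$ has tangent spaces contained in the vertical cone field. This is not true in general under Assumption~A. Assumption~A only requires that the \emph{backward} iterates $U^{-n}$ lie in $\NN(J^+_R)$; it says nothing about the \emph{forward} iterates $U^n$. The stable direction $E^s_p$ at $p\in U^0$ is determined by the forward orbit (it is the pullback of the stable direction from the far future), and if some forward iterate $U^n$ leaves $\NN(J^+_R)$ --- passing through what the paper calls a ``component with hole'' --- then the backward invariance of the cone field breaks down along that orbit and $E^s_p$ need not lie in $C^s_p$. Your parenthetical identification of the dynamical sub-lamination with the artificial lamination on $U^{-n}$ is exactly the point that fails: off $J^+_R$ these two laminations can differ, as illustrated in Figure~\ref{wandering}. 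Indeed, if your cone argument were valid it would prove that the two laminations are everywhere \emph{transverse}, which would make $\deg_\crit = 1$ automatically; but the corollary immediately following the lemma is devoted to bounding the order of (genuinely possible) tangencies between horizontal and vertical leaves, and Corollary~\ref{bound for A} explicitly separates the case where forward iterates stay in $\NN(J^+)$ (where your argument does work and gives $\deg_\crit = 1$) from the case where they do not.

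The paper's proof is instead a one-line global argument: the semi-local vertical leaves $W^s_R(p)$ are properly embedded disks in $\Delta^2_R$ whose closure meets the horizontal boundary $\{|y|=R\}$, whereas the horizontal leaves $\mathcal S_\nu$, being uniform limits of pieces of $f^n(\mathbb L_0)$, stay uniformly bounded away from $\{|y|=R\}$ (cf.\ the remark in Definition~\ref{defn:beta}). Hence no leaf can be both vertical and horizontal.
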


\begin{proof}
Indeed, vertical leaves intersect the boundary of $\Delta^2_R$,
while  horizontal do not.
\end{proof}

\begin{corollary}
For any semi-local component $U^0$ satisfying Assumption (A)
   the order of tangencies between horizontal and vertical leaves in
   $U^0$  is bounded.
\end{corollary}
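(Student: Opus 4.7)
The plan is to bound the intersection multiplicity at any tangency point by the uniform constant $d_{\max}$ from Lemma~\ref{boundeddegrees}, which bounds the degree of every semi-local vertical leaf as a branched cover over $\Delta_R$. Since by the preceding lemma the horizontal and vertical laminations share no leaves, each tangency point $z_0 \in \mathcal{S}_\nu \cap V$ (with $V = W^s(p)$) is isolated, and a tangency of order $k$ at $z_0$ corresponds to local intersection multiplicity $I(\mathcal{S}_\nu, V; z_0) = k+1$. Thus the task reduces to a uniform upper bound on this intersection number.

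First I would choose a small open ball $B$ around $z_0$ with $\overline{B}\cap \mathcal{S}_\nu \cap V = \{z_0\}$. Since, near $z_0$, the leaf $\mathcal{S}_\nu$ is by construction a locally uniform limit of horizontal graphs cut out of iterates $f^{n_j}(W^{-n_j})$, the classical persistence of intersection numbers for holomorphic curves converging uniformly on compacta yields, for all $j$ sufficiently large,
$$
I\bigl(f^{n_j}(W^{-n_j}), V; B\bigr) \;=\; I(\mathcal{S}_\nu, V; z_0) \;=\; k+1.
$$

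Next I would pull the picture back by the biholomorphism $f^{n_j}$, which preserves intersection multiplicities. Assumption~(A) is crucial here: it guarantees that $U^{-n_j}$ lies in the region of dominated splitting, so the pullback $f^{-n_j}(V)$ is precisely the semi-local vertical leaf $W^s_R(f^{-n_j}(p))$. Since $W^{-n_j} \subset \mathbb L_0$, invariance gives
$$
k+1 \;=\; I\bigl(W^{-n_j}, W^s(f^{-n_j}(p)); f^{-n_j}(B)\bigr) \;\le\; I\bigl(\mathbb L_0, W^s_R(f^{-n_j}(p))\bigr),
$$
and the right-hand side equals the degree of the branched cover $\pi_2 : W^s_R(f^{-n_j}(p)) \to \Delta_R$ evaluated at the horizontal slice $\{y = y_0\}$, which is uniformly bounded by $d_{\max}$ via Lemma~\ref{boundeddegrees}. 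Therefore $k \le d_{\max} - 1$, independent of $\mathcal{S}_\nu$, $V$, and $z_0$.

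The main obstacle I anticipate is the justification of the persistence-of-intersection step: one must verify that the approximating graphs converge to $\mathcal{S}_\nu$ in a sufficiently strong topology on a neighborhood of $\overline{B}$, and that no intersection point of $\mathcal{S}_\nu$ with $V$ escapes to $\partial B$ under the approximation. Both facts follow from the local uniform convergence built into the definition of the horizontal lamination together with the isolation of $z_0$ in $\mathcal{S}_\nu \cap V$, but they deserve an explicit check. Everything else is pure invariance under the biholomorphism $f^{n_j}$ combined with the degree bound already established in Lemma~\ref{boundeddegrees}.
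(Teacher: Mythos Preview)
Your proof is correct, but it takes a genuinely different route from the paper.

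The paper gives a two-line soft argument: the order of tangency between a horizontal and a vertical leaf is upper semi-continuous in the intersection point, and near $\partial U^0\subset J^+$ both laminations lie in their respective cone fields and are therefore transverse. Hence an unbounded sequence of tangency orders would have to accumulate at an interior point, where the previous lemma (no shared leaves) forces the order to be finite, contradicting upper semi-continuity.

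Your argument is instead a direct quantitative estimate: approximate the horizontal leaf by the graphs $f^{n_j}(W^{-n_j})$ that define it, use persistence of intersection multiplicity for holomorphic curves, pull back by the biholomorphism $f^{n_j}$, and bound the resulting intersection of $\mathbb L_0$ with the semi-local stable manifold by the uniform degree bound on the covers $\pi_2: W^s_R(q)\to\Delta_R$. This has the pleasant feature of producing an explicit bound (order $\le d_{\max}-1$) and of making the role of Assumption~(A) completely transparent, at the cost of invoking the persistence step and the approximation built into the definition of the horizontal lamination. One small remark: the uniform degree bound you need is for points $q\in\mathcal N(J^+_R)$ (where $f^{-n_j}(p)$ lives under Assumption~(A)), not literally for $q\in J$; this is stated at the end of \S4.4 rather than in Lemma~\ref{boundeddegrees} itself, so you may want to cite that passage instead.
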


\begin{proof}
It follows from two observations:

\ssk \nin  $\bullet$ The order of tangencies between the leaves depends  upper
  semi-continuous on the intersection  point.

\ssk \nin  $\bullet$
Near $J^+$ the laminations are transverse.
\end{proof}

\begin{corollary}
For any semi-local component $U^0$ satisfying Assumption (A),
any preimage $U^{-j}$, and any  component   $W^{-j}$  of the horizontal slice $\bL_0 \cap U^{-j}$,
the orders of tangency of the holomorphic disk $f^j ( W^{-j} )$ with the
dynamical vertical foliation $\FF_{U^0}$ is bounded.
\end{corollary}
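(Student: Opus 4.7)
My strategy is to deduce the bound from the previous corollary by a normal-family argument. Each disk $f^j(W^{-j})$ is precisely of the type from which the horizontal lamination $\{\mathcal{S}_\nu\}$ was built as locally uniform sublimits, and the previous corollary provides a finite uniform bound $K$ on the tangency orders between horizontal leaves and vertical leaves in $U^0$. I would propagate this bound to the pre-limit disks by showing that tangency order is upper semi-continuous under locally uniform convergence of both the horizontal disks and the vertical leaves they touch.

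Arguing by contradiction, suppose there exist sequences $j_k$, components $W^{-j_k}$, vertical leaves $V_k \in \FF_{U^0}$, and points $z_k \in f^{j_k}(W^{-j_k}) \cap V_k$ at which the order of tangency is at least $k$. After passing to a subsequence the points $z_k$ converge to some $z_\infty \in U^0$, and by the normal-family argument used in the construction of the horizontal lamination, the disks $f^{j_k}(W^{-j_k})$ converge locally uniformly near $z_\infty$ to a piece of a horizontal leaf $\mathcal{S}_\nu$ through $z_\infty$. On the vertical side, by Lemma \ref{boundeddegrees} the semi-local dynamical vertical leaves are branched covers of $\Delta_R$ of uniformly bounded degree, so they form a precompact family of proper holomorphic disks in $\Delta^2_R$; after a further subsequence the $V_k$ converge locally uniformly to a vertical leaf $V_\infty$ through $z_\infty$.

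Tangency of order $\geq k$ at $z_k$ is encoded by the vanishing of the first $k$ Taylor coefficients of the defining difference between suitable local graph parametrizations of the two disks over a common transversal. Under locally uniform convergence these coefficients converge, so in the limit the difference function vanishes to infinite order at the parameter corresponding to $z_\infty$, forcing $\mathcal{S}_\nu$ and $V_\infty$ to coincide on a neighborhood and, by analytic continuation, as Riemann surfaces. This contradicts the earlier lemma that the horizontal and vertical laminations share no leaves; equivalently, any tangency of order $> K$ in the pre-limit already survives to the limit and violates the previous corollary. The main technical point to verify carefully is the simultaneous compactness of the horizontal and vertical families near $z_\infty$, which is ensured by Wiman's theorem and the uniformly bounded-degree branched-cover structure coming from substantial dissipativity.
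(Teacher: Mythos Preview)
Your approach is essentially the same as the paper's: use upper semi-continuity of tangency order under locally uniform convergence, and contradict either the ``no shared leaves'' lemma or the previous corollary. The paper's proof is just two sentences---``obvious for small $j$; for large $j$, $f^j(W^{-j})$ is a small perturbation of a horizontal leaf''---so you have in effect fleshed out the second clause.

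There are two points you gloss over. First, your sentence ``the disks $f^{j_k}(W^{-j_k})$ converge locally uniformly near $z_\infty$ to a piece of a horizontal leaf $\mathcal{S}_\nu$'' uses the normal-family construction of the horizontal lamination, but that construction only applies once the disks have become horizontal graphs, i.e.\ for $j$ large. If your sequence $j_k$ stays bounded you cannot claim the limit is an $\mathcal{S}_\nu$; you only get the single algebraic curve $f^j(\mathbb L_0)$. The paper sidesteps this by treating small $j$ separately as ``obvious'' (finitely many fixed curves, each with isolated tangencies of finite order since none of them can coincide with a vertical leaf). Your Taylor-coefficient argument still goes through in that case, but the contradiction comes from $f^j(\mathbb L_0)$ sharing a piece with a stable manifold rather than from the horizontal/vertical lemma; you should say so.

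Second, you assert $z_\infty\in U^0$ without justification, whereas a priori $z_\infty$ could land on $\partial U^0\subset J^+$. This is ruled out by the transversality observation already used in the proof of the previous corollary (near $J^+$ the horizontal disks and the vertical leaves lie in complementary cones, so tangencies cannot accumulate there), but you should invoke it explicitly. With these two clarifications your argument is complete and matches the paper's.
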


\begin{proof}
   It is obvious for small $j$.
For a large $j$, the disk $f^j ( W^{-j} )$ is a small perturbation of
some horizontal leaf $L$ in $U^0$, so the order of its tangencies
between  $f^j ( W^{-j} )$  and $\FF_{U^0}$ is bounded by the order of
tangencies between $L$ and $\FF_{U^0}$ .
\end{proof}

For a component $U^0$ satisfying Assumption (A),
let us define  $\deg_\crit (U^0)$  as the maximum of the   order of
tangency between the above holomorphic disks $f^j ( W^{-j} )$
and the dynamical vertical foliation $\FF_{U^0}$.

\begin{corollary}\label{bound for A}
$\deg_\crit (U^0)$ is bounded over all components $U^{0}$
satisfying Assumption A.
\end{corollary}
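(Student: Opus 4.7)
The plan is to exploit two facts: (i) $f^j$ is a biholomorphism of $\C^2$, hence it preserves orders of tangency between holomorphic curves at any intersection point; and (ii) the total collection of tangencies between $\mathbb{L}_0$ and the artificial vertical lamination is \emph{globally} finite, with uniformly bounded orders. Combining these two observations will give a uniform upper bound, independent of $U^0$ and of the choice of preimage $U^{-j}$.

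More concretely, I would proceed as follows. Under Assumption A, every $U^{-j}$ in the backward orbit of $U^0$ lies in $\mathcal{N}(J^+_R)$, so the dynamical foliation $\FF_{U^{-j}}$ agrees with the restriction of the artificial vertical lamination $\mathcal L$ to $U^{-j}$. Because the strong stable manifolds are dynamically defined ($W^s(f(z))=f(W^s(z))$), one has $f^j(\FF_{U^{-j}})=\FF_{U^0}\bigl|_{f^j(U^{-j})}$. Since $f^j$ is a biholomorphism of $\C^2$, each tangency of the disk $f^j(W^{-j})$ with a leaf of $\FF_{U^0}$ corresponds, under pullback by $f^{-j}$, to a tangency of $W^{-j}\subset\mathbb{L}_0$ with a leaf of $\FF_{U^{-j}}$, and the local order of tangency is preserved.

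It therefore remains to control the order of tangencies of $\mathbb{L}_0$ with leaves of the artificial vertical lamination. By Lemma~\ref{lemma:prep2} (and the generic choice of $y_0$ in Definition~\ref{ellnot}), the set of such tangencies in a sufficiently thin neighborhood of $J^+_R$ is finite, and the tangency order at each of them is finite. Let
$$
T:=\max_{p\in\mathbb{L}_0}\operatorname{ord}_p\bigl(\mathbb{L}_0,\mathcal L\bigr)<\infty.
$$
Since each pulled-back tangency of $W^{-j}$ with $\FF_{U^{-j}}$ is in particular a tangency of $\mathbb{L}_0$ with $\mathcal L$, its order is at most $T$. Hence $\deg_\crit(U^0)\le T$ for every $U^0$ satisfying Assumption A, which is the desired uniform bound.

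The only subtle point I expect is the verification that the pullback is well-defined and preserves orders even when $f^j(U^{-j})$ is a proper subset of $U^0$ (so that some leaves of $\FF_{U^0}$ do not come from $U^{-j}$). This is harmless because every tangency of $f^j(W^{-j})$ with $\FF_{U^0}$ automatically lies inside $f^j(U^{-j})$, as $f^j(W^{-j})\subset f^j(U^{-j})$; hence the correspondence with tangencies in $W^{-j}$ is bijective on the relevant set. Once this is recorded, the argument reduces to the elementary invariance of tangency order under biholomorphism together with Lemma~\ref{lemma:prep2}.
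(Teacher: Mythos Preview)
Your pullback idea is correct: $f^j$ is a biholomorphism, so tangencies of $f^j(W^{-j})$ with $\FF_{U^0}$ correspond bijectively, with the same order, to tangencies of $W^{-j}\subset\bL_0$ with $\FF_{U^{-j}}$. The gap is in the next step, where you assert that under Assumption~A the dynamical foliation $\FF_{U^{-j}}$ agrees with $\mathcal L|_{U^{-j}}$, and then invoke the finite tangency set of $\bL_0$ with $\mathcal L$.

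Assumption~A controls only the \emph{backward} orbit of $U^0$: it says each $U^{-j}$ lies in $\NN(J^+_R)$. The dynamical foliation $\FF_{U^{-j}}$, however, is made of strong stable manifolds, which are determined by the \emph{forward} orbit. If some forward iterate $U^n$ ($n\ge 1$) leaves $\NN(J^+_R)$, then the adjustment procedure of \S\ref{adjustment} does \emph{not} set $\mathcal L|_{U^{-j}}$ equal to $\FF_{U^{-j}}$; on the contrary, this is precisely the situation depicted in Figure~\ref{wandering}, where the two laminations disagree. In that case the leaves of $\FF_{U^{-j}}$ need not even be tangent to the vertical cone field, so tangencies of $\bL_0$ with $\FF_{U^{-j}}$ are not among the finitely many tangencies with $\mathcal L$ supplied by Lemma~\ref{lemma:prep2}. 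Your bound $T$ therefore does not control $\deg_\crit(U^0)$ in this case.

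The paper's proof avoids this by a dichotomy. If \emph{all} forward iterates $U^n$ stay in $\NN(J^+)$, then $\FF_{U^0}$ is tangent to the vertical line field, which is transverse to the horizontal lamination, so $\deg_\crit(U^0)=1$. Otherwise, one observes that $\deg_\crit$ is invariant under passing to preimages (both the horizontal and the vertical dynamical laminations are $f$-invariant), so it suffices to bound $\deg_\crit(U^0)$ for those $U^0$ satisfying Assumption~A while $U^1$ does not; there are only finitely many such components, and each has a finite $\deg_\crit$ by the preceding corollary. Your argument works cleanly in the first case (and in fact gives the same conclusion once you replace ``$\FF=\mathcal L$'' by ``$\FF$ is vertical''), but you still need the finiteness reduction for the second.
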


\begin{proof}
In the case where all forward components $U^{n}$
are contained in $\mathcal{N}(J^+)$
then the dynamical vertical lamination $\FF_{U^0}$
is tangent to the vertical line field $E^v$, which is  transverse to
the horizontal lamination. Hence $\deg_\crit U^0 = 1$ in this
case.

Therefore we only need to consider the case where some forward component $U^n$ is not contained in $\mathcal{N}(J^+)$. Since $\deg_\crit (U^0)$ is defined by means of two dynamical laminations, both invariant under $f$,
it remains the same for all semi-local preimages $U^{-j}$ of $U^{0}$.
Thus, it suffices to consider only those semi-local components $U^n$
for which $U^{n+1}$ does not satisfy Assumption A. Since there are only finitely many such components, the conclusion follows.
\end{proof}

Finally, let us get rid of Assumption A:

\begin{lemma}
For an arbitrary semi-local component $U$,
any  component   $W $  of the horizontal slice $\bL_0 \cap U$, and
any integer $n \geq 0$,
the orders of tangency of the holomorphic disk $f^n ( W )$ with the
dynamical vertical foliation $\FF_{U^n  }$ are bounded
(where $U^n$ is the semi-local component containing $f^n(U)$).
\end{lemma}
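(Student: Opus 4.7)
The plan is to reduce the question to transversality at a sufficiently late forward iterate by exploiting the $f$-invariance of $\FF$. Because $f$ is a biholomorphism of $\C^2$ that takes leaves of $\FF_{U^k}$ into leaves of $\FF_{U^{k+1}}$, any tangency of order $r$ between $f^n(W)$ and a leaf of $\FF_{U^n}$ at a point $p$ corresponds to a tangency of the same order between $f^{n+m}(W)$ and a leaf of $\FF_{U^{n+m}}$ at $f^m(p)$. It is therefore enough to find some $m$ for which $f^{n+m}(W)$ meets $\FF_{U^{n+m}}$ transversely at every point; then the tangency orders upstream at stage $n$ are vacuously bounded.

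To produce such an $m$, I would first use the finiteness of semi-local wandering components with holes (\S\ref{adjustment}): since the forward iterates $U^{n+k}$ are pairwise distinct wandering components, the orbit can visit each pathological component at most once. Hence there exists $M_0$ such that, for every $m \geq M_0$, both $U^{n+m}$ and its entire forward orbit lie in $\NN(J^+_R)$. This forces the stable direction $E^s_z = \bigcap_{j \geq 0} df^{-j} C^s_{f^j z}$ to be defined at every $z \in U^{n+m}$ and to lie in the backward-invariant vertical cone, so the leaves of $\FF_{U^{n+m}}$ are everywhere tangent to the vertical cone field. On the other hand, $W \subset \bL_0$ starts with literally horizontal tangent lines, the horizontal cone field is forward invariant inside $\NN(J^+_R)$, and the dominated splitting provides the exponential contraction $\| df|_{E^s} \| / \| df|_{E^c} \| < \rho < 1$ that pulls any tangent line toward the horizontal cone after finitely many iterates in $\NN(J^+_R)$. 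Although the forward orbit of $W$ may execute finitely many excursions through components with holes, where tangent directions can be temporarily distorted, each such excursion distorts directions only by a bounded amount and the distortion is reabsorbed in a bounded number of further iterates inside $\NN(J^+_R)$. Taking $m \geq M_1 \geq M_0$ large enough, the tangent lines of $f^{n+m}(W)$ lie entirely in the horizontal cone everywhere in $U^{n+m}$. Since the horizontal and vertical cones have disjoint interiors, the desired transversality follows, and the invariance argument of the first paragraph completes the proof.

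The main obstacle I foresee is the uniformity of the cone-realignment step: the number of iterates required to bring a tangent line back into the horizontal cone depends on how far that line was displaced during the excursions, which can vary along $W$. Finiteness of the pathological components bounds the worst-case displacement, and a continuity and compactness argument along $W$ (or the relevant closure inside $\Delta^2_R$) should supply the uniform $M_1$.
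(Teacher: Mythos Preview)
Your invariance reduction in the first paragraph is correct and is exactly what the paper uses: because $f$ is a biholomorphism carrying leaves of $\FF_{U^k}$ to leaves of $\FF_{U^{k+1}}$, a tangency of order $r$ at stage $n$ corresponds to one of the same order at any other stage.

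The genuine gap is in the second step. You are trying to show that for some large $m$ the disk $f^{n+m}(W)$ is \emph{everywhere transverse} to $\FF_{U^{n+m}}$, and you argue that after the finitely many excursions through holes the tangent lines of $f^{n+m}(W)$ are eventually pulled back into the horizontal cone. This is false. Suppose that at some point $p\in W$ the forward orbit passes through a component with a hole and, upon re-entering $\NN(J^+_R)$, the tangent line $T_{f^k p}\,f^k(W)$ happens to coincide with $E^s_{f^k p}$. Since the line field $E^s$ is $df$-invariant, $T_{f^{k+j} p}\,f^{k+j}(W)=E^s_{f^{k+j} p}$ for all $j\ge 0$: the tangent stays exactly vertical and never realigns with the horizontal cone, so the tangency with $\FF$ persists forever. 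The dominated-splitting inequality $\|df|_{E^s}\|/\|df|_{E^c}\|<\rho$ pulls any vector \emph{not} in $E^s$ toward the horizontal cone, but it does nothing for vectors already in $E^s$. Your claimed ``reabsorption'' therefore fails precisely at the tangency points, and since such tangencies genuinely occur (this is why $\deg_\crit$ in Definition~\ref{degcrit} can exceed $1$), the strategy of proving eventual transversality everywhere is unachievable.

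The paper does \emph{not} try to eliminate tangencies. It also uses invariance, but in the opposite direction: it reduces to stage $0$, where $W$ sits inside the fixed horizontal line $\bL_0$. If every forward iterate $U^m$ lies in $\NN(J^+)$, then $\FF_U$ is tangent to the vertical field $E^v$ and the transversality of $\bL_0$ to $E^v$ (the defining property of $\bL_0$) gives no tangencies. In the complementary case one accepts that tangencies exist and bounds their multiplicities directly at stage $0$, using that the relevant components are finite in number and that on each of them $W\subset\bL_0$ has only finitely many tangencies with $\FF_U$; invariance then propagates the same bound to every stage $n$.
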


\begin{proof}
We already know this for components satisfying Assumption A,
so let us deal with other components.

Assume $U^n\subset \NN(J^+)$, $n=0,1,\dots$.
Then the vertical dynamical foliations on the $U^n$
are tangent to the vertical line field $E^v$.
On the other hand,
by the choice of $\bL_0$, the slice $W$ is transverse to this line field.
Thus, $W$ is transverse to $\FF_U$.
By invariance of the dynamical foliation, the forward iterates $f^n(W)$ are
transverse to $\FF_{U^n}$: no tangencies in play.

This leaves us with finitely many components $U$.
For each of them, $W$ has finitely many tangencies with $\FF_U$
counted with multiplicities (by construction of $\bL$).
By invariance of the dynamical foliations,
$f^n(W)$ has the same number of tangencies with $\FF_{U^n}$
for any integer $n\ge0$. The conclusion follows.
\end{proof}

\begin{defn}\label{degcrit}[$\deg_\crit$]
Let  $\deg_\crit$ be the maximum of the orders of tangency
   that appear in the above lemma.
\end{defn}

\begin{corollary}\label{bound on the order of tan}
For any semi-local component $U^0$, any preimage $U^{-j}$,
and any  component   $W^{-j}$  of the horizontal slice $\bL_0 \cap U^{-j}$,
the order of tangency of the holomorphic disk $f^j (W^{-j})$ with the
dynamical vertical foliation $\FF_{U^0}$ is bounded by $\deg_\crit$.
\end{corollary}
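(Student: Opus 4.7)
The plan is essentially a re-indexing argument: Corollary \ref{bound on the order of tan} is a direct restatement of the previous lemma after applying the definition of $\deg_\crit$. I would proceed as follows.

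First I would observe that the corollary and the preceding lemma parametrize the same data in two different ways. The lemma fixes a semi-local component $U$, takes a horizontal slice $W\subset \bL_0\cap U$, pushes it forward by $f^n$, and measures tangencies with $\FF_{U^n}$, where $U^n$ is the semi-local component containing $f^n(U)$. The corollary fixes the \emph{final} component $U^0$, chooses a preimage $U^{-j}$, and examines the forward iterate $f^j(W^{-j})$ inside $U^0$. Setting $n:=j$ and $U:=U^{-j}$ in the lemma, the component that was called $U^n$ there is exactly the component $U^0$ of the corollary, and the disk $f^n(W)$ of the lemma is the disk $f^j(W^{-j})$ of the corollary. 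Thus each configuration considered in the corollary is one of the configurations already covered by the lemma.

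Next I would invoke Definition \ref{degcrit}: by construction $\deg_\crit$ is the supremum, over \emph{all} triples $(U,W,n)$ as in the preceding lemma, of the order of tangency of $f^n(W)$ with $\FF_{U^n}$. The lemma guarantees this supremum is finite. Applying this bound to the particular triple $(U^{-j},W^{-j},j)$ gives
\[
\operatorname{ord}_{\FF_{U^0}}\bigl(f^j(W^{-j})\bigr)\;\le\;\deg_\crit,
\]
which is exactly the content of the corollary.

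Since no new analytic input is required beyond the previous lemma and the definition, there is no real obstacle: the only thing to be careful about is the superscript convention, where positive indices in the lemma denote forward iterates of a fixed component while negative indices in the corollary denote backward branches of a fixed terminal component. Once one verifies that these two conventions describe the same set of disk-foliation pairs, the proof is complete in one line.
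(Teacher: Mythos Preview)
Your proposal is correct and matches the paper's approach exactly: the paper states this corollary without proof, since it is immediate from the preceding lemma and Definition~\ref{degcrit} by the re-indexing you describe.
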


\section{Final preparations}

\comm{
The Poincar\'e metric played an important role in the one-dimensional proof. In higher dimensions an analogues role will be played by the Kobayashi metric. In the complex plane we were able to make the Poincar\'e metric on the Julia set arbitrarily large by removing nearby points. Indeed, the Poincar\'e metric on a hyperbolic Riemann surface blows up near any boundary point. In higher dimensions we can obtain corresponding estimates on the Kobayashi metric by removing \emph{holomorphic disks}.

\begin{lemma}
Let $U \subset \mathbb C^2$ be open and bounded, and let $D \subset U$ be a holomorphic disk. Let $(z_j)$ be a sequence of points in $U\setminus D$ converging to a point $z \in D$. Then for any non-zero tangent vector $\xi \in T_z(\mathbb C^2)$ transverse to $D$ we have that $d_{U\setminus D}(z_j, \xi) \rightarrow \infty$.
\end{lemma}

While this simple lemma is undoubtedly known to experts in the field, we give a proof for the convenience of the reader.

\begin{proof}
Without loss of generality we may assume that $z = 0$, and that $D$ is tangent to $\{x = 0\}$ at the origin. Let $F_j = (f^1_j, f^2_j)$ be holomorphic maps from the unit disk $\mathbb D$ into $U \setminus D$ satisfying $F_j(0) = z_j$ and $(f^2_j)^\prime(0) = 0$. It suffices to show that $(f^1_j)^\prime(0) \rightarrow 0$ as $j \rightarrow \infty$.

Note that $D$ is locally a graph over $\{x = 0\}$. Since the Kobayashi metric can only decrease when the domain is increased, we may in fact assume that for some $r_0 > 0$ we can write $D$ as
$$
D = \{(x, y): |y| < r_0, \; x = \phi(y)\}.
$$

Since $U$ is Kobayashi hyperbolic, the derivatives of the maps $f^2_j$ are uniformly bounded on compact subsets of $\mathbb D$. This implies that there exists an $r_1$ such that for $|\zeta|<r_1$ and all $j \in N$ we have that $|f_j^2(\zeta)|< r_0$. On $|y|< r_0$ we can consider the holomorphic function
$$
\Phi(x, y) = x - \phi(y).
$$
It follows that the functions $\Phi \circ F_j$ restricted to $D_{r_1}$ are all mapping into a punctured disk of some fixed radius. We moreover have that $\Phi\circ F_j(0) \rightarrow 0$, from which it follows that $(\Phi \circ F^j)^\prime(0) \rightarrow 0$. Since $(f^2_j)^\prime(0) = 0$ we obtain
$(f^1_j)^\prime(0) \rightarrow 0$, which completes the proof.
\end{proof}

\begin{lemma}\label{lemma32}
Let $D \subset \Delta^2_R$ be a holomorphic disk, properly embedded in
$\Delta^2_R$ with boundary contained in $\{|y| = R\}$. Let $T\subset
\Delta^2_R$ be a tubular neighborhood of $D$. For any $C>0$ there
exists $\delta>0$ such that for any point $z$ that is $\delta$-close
to $D$, and any set $S \subset \Delta^2_R \sm D$ with $z \in S$ satisfying
$$
\mathrm{diam}_{\Delta^2_R\sm D}(S) < C,
$$
we have $S \subset T$.
\end{lemma}
}

The following is a rephrasing of Corollary \ref{cor:absorbing}.

\begin{lemma}\label{lemma:omega}
Let $\epsilon>0$.
Then there exists a domain 
$\Omega \subset \Delta^2_R$ for which
$$
f^{-1}(\Omega) \cap \Delta^2_R \subset \Omega,
$$
and
$$
\left(J^+ \cap \Delta^2_R \right)
\bigcup \mathrm {(semi-local\  wandering\ components) }
\setminus \{\; \mathrm{parabolic \; cycles}\; \} \subset \Omega,
$$
and which satisfies two conditions:
\begin{enumerate}
\item[ {\rm (i)} ] For any $z \in J^+$ there exist $w \in \Delta^2_R$
  with $|z-w| < \epsilon$ such that the semi-local leaf though $w$ does not intersect $\Omega$.
\item[{\rm  (ii) } ] Any $z\in \Omega$ that does not lie in a wandering Fatou component lies $\epsilon$-close to $J^+$.
\end{enumerate}
\end{lemma}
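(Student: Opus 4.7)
The plan is to derive the lemma essentially as a packaging of Corollary \ref{cor:absorbing}, with a careful choice of the removed absorbing sets so that the new geometric condition (i) is built into the construction.

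First, for each periodic Fatou component $U_i$ intersecting a suitable compact $Q$, I invoke Lemma \ref{absorbing sets} to produce a forward invariant domain $W_i \subset U_i$ with $\overline{W_i} \cap \partial U_i \subset A_{U_i}$. The key refinement will be to arrange each $W_i$ to be \emph{saturated by semi-local strong stable leaves}: in the rotation case this is automatic from the construction $D = \bigcup_{z \in \Sigma'} W^s_{\loc}(z)$; in the attracting case one thickens a neighborhood of the attracting fixed point by flowing along local strong stable manifolds; in the parabolic case one modifies the attracting petal $P$ analogously, using that for $q \in P$ not equal to the parabolic fixed point, the semi-local leaf of $W^s(q) \cap \Delta^2_R$ lies in $U_i$ and can be incorporated into $W_i$ without disturbing $\overline{W_i} \cap \partial U_i \subset \{p\}$. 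Then set
\[
\Omega = \Delta^2_R \cap \{G^+ < \epsilon'\} \setminus \bigcup_i \overline{W_i}
\]
for a sufficiently small $\epsilon' > 0$. Corollary \ref{cor:absorbing} immediately gives the relative backward invariance and the required containment of $J^+ \cap \Delta^2_R$ away from parabolic cycles and of the semi-local wandering components.

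For property (ii), I use continuity of $G^+$ together with the fact that $G^+ = 0$ on $K^+$: the set $\{G^+ < \epsilon'\} \cap \Delta^2_R$ lies in an arbitrarily small Euclidean neighborhood of $K^+ \cap \Delta^2_R$ as $\epsilon' \to 0$. Any $z \in \Omega$ outside the wandering components lies outside $\bigcup \overline{W_i}$, so inside each periodic component only the thin collar near the boundary (which is close to $J^+$) survives; choosing the $W_i$ to fill out each $U_i$ sufficiently deeply makes this collar lie in any prescribed $\epsilon$-neighborhood of $J^+$. For property (i), fix $z \in J^+$. By choosing the $W_i$ to approach $\partial U_i \setminus A_{U_i}$ and (where needed) taking $w$ on the ``exterior side'' of $J^+$ with $G^+(w) \geq \epsilon'$, we can find $w$ within distance $\epsilon$ of $z$ lying in one of the $W_i$ (or in $\{G^+ \geq \epsilon'\}$). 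In the former case the saturation of $W_i$ guarantees that the semi-local leaf through $w$ stays entirely in $W_i$, hence avoids $\Omega$; in the latter case the leaf is a nearby perturbation of a dynamical leaf in $J^+$ and, by choosing $w$ so that $G^+$ stays bounded below on the whole leaf, it again avoids $\{G^+ < \epsilon'\} \supset \Omega$.

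The main obstacle is the saturation construction for parabolic components and the uniform verification of (i) at points of $J^+$ that are not approached by any non-parabolic periodic Fatou component. The former is essentially a local calculation in parabolic normal form showing that a petal can be chosen strong stable saturated without touching $\partial U_i \setminus \{p\}$; the latter is handled by the exterior-approach argument using continuity of $G^+$ along the nearly vertical leaves of the artificial lamination.
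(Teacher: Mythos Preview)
Your proposal follows exactly the route the paper intends: the paper gives no separate proof of this lemma, declaring it a ``rephrasing of Corollary~\ref{cor:absorbing}'', so the construction
\[
\Omega \;=\; \Delta^2_R \cap \{G^+<\epsilon'\}\setminus \bigcup_i \overline{W_i}
\]
with $W_i$ coming from Lemma~\ref{absorbing sets} is precisely what is meant. Your treatment of the containment, of relative backward invariance, and of condition~(ii) is correct.

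Your handling of condition~(i), however, is more elaborate than necessary, and the ``obstacles'' you flag are artifacts of that extra elaboration. You do not need to saturate the absorbing domains $W_i$ by semi-local leaves at all. The exterior argument alone suffices for \emph{every} $z\in J^+$: since $J^+=\partial I^+$, there are points $w\in I^+\cap\mathcal N(J^+_R)$ with $|z-w|<\epsilon$. The artificial leaf through such a $w$ cannot meet $J^+$ (distinct leaves are disjoint and $J^+$ is the union of the dynamical leaves), so it lies entirely in $I^+$, where $G^+>0$. The function $w\mapsto \min_{\text{leaf}(w)} G^+$ is continuous and strictly positive on $I^+\cap\mathcal N(J^+_R)$; by compactness of $J^+_R$ the quantity
\[
\inf_{z\in J^+_R}\ \sup_{\substack{|w-z|<\epsilon\\ w\in\mathcal N(J^+_R)}}\ \min_{\text{leaf}(w)} G^+
\]
is strictly positive, and choosing $\epsilon'$ below it gives~(i). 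This eliminates both the parabolic-petal saturation issue and the worry about points of $J^+$ not approached by periodic components.
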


In several instances of the proof the value of the constant $\epsilon>0$ must be sufficiently small, which we will refer to by saying that $\Omega$ should be \emph{sufficiently thin}. The domain $\Omega$ will only be fixed after all bounds on degrees and diameters are determined. To avoid a circular argument we should take care that the constants that appear in those bounds can be defined independently of the exact choice of $\Omega$. At this time we only guarantee that $\Omega$ is chosen sufficiently thin so that every point
$z \in \Omega$ lies on a semi-local leaf of the artificial vertical lamination.

\comm{Let $\hat\Omega$ be the saturation of $\Omega $ by these leaves.
Let $\Omega_0$ be the slice of $\hat \Omega$  by the horizontal line $\mathbb L_0$.
We will also refer to $\Omega_0$ as the {\em holonomy projection} of
$\Omega$ to $\mathbb L_0$.
A possibly smaller set $\Omega_0^\prime$ is given by those points $z\in \Omega_0$ for which the entire semi-local vertical leaf through $z$ is contained in $\Omega$.

\begin{defn}[\emph{transverse diameter}]
Let $V\subset \Omega$ be an embedded holomorphic disk.
Let $V_0$ be the holonomy projection of $V$ to $\Omega_0$.
The \emph{transverse diameter} of $V$, denoted by
$
\mathrm{diam}_{\Omega_0} V,
$
is the maximal hyperbolic diameter  in $\Omega_0$ \marginpar{added}
 of a connected component of $V_0$.

The holonomy maps from $V$ to the straight disk $\mathbb L_0$ are quasiregular and have uniformly bounded dilatations. Hence for sufficiently small disks $V$ there exists a constant $\mu^\prime>0$ such that
$$
\mathrm{diam}_{\Omega_0} V  \le \mu^\prime \cdot \mathrm{diam}_{\Omega} V,
$$
where the latter refers to the Kobayashi diameter in $\Omega$.

At one point in the proof we will refer to the \emph{Euclidean transverse diameter} of $V$, by which we mean the maximal Euclidean diameter of the connected components of $V^0$.
\end{defn}
}

\begin{lemma}\label{lemma:basic2D}
Given constants $d \in \mathbb N$, $r<1$ and $\mu>0$ there exists a constant $C = C(r, d,\mu)$ with the following property. For any hyperbolic Riemann surface $V$, and any proper quasiregular map $f: V \rightarrow \mathbb D$ of degree at most $d$ whose dilatation is bounded by $\mu$, the hyperbolic diameter of any connected component of $f^{-1} D_r(0)$ is bounded by $C$. For fixed $d, \mu$ the constant $C(r, d, \mu)$ converges to $0$ as $r\rightarrow 0$.
\end{lemma}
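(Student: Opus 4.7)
My plan is to reduce the quasiregular statement to the holomorphic one (Lemma~\ref{basiclemma}) by straightening $f$ via Stoilow factorization. First I would pull back the standard complex structure on $\mathbb D$ by $f$ to obtain a Beltrami differential on $V$ of $L^\infty$-norm at most $k<1$ with $k=k(\mu)$. Applying the Measurable Riemann Mapping Theorem equivariantly under the deck group on the universal cover $\widetilde V \cong \mathbb D$, I obtain a $K$-quasiconformal homeomorphism $\phi: V \to V_1$ onto a hyperbolic Riemann surface $V_1$, with $K=K(\mu)$, such that
\[
   g := f\circ \phi^{-1} : V_1 \to \mathbb D
\]
is proper, holomorphic, and of degree at most $d$. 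Since $\phi$ is a homeomorphism, the connected components of $f^{-1}(D_r(0))\subset V$ correspond bijectively to those of $g^{-1}(D_r(0))\subset V_1$, and Lemma~\ref{basiclemma} applied to $g$ bounds the hyperbolic $V_1$-diameter of each such component by a constant $C(r,d)$ that tends to zero as $r\to 0$.

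The remaining task is to compare the hyperbolic metrics of $V_1$ and $V$. Uniformization identifies both universal covers with $\mathbb D$, and $\phi^{-1}$ lifts to a $K$-quasiconformal self-map $\tilde\psi$ of $\mathbb D$. The key input I plan to use is that any $K$-quasiconformal self-map of $\mathbb D$ is uniformly continuous in the hyperbolic metric, with a modulus of continuity $\omega_K$ depending only on $K$, satisfying $\omega_K(t)\to 0$ as $t\to 0$. Indeed, because $\mathrm{Aut}(\mathbb D)$ acts transitively by hyperbolic isometries and preserves the class of $K$-quasiconformal maps, it suffices to control $\tilde\psi$ near a single reference point; there the hyperbolic and Euclidean metrics are comparable on small balls, and Mori's inequality (after a Möbius normalization) gives $|\tilde\psi(x)-\tilde\psi(x_0)|\le 16\,|x-x_0|^{1/K}$. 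Passing to the quotients $V_1$ and $V$ (using that hyperbolic distance is an infimum over lifts), $\phi^{-1}:V_1\to V$ inherits the same modulus of continuity $\omega_K$.

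Combining both steps, each connected component $W\subset V$ of $f^{-1}(D_r(0))$ satisfies
\[
   \mathrm{diam}_V(W) \le \omega_K\!\bigl(C(r,d)\bigr) =: C(r,d,\mu),
\]
which is finite and tends to zero as $r\to 0$, since $C(r,d)\to 0$ and $\omega_K$ is continuous at the origin. The hard part of the argument, and the only place where quasiconformality is felt non-trivially, is the small-scale Hölder estimate for $K$-quasiconformal self-maps of $\mathbb D$ in the hyperbolic metric; once this regularity is in hand, the reduction to Lemma~\ref{basiclemma} via Stoilow factorization is formal.
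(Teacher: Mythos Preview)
Your proof is correct and follows essentially the same route as the paper's: Stoilow factorization $f = g\circ\phi$ with $\phi$ a $K(\mu)$-quasiconformal homeomorphism and $g$ proper holomorphic of degree $\le d$, then apply Lemma~\ref{basiclemma} to $g$ and transfer the bound through $\phi^{-1}$. The paper compresses this into one sentence (``the statement holds for both of these maps, and hence also for the composition''), leaving the reader to supply exactly the two ingredients you spell out---Lemma~\ref{basiclemma} for the holomorphic factor, and the uniform hyperbolic modulus of continuity of $K$-qc maps for the quasiconformal factor. Your use of Mori's inequality plus $\mathrm{Aut}(\mathbb D)$-homogeneity to obtain the latter is a clean way to make that step explicit.
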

\begin{proof}
The quasiregular map can be written as the composition of a proper quasiconformal homeomorphism with dilation bounded by $\mu$, with a proper holomorphic map of degree at most $d$. The statement holds for both of these maps, and hence also for the composition.
\end{proof}

\begin{defn}[\emph{protected lifts}]
Let $S \subset \Delta^2_R$ be a properly embedded holomorphic disk,
bounded away from the horizontal boundary $\{|y| = R\}$.
For a disk $D \subset \mathbb L_0$ consider all artificial vertical leaves through points in $D$. Write $V \subset S$ for a connected component of the set of intersection points of these artificial vertical leaves with $S$.

We say that $V$ is a \emph{lift} of $D$ if the holonomy correspondence
is \emph{proper},
 i.e. for any compact $E \subset D$ the intersection points of the
 leaves through $E$ with $V$
is compact.

We will consider lifts in $f^j \mathbb L_0 \cap \Omega$. Let $t > 1$
and consider two concentric disks $D_r(z) \subset D_{t\cdot r}(z)
\subset \mathbb L_0$. If the disks $D_r(z)$ and $D_{t\cdot r}(z)$ can
be lifted to $V_r(z) \subset V_{t \cdot r}(z) \subset f^j \mathbb L_0
\cap \Omega$, then we say that $V_r(z)$ is a \emph{protected lift} of $D_r(z)$.
We define the \emph{degree} of $V_r(z)$ as the maximal number of intersections with artificial vertical leaves.
\end{defn}

Recall that the artificial vertical leaves intersect $\mathbb L_0$ transversally near $J^+$.
 Thus, for sufficiently small disks $D_r(z) \subset \mathbb L_0$
 sufficiently close to $J^+$ the lift is traditional: a pullback under
 the holonomy map.
 However, if $D_r(z)$ intersects an artificial leaf non-transversally
then the holonomy from $V_r(z)$ to $D_r(z)$ cannot be single valued,
so we talk about the holonomy correspondence.

The properness of lifts is not automatic, and may be violated when an artificial vertical leaf through a disk $D$ is tangent to the boundary of $\Delta^2_R$. It is therefore possible that a disk $D_r(z)\subset \mathbb L_0$ that cannot be lifted, even when all artificial vertical leaves through $D$ are contained in $\Omega$. However, for every point $z \in \mathbb L_0$ for which the vertical leaf through $z$ intersects $f^j \mathbb L_0 \cap \Omega$ there is a sufficiently small disk that can be lifted. Conversely every point in $f^j \mathbb L_0 \cap \Omega$ is contained in some lift $V_r(z)$.

\begin{lemma}\label{constantC1}
Let $V_r(z) \subset V_{t \cdot r}(z)$ be a protected lift of degree $d$. Then there exist a constant $C_1(\frac{1}{t}, d)>0$ such that
$$
\mathrm{diam}_{\Omega} V_r(z) \le C_1(\frac{1}{t}, d),
$$
and given $d$ the constant $C_1(\frac{1}{t},d)$ converges to $0$ as $t \rightarrow \infty$.
\end{lemma}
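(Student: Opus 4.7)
The plan is to reduce the statement to the one-dimensional Lemma~\ref{lemma:basic2D} by exploiting the fact that the artificial vertical lamination is built from iterated holomorphic motions, and hence its holonomies are quasiregular with uniformly bounded dilatation.

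First I would note that, by the definition of the lift, the artificial-vertical-leaf correspondence induces a well-defined proper holonomy map
\[
    h:V_{t\cdot r}(z) \longrightarrow D_{t\cdot r}(z),
\]
sending each point $v\in V_{t\cdot r}(z)\subset f^j\bL_0$ to the unique point of $D_{t\cdot r}(z)\subset\bL_0$ lying on the same artificial vertical leaf. Properness is precisely the hypothesis built into the definition of ``lift''; the degree of $h$ equals the degree $d$ of $V_{t\cdot r}(z)$ (the maximal number of intersections with a single vertical leaf). Since the artificial lamination is produced by iterated applications of the Slodkowski $\lambda$-lemma on finitely many flow boxes as in Proposition~\ref{prop:single}, its holonomies are quasiconformal with a dilatation bounded by some universal constant $\mu>0$, independent of the leaf and of the lift under consideration. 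Composing the holonomy in these flow boxes one obtains that $h$ is quasiregular of dilatation at most $\mu$.

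Next, since $V_{t\cdot r}(z)$ is an open connected subset of the complex line $f^j\bL_0$ and is proper (by properness of the lift), it is a hyperbolic Riemann surface. Applying Lemma~\ref{lemma:basic2D} with radius $1/t$, degree $d$ and dilatation $\mu$, every connected component of $h^{-1}(D_r(z))$ has hyperbolic diameter in $V_{t\cdot r}(z)$ bounded by some constant $C(1/t,d,\mu)$ that tends to $0$ as $t\to\infty$. In particular $V_r(z)$, being such a component, satisfies
\[
    \mathrm{diam}_{V_{t\cdot r}(z)} V_r(z) \;\le\; C(1/t,d,\mu).
\]

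Finally, since $V_{t\cdot r}(z)\subset\Om$, the Kobayashi distance decreases under inclusion, so
\[
    \mathrm{diam}_\Om V_r(z) \;\le\; \mathrm{diam}_{V_{t\cdot r}(z)} V_r(z)\;\le\; C(1/t,d,\mu).
\]
Setting $C_1(1/t,d):=C(1/t,d,\mu)$ yields the desired bound, together with the vanishing as $t\to\infty$.

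The only point that requires care is the quasiregularity of $h$ with a dilatation independent of the particular lift. This relies on the fact that the artificial lamination was produced by a \emph{finite} and \emph{uniform} sequence of $\lambda$-lemma extensions (Proposition~\ref{prop:single}), so that the dilatations accumulated through composition are bounded by a universal constant. Once this uniform bound $\mu$ is in hand, the reduction to the one-dimensional Koebe-type estimate of Lemma~\ref{lemma:basic2D} is routine.
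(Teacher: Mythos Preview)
Your reduction to Lemma~\ref{lemma:basic2D} is exactly the paper's argument in the case where the holonomy from $V_{t\cdot r}(z)$ to $D_{t\cdot r}(z)$ is a genuine single-valued map, and in that case your bound $C_1(1/t,d)=C(1/t,d,\mu)$ is correct. The gap is that you assume this holonomy is always a well-defined map, i.e.\ that each artificial vertical leaf meets $D_{t\cdot r}(z)\subset\bL_0$ in a \emph{unique} point. This is not true in general: as noted in the paragraph immediately preceding the lemma, the artificial lamination may be tangent to $\bL_0$, and near such a tangency a single leaf meets $\bL_0$ in several points, so the leafwise projection $V_{t\cdot r}(z)\to D_{t\cdot r}(z)$ is only a \emph{correspondence}, not a map. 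In particular you cannot feed it into Lemma~\ref{lemma:basic2D}, which needs a proper quasiregular map.

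The paper handles this by a case split. Near $J^+$ the lamination is transverse to $\bL_0$ (by the choice of $\bL_0$), so for disks $D_{t\cdot r}(z)$ sufficiently small and sufficiently close to $J^+$ each leaf meets $D_{t\cdot r}(z)$ at most once and your argument applies verbatim. The problematic tangencies therefore occur only inside finitely many semi-local wandering components. For disks of radius bounded away from zero one gets a bound by compactness; for small disks one is either back in the transverse regime near $J^+$, or deep inside one of these finitely many wandering components $U$, where one argues directly that $\diam_U V_r(z)\to 0$ as $r\to 0$ (and $U\subset\Om$). So your proof is the main step, but it needs to be supplemented by this compactness argument to cover the non-transverse situation.
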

\begin{proof}
If $D_{t \cdot r}$ intersects artificial vertical leaves at most once, then the holonomy defines a proper quasi-regular map from $V_{t \cdot r}$ to $D_{t\cdot r}$. The degree of the holonomy map is then exactly the maximal number of intersections of $V_{t\cdot r}$ with leaves of the artificial lamination, which is $d$. It follows from Lemma \ref{lemma:basic2D} that the Poincar\'e diameter
$$
\mathrm{diam}_{V_{t\cdot r}(z)} V_r(z)
$$
is then bounded by $C(\frac{1}{t}, d, \mu)$, for a bound $\mu$ on the order of qc-dilatation of the holonomy maps induced by the artificial vertical lamination. By definition of protected lifts we have $V_{t\cdot r}(z) \subset \Omega$, which implies the same bound on the Kobayashi diameter in $\Omega$.

Recall that the vertical lamination of $J^+$ is transverse to $\mathbb L_0$, hence the above discussion applies to sufficiently small disks in a sufficiently small neighborhood of $J^+$. By choosing $\Omega$ sufficiently thin, it follows that $D_{t\cdot r}(z) \subset \mathbb L_0$ intersects each vertical leaf in a unique point, unless $D_{t\cdot r}(z)$ intersects one of finitely many wandering Fatou components.

It is clear that for each given $D_r(z) \subset D_{t\cdot r}(z) \subset \mathbb L_0$ there does exist a bound on
$$
\mathrm{diam}_{V_{t\cdot r}(z)} V_r(z),
$$
depending only on the degree of $V_{t\cdot r}(z)$. Hence by compactness we obtain a bound when the radius $r$ is bounded away from zero. But when $r$ is sufficiently small, the disk $D_{t\cdot r}(z)$ either lies in a neighborhood of $J^+$ that guarantees that $D_{t\cdot r}(z)$ intersects each artificial vertical leaf at most once, or $D_{t\cdot r}(z)$ lies well inside one of a semi-local wandering Fatou component $U$, where $U$ is one of at most finitely many such components. It follows that
$$
\mathrm{diam}_{U} V_r(z) \rightarrow 0
$$
as $r \rightarrow 0$. Since $U \subset \Omega$, this completes the proof.
\end{proof}

\begin{lemma}\label{lemma:transverse}
There exists a constant $\epsilon>0$ such that the following holds. Let $V \subset \mathbb L_0$ be a holomorphic disk, and write $V^j = f^j(V)$. Suppose that for $j = 0, \ldots , n$ we have $V^j \in \Delta^2_R$ and
$$
\sup_{z \in V^j} d(z, J^+) < \epsilon,
$$
where $d(\cdot, \cdot)$ refers to the Euclidean distance in $\mathbb C^2$.
Then each $V^j$ is transverse to the artificial lamination. If the Euclidean diameter of each $V^j$ is sufficiently small then it follows moreover that each $V^j$ has degree $1$.
\end{lemma}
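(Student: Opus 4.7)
The proof splits into the transversality statement and the sharper degree-one statement, both resting on the backward invariance of the vertical cone field $C^s$ under $df^{-1}$.

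For transversality, I would first choose $\epsilon>0$ small enough that the $\epsilon$-neighborhood of $J^+\cap\Delta^2_R$ is contained in $\mathcal{N}(J^+_R)$, where $C^s$ and the artificial lamination $\mathcal{L}$ are both defined, and small enough that, by Definition \ref{ellnot}, the horizontal line $\mathbb{L}_0$ is uniformly transverse to $\mathcal{L}$ on this neighborhood, with some angle $\alpha_0>0$. In particular $T_z V^0$ lies outside $C^s_z$ for every $z \in V^0$. Now suppose for contradiction that some $V^j$ is tangent to a leaf of $\mathcal{L}$ at a point $z$; since the tangent bundle of $\mathcal{L}$ lies in the vertical cone field, $T_z V^j \subset C^s_z$. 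The dominated splitting yields $df^{-1}(C^s_{f(p)}) \subset C^s_p$ for $p \in \mathcal{N}(J^+_R)$, and the hypothesis that $V^0, \ldots, V^j$ all lie in $\mathcal{N}(J^+_R)$ allows us to pull back $j$ times to obtain $T_{f^{-j}(z)} V^0 \subset C^s_{f^{-j}(z)}$. This contradicts the transversality of $V^0$ to $\mathcal{L}$.

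For the degree-one statement, the transversality of $V^j$ to $\mathcal{L}$ is moreover uniform in $j$, since the strictness in the backward invariance of $C^s$ (with a fixed ratio $r<1$) means the angle between $T V^j$ and $C^s$ only decreases under $df^{-1}$; hence the uniform lower bound on this angle over $V^0$ (from Definition \ref{ellnot} and compactness) propagates to every $V^j$. Fix this uniform angle $\alpha_1>0$. By compactness of $J^+ \cap \Delta^2_R$ and continuity of $C^s$, there is a radius $\rho>0$ such that inside any Euclidean ball of radius $\rho$ meeting $\mathcal{N}(J^+_R)$, in affine coordinates adapted to $C^s$, every leaf of $\mathcal{L}$ is a graph $x = \varphi(y)$ with $|\varphi'|\le \tfrac{1}{2}$, while any holomorphic disk whose tangents lie outside $C^s$ by angle at least $\alpha_1$ is a graph $y = \psi(x)$ with $|\psi'|\le \tfrac{1}{2}$. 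An intersection of such a disk with such a leaf is then a fixed point of the contraction $\varphi\circ\psi$, hence unique. If $\mathrm{diam}(V^j)<\rho/2$ for each $j$, then every $V^j$ sits inside one such ball, and each leaf of $\mathcal{L}$ meets $V^j$ at most once, giving degree one.

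The main technical point I expect is ensuring the uniformity of the transversality angle $\alpha_1$ between $T V^j$ and $C^s$ across all $j$ and all admissible $V$. This uniformity is precisely what the strict inclusion $df(C^s_p) \supset C^s_{f(p)}$ with a definite factor $r<1$ gives us: the angle between $T V^0$ and $C^s$, bounded below by compactness and the choice of $\mathbb{L}_0$, is not merely preserved but strictly improved under each forward iterate inside $\mathcal{N}(J^+_R)$. The rest is a routine graph-intersection argument in adapted coordinates.
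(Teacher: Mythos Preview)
Your proof is correct and follows essentially the same approach as the paper's. Both arguments rest on the fact that $\mathbb{L}_0$ is transverse to the vertical cone field near $J^+$ (Definition~\ref{ellnot}) together with the (backward) invariance of the vertical cone field in the region of dominated splitting, so that the tangent spaces to the $V^j$ remain uniformly horizontal; the paper phrases this positively (``their tangent spaces lie in some large horizontal cone field'') whereas you argue by contradiction via pullback, but the content is identical, and your degree-one graph-intersection argument simply makes explicit what the paper leaves implicit in the sentence ``It follows that sufficiently small disks will have degree~$1$.''
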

\begin{proof}
Recall that $\mathbb L_0$ is transverse to the artificial vertical lamination in a small neighborhood of $J^+$. Since the disks $V^j$ remain in the region of dominated splitting, their tangent spaces lie in some large horizontal cone field, while in a small neighborhood of $J^+$ the tangent spaces to the vertical leaves do not intersect those horizontal cones. Thus transversality follows, in fact with uniform bounds on the angles between the tangent spaces of the disks $V^j$ and the leaves of the artificial vertical lamination. It follows that sufficiently small disks will have degree $1$.
\end{proof}

\begin{lemma}\label{lemma:2D-N0}
Let $t > 1$. There exists an $N_0 = N_0(t) \in \mathbb N$ such that the following holds. For each protected lift $V_r(z) \subset V_{t\cdot r}(z)$ of disks $D_r(z) \subset D_{t\cdot r}(z) \in \mathbb L_0$, the preimage $V_r^{-1}(z)$ can be covered by protected lifts of at most $N_0$ disks $D_{r_k}(z_k) \subset D_{2t\cdot r_k}(z_k)$.

Moreover,
in the particular case where the disk $D_{t \cdot r}(z)$ lies in a
semi-local wandering component $U$,
the lifts $V_{2t\cdot r_k}(z_k)$ are all contained in the component $U^{-1}$. In all other cases, the lifts $V_{2t\cdot r_k}(z_k)$ are all  contained in $V_{tr}^{-1}(z)$.
\end{lemma}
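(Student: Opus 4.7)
The plan is to mirror the one-dimensional Lemma \ref{1D-lemma2} with protected lifts replacing disks, and to split the argument into regimes depending on whether $D_r(z)$ and $D_{t\cdot r}(z)$ sit in a semi-local wandering component.

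First I would treat the case when $D_{t\cdot r}(z)$ is deeply contained in a semi-local wandering component $U$. Here the preimage $V_r^{-1}(z)$ lies in the corresponding preimage component $U^{-1}$, and the global uniformization $\Phi\colon D \times \mathbb{C} \to U^{-1}$ from Corollary \ref{global straightening} straightens the dynamical vertical foliation on $U^{-1}$. Combined with the uniform forward-degree bound from Lemma \ref{lemma:forwarddegree}, the problem reduces to covering a bounded-degree branched cover of a planar domain by lifts of small disks in $\mathbb{L}_0 \cap U^{-1}$. An elementary planar covering then gives a uniformly bounded collection of protected lifts $V_{r_k}(z_k)$ whose enlargements $V_{2t\cdot r_k}(z_k)$ remain in $U^{-1}$.

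Next, for the generic case where $D_{t\cdot r}(z)$ is not contained in any semi-local wandering component, Lemma \ref{lemma:omega} confines $D_{t\cdot r}(z)$ to a controlled neighborhood of $J^+$. By the choice of $\mathbb{L}_0$ in Lemma \ref{good hor line} the artificial vertical lamination meets $\mathbb{L}_0$ transversally there, with a uniform lower bound on the angle and hence uniformly bounded holonomy dilatation. For $r$ sufficiently small the lifts are univalent and close to linear, reducing the covering problem to the one-dimensional Lemma \ref{1D-lemma2}. For $r$ bounded below, the family of admissible pairs of disks is compact inside $\Omega$, so a uniform $N_0$ follows from a finite cover. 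The required containment $V_{2t\cdot r_k}(z_k) \subset V_{tr}^{-1}(z)$ is then automatic in the near-linear regime, and the intermediate case in which $D_r(z) \subset U$ but $D_{t\cdot r}(z)\not\subset U$ is handled by the same compactness, since $r$ is then bounded below by the distance of $D_r(z)$ to $\partial U$.

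The main obstacle I expect is at the interface between the dominated-splitting region and the wandering components with hole, where the artificial and dynamical vertical laminations disagree, so neither argument applies verbatim. This is controlled by the fact, recorded in Lemma \ref{lemma:omega} and Section \ref{adjustment}, that only finitely many semi-local wandering components with hole lie near $J^+$; each contributes only a bounded amount to the count, and the final $N_0(t)$ is the maximum over the three regimes plus these finitely many exceptional components. The secondary technical point is to verify that in the wandering-component case the $2t$-enlargement of each lift — rather than just the $t$-enlargement guaranteeing protection — stays in $U^{-1}$; this is handled by shrinking the radii $r_k$ by a uniform factor (absorbed into $N_0$) before applying the planar covering.
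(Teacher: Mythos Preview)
Your decomposition broadly parallels the paper's, but there is a genuine gap in the ``generic'' case where $D_{t\cdot r}(z)$ is not inside a wandering component. You assert that for $r$ sufficiently small ``the lifts are univalent and close to linear'', and that for $r$ bounded below a compactness argument applies. Neither step goes through as stated. The lift $V_r(z)$ lives in $f^j\mathbb L_0$ for an \emph{unbounded} index $j$, so even with $r$ bounded below the family of lifts is not compact, and for small $r$ there is no reason the curve $f^j\mathbb L_0$ should be ``close to linear'' near $J^+$ without further argument. The transversality of $\mathbb L_0$ to the lamination controls only the initial disk, not its $j$-th image.

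The paper treats precisely this regime (lifts intersecting $J^+$, $r\to 0$) by a nontrivial construction: it shows that $j$ must then be large, picks for each nearby $p\in J$ a loop $\gamma\subset W^s(p)$ on which $G^-$ is strictly positive, thickens it to a horizontal family $\Gamma$, and uses the filtration to see that $V_{t\cdot r}(z)$ lies in an exponentially thin tube around a forward image of a horizontal disk in some $f^{n-N}\Gamma$. This is what forces the lift (and its preimage) to be horizontal and reduces the problem to covering a horizontal disk by protected lifts. Your proposal skips this step. A secondary issue: in the wandering case you invoke the dynamical uniformization $\Phi$ and Lemma~\ref{lemma:forwarddegree}, but the lamination relevant to lifts is the \emph{artificial} one, which on finitely many components is not the dynamical foliation; the paper handles those components separately by using invariance of the artificial lamination near $\partial U$ and a direct small-radius argument, not the forward-degree bound.
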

\begin{proof}
It is clear that for every $r>0$ and $z \in \mathbb L_0$ and every choice of protected lift $V_r(z) \subset V_{t \cdot r}(z)$, the disk $V_r^{-1}(z)$ can be covered with a finite number of protected lifts $V_{r_k}(z_k)$ for which $V_{2t r_k}(z_k) \subset V_{tr}^{-1}(z)$. Suppose for the purpose of a contradiction that there exists a sequence $(r, z, V_r(z))$ for which the minimal number of lifts needed converges to infinity.

By restricting to a subsequence we may assume that the lifts $V_{t\cdot r}(z)$ are either contained in wandering Fatou components, in periodic Fatou components, or all intersect $J^+$. We consider theses cases separately.

First suppose that the lifts $V_{t\cdot r}(z)$ are contained in wandering Fatou components. Suppose first that $V_{t\cdot r}(z)$ is contained a wandering domain $U$ where the artificial lamination is backwards invariant. By invariance the image of $V^{-1}_{t\cdot r}(z)$ in $\mathbb L_0$ under holonomy is independent of the choice of lift, and the argument is the same as in the one-dimensional setting: For disks of radius bounded away from zero the bound on $N_0$ follows from compactness. But sufficiently small disks are either contained in a small neighborhood of $J^+$, where the lamination is transverse to $\mathbb L_0$, or well inside wandering domains. In the latter case it is clear that $V^{-1}_r(z)$ can in fact be covered by a single lift. In the former case holonomy induces a quasiconformal map of bounded dilatation, which gives a bound on the distortion and thus on $N_0$.

Since the vertical lamination is invariant except in finitely many components, we may therefore assume that all $V_{t\cdot r}(z)$ are in one of the wandering components for which the vertical lamination is not backwards invariant. Recall that in this wandering Fatou component the vertical lamination is still invariant in a neighborhood of the boundary $J^+$. The bound on $N_0$ follows again when $r$ remains bounded away from zero, hence we may assume that $r \rightarrow 0$. But in that case the lifts are either very close to $J^+$, where the lamination is invariant and the bound on $N_0$ follows as above, or the lifts are bounded away from $J^+$. But then for sufficiently small $r$ the preimage $V_r^{-1}(z)$ can again be covered by a single lift $V_{\rho}(w)$ for which $V_{2t\cdot \rho}(w) \subset U^{-1}$.

Now suppose that the lifts $V_{t\cdot r}(z)$ are contained in periodic Fatou components. Recall that $\mathbb{L}_0$ was chosen to be transverse to the dynamical lamination near $J^+$. Thus, by making $\Omega$ sufficiently thin, we may assume that both $V_{t\cdot r}(z)$ and $V_{t\cdot r}^{-1}(z)$ are transverse to the vertical lamination, with angles bounded from below, see Lemma \ref{lemma:transverse}. Again it follows that holonomy from $\mathbb L_0$ to $V_{t\cdot r}^{-1}(z)$ and from $V_{t\cdot r}(z)$ back to $\mathbb L_0$ induces a quasiconformal map of bounded dilatation, which implies a bound on $N_0$.

The last case to be considered is when the lift $V_{t\cdot r}(z)$ intersects $J^+$. Suppose first that there exists a subsequence for which $r$ converges to zero. In that case $n$ must converge to infinity, since otherwise the preimages $V^{-i}_{t\cdot r}(z)$ are all contained in the domain of dominated splitting, which implies that the lifts $V_{t\cdot r}$ are horizontal, giving a bound on $N_0$. In fact, by Lemma \ref{newlemma4} the contraction in the horizontal direction is sub-exponential, and hence in backwards time the expansion is sub-exponential, therefore we may assume that
$$
n \ge \log_\alpha(r)
$$
for any $\alpha>1$ and $r$ sufficiently small.

Since $n$ is large the disks $V_{t\cdot r}(z)$ must intersect $J^+$ in a point where $G^-$ is very close to zero, which implies that there is a nearby point $x \in J$ on the same local stable manifold. For each $x\in J$ there exists a small closed loop $\gamma$ around $x$ in $W^s(p)$ where $G^-$ is strictly positive. Consider a small disks through each point in $\gamma$, normal to $W^s(p)$, and therefore in particular horizontal, and denote the union of these horizontal disks by $\Gamma$. By making these horizontal disks sufficiently small, we can guarantee that $G^-$ is strictly positive on $\Gamma$. It follows that there exists an $N$ such that
$$
f^{-N}(\Gamma) \cap \Delta^2_R = \emptyset.
$$
By compactness of $J$ , we can find  a uniform bound from above on the diameter of the $\gamma$, a uniform bound from below on the size of the horizontal disks through $\gamma$, and a uniform bound on $N$.

By the exponential contraction in the vertical direction and the fact that the inverse images $f^{-j}(V_{tr}(z))$ are all contained in the bidisk $\Delta^2_R$, it follows that $V_{tr}(z)$ is contained in a tubular neighborhood with radius of order $r^\alpha$ around the forward image of a horizontal disk $D$, (with radius of order $t\cdot r$) in a $f^{n-N} \Gamma(p)$, for some point $p \in J$, with similar estimates for $V^{-1}_{t\cdot r}(z)$ and $f^{-1}(D)$. Therefore it is sufficient to consider the respective lifts in $D$ and $f^{-1}(D)$, and the fact that these disks are horizontal implies a bound on $N_0$ by the same argument as above.

Thus the remaining lifts $V_{t\cdot r}(z)$ we need to consider have radius bounded away from zero and intersect $J^+$. By making $\Omega$ sufficiently thin we can therefore guarantee that for some $1< t_2 < t$ the lift $V_{t_2 \cdot r}(z)$ is contained in a wandering domain $U$. The existence of the bound $N_0$ follows by the same argument as when $V_{t\cdot r}(z) \subset U$.


\end{proof}

\begin{figure}[t]
\centering
\includegraphics[width=2.5in]{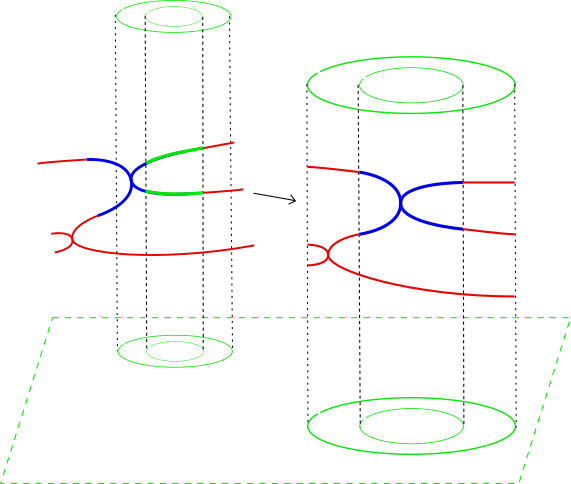}
\caption{The set $V_{r}^{-1} $ is covered by lifts of disks $D_{r_k}(z_k) \subset D_{t\cdot r_k}(z_k)$.}
\label{figure:outline}
\end{figure}

Figure \ref{figure:outline} illustrates the covering of $V_r^{-1}(z)$ by lifts of disks $D_{r_k}(z_k)\subset D_{t\cdot r_k}(z_k)$. The lift $V_r(z)$ and its inverse image $V_r^{-1}(z)$ are depicted in blue; the rest of the larger $V_{tr}(z)$ and $V_{t\cdot r}^{-1}(z)$ in red. In the sketch the vertical lamination is given by straight vertical lines. Two distinct lifts of a single disk $V_{r_k}(z_k)$ are depicted in green.

\begin{remark}
By construction the bound $N_0$ is independent of $\Omega$, that is, $N_0$ does not need to be changed when $\Omega$ is made smaller.
\end{remark}

Let $U$ be a semi-local wandering domain on which the artificial lamination is not equal to the dynamical lamination. We will compare lifts of disks $D_r(z) \subset \mathbb L\cap U$ with respect to both laminations.

\begin{prop}\label{2D-prop11}
There exists $\delta = \delta(U) >0$ such that the following holds. Let $D \subset \mathbb L_0 \cap U$ be a holomorphic disk of hyperbolic diameter (in $\mathbb L_0 \cap U$) at most $\delta$, and let $V \subset f^j \mathbb L_0$ be a lift with respect to the dynamical lamination. Then $V$ is contained in a lift with respect to the artificial lamination of a protected disk $D_\rho(w) \subset D_{2 \cdot \rho}(w) \subset \mathbb L_0 \cap U$.
\end{prop}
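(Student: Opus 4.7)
The plan is to exploit the fact that the artificial and dynamical vertical laminations on $U$ coincide on a neighborhood of the vertical boundary $\bar U\cap J^+$, so that their discrepancy is confined to a set compactly contained in $U$. Once a uniform bound on the discrepancy between the two holonomies to $\mathbb L_0$ is established, the dynamical lift of a sufficiently small disk $D$ will be enclosed in the artificial lift of a slightly enlarged protected disk, and the connectedness of $V$ will place it inside a single component of that set.

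The key step is the following bounded-discrepancy estimate: there exists $C_1=C_1(U)>0$ such that for every $y\in \mathbb L_0\cap U$ and every $v$ on the dynamical leaf $L^d(y)$, some intersection point $y'\in L^a(v)\cap \mathbb L_0\cap U$ satisfies $d_{\mathbb L_0\cap U}(y,y')\le C_1$. The proof would rest on three ingredients: (i) by the construction of $\mathcal L$ in \S\ref{loc and glob extensions} and the adjustment in \S\ref{adjustment}, the two laminations coincide on an open neighborhood of $\bar U\cap J^+$, so the correspondence is trivially the identity there; (ii) by Lemma~\ref{good hor line} and the continuity of both laminations on the compact closure $\bar U$, the associated holonomies to $\mathbb L_0$ are continuous correspondences; (iii) the region where the two laminations differ is compactly contained in $U$, so the Euclidean discrepancy admits a uniform bound there, and no blow-up occurs in the hyperbolic metric on $\mathbb L_0\cap U$ because the discrepancy vanishes precisely near the portion of $\partial(\mathbb L_0\cap U)$ where the hyperbolic metric degenerates.

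Given $C_1$, choose $\delta=\delta(U)>0$ small. For $D\subset \mathbb L_0\cap U$ of hyperbolic diameter at most $\delta$, let $\widehat D$ be the hyperbolic $(C_1+\delta)$-neighborhood of a chosen center of $D$ in $\mathbb L_0\cap U$. We claim that $\widehat D$ can be enclosed in a Euclidean disk $D_\rho(w)$ with $D_{2\rho}(w)\subset \mathbb L_0\cap U$: near $\partial(\mathbb L_0\cap U)$ a hyperbolic ball of bounded radius has Euclidean diameter proportional to the Euclidean distance to that boundary, so a small Euclidean $\rho$ always suffices, while in the hyperbolic interior both metrics are comparable and there is ample Euclidean room. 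Each $v\in V$ lies on $L^d(y)$ for some $y\in D$, and the bounded-discrepancy estimate produces $y'\in L^a(v)\cap \mathbb L_0\cap U$ with $y'\in \widehat D\subset D_\rho(w)$. Hence every $v\in V$ belongs to $\{v'\in f^j\mathbb L_0:L^a(v')\cap D_\rho(w)\neq\emptyset\}$, and since $V$ is connected it is contained in a single connected component of this set, which is by definition the artificial lift of $D_\rho(w)$.

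The main obstacle will be the bounded-discrepancy estimate. Although it is intuitively forced by the continuity of both laminations on $\bar U$ and their agreement near $J^+$, the argument must handle several technical points: both laminations can have isolated tangencies with $\mathbb L_0$ inside $U$, so leaves may meet $\mathbb L_0$ at several points; the point $v\in L^d(y)$ can lie far from $y$ along the leaf; and the bound must hold uniformly as $y$ approaches the ``horizontal'' part of $\partial(\mathbb L_0\cap U)$ arising from cutting $U$ by $\Delta^2_R$, where the two laminations need not agree.
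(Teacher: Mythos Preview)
Your approach differs from the paper's and contains a genuine gap beyond the obstacle you flag at the end.

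The paper does not compare the two holonomies directly. Instead it shows that the lift $V$ itself has arbitrarily small Kobayashi diameter in $U$ (once $\delta$ is small), and then projects via the artificial lamination. The mechanism is the horizontal lamination of \S7: the holomorphic disks $f^n(W^{-n})\subset f^n\mathbb L_0$ converge to horizontal leaves, and neither these disks nor their limits can coincide with any dynamical vertical leaf. Hence in any sufficiently thin tubular neighborhood $\mathcal N$ of a fixed dynamical vertical leaf, every component of $f^j\mathbb L_0\cap\mathcal N$ has small Euclidean diameter, uniformly in $j$. A dynamical lift $V$ of a disk $D$ of small hyperbolic diameter sits in exactly such a thin tube, so $V$ is Euclidean-small; when $V$ is bounded away from $\partial U$ this yields small Kobayashi diameter in $U$, while near $\partial U$ the two laminations coincide and the statement is trivial. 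The artificial holonomy then carries $V$ to a set $E\subset\mathbb L_0\cap U$ of \emph{small} hyperbolic diameter, and simple connectivity of $\mathbb L_0\cap U$ (as in the proof of Lemma~\ref{1D-lemma4}) places $E$ inside a protected disk.

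Your route via a bounded-discrepancy constant $C_1$ breaks at the protected-disk step even if $C_1$ were established. A hyperbolic ball of radius $R$ in a simply connected plane domain can be enclosed in a Euclidean disk $D_\rho(w)$ with $D_{2\rho}(w)\subset\Omega$ only when $R$ lies below a \emph{universal} threshold: in the upper half-plane the hyperbolic ball of radius $R$ about $i$ is the Euclidean disk from height $e^{-R}$ to $e^R$, and one checks that it fits inside a $2$-protected disk only for $R\le\tfrac12\log 3$. Your $\widehat D$ has hyperbolic radius $C_1+\delta$, with $C_1$ fixed by the geometry of $U$ and in no way forced to be small; if $C_1$ exceeds that universal threshold the enclosure claim simply fails, no matter how small you take $\delta$. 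The paper needs---and obtains---\emph{smallness} of the artificial projection, not merely a fixed bound, and that smallness is exactly what the horizontal-lamination transversality supplies and what your argument never invokes.
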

\begin{proof}
Note that any connected component of $\mathbb L_0 \cap U$ is simply connected. Therefore, as in the proof of Lemma \ref{1D-lemma4}, a subset $E \subset \mathbb L_0 \cap U$ of sufficiently small hyperbolic diameter is contained in a disk $D_r(w)$ satisfying $D_{r}(w)\subset D_{2\cdot r}(w) \subset U$. Hence the proof is completed by showing that $V$ is a lift (with respect to the artificial lamination) of a set $E\subset \mathbb L_0 \cap U$ of sufficiently small hyperbolic diameter.

Since the dynamical and artificial laminations coincide near the boundary of $U$, the statement holds trivially when $V$ is sufficiently close to the boundary. We will therefore consider lifts of disks $D$ that are bounded away from $\partial U$.

Recall from the previous section that the holomorphic disks $f^n(W^{-n})$ converge to the horizontal lamination. Both the horizontal leaves and the disks $f^n(W^{-n})$ cannot coincide with vertical dynamical leaves, hence for a sufficiently small tubular neighborhood $\mathcal{N}$ of a dynamical leaf, the intersection $f^n(W^{-n}) \cap \mathcal{N}$ have arbitrarily small Euclidean diameters. Since we consider leaves that are bounded away from the boundary, small Euclidean diameters imply small Kobayashi diameters in $U$, and thus $V$ can be assumed to be a lift of some $E \subset \mathbb L_0 \cap U$ of arbitrarily small hyperbolic diameter, which completes the proof.

\comm{
These disks cannot coincide with leaves of the dynamical vertical lamination. Thus, for sufficiently large $n$ and a sufficiently small tubular neighborhood $\mathcal{N}$ of a dynamical leaf, the connected components of $f^n(W^{-n}) \cap \mathcal{N}$ will have arbitrarily small Euclidean diameter, and thus have arbitrarily small transverse diameter with respect to the artificial vertical lamination if the dynamical leaf is bounded away from the boundary of $U^0$.

Thus, we have proved the statement for $V \subset f^n \mathbb L_0$ and $n\ge N$ for some $N \in \mathbb N$, leaving us with finitely many disks $f^n(W^{-n})$. Since those disks also cannot coincide with leaves of the dynamical vertical lamination, it again follows that intersections with sufficiently small tubular neighborhoods $\mathcal{N}$ will have arbitrarily small Euclidean diameter. This completes the proof.
}
\end{proof}

\comm{
For each semi-local wandering domain we have the dynamical vertical lamination, obtained by pulling back the dynamical vertical lamination of a component $U^n$, for $n$ sufficiently large. We also have a family of horizontal leaves given by the disks $f^n(W^{-n})$. There is a maximal order of tangency between these two families of Riemann surfaces, which we can regard as the local degree on the component $U^0$, a notion that corresponds to the degree caused by critical points in the one-dimensional setting. We let $\mathrm{deg}_{\mathrm{crit}}$ be the maximum of these upper bounds over all the possible semi-local wandering components. The maximum exists since the order of tangency is invariant under the automorphism $f$, and there are only finitely many grand orbits of semi-local wandering components for which this order of tangency is not equal to $1$.}

\begin{defn}\label{defn:beta}[$\beta$]
Let $\Lambda_R(a)$ be a vertical leaf that is either contained in $J^+$ or in a semi-local wandering domain, and assume that $\Lambda_R(a)$ is not one of the finitely many parabolic leaves in $J^+$ that were removed from $\Omega$. Then the leaf $\Lambda_R(a)$ is contained in $\Omega$. Note that the sets $f^j \mathbb L \cap \Delta^2_R$ stay bounded away from the horizontal boundary $\{|w| = R\}$. It follows that there exists an upper bound $\beta$, independent from $j \in \mathbb N$, on the Kobayashi diameter in $\Omega$ of the intersection of any vertical leaf $\Lambda_R(a)$ with any $f^j \mathbb L_0$. We note in particular that the constant $\beta$ can be chosen independently of $\Omega$.
\end{defn}

\begin{defn}\label{diammax}[\emph{$\mathrm{diam}_{\mathrm{max}}$}]
For $t>1$ and an integer $d \ge 2$ we define
$$
\mathrm{diam}_{\mathrm{max}}(t,d) := 2 N_0(t) \cdot C_1(\frac{1}{2}, d) + \beta.
$$
In what follows $t$ will equal either $2$ or $2K$, where the constant $K$ will be introduced in Proposition \ref{2D-prop12}. The integer $d$ will equal either $1$ or $\deg_\crit$.
\end{defn}

\begin{defn}\label{bigcomponent}[\emph{big wandering domain}]
We say that a semi-local wandering domain $U$ is a \emph{big wandering
  domain} if the vertical lamination on $U$ is not backwards invariant,
 or if there exist $j \in \mathbb N$ and $S \subset U \cap f^j \mathbb L_0$
 of Kobayashi diameter
$$
\diam_U S \le \diam_{\max} (2,1)
$$
intersecting a vertical leaf $\Lambda_R(a)$ for which $f^{-1}\Lambda_R(a) \cap \Delta^2_R$ has more than one component intersecting $f^{-1} S$.
\end{defn}

We note that there are at most finitely many big wandering domains.

\begin{defn}\label{regularcomponent}[Regular wandering domains]
A semi-local wandering domain $U$ is said to be \emph{regular} if none of the components $U^{-n}$ for $n \ge 0$ are big wandering domains.

There exists only finitely many bi-infinite orbits of semi-local wandering domains that are not regular. A component that is not regular is called \emph{post-critical}. Note that there at most finitely many grand orbits of semi-local wandering Fatou components that contain post-critical domains. If $U^n$ is regular but $U^{n+1}$ is post-critical then we say that $U^n$ is \emph{critical}.
\end{defn}

\begin{defn}\label{degmax}[\emph{$\mathrm{deg}_{\mathrm{max}}$}]
Recall from Lemma \ref{lemma:forwarddegree} that the degree of the maps $f^n: U^0 \rightarrow U^n$ is bounded from above by a constant independent of $n$. Thus, such a forward degree bound exists for each critical wandering component. Since there are only finitely many critical components, there exists a uniform bound, which we will denote by $\mathrm{deg}_{\mathrm{max}}$, analogously to the one-dimensional setting.
\end{defn}

\comm{Let $(U^n)_{n \in \mathbb Z}$ be an orbit of semi-local wandering Fatou components. If all Fatou components are contained in the region of dominated splitting, then the dynamical vertical lamination can be pulled back to all the components, and the thus adjusted artificial vertical lamination will be invariant and its tangent space will be contained in the vertical cone field.

\note{not right to keep changing the artificial lamination!}
Let us now assume that the above is not the case. We can renumber the orbit $(U^n)$ so that for $n \le 0$ the domain $U^n$ is contained in the region of dominated splitting as well as in the region where the artificial vertical lamination is defined, but $U^1$ is not. By pulling back the artificial vertical lamination on $U^0$ to all components $U^n$ with $n <0$ we obtain an invariant artificial vertical lamination on the backward orbit $(U^n)_{n \le 0}$. Since the components $U^n$ are all disjoint, it follows that as $n \rightarrow - \infty$ the inner radius of $U^n$ converges to zero. Thus, by making $\Omega$ sufficiently thin, it can be guaranteed that the Euclidean transverse diameter of a subset in $U^n$ of bounded hyperbolic transverse diameter becomes arbitrarily small as $n \rightarrow \infty$.

\begin{defn}[\emph{leafwise univalent}]\note{Clarify definition}
Since the artificial vertical lamination is invariant on $(U^n)_{n \le 0}$, the inverse image of a semi-local vertical leaf intersects $\Delta^2_R$ in a finite number of semi-local vertical leaves. Let $V \subset U^n$ and $f^{-1}(V) \subset U^{n-1}$ be holomorphic disks. We say that $f: f^{-1}(V) \rightarrow V$ is \emph{leaf-wise univalent} if for every semi-local vertical leaf $L \subset U^n$ that intersects $V$, the inverse $f^{-1}(L)$ intersects $f^{-1}(V)$ in a unique semi-local vertical leaf.
\end{defn}

\begin{defn}[\emph{sufficiently thin wandering component}]
We say that a component $U^n$, with $n \le 0$, is \emph{sufficiently thin} if the following two requirements are satisfied. First, we require for every $j \in \mathbb N$ and every holomorphic disk $V \subset U^n \cap f^j\mathbb L_0$ of hyperbolic transverse diameter at most $\mathrm{diam}_{\mathrm{max}}$ that $f: f^{-1}(V) \rightarrow V$ is leaf-wise univalent.  Second, we require that the artificial vertical lamination is transverse to $U^n \cap \mathbb L_0$, and that every holomorphic disk $V \subset U^n \cap \mathbb L_0$ of hyperbolic diameter at most $\mathrm{diam}_{\mathrm{max}}$ $f^{-1}(V) \subset \Delta^2_R$ is of degree $1$. Note that these two conditions are satisfied when the inner radius of $U^n$ is sufficiently small, and $\Omega$ is sufficiently thin.
\end{defn}

We now fix the artificial vertical lamination once and for all by pulling back the dynamical vertical lamination to all post-critical components, and pulling back the artificial vertical lamination on the critical components to all earlier components. Thus, if $(U^n)_{n \in \mathbb Z}$ is a bi-infinite orbit of semi-local wandering components with $U^0$ critical, then the artificial vertical lamination is invariant except for $f: U^0 \rightarrow U^1$.
}

\section{Diameter and degree bounds - proof for H\'enon maps}

Let us recall the constants and objects that play a role in the upcoming proofs, listed in the order of their dependency.

\begin{enumerate}
\item[]$\mathrm{deg}_{\mathrm{crit}}$ : The maximal local degree (Def \ref{degcrit}).
\item[]$\mathbb L_0$ : Convenient choice of horizontal line (Def. \ref{ellnot}).
\item[]$\beta$ : Upper bound on the Kobayashi diameter in artificial vertical leaves (Def. \ref{defn:beta}).
\item[]$N_0(t)$ : Maximal number of $t$-protected disks whose lifts cover $V_r^{-1}(z)$ (Lemma \ref{lemma:2D-N0}).
\item[]$C_1(\frac{1}{t},d)$ : Upper bound on the Kobayashi diameter for lifts in $\Omega$ (Lemma \ref{constantC1}).
\item[]$\mathrm{deg}_{\mathrm{max}}$ : Bound on global degrees of iterates on critical wandering domains (Def. \ref{degmax}).
\item[]$K$: Defined in Prop. \ref{2D-prop12} below. We will consider protected lifts $V_r(z) \subset V_{2K\cdot r}(z)$.
\item[]$\mathrm{diam}_{\mathrm{max}}(t,d) := 2 N_0(t) \cdot C_1(\frac{1}{2}, d) + \beta$ (Def. \ref{diammax}).
\item[]$\Omega$ : Domain of consideration, chosen sufficiently thin
  (Lemma \ref{lemma:omega}).
\end{enumerate}

In this section we prove the main estimates on the diameters and
degree of lifts $V_r(z)$ and their preimages. Just as in the
one-dimensional argument we distinguish between three different kinds of
lifts. First we consider lifts that are deeply contained in wandering
components, i.e. lifts of disks $D_r(z)$ for which $D_{K\cdot r}(z)$
is contained in the same component for a sufficiently large
constant $K$. Afterwards we consider the two remaining cases, namely
lifts of disks that are not contained in wandering components,
and lifts of disks $D_r(z)$ that are contained in wandering components
but for which the
protecting disks $D_{K \cdot r}(z)$ are not.

\subsection{Lifts deeply contained in wandering domains.}

In what follows we let $(U^n)_{n \in \mathbb Z}$ be a bi-infinite
orbit of semi-local wandering Fatou components, and consider protected lifts $V_r \subset V_{t\cdot r}$ for which $V_{t\cdot r}$ is contained in one of the domains $U^n$. As  post-critical components are harder to deal with than regular components, we start with the latter.

\begin{lemma}\label{2D-lemma12}
Let $U^n$ be a regular wandering semi-local Fatou component and let $V_r(z) \subset V_{2r}(z) \subset f^j\mathbb L_0 \cap U^n$ be protected lifts of disks $D_r(z) \subset D_{2r}(z) \subset \mathbb L_0$. Then for $i = 0, \ldots ,j$ we have
$$
\mathrm{diam}_{\Omega} V_r^{-i}(z) \le \diam_{\max} (2,1)
$$
and
$$
\mathrm{deg} V_r^{-i}(z) = 1.
$$
\end{lemma}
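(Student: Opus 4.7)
The plan is to mirror the structure of the one-dimensional Lemma \ref{1D-lemma3}, inducting on $j$ with a secondary induction on $i$. For the base case $j = 0$, we have $V_r(z) = D_r(z) \subset \mathbb L_0 \cap U^n$, so the degree-one claim follows from transversality of $\mathbb L_0$ to the artificial vertical lamination (Definition \ref{ellnot} together with the thinness of $\Omega$) combined with the regularity of $U^n$, and the diameter bound is Lemma \ref{constantC1} with $d = 1$.

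For the inductive step, given protected lifts $V_r(z) \subset V_{2r}(z) \subset f^{j+1}\mathbb L_0 \cap U^n$, I would apply Lemma \ref{lemma:2D-N0} with $t=2$ to cover $V_r^{-1}(z)$ by at most $N_0(2)$ protected lifts $V_{r_k}(z_k) \subset V_{4 r_k}(z_k) \subset U^{n-1}$. Since $U^{n-1}$ inherits regularity from $U^n$ (Definition \ref{regularcomponent}), the inductive hypothesis applied to each pair $V_{2 r_k}(z_k) \subset V_{4 r_k}(z_k)$, viewed as a protected lift with protecting factor $2$, yields $\deg V_{2 r_k}^{-i}(z_k) = 1$ and $\diam_\Omega V_{2 r_k}^{-i}(z_k) \le \diam_{\max}(2,1)$ for $i = 0, \ldots, j$. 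Combining degree one with Lemma \ref{constantC1} applied to the nested pair $V_{r_k}^{-i}(z_k) \subset V_{2 r_k}^{-i}(z_k)$ then gives $\diam_\Omega V_{r_k}^{-i}(z_k) \le C_1(\tfrac{1}{2},1)$.

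A secondary induction on $i = 0, \ldots, j+1$ propagates these bounds to $V_r^{-i}(z)$. The base case $i = 0$ is handled as in the outer base. For the inductive step, $V_r^{-(i+1)}(z)$ is a connected set covered by at most $N_0(2)$ sub-lifts $V_{r_k}^{-i}(z_k)$, each of Kobayashi diameter at most $C_1(\tfrac{1}{2},1)$, so chaining through consecutive sub-lifts gives $\diam_\Omega V_r^{-(i+1)}(z) \le N_0(2)\cdot C_1(\tfrac{1}{2},1) \le \diam_{\max}(2,1)$. For the degree, the regularity of $U^{n-i-1}$ (negation of Definition \ref{bigcomponent}) combined with the controlled bound $\diam_\Omega V_r^{-i}(z) \le \diam_{\max}(2,1)$ ensures that each vertical leaf meeting $V_r^{-i}(z)$ has a single preimage component intersecting $V_r^{-(i+1)}(z)$; together with the inductive degree-one bound for $V_r^{-i}(z)$, this yields $\deg V_r^{-(i+1)}(z) = 1$. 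The chief technical point, to which I would pay close attention, is the degree-one statement for the original lift $V_r(z)$ in the base cases: it rests on the transversality of $\mathbb L_0$ in $\Omega$ and on regularity of $U^n$ ruling out tangencies with the horizontal lamination that could otherwise inflate the degree.
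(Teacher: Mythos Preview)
Your overall architecture (outer induction on $j$, inner induction on $i$, covering $V_r^{-1}(z)$ via Lemma~\ref{lemma:2D-N0}) matches the paper's, but there is a genuine gap in the chaining step.

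You assert that $V_r^{-1}(z)$ is covered by ``at most $N_0(2)$ protected lifts $V_{r_k}(z_k)$'', and then that $V_r^{-(i+1)}(z)$ is covered by at most $N_0(2)$ sets $V_{r_k}^{-i}(z_k)$. But Lemma~\ref{lemma:2D-N0} only guarantees that $V_r^{-1}(z)$ is covered by protected lifts of at most $N_0(2)$ \emph{disks} $D_{r_k}(z_k)\subset\mathbb L_0$; each such disk may have several distinct lifts inside $f^j\mathbb L_0\cap U^{n-1}$, and there is no a~priori bound on the total number of lifts. The paper makes this point explicitly (``a priori we do not have a bound on the number of lifts $V_{r_k}(z_k)$''). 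Your chaining bound $\diam_\Omega V_r^{-(i+1)}(z)\le N_0(2)\cdot C_1(\tfrac12,1)$ therefore does not follow.

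The paper's fix is exactly what the constant $\beta$ and the factor $2$ in $\diam_{\max}(2,1)=2N_0(2)\,C_1(\tfrac12,1)+\beta$ are for. One picks a point $x\in V_r(z)$ and uses that on a regular component the artificial vertical lamination is backward invariant, so the semi-local leaf $\Lambda(x_{-i})$ through $f^{-i}(x)$ lies in $U^{n-i}$ and meets $V_r^{-i}(z)$; the intersection of this leaf with $f^{j+1-i}\mathbb L_0$ has Kobayashi diameter at most $\beta$ in $\Omega$ (Definition~\ref{defn:beta}). Any point of $V_r^{-i}(z)$ can be joined to $\Lambda(x_{-i})$ by a chain of overlapping lifts; since two lifts of the \emph{same} disk are disjoint, consecutive lifts in a chain come from distinct disks and the chain has length at most $N_0(2)$. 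This yields $\diam_\Omega V_r^{-i}(z)\le 2N_0(2)\,C_1(\tfrac12,1)+\beta$. The reference leaf, and with it the regularity hypothesis ensuring its backward invariance, is doing real work here; simply chaining the lifts without it does not close. Once this diameter bound is in hand, the degree argument via Definition~\ref{bigcomponent} proceeds as you indicate.

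A smaller point: your treatment of the inner base $i=0$ appeals to ``the outer base'', but for $j+1>0$ the set $V_r(z)$ lies in $f^{j+1}\mathbb L_0$, not in $\mathbb L_0$, so transversality of $\mathbb L_0$ does not apply directly. In the paper the degree bound for $V_r(z)$ itself is obtained at the end, from invariance of the lamination on regular components together with the degree-one bound already established for $V_r^{-1}(z)$.
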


\begin{proof}
We will assume the statement holds for a given $j \in \mathbb N$, and
proceed to prove it for $j+1$. By Lemma \ref{lemma:2D-N0} the
holomorphic disk $V^{-1}_r(z)$ can be covered by lifts $V_{r_k}(z_k)$
of at most $N_0$  disks $D_{r_k}(z_k) \subset D_{4\cdot r_k}(z_k) \subset \mathbb L_0 \cap U^{n-1}$. By the induction assumption each $V_{2\cdot r_k}^{-i}(z_k)$ has degree $1$, and thus we obtain the estimates
$$
\mathrm{diam}_{U^{n-i-1}} V_{r_k}^{-i}(z_k) \le C_1(\frac{1}{2},1 ).
$$
Note that a priori we do not have a bound on the number of lifts $V_{r_k}(z_k)$.
Since $U^n$
is regular, the vertical lamination on the components $(U^{n-i})_{i \ge 0}$ is invariant. Let $x \in V_r(z)$, and write $x_{-i} =
f^{-i}(x)$. Each point in a lift $V_{r_k}(z_k)$ can be connected to the semi-local leaf $\Lambda(x_{-1})$ by a path that travels through at most $N_0$ other lifts $V_{r_l}(z_l)$. Hence it follows that
$$
\mathrm{diam}_{U^{n-1}} V_{r}^{-1}(z) \le \diam_{\max} (2,1).
$$
Recall that by Definitions \ref{bigcomponent} and \ref{regularcomponent} of respectively \emph{big} and \emph{regular} Fatou components, this bound on the hyperbolic diameter in $U^{n-1}$ of $V_r^{-1}(z)$ implies that $\Lambda(x_{-2})$ is the unique semi-local leaf in $f^{-1} \Lambda(x_{-1})  \cap \Delta^2_R$ that intersects $V_r^{-2}(z)$. The same argument as above gives the same bound on the hyperbolic diameter in $U^{n-2}$ of $V_r^{-2}(z)$. Continuing by induction on $i$ gives
$$
\mathrm{diam}_{U^{n-i}} V_{r}^{-i}(z) \le \diam_{\max} (2,1),
$$
for all $i \le j$. We obtain the required bounds on the hyperbolic diameters in $\Omega$, as $\Omega$ contains all semi-local wandering components. Since the semi-wandering component $U$ is regular it follows that
$$
\mathrm{deg} V_r^{-i}(z) = 1
$$
for $i = 0, \ldots ,j$.
\end{proof}

\begin{prop}\label{2D-prop12}
There exists a constant $K >0$ such that the following holds. Let $U^n$ be a semi-local wandering component, and let $V_r(z)\subset V_{K\cdot r}(z) \subset f^j\mathbb L_0 \cap U^n$ be protected lifts of disks $D_r(z) \subset D_{K\cdot r}(z) \subset \mathbb L_0$. Then for all $i \le j$ we have
$$
\mathrm{diam}_{\Omega} V_r^{-i}(z) \le \diam_{\max} (2,1),
$$
and
$$
\mathrm{deg} V_r^{-i}(z) \le \mathrm{deg}_{\mathrm{crit}}.
$$
\end{prop}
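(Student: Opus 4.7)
The strategy parallels Lemma \ref{1D-lemma4}. If $U^n$ is regular then Lemma \ref{2D-lemma12} already gives the stronger conclusion ($\deg = 1$ and the same diameter bound) with $K = 2$, so we may assume $U^n$ is post-critical. Let $m \ge 1$ be the integer for which $U^{n-m}$ is the critical (and hence regular) component in the grand orbit of $U^n$, so that $U^{n-i}$ is post-critical for $1 \le i \le m-1$.

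The plan is to treat the preimages $V_r^{-i}(z)$ for $0 \le i \le m$ first, and then transfer to the regular regime for $i > m$. In the first range the key ingredients are the forward degree bound $\deg(f^i\colon U^{n-i} \to U^n) \le \deg_{\max}$ from Lemma \ref{lemma:forwarddegree}; the uniform tangency bound of Corollary \ref{bound on the order of tan}, which views $V_r^{-i}(z) \subset f^{j-i}\bL_0 \cap U^{n-i}$ as the image under $f^{j-i}$ of a horizontal slice in $\bL_0 \cap U^{n-j}$ and therefore controls its intersection number with $\FF_{U^{n-i}}$ by $\deg_\crit$ once the lift is localized enough to be realized at a single tangency point; and Lemma \ref{constantC1}, which converts a degree bound for a protected lift into a Kobayashi diameter bound $C_1(1/K, \deg_\crit)$ that shrinks as $K$ grows. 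I would run the argument by induction on $j$, at each step covering $V_r^{-1}(z)$ by at most $N_0(K)$ protected sublifts via Lemma \ref{lemma:2D-N0} and invoking the induction hypothesis on each, so that the degree and diameter estimates bootstrap one another.

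For $i > m$, I choose $K$ large enough that the diameter bound obtained at $i = m$ forces $V_r^{-m}(z) \subset U^{n-m}$ into a small protected sublift $V_\rho(w) \subset V_{2\rho}(w) \subset f^{j-m}\bL_0 \cap U^{n-m}$; the existence of such a sublift rests on simple connectivity of $\bL_0 \cap U^{n-m}$ together with $\Omega$ being sufficiently thin, in direct analogy with the proof of Lemma \ref{1D-lemma4}. Since $U^{n-m}$ is regular, Lemma \ref{2D-lemma12} applies to $V_\rho(w)$ and yields, for all $i-m \le j-m$, the bounds $\deg V_\rho^{-(i-m)}(w) = 1$ and $\diam_\Omega V_\rho^{-(i-m)}(w) \le \diam_{\max}(2,1)$. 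The inclusion $V_r^{-i}(z) \subset V_\rho^{-(i-m)}(w)$ concludes the proof.

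The main obstacle is the degree estimate in the first range: showing $\deg V_r^{-i}(z) \le \deg_\crit$ rather than the weaker $\deg_\crit \cdot \deg_{\max}$ that would arise from naively combining a degree bound on $V_r(z)$ with the forward multiplicity of $f^i$. The sharper bound hinges on applying Corollary \ref{bound on the order of tan} to the preimage itself, which in turn requires that the preimage be sufficiently localized for the intersection count to coincide with a single tangency order; this localization is precisely what the enlarged protection constant $K$ (replacing the $K=2$ of Lemma \ref{2D-lemma12}) is meant to supply, while simultaneously setting up the hand-off to the regular component $U^{n-m}$ in the second range.
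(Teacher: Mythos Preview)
Your two-phase outline (post-critical range first, then hand off to Lemma~\ref{2D-lemma12} on the critical component) is exactly the skeleton of the paper's argument, but both phases have genuine gaps as written.

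\emph{Post-critical phase.} You propose to run an induction on $j$ in the style of Lemma~\ref{2D-lemma12}: cover $V_r^{-1}(z)$ by at most $N_0(K)$ protected sublifts and apply the induction hypothesis to each. The step that makes this close in Lemma~\ref{2D-lemma12} is that only a bounded number of \emph{lifts} $V_{r_k}(z_k)_\nu$ (not merely of disks $D_{r_k}(z_k)$) are needed to cover each $V_r^{-i-1}(z)$, and that is deduced there from the regularity of $U^n$ via Definitions~\ref{bigcomponent} and~\ref{regularcomponent}. On post-critical components that hypothesis is exactly what fails, so the induction does not close and your listed ingredient $\deg_{\max}$ is left without a role. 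The paper does \emph{not} argue inductively in this range. It uses the uniformization of wandering components: a component $D$ of $\bL_0 \cap U^n$ is a global transversal (Lemma~\ref{existence}), and $f^i$ composed with the holonomy $f^i(\bL_0 \cap U^{n-i}) \to D$ of Corollary~\ref{globalholonomy} is a quasi-regular map of degree at most $\deg_{\max}$ with bounded dilatation. Lemma~\ref{lemma:basic2D} applied to $D_r(z) \subset D_{K\cdot r}(z) \subset D$ then bounds, in one shot, the hyperbolic diameter in $\bL_0 \cap U^{n-i}$ of the projection of $V_r^{-i}(z)$ \emph{with respect to the dynamical lamination} by a quantity tending to $0$ as $K \to \infty$.

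\emph{Transition at the critical component.} Your hand-off appeals to simple connectivity of $\bL_0 \cap U^{n-m}$ to produce a protected sublift $V_\rho(w) \subset V_{2\rho}(w)$ containing $V_r^{-m}(z)$. But the smallness obtained above is with respect to the \emph{dynamical} lamination, while Lemma~\ref{2D-lemma12} requires a protected lift with respect to the \emph{artificial} lamination, and on the critical component these two laminations differ. Simple connectivity alone does not bridge them. The paper invokes Proposition~\ref{2D-prop11} precisely here: once the dynamical-lamination projection of $V_r^{-m}(z)$ has hyperbolic diameter below $\delta(U^{n-m})$, that proposition places $V_r^{-m}(z)$ inside an artificial-lamination lift of some protected disk $D_\rho(w) \subset D_{2\rho}(w)$. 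Only then does Lemma~\ref{2D-lemma12} apply for $i \ge m$; the degree bound $\deg V_r^{-i}(z) \le \deg_\crit$ for $i < m$ is obtained afterwards from the now arbitrarily small diameters together with the tangency bound of Corollary~\ref{bound on the order of tan}, just as you anticipate in your final paragraph.
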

\begin{proof}
Write $U^{n-i}$ for the semi-local wandering Fatou component in $f^{-i} U^n \cap \Delta^2_R$ that contains the lift $V_r^{-i}(z)$. By renumbering the orbit $U^{j}$ we may assume that $U^0$ is critical. It follows from the previous lemma that $K \ge 2$ suffices when $n \le 0$, so let us suppose that $n > 0$.

We first consider the hyperbolic diameters in components $U^{n-i}$, for $i \le n$. We write $D_r(z) \subset D_{k\cdot r}(z) \subset U^n\cap \mathbb L_0$ for the disks giving the lifts $V_r(z) \subset V_{K\cdot r}(z)$. We also consider the lifts these disks to $f^{i} \mathbb L_0$.

Recall from the proof of Lemma \ref{existence} that for $n$ sufficiently large any connected component $D$ of $\mathbb{L}_0\cap U^n$ is a global transversal. The composition of $f^i$ with holonomy from $f^i(\mathbb L_0 \cap U^{n-i}) \rightarrow D$, induces quasi-regular maps, whose degrees are bounded by $\mathrm{deg}_{\mathrm max}$ and have bounded dilatation. It follows that by making $K$ sufficiently large, the hyperbolic diameter of the preimage in $\mathbb L_0 \cap U^{n-i}$ can be made arbitrarily small. Note that $V^{-i}_r(z)$ is a lift of this pre-image with respect to the \emph{dynamical lamination}. It follows that the Kobayashi diameter of $V^{-i}_r(z)$ in $U^{-i}$, and thus also in $\Omega$, can therefore be made arbitrarily small by choosing $K$ large.

Let us consider the finitely many values of $n$ for which we do not know that $D$ is a global transversal. For any fixed $n$, making $K$ sufficiently large implies that $V_r^{-i}(z)$ is contained in an arbitrarily small neighborhood of a dynamical leaf, and thus we immediately obtain the same bounds on the hyperbolic diameter in $\mathbb L_0 \cap U^{n-i}$. Therefore we can drop the assumption that $n$ is sufficiently large.

We now consider the artificial vertical lamination on $U^0$. We have seen that $V_r^{-i}(z)$ is the lift with respect to the dynamical lamination of a set $E\subset \mathbb L_0 \cap U^0$, whose hyperbolic diameter can be made arbitrarily small by choosing $K$ sufficiently large.  Lemma \ref{2D-prop11} implies that $V_r^{-i}(z)$ is contained in the lift of a disk $D_{\rho}(w) \subset D_{2 \cdot \rho}(w) \subset \mathbb L_0 \cap U^0$ with respect to the artificial lamination when the hyperbolic diameter of $E$ is less than $\delta = \delta(U^0)$. Since there are only finitely many critical components $U^0$, the constant $\delta$ can be chosen independently of the critical component, and thus $K$ can be chosen independently as well.

By applying Lemma \ref{2D-lemma12} to the lift of the disk $D_\rho(w)$ we obtain the required diameter bounds on $V^{-i}_r(z)$ for $n \le i \le j$, as well as
$$
\mathrm{deg} V^{-i}_r(z) = 1
$$
for $n \le i \le n$. Hence if $K$ is chosen sufficiently large it follows that
$$
\mathrm{deg} V^{-i}_r(z) \le \mathrm{deg}_{\mathrm{crit}}
$$
for all $i \le j$.
\end{proof}

\comm{
Recall that the degree of $f^n : U^0 \rightarrow U^n$ is bounded by $\mathrm{deg}_{\mathrm{max}}$, meaning that there are most $n$ semi-local stable manifolds in $U^0$ that are mapped into the same semi-local stable manifold in $U^n$. Hence the same degree bound holds for $f^i: U^{n-i} \rightarrow U^n$, for $i \le n$. It follows that $f^i$ induces a quasi-regular map from the leaf space in $U^{n-i}$ to the leaf space in $U^n$ \marginpar{We haven't defined this, nor is it completely clear what it means.}, of uniformly bounded degree and qc-dilatation. Hence Lemma (which?) gives a bound on the Kobayashi diameter of $V_r^{-i}(z)$, that can be made arbitrarily small by choosing $K$ sufficiently large.

As $K \rightarrow 0$ we have that $C_1(\frac{1}{K}, \mathrm{deg}_{\mathrm{max}}) \rightarrow 0$, therefore the diameter bound is satisfied when $K$ is sufficiently large for $i < n$.

By pulling back the dynamical vertical lamination one step further we obtain two distinct laminations on $U^0$, the dynamical vertical lamination and the artificial vertical lamination. The transverse diameter of $V_r^{-n}(z)$ with respect to the dynamical vertical lamination is bounded by $C_1(\frac{1}{K}, \mathrm{deg}_{\mathrm{max}})$ as above, hence by Proposition \ref{2D-prop11} the transverse diameter of $V_r^{-n}(z)$ with respect to the artificial vertical lamination can be made arbitrarily small by choosing $K$ sufficiently large. Since the two laminations on $U^0$ coincide near the boundary it follows in fact that $V_r^{-n}(z)$ may be assumed to be contained in a lift of a disk $D_{r^\prime}(z^\prime)$ satisfying $D_{2r^\prime}(z^\prime) \subset U^0$. Hence Lemma \ref{2D-lemma12} implies that for $i \ge n$ the set $V_r^{-i}(z)$ has degree $1$, and the transverse diameter of $V_r^{-i}(z)$ is strictly smaller than what we set out to prove.

What remains are the degree bounds for $V_r^{-i}(z)$ when $i < n$. By the definition of $\mathrm{deg}_{\mathrm{crit}}$ it follows that the order of contact of each $V_{r}^{-i}(z)$ with the dynamical vertical lamination is at each point bounded by $\mathrm{deg}_{\mathrm{crit}}$. Thus, if the transverse diameter of the sets $V_r^{-i}(z)$ remains sufficiently small, then the degree of $V_r^i(z)$ is also bounded by $\mathrm{deg}_{\mathrm{crit}}$. The proof is therefore completed by again choosing $K$ sufficiently large.}

\subsection{Lifts that are not deeply contained.}

We are ready to prove the main technical result:

\begin{prop}\label{Prop51}
Let $z \in \Omega$ and let $r>0$ be such that $D_{2K\cdot r}(z) \subset \Omega$. Then for every $j \in \mathbb N$ and any lift $V_r(z) \subset f^n \mathbb L_0$ one has:
$$
\mathrm{diam}_{\Omega} V_r^{-j}(z) \le \mathrm{diam}_{\mathrm{max}}(2K, \deg_\crit ),
$$
and
$$
\mathrm{deg} V_r(z) \le \mathrm{deg}_{\mathrm{crit}}.
$$
\end{prop}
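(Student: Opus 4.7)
The plan is to mirror the one-dimensional induction in Proposition~\ref{1D-prop1}, using the 2D preparatory machinery (Lemma~\ref{lemma:2D-N0}, Lemma~\ref{constantC1}, Proposition~\ref{2D-prop12}) in place of their 1D counterparts. We induct on $j$. The base $j=0$ follows immediately from Lemma~\ref{constantC1}, since by hypothesis $V_r(z)\subset V_{2Kr}(z)\subset\Omega$, giving $\mathrm{diam}_\Omega V_r(z)\le C_1(\tfrac{1}{2K},\deg_\crit)\le \diam_{\max}(2K,\deg_\crit)$; and the degree bound is built into the definition of $\deg_\crit$ via Corollary~\ref{bound on the order of tan}. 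For the induction step, suppose the conclusions hold up through some $j$, and consider $V_r^{-(j+1)}(z)$.

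First, the case where $D_{K\cdot r}(z)$ is contained in a semi-local wandering component is already disposed of by Proposition~\ref{2D-prop12}, with stronger estimates. Thus two situations remain: either $D_r(z)$ is not contained in any semi-local wandering component, or $D_r(z)$ lies in some semi-local wandering component $U^n$ while $D_{K\cdot r}(z)$ does not. I will treat these in sequence.

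In the first situation, pick a witness $z_0\in D_r(z)$ that is not in a wandering component; by making $\Omega$ sufficiently thin (part (ii) of Lemma~\ref{lemma:omega}) all $j$-preimages of $z_0$ lie arbitrarily close to $\partial\Omega$. Apply Lemma~\ref{lemma:2D-N0} with $t=2K$ to cover $V_r^{-1}(z)$ by at most $N_0(2K)$ protected lifts $V_{r_k}(z_k)\subset V_{4K r_k}(z_k)\subset V_{2Kr}^{-1}(z)$. The induction hypothesis applies to each $V_{r_k}(z_k)\subset V_{2Kr_k}(z_k)$, giving degree $\le\deg_\crit$ and diameter $\le C_1(\tfrac{1}{2},\deg_\crit)$ for every $V_{r_k}^{-i}(z_k)$, $i\le j$. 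I then prove, by a secondary induction on $i=0,\ldots,j$, the two statements: (a) $\mathrm{diam}_\Omega V_r^{-(i+1)}(z)\le 2 N_0(2K)C_1(\tfrac{1}{2},\deg_\crit)+\beta=\diam_{\max}(2K,\deg_\crit)$, connecting the lifts $V_{r_k}^{-i}(z_k)$ along a semi-local vertical leaf (whose Kobayashi diameter in $\Omega$ is bounded by $\beta$ by Definition~\ref{defn:beta}); and (b) $\deg V_r^{-(i+1)}(z)\le\deg_\crit$, which follows from Corollary~\ref{bound on the order of tan} together with the diameter bound guaranteeing that distinct preimages of the witness $z_0$ lie in distinct components (thinness of $\Omega$).

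In the second situation, let $w\in\partial U^n\cap\mathbb L_0$ minimize $|z-w|$. Since $D_{Kr}(w)\subset D_{2Kr}(z)\subset\Omega$, the disk $D_{r/2}(w)$ satisfies the conditions of the first situation, so inductively $V_{r/2}^{-i}(w)$ obeys both conclusions. Cover the segment $[z,w]$ by the disk $D_{r/2}(w)$ together with a universally bounded number of disks $D_{s_\nu}(w_\nu)$ with $D_{K s_\nu}(w_\nu)\subset U^n$; each of the latter disks is eligible for Proposition~\ref{2D-prop12}. Chaining the resulting preimage lifts along a semi-local vertical leaf (again using $\beta$) bounds both $\mathrm{diam}_\Omega V_r^{-(i+1)}(z)$ and its degree, completing the induction.

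The main obstacle I expect is bookkeeping in the second situation: the lifts of $D_{s_\nu}(w_\nu)$ live in a semi-local wandering component $U^{n-i}$, while the lift of $D_{r/2}(w)$ may not, so one must argue that the resulting pieces of $V_r^{-(i+1)}(z)$ nevertheless lie in a single semi-local leaf's $\beta$-neighborhood inside $\Omega$. This uses Definitions~\ref{bigcomponent}--\ref{regularcomponent} to guarantee, for regular and post-critical components with $K$ chosen large enough, that a unique preimage leaf in $f^{-1}\Lambda$ meets $V_r^{-(i+1)}(z)$; for the finitely many big/critical components we absorb the extra multiplicity into the jump from $\deg=1$ to $\deg\le\deg_\crit$, which is exactly the reason $\deg_\crit$ was introduced. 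Once this chaining is in place, the conclusion for $j+1$ follows and the induction closes.
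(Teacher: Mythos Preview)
Your outline matches the paper's proof: induction on $j$, the three-way split (deeply contained in a wandering component via Proposition~\ref{2D-prop12}; not in a wandering component; contained but not deeply), the covering of $V_r^{-1}(z)$ by Lemma~\ref{lemma:2D-N0}, and a secondary induction on $i$ that chains the small lifts $V_{r_k}^{-i}(z_k)$ along a single vertical leaf whose intersection with $f^{j-i}\mathbb L_0$ contributes the $\beta$ term.

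Two places where your justifications are off. In your first situation, the degree bound does \emph{not} come from Corollary~\ref{bound on the order of tan}: that corollary concerns tangencies of $f^j(W^{-j})$ with the dynamical foliation \emph{inside wandering components}, and is irrelevant when $D_r(z)$ is not in one. The paper in fact splits this situation in two. If $V_r(z)$ lies in a periodic Fatou component, thinness of $\Omega$ makes $V_r(z)$ (and $V_r^{-1}(z)$) horizontal of degree $1$, so each of the $N_0(2K)$ disks has a \emph{unique} lift and one sums diameters directly without any leaf-chaining or $\beta$. If instead $V_r(z)$ meets $J^+$, one picks $x_0\in V_r(z)$ on a leaf $\Lambda_R(x_0)\subset J^+$; this leaf is \emph{dynamical}, hence invariant, which is what permits the chaining through $\Lambda_R^{-i}(x_0)$. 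The degree bound (indeed $\deg=1$) then comes from Lemma~\ref{lemma:transverse}, using that each $V_r^{-i}(z)$ contains a point of $J^+$ and has small Euclidean diameter. Your single ``witness $z_0$'' argument conflates these and will break when $D_r(z)$ lies entirely in a periodic Fatou component, since the artificial leaf through such a $z_0$ need not be invariant.

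In your second situation, the chain of disks $D_{r/2}(w),D_{s_1}(w_1),\dots,D_{s_{N_1}}(w_{N_1})$ over $[z,w]$ is not used to bound $\diam_\Omega V_r^{-i}(z)$ directly (that would give a bound of order $N_1\cdot\diam_{\max}$, not $\diam_{\max}$). Its role is to show, via the preimage diameter bounds already established for these disks, that each $f^{-i}(z)$ lies Euclidean-close to $J^+$; hence the vertical leaf $\Lambda_R(f^{-i}(z))$ is \emph{dynamical} and invariant. Only then does one rerun the Case~1 chaining argument with $\Lambda_R(f^{-i}(z))$ playing the role of the central leaf. Your appeal to Definitions~\ref{bigcomponent}--\ref{regularcomponent} is misplaced here: those notions govern the uniqueness of preimage leaves inside the regular/critical wandering dichotomy used in Lemma~\ref{2D-lemma12} and Proposition~\ref{2D-prop12}, whereas the mechanism needed in Case~2 is proximity to $J^+$ forcing the central leaf to be dynamical.
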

\begin{proof}
Having previously dealt with lifts that are sufficiently deeply contained in wandering domains, we only need to consider the two other cases: lifts are either not contained in a wandering component, or those that are contained in a wandering component, but lifts of disks $D_r(z)$ for which $D_{K \cdot r}(z)$ intersects $J^+$.

We again assume the induction hypothesis that both bounds hold for all $i \le j$, and proceed to prove the statements for $j+1$. We will first prove the induction step under the assumption that $D_r(z)$ is not contained in a wandering domain, and afterwards deal with the case where $D_r(z)$ is contained in a wandering domain. The result in the former case will be used to prove the latter.

\smallskip

\emph{Case 0.} Let us first consider the situation where $V_r(z)$ is contained in a periodic Fatou component. By choosing $\Omega$ sufficiently thin we can guarantee that $V_r(z)$ is horizontal and that the disks $D_r(z)$ and $D_{2k\cdot r}(z)$ are arbitrarily small. It follows that $V_r(z)$ must have degree $1$. Hence we can cover $V_r^{-1}(z)$ with at most $N_0(2K)$ protected lifts $V_{r_k}(z_k) \subset V_{4K \cdot r_k}(z_k) \subset V_{2K \cdot r}(z)$. It follows by the induction assumption that the disks $V_{2\cdot r_k}(z_k)$ all have degree at most $\mathrm{deg}_{\mathrm{crit}}$. Recall from the proof of Lemma \ref{constantC1} that the Poincar\'e diameter of
$V_{r_k}(z_k)_\nu$ with respect to $V_{2\cdot r_k}(z_k)_\nu$ is
bounded by $C_1(\frac{1}{2}, \mathrm{deg}_{\mathrm{crit}})$. Since
$f$ is an automorphism,
we obtain the same bounds for the pairs $V^{-i}_{r_k}(z_k)_\nu \subset V^{-i}_{2\cdot r_k}(z_k)_\nu$, and since each $V^{-i}_{2\cdot r_k}(z_k)_\nu \subset \Omega$, therefore we have the bounds
$$
\mathrm{diam}_{\Omega} V^{-i}_{r_k}(z_k) \le C_1(\frac{1}{2}, \mathrm{deg}_{\mathrm{crit}}).
$$
This implies that
$$
\mathrm{diam}_{\Omega} V_r^{-i}(z) \le N_0(2K) \cdot C_1(\frac{1}{2}, \mathrm{deg}_{\mathrm{crit}}),
$$
which in turn gives the necessary degree bounds.

\smallskip

\emph{Case 1.} Now assume that $V_r(z)$ is not contained in a Fatou component. Then there exists $y_0 \in D_r(z) \cap J^+$, and we let $x_0 \in V_r(z)$ lie in the semi-local vertical leaf through $y_0$, which we denote by $\Lambda_R(x_0)$. We similarly write $\Lambda_R^{-i}(x_0)$ for the semi-local leaf through $f^{-i}(x_0)$, which by invariance of the vertical lamination on $J^+$ equals the connected component of $f^{-i}(\Lambda_R(x_0))$ that contains $f^{-i}(x_0)$.

Cover $V^{-1}_r(z)$ by protected lifts
$V_{r_k}(z_k)_{\nu} \subset V_{4K\cdot r_k}(z_k)_\nu \subset \Omega$ of at most $N_0(2K)$ disks $D_{r_k}(z_k) \subset D_{4K\cdot r_k}(z_k) \subset
\mathbb L_0$. The induction hypothesis implies that the degree of each lift
$V_{2\cdot r_k}(z_k)_{\nu}$ is bounded by
$\mathrm{deg}_{\mathrm{crit}}$, hence as above we obtain
$$
\mathrm{diam}_{\Omega} V^{-i}_{r_k}(z_k)_\nu \le C_1(\frac{1}{2}, \mathrm{deg}_{\mathrm{crit}}).
$$

As before we do not have an a priori estimate on the number of lifts $V_{r_k}(z_k)_\nu$. We apply the same induction argument as in Proposition \ref{2D-prop12}. By choosing $\Omega$ sufficiently thin we can guarantee that the Euclidean diameter of $V_r(z)$ is sufficiently small, so that for each vertical leaf $\Lambda_R(a)$ intersecting $V_r(z)$ there is a unique component of $f^{-1}\Lambda_R(a) \cap \Delta^2_R$ intersecting $V_r^{-1}(z)$.

Any point in $V^{-1}_r(z)$ can then be connected to a point in $\Lambda_R^{-1}(x_0)$ by a path that passes through at most $N_0(2K)$ lifts $V_{r_k}(z_k)_\nu$. By the definition of $\beta$ it follows that
$$
\mathrm{diam}_\Omega V_r^{-1}(z)  \le \mathrm{diam}_{\mathrm{max}}(2K, \deg_\crit ).
$$
Hence we can continue the induction procedure, and obtain
$$
\mathrm{diam}_\Omega V_r^{-i}(z) \le \mathrm{diam}_{\mathrm{max}}(2K, \deg_\crit )
$$
for all $i \le j$.

By choosing $\Omega$ sufficiently thin it follows from Lemma \ref{lemma:transverse} that each $V^{-i}_r(z)$ is transverse to the artificial vertical lamination and that
$$
\mathrm{deg} V^{-i}_r(z) = 1
$$
for all $i \le j$, completing the proof for disks $D_r(z)$ that are not contained in a wandering component.

\smallskip

\emph{Case 2.} In the remainder of this proof we assume that $D_r(z)$ is contained in a semi-local wandering component $U$, but that $D_{K\cdot r}(z)$ is not contained in $U$. We again cover $V^{-1}_r(z)$ with protected lifts $V_{r_k}(z_k)_\nu \subset V_{4K\cdot r}(z_k)_\nu \subset f^{j-1}\mathbb L_0 \cap \Omega$ of at most $N_0(2K)$ disks $D_{r_k}(z_k) \subset D_{4K \cdot r}(z_K) \subset \mathbb L_0$, and by induction obtain the diameter bounds
$$
\mathrm{diam}_\Omega V_{r_k}^{-i}(z_k)_\nu \le \mathrm{diam}_{\mathrm{max}}(2K, \deg_\crit ).
$$
Again the difficulty lies in the fact that a priori we do not have a bound on the number of lifts $V_{r_k}(z_k)_\nu$. As before we will obtain the diameter bounds for each inverse image $V_r^{-i}(z)$ by connecting the disks $V_{r_k}^{-i+1}(z_k)_\nu$ to a single semi-local vertical leaf.

Write $W$ for the component of $U \cap \mathbb L_0$ that contains $D_r(z)$, and let $w \subset \partial W$ be such that $|z - w|$ is minimal, and write $[z, w] \subset \mathbb L_0$ for the closed interval.

Consider the disk $D_{\frac{r}{2}}(w)$. Since $D_{K\cdot r}(w) \subset D_{2K\cdot r}(z)$ it follows that $D_{K\cdot r}(w) \subset \Omega$. Hence the disk $D_{\frac{r}{2}}(w)$ satisfies the conditions of the previously discussed case, and we obtain the estimates
$$
\mathrm{diam}_{\Omega} V^{-i}_{\frac{r}{2}}(w) \le \mathrm{diam}_{\mathrm{max}}(2K, \deg_\crit )
$$
for any protected lifts $V_{\frac{r}{2}}(w) \subset V_{K\cdot r}(w) \subset f^j \mathbb L_0 \cap \Omega$ of the disks $D_{\frac{r}{2}}(w) \subset D_{K \cdot r}(w)$.

The interval $[z, w]$ can be covered by the disk $D_{\frac{r}{2}}(w)$, plus a bounded number of disks
$$
D_{s_1}(w_1), \ldots,  D_{s_{N_1}}(w_{N_1}),
$$
each satisfying $D_{K\cdot s_\nu}(w_\nu) \subset U^n$. The maximal number of disks needed is bounded by a constant $N_1 \in \mathbb N$ that only depends on $K$, see Figure \ref{figure:disks}.

Write
$$
V_{s_\nu}(w_\nu)_\xi \subset V_{K \cdot s_\nu}(w_\nu)_\xi \subset f^j\mathbb L_0\cap \Omega
$$
for protected lifts of the disks $D_{s_\nu}(w_\nu)\subset D_{K\cdot s_\nu}(w_\nu)$. It follows that these lifts satisfy the conditions of Proposition \ref{2D-prop12}, and hence their preimages satisfy
$$
\mathrm{diam}_\Omega V^{-i}_{s_\nu}(w_\nu)_\xi \le \mathrm{diam}_{\mathrm{max}}(2K, \deg_\crit ).
$$
As in case 1 we obtain a bound on the hyperbolic distance of each point $f^{-i}(z)$ to $\partial U^{-i} \subset J^+$. By choosing $\Omega$ sufficiently thin, it follows that the points $f^{-i}(z)$ can all be assumed to lie arbitrarily close to $J^+$. In particular we may assume that the vertical leaf through each $f^{-i}(z)$ is dynamical, and these semi-local leaves $\Lambda_R^{-i}(z)$ are in particular invariant under $f$. Recall from Definition \ref{defn:beta} that
$$
\mathrm{diam}_\Omega f^{j-i} \mathbb L_0 \cap \Lambda_R^{-i}(z)
$$
is bounded by $\beta$. Hence we can use the same argument as in case (1), using $\Lambda_R^{-i}(z)$ instead of a semi-local leaf in $J^+$, to obtain the required diameter and degree bounds.
\end{proof}

\section{Consequences}

As before we will assume that the H\'enon map $f$ is substantially dissipative and admits a dominated splitting on $J$.

\begin{lemma}
There are no wandering Fatou components.
\end{lemma}
\begin{proof}
Suppose for the purpose of a contradiction that there does exist a
wandering Fatou component $U$. Without loss of generality we may
assume that $U$ intersects $\Delta^2_R$. Since $U \cap \Delta^2_R$ is
foliated by semi-local strong stable manifolds, the intersection $U
\cap \mathbb L_0$ is a non-empty relatively open set. Let $D$ be a
holomorphic disk relatively compactly contained in $U \cap \mathbb
L_0$.
By construction, we may assume that $\Omega$ is sufficiently thin and $D \subset U \cap \mathbb L_0$ is sufficiently large such that the hyperbolic diameter of $D$ in $\Omega^\prime$ is larger than $\mathrm{diam}_{\mathrm{max}}$.

Since $U$ lies in the Fatou set there exists a sequence $(n_j)$ for which $f^{n_j}$ converges uniformly on compact subsets of $U$ to a map $h: U \rightarrow J$. By the dominated splitting the image $h(U)$ is a point $p \in J$, and without loss of generality we may assume that $p$ is not contained in a parabolic cycle. Let $D_r(p) \subset D_{2r}(p) \subset \Omega$ be a transverse disk. Since $D$ is relatively compact in $U$, it follows that for sufficiently large $j$ the image $f^{n_j}(D)$ is contained in a lift  of $D_r(p)$ in $f^{n_j}(\mathbb L_0$. But then it follows from Proposition \ref{Prop51} that the hyperbolic diameter of $D$ in $\Omega^\prime$ is bounded by $\mathrm{diam}_{\mathrm{max}}$, leading to a contradiction.
\end{proof}

Lacking wandering domains the proof of Proposition \ref{Prop51} becomes considerably simpler. An immediate consequence is a better degree bound.

\begin{corollary}
The constant $\mathrm{deg}_{\mathrm{crit}}$ can be taken equal to $1$.
\end{corollary}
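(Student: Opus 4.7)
The plan is to observe that $\deg_\crit$ (Definition~\ref{degcrit}) was defined as a maximum of orders of tangency between horizontal slices $f^j(W^{-j})$ of semi-local \emph{wandering} components and the dynamical vertical foliations $\FF_{U^0}$ on those components. Now that wandering Fatou components have been ruled out by the previous lemma, the set over which this maximum is taken consists only of components $U$ on which $\mathbb L_0$ is, by construction (Definition~\ref{ellnot}), transverse to $\FF_U$, and such transversality is preserved by the dynamics. Therefore the orders of tangency appearing in Definition~\ref{degcrit} are all equal to $1$, and we may set $\deg_\crit = 1$.

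To make this compatible with the running argument (and, in particular, with the statement of Proposition~\ref{Prop51}), one should check that every protected lift $V_r(z) \subset f^j \mathbb L_0 \cap \Omega$ that now arises has degree $1$. First I would note that, since $U^0$ no longer exists in the wandering sense, Case~2 of the proof of Proposition~\ref{Prop51} never occurs, and only Cases~0 and~1 need to be run. In both of those cases, by choosing $\Omega$ sufficiently thin as in Lemma~\ref{lemma:omega}, the lifts $V_r^{-i}(z)$ either sit in a horizontal cone field close to $J^+$, or well inside a periodic Fatou component where $\mathbb L_0$ was chosen transverse to the vertical lamination with angles bounded below. In either situation Lemma~\ref{lemma:transverse} applies and yields $\deg V_r^{-i}(z) = 1$.

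The two cases can then be combined in the obvious way: cover $V_r^{-1}(z)$ by at most $N_0(2K)$ protected lifts, invoke the induction hypothesis with $\deg_\crit = 1$ on each of them, and conclude the diameter and degree bounds for $V_r^{-j-1}(z)$ exactly as in Proposition~\ref{Prop51}, now with $\mathrm{diam}_{\max}(2K, 1)$ in place of $\mathrm{diam}_{\max}(2K, \deg_\crit)$.

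I do not expect a serious obstacle here: the only thing to watch is that the required smallness of $\Omega$ is a finite condition — small enough that the horizontal cone field is uniformly transverse to the artificial vertical lamination on $\Omega \cap \mathcal N(J^+_R)$, and small enough to avoid the (finitely many) periodic Fatou components where $\mathbb L_0$ could a priori fail to be transverse. Both requirements have already been used, so no new estimate is needed.
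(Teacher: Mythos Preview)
Your proposal is correct and follows essentially the same route as the paper. The paper's proof also observes that without wandering components $\Omega$ can be taken as an arbitrarily thin neighborhood of $J^+_R$, where $\mathbb L_0$ is transverse to the artificial vertical lamination, and then traces the degree of $V_r$ back to the initial flat disk $V_n \subset \mathbb L_0$, which therefore has degree $1$; this is exactly what your re-run of Cases~0 and~1 (with Case~2 vacuous) amounts to. One minor imprecision: in your first paragraph the set over which the maximum in Definition~\ref{degcrit} is taken is actually \emph{empty} once wandering components are excluded, not a set of components on which $\mathbb L_0$ is transverse---but this is harmless, since your second paragraph then correctly supplies the substantive check that every protected lift now has degree~$1$.
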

\begin{proof}
Since there are no wandering Fatou components, it follows from the proof of Proposition \ref{Prop51} that the degree of the disk $V_r$ is bounded by the degree of the disk $V_n$, which is a small straight disk contained in $\mathbb L_0$. Recall that $y_0$ was chosen so that there are no tangent intersections between $\mathbb L_0$ and the dynamical vertical lamination on $J^+$. Now that we know that there are no wandering Fatou components, the set $\Omega$ can be constructed as an arbitrarily thin neighborhood of $J^+_R$. Hence we may assume that there are no tangencies in $\mathbb L_0$ with the artificial vertical lamination in $\Omega$. Since the disks $V_n$ may be assumed to have arbitrarily small Euclidean diameter, it follows that $V_n$ intersects each vertical leaf in at most one point.
\end{proof}

\begin{prop}
Let $x \in \Omega \cap J$. Then there exists a local unstable manifold through $x$.
\end{prop}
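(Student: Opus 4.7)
The plan is to realize a local unstable manifold at $x$ as a subsequential limit of horizontal holomorphic disks obtained by iterating forward small pieces of $\mathbb{L}_0$ sitting over the backward orbit of $x$. First I would set up the orbit geometry: since $x \in \Omega \cap J$ is not in a parabolic cycle, for every $n \geq 0$ the backward iterate $x_{-n} := f^{-n}(x)$ lies in $J$, and by the transversality arranged in Definition \ref{ellnot} the semi-local strong stable manifold $W^s_R(x_{-n})$ meets $\mathbb{L}_0$ transversally at a point $y_{-n} \in J^+ \cap \mathbb{L}_0$. The stable contraction in Lemma \ref{newlemma5} then gives $\|f^n(y_{-n}) - x\| = \|f^n(y_{-n}) - f^n(x_{-n})\| \to 0$ exponentially fast, since $y_{-n}$ and $x_{-n}$ lie on the same local stable leaf.

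For each $n$ I would next choose $r_n>0$ with $D_{2K r_n}(y_{-n}) \subset \Omega$ so that the forward image
\[
V_n := f^n\bigl(D_{r_n}(y_{-n})\bigr) \subset f^n\mathbb{L}_0
\]
is a lift of prescribed small Euclidean diameter $\rho>0$. Applying Proposition \ref{Prop51} (and using that $\deg_\crit = 1$ in the current setting, since no wandering components remain), $V_n$ is a degree-one lift with Kobayashi diameter in $\Omega$ bounded by $\mathrm{diam}_{\max}$; for $n$ large the tangent lines to $f^n\mathbb{L}_0$ lie in the horizontal cone field, so $V_n$ is a horizontal graph of fixed Euclidean size $\rho$ passing through $f^n(y_{-n})\to x$. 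A standard normal families argument on the bounded region $\Delta^2_R$ then extracts a subsequence $V_{n_k}$ converging locally uniformly to a non-degenerate holomorphic disk $V_\infty$ through $x$.

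Finally I would identify $V_\infty$ as a local unstable manifold. The tangent line $T_x V_\infty$ is the limit of the tangent lines to $V_{n_k}$ at $f^{n_k}(y_{-n_k})$, which are the images $df^{n_k}\cdot T_{y_{-n_k}}\mathbb{L}_0$ sitting inside the iterated horizontal cones along the orbit. By the dominated splitting these cones collapse to the single line $E^c_x$ as $n\to\infty$, and because $y_{-n}$ shares its local stable leaf with $x_{-n}$ the cones attached to both orbits have the same limiting direction, giving $T_x V_\infty = E^c_x$. For the dynamical property, any backward iterate $f^{-j}(V_\infty)$ is the locally-uniform limit of $V_{n_k}^{-j} = f^{n_k-j}(D_{r_{n_k}}(y_{-n_k}))$, which by Proposition \ref{Prop51} has Kobayashi diameter in $\Omega$ bounded by $\mathrm{diam}_{\max}$; letting $j$ grow with $k$ in a diagonal fashion shows that the pullbacks of $V_\infty$ shrink in the Euclidean sense to the backward orbit of $x$, so $V_\infty$ behaves as a local unstable manifold. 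The main difficulty in executing this plan will be securing both the non-degeneracy of $V_\infty$ and the correct identification of its tangent as $E^c_x$; the first is handled by the explicit choice of $r_n$ forcing $V_n$ to be a horizontal graph of fixed Euclidean size $\rho$, and the second follows from the cone-collapse mechanism of the dominated splitting once non-degeneracy is in hand.
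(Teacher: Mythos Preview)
Your construction of the candidate disk $V_\infty$ as a normal-families limit of forward images of pieces of $\mathbb L_0$ sitting on the stable leaves of the backward orbit of $x$ is the same idea as the paper's. There is a cosmetic mismatch in terminology---$f^n(D_{r_n}(y_{-n}))$ is not literally a ``lift'' in the paper's sense (lifts are defined via holonomy along the artificial vertical lamination, not via $f^n$); to invoke Proposition~\ref{Prop51} one should take a protected disk $D_r(z)\subset D_{2Kr}(z)\subset\mathbb L_0$ at the \emph{top} level and realize $D_{r_n}(y_{-n})$ as the pullback $V_r^{-n}(z)$---but this is easily repaired.

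The real gap is in your argument that $V_\infty$ is actually unstable. Proposition~\ref{Prop51} yields a \emph{fixed} bound $\diam_\Omega V_{n_k}^{-j}\le \mathrm{diam}_{\max}$, not one that tends to zero with $j$. Near $J^+$ a bounded Kobayashi diameter in the thin domain $\Omega$ does translate into a small Euclidean diameter, but ``small'' here means bounded by a constant depending on the thinness $\epsilon$ of $\Omega$, and $\Omega$ has already been fixed. So your diagonal device (``let $j$ grow with $k$'') does not force $\diam f^{-j}(V_\infty)\to 0$; at best it gives a uniform small upper bound, which is not the unstable-manifold property. Nor does $j=n_k$ help: $V_{n_k}^{-n_k}=D_{r_{n_k}}(y_{-n_k})$, and you have no a~priori control on $r_{n_k}$ beyond the choice making the \emph{forward} image have size $\rho$.

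The paper closes this with a different, short argument by contradiction. If the pullbacks $f^{-n}(D_n)\subset\mathbb L_0$ do not shrink, they form a normal family with a subsequence converging to a nondegenerate disk $E\subset\mathbb L_0$ containing a point of $J^+$. Since $\mathbb L_0$ was chosen transverse to the vertical lamination of $J^+$, the disk $E$ must meet a vertical leaf lying in the basin of infinity, hence contain a point $t$ that escapes $\Delta^2_R$ in finitely many steps; but $t\in f^{-n_j}(D_{n_j})$ for large $j$ forces $f^{n_j}(t)\in D_{n_j}\subset\Delta^2_R$, a contradiction. This escape-to-infinity step is the missing ingredient in your proposal.
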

\begin{proof}
For each $n \in \mathbb N$, the semi-local strong stable manifold $W^s_R(f^{-n}(x))$ must intersect the horizontal disk $\mathbb L_0$. Let $y_n$ be such an intersection point. Since the degree of the semi-local strong stable manifolds is uniformly bounded and $f$ is uniformly contracting on the family of strong stable manifolds, it follows that $f^n(y_n)$ converges exponentially fast to $x$. Moreover, by the exponential contraction in the vertical direction, it follows that locally the disks $D_n \subset f^n\mathbb L_0$ passing through $f^n(y_n)$ converge to a holomorphic disk through $x$, which we denote by $D$.

We claim that $D$ must be an unstable manifold, i.e. that the diameter of $f^{-n}(D)$ converges to $0$. By the exponential contraction in the vertical direction it suffices to show that the diameter of $f^{-n}(D_n)$ converges to zero. Suppose that this is not the case. Recall that the diameters of the disks $f^{-n}(D_n)$ are uniformly bounded, hence are given by images of a normal family of holomorphic maps. Hence if the diameters do not converge to zero, then there is a subsequence $(n_j)$ for which $f^{-n_j}(D_{n_j}) \subset \mathbb L_0$ converges to a holomorphic disk $E$, necessarily intersecting $J^+$. By construction $\mathbb L_0$ is transverse to $J^+$, hence it follows that $E$ must intersect vertical leaves through points in the basin of infinity, say in a point $t$. Since those vertical leaves must be contained in the basin of infinity, so must $t$. Note that since $t \in E$, it follows that $t \in f^{-n_j}(D_{n_j})$ for $j$ large enough. This however leads to a contradiction, as the forward orbit $t$ must escape the bidisk in finite time, and hence $f^{n_j}(t)$ cannot be contained in $D_{n_j}$ for $j$ large. The contradiction finishes the proof.
\end{proof}

Since $D$ is a uniform limit of holomorphic disks with horizontal tangent bundles, it follows that the tangent bundles to $D$ must also lie in the horizontal cone field. The size of the local unstable manifolds is uniform on any subset of $J$ that is bounded away from the parabolic cycles.

\begin{corollary}
The map $f$ does not have any attracting-rotating Fatou components.
\end{corollary}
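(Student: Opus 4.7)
I argue by contradiction. Suppose $U$ is a rotation basin with invariant attracting Riemann surface $\Sigma \subset U$, biholomorphic to a disk or annulus, on which $f$ acts as an irrational rotation and whose strong stable manifolds $W^s(z) \cong \C$ foliate $U$. Since $\Sigma$ is properly embedded in $U$, its closure in $\C^2$ meets $\partial U \subset J^+$; in fact, because $f|_\Sigma$ is a (non-trivial) irrational rotation, either $\Sigma$ is an annulus whose ends accumulate on $\partial U \subset J$, or $\Sigma$ is a disk whose central fixed point is neutral and sits in $J$. Using that the parabolic set is finite, I select an accumulation point $x \in \overline{\Sigma} \cap J$ that is not a parabolic periodic point.

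The previous proposition yields a local unstable manifold $D := W^u_{\loc}(x)$ of definite horizontal size, tangent to the horizontal cone field at $x$, with the property that $\diam f^{-n}(D) \to 0$ exponentially fast. Since every $z \in \Sigma$ has tangent line in the central direction $E^c_z$, the tangent line $T_x D \subset E^c_x$ coincides with the limiting tangent direction of $\Sigma$ at $x$. Hence $D$ and the local pieces of $\Sigma$ accumulating at $x$ are mutually transverse to the strong-stable lamination of $U$, and $D$ enters $U$: there is a nontrivial horizontal subdisk $D' \subset D \cap U$ with $x \in \overline{D'}$.

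Fix $y \in D'$. Then $y$ lies on a strong stable manifold $W^s(z_0)$ with $z_0 \in \Sigma$ close to $x$, so $f^n(y) \to \Sigma$ exponentially fast along the orbit of $z_0$. Irrational rotation of $f|_\Sigma$ furnishes a subsequence $n_k \to \infty$ with $f^{n_k}(z_0) \to z_0$, and hence $f^{n_k}(y) \to z_0$. I now push the argument through Proposition~\ref{Prop51}. Because $f^{n_k}(y)$ lies near $z_0 \in \Sigma \subset U$ (bounded away from the vertical boundary $\{|y|=R\}$), I can, for each large $k$, select a small protected lift $V_{r_k}(w_k) \subset f^{n_k}(\mathbb L_0)$ in $\Omega$ whose projection to $\mathbb L_0$ is a fixed small disk $D_{r}(w)$ through a point near $y$, with $r$ independent of $k$ and $D_{2K\cdot r}(w) \subset \Omega$. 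Proposition~\ref{Prop51} then bounds
\[
\diam_{\Omega} V_{r_k}^{-n_k}(w_k) \le \diam_{\max}(2K,\deg_{\crit})
\]
uniformly in $k$.

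The contradiction comes from confronting this uniform Kobayashi bound with the exponential shrinking of $f^{-n}(D)$. By construction $V_{r_k}^{-n_k}(w_k) \subset f^{-n_k}\circ f^{n_k}(\mathbb L_0) = \mathbb L_0$ must contain the lift of $D_r(w)$, a macroscopic horizontal disk independent of $k$; on the other hand, tracking the component of $V_{r_k}^{-n_k}(w_k)$ that follows $f^{-n_k}$ of the point near $y$ forces this component into $f^{-n_k}(D) \cup \{$arbitrarily small error$\}$, which collapses to $\{x\}$. Thus the Kobayashi diameter of the preimage in $\Omega$ must, along an appropriate component, blow up, contradicting the bound above.

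The main obstacle will be step three: producing the lifts $V_{r_k}(w_k)$ so that they simultaneously (i) verify the $2K$-protected hypothesis of Proposition~\ref{Prop51} uniformly in $k$, using that $f^{n_k}(\mathbb L_0)$ is horizontal near $\Sigma$ by Lemma~\ref{lemma:transverse} and that $\mathbb L_0$ is transverse to the artificial vertical lamination near $J^+$, and (ii) have preimages $V_{r_k}^{-n_k}$ whose identified component faithfully tracks the backward orbit of the point near $y$, so that the contraction of $W^u_{\loc}(x)$ is genuinely inherited. Both are handled by keeping $r$ small enough to sit below the transversality scale of the relevant laminations and using the uniformity of the construction of $W^u_{\loc}(x)$ away from the parabolic set.
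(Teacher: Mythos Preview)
Your proposal takes a substantially different and more complicated route than the paper, and the contradiction step contains a genuine gap.

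The paper's argument is direct and uses only backward iteration. Pick $x\in\partial\Sigma\subset J$, build the horizontal local unstable manifold $D$ through $x$, and note that $D$ must intersect $U$ (since $U$ is foliated by strong stable manifolds). Take $y\in D\cap U$. Since $\diam f^{-n}(D)\to 0$ and $f^{-n}(x)\in\partial\Sigma$, the backward orbit $f^{-n}(y)$ stays arbitrarily close to $\overline\Sigma$. But $y$ lies on some $W^s(z_0)$ with $z_0\in\Sigma$, and backward iteration \emph{expands} along stable manifolds; so the only way $f^{-n}(y)$ can remain near $\Sigma$ is $y=z_0\in\Sigma$. Now $y\in\Sigma$ forces $f^{-n}(y)$ to stay on a compact invariant circle in $\Sigma\subset U$, while $y\in D$ forces $f^{-n}(y)\to J$. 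That is the contradiction. Proposition~\ref{Prop51} is never invoked.

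Your attempt to route the argument through Proposition~\ref{Prop51} breaks down at the sentence ``$V_{r_k}^{-n_k}(w_k)\subset\mathbb L_0$ must contain the lift of $D_r(w)$, a macroscopic horizontal disk independent of $k$.'' This conflates two unrelated operations. The disk $D_r(w)\subset\mathbb L_0$ and the lift $V_{r_k}(w_k)\subset f^{n_k}\mathbb L_0$ are related by \emph{holonomy} along the vertical lamination; the disk $V_{r_k}^{-n_k}(w_k)\subset\mathbb L_0$ is related to $V_{r_k}(w_k)$ by the \emph{dynamics} $f^{-n_k}$. There is no reason for $V_{r_k}^{-n_k}(w_k)$ to contain $D_r(w)$ or any fixed macroscopic set. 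Moreover, the final sentence of your contradiction is self-defeating: you argue that the relevant component ``collapses to $\{x\}$'' and then conclude that its Kobayashi diameter ``must blow up.'' A set collapsing to a point has small, not large, diameter; no contradiction with the uniform bound from Proposition~\ref{Prop51} is produced.

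A secondary issue: the point $z_0\in\Sigma$ and the disk $D_r(w)$ you want to work with lie inside the periodic rotation component $U$, large parts of which are \emph{removed} from $\Omega$ by construction. You would need to argue carefully that the protected-lift hypothesis $D_{2Kr}(w)\subset\Omega$ can be arranged, which is not addressed. The paper's backward-iteration argument avoids all of this: it uses only the unstable manifold and the geometry of $\Sigma$, and never needs the Kobayashi machinery.
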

\begin{proof}
Suppose there does exist an attracting rotating Fatou component, and let $U$ be the connected component of the intersection with $\Delta^2_R$ that contains the rotating disk $\Sigma$. Let $x \in \partial \Sigma$, which implies that $x \in J$.
The local unstable manifold $D$ through $x$ is horizontal and thus intersects all nearby strong stable manifolds. Since $U$ is foliated by strong stable manifolds, $D$ must intersect $U$ in a point $y$ local strong stable manifold through $\Sigma$. The sets $f^{-n}(D)$ may all be assumed to have arbitrarily small diameters, from which it follows that $f^{-n}(y)$ remains arbitrarily close to $\Sigma$. It follows that $y \in \Sigma$.  But while the backwards orbits of points on this local unstable manifold converge to $J$, backwards orbits in $\Sigma$ do not, giving a contradiction.
\end{proof}

\begin{prop}
If $f$ does not have any parabolic cycles then $f$ is hyperbolic.
\end{prop}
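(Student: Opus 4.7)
The plan is to establish uniform hyperbolicity of some iterate $f^N$ on $J$ by combining the strong contraction of $Df|_{E^s}$ (from the dominated splitting) with uniform expansion of $Df^N|_{E^c}$ extracted from the local unstable manifolds constructed in the preceding proposition.

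First, I would observe that without parabolic cycles $J$ is compactly contained in $\Omega$, so every $x \in J$ admits a local unstable manifold $D_x$ whose tangent at $x$ lies in the horizontal cone field. That tangent line is invariant under $Df$ and stays horizontal under backward iteration; together with the dominated splitting this forces it to coincide with $E^c_x$, since any other invariant line complementary to $E^s$ would, under $Df^{-n}$, develop an unbounded $E^s$-component (its $E^s/E^c$ coefficient is multiplied by $\lambda_c/\lambda_s$ at each backward step). Moreover, the Euclidean size of $D_x$ may be chosen uniform in $x \in J$ since $J$ avoids the (absent) parabolic cycles. Pointwise, $\operatorname{diam}_{\mathrm{Euc}} f^{-n}(D_x) \to 0$ is the defining property of $D_x$; I would upgrade this to uniform convergence via compactness of $J$ and a normal-families argument: any failure would yield, through a diagonal subsequence, a non-degenerate horizontal disk through a point of $J$ whose backward iterates retain a definite Euclidean size, contradicting the transversality of $\mathbb L_0$ to the dynamical vertical lamination at $J^+$ exactly as in the existence-of-unstable-manifolds proof.

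I would then fix $r > 0$ and $N \in \mathbb N$, uniform over $x \in J$, so that the intrinsic sub-disk $D_x^r \subset D_x$ of radius $r$ centered at $x$ satisfies $\operatorname{diam}_{\mathrm{Euc}} f^{-N}(D_x^r) < r/2$, and apply the Schwarz lemma to the biholomorphism $f^N : f^{-N}(D_x^r) \to D_x^r$, parameterized so that $f^{-N}(x) \mapsto x$. This yields $\|Df^N_y|_{E^c_y}\| \geq 2$ for $y = f^{-N}(x)$; since $J$ is backward invariant under $f$, every $y \in J$ arises in this way, so the expansion estimate holds uniformly on $J$. Combining this with Lemma \ref{newlemma5}, which after possibly enlarging $N$ gives $\|Df^N|_{E^s}\| \leq 1/2$ uniformly on $J$, and with the cone-bounded angle between $E^s$ and $E^c$, one concludes that $f^N$ is uniformly hyperbolic on $J$. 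Hence $f$ itself is hyperbolic on $J$, and since $J = J^\star$ by the Main Theorem, $f$ is a hyperbolic H\'enon map in the standard sense.

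The hard part will be the uniform Euclidean contraction step — promoting the pointwise shrinkage $\operatorname{diam}(f^{-n}(D_x)) \to 0$ to a statement uniform in $x \in J$. Everything else is either by construction (existence and size of $D_x$, contraction on $E^s$) or a direct application of the Schwarz lemma. Uniformity should follow by a standard compactness argument but requires some care in arranging the parameterizations of the $D_x$ to vary continuously with $x \in J$ and in ruling out degeneration through a normal-families limit.
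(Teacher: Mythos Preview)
Your proposal is correct and follows essentially the same route as the paper: absence of parabolic cycles gives local unstable manifolds through every point of $J$ (with tangent $E^c$), compactness of $J$ promotes the pointwise backward shrinking to a uniform rate, and this makes the center direction genuinely unstable. The paper compresses all of this into two sentences, while you have spelled out the Schwarz-lemma mechanism for extracting the uniform expansion constant and the normal-families argument for uniformity --- exactly the details the paper suppresses with the phrase ``By compactness of $J$ there are uniform estimates on the rate at which these unstable manifolds are contracted.'' One small caution: you invoke $J=J^\star$ from the Main Theorem at the very end, but in the paper's logical order that equality is proved \emph{after} the present proposition; it is harmless here since hyperbolicity on $J$ is already the desired conclusion and $J=J^\star$ for hyperbolic maps is classical (Bedford--Smillie), but you should not rely on the paper's own $J=J^\star$ corollary at this stage.
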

\begin{proof}
If $f$ does not have parabolic cycles, then there exist local unstable manifolds through any point in $J$, and the family of these unstable manifolds is invariant under $f$. By compactness of $J$ there are uniform estimates on the rate at which these unstable manifolds are contracted, hence the center direction of the dominated splitting is actually unstable, proving that the map is hyperbolic.
\end{proof}

\begin{prop}
The Julia set $J^+$ has zero Lebesgue measure.
\end{prop}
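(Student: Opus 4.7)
The plan is to emulate the classical one-dimensional Ma\~n\'e-type argument (underlying Corollary~\ref{cor:12} and the subsequent discussion leading to non-existence of Siegel disks and hyperbolicity away from parabolics): assume $J^+$ has positive Lebesgue measure, and derive a contradiction by transporting Lebesgue density to a dense subset of $J$ via bounded-distortion inverse branches.

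First I would invoke Fubini applied to the horizontal line foliation $\{y=c\}_{c\in\mathbb C}$: if $J^+$ had positive four-dimensional Lebesgue measure, some complex line in this family would meet $J^+$ in a set of positive planar measure. Since the horizontal line $\mathbb L_0$ from Definition~\ref{ellnot} is already chosen transverse to the artificial vertical lamination and may be perturbed slightly in $y_0$ without losing this property, we may take the Fubini slice to be $\mathbb L_0$ itself. Pick a Lebesgue density point $z_0 \in \mathbb L_0 \cap J^+$ that lies in $J$ and avoids the (finitely many) parabolic cycles.

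Next, for each $n\ge 0$ let $z_n=f^n(z_0)$ and choose a small protected lift $V_r(z_n)\subset V_{2Kr}(z_n)\subset f^n(\mathbb L_0)\cap\Omega$. The pullback $V_r^{-n}(z_n)$ is a disk in $\mathbb L_0$ containing $z_0$. Once wandering Fatou components have been ruled out one has $\deg_{\mathrm{crit}}=1$, so Proposition~\ref{Prop51} guarantees that $V_r^{-n}(z_n)$ is univalent with Kobayashi diameter in $\Omega$ bounded by $\mathrm{diam}_{\mathrm{max}}$. Combined with the horizontal expansion on $J$ away from parabolic cycles (which follows from the existence of local unstable manifolds established above), these pullbacks have Euclidean diameter tending to zero along a suitable subsequence $n_k$. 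A Koebe-type distortion estimate applied to the univalent inverse branches $f^{-n_k}\colon V_{2Kr}(z_{n_k})\to V_{2Kr}^{-n_k}(z_{n_k})$ then transports the density of $\mathbb L_0\cap J^+$ at $z_0$ to the density of $f^{n_k}(\mathbb L_0)\cap J^+$ at $z_{n_k}$ with bounded multiplicative distortion; by density of saddle preimages of $z_0$ in $J$ (using $J=J^*$), one obtains density close to $1$ on a dense subset of $\mathbb L_0\cap J$. Since $J^+=\partial K^+$ is nowhere dense in $\mathbb C^2$, and hence $\mathbb L_0\cap J^+$ is nowhere dense in $\mathbb L_0$, this yields the desired contradiction.

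The main obstacle is the two-dimensional Koebe distortion estimate. Unlike in one dimension, the inverse branches map between distinct holomorphic curves $f^n(\mathbb L_0)$ and $\mathbb L_0$ rather than between flat disks, so the classical Koebe theorem does not apply directly. The key substitutes are the Lipschitz (in fact $C^{1+\epsilon}$) regularity of the artificial vertical lamination established in Lemma~\ref{lemma:prep1}, which provides bounded-distortion holonomy, together with the bounded Kobayashi diameter from Proposition~\ref{Prop51} and the univalence supplied by $\deg_{\mathrm{crit}}=1$; combining these reduces the problem to an honest one-dimensional Koebe estimate on a flat transversal. A secondary issue is controlling orbits that pass close to the finitely many parabolic cycles, which one handles by excising small petal neighborhoods and using the one-dimensional parabolic fact that boundary petal pieces contribute zero planar measure.
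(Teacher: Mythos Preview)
Your overall strategy---forward iteration with bounded distortion, using $\deg_{\mathrm{crit}}=1$ and Proposition~\ref{Prop51}---matches the paper's, but the final contradiction is wrongly formulated and this is a genuine gap. Having density close to $1$ on a dense subset does \emph{not} contradict nowhere density: a fat Cantor set is nowhere dense yet has density $1$ at almost every one of its points. The detour through ``saddle preimages of $z_0$'' to carry density back to $\mathbb L_0\cap J$ is neither justified nor needed. (A smaller issue: a Lebesgue density point of $\mathbb L_0\cap J^+$ need not lie in $J$, since most points of $J^+$ have unbounded backward orbit; the paper works with an arbitrary $x\in J^+$ off the strong stable manifolds of the parabolic cycles.)

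The paper's argument is the direct (non-contrapositive) version and shows what the correct final step should be. For any such $x\in J^+$ and any horizontal line $L$ through $x$, the existence of local unstable manifolds yields a shrinking nest of bounded-shape ovals $V_k\subset L$ around $x$ whose forward images $D_k=f^{n_k}(V_k)$ are horizontal ovals of \emph{definite} size, with $f^{n_k}\colon V_k\to D_k$ of bounded distortion. Since $J^+=\partial K^+$, each definite-size $D_k$ contains a gap in $J^+$ of definite relative area; pulling this gap back with bounded distortion, so does each $V_k$. Hence $x$ is not a density point of $L\cap J^+$, every horizontal slice of $J^+$ has zero area, and (by Fubini, or via the smooth vertical lamination of Lemma~\ref{lemma:prep1}) $J^+$ has zero volume. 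Your argument is salvaged by replacing the faulty last step with this ``definite gap in the forward image'' observation---which is your density-transport run in the correct direction.
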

\begin{proof}
The one-dimensional counterpart of this result was proven in  \cite{Lyubich1,Orsaynotes}.
The 1D argument can be adapted to our setting as follows.
Take any point $x\in J^+$ that does not belong to strong stable manifolds of the parabolic points.
Then there is a sequence of moments  $n_k\to \infty$ for which $f^{n_k} x$ stays away from  the parabolic points.
Let $L$ be a complex horizontal line through $x$.
Proposition 10.3  implies
that there exists a shrinking nest  of ovals $V_k \subset L $
of bounded shape around $x$ (i.e., with bounded ratio of the inner and outer radii centered at $x$)
such that:

\smallskip\noindent  (i)
Each $D_k:=  f^{n_k} (V_k) $ is a horizontal-like oval of a definite size and bounded shape around $f^{n_k} x$;

\smallskip\noindent  (ii)
The maps $f^{n_k} : V_k \rightarrow D_k$ have a bounded distortion.

\smallskip
It follows from (i) that the ovals $D_k$ contain gaps in $J^+$ of definite relative size.
Then (ii)  implies that  so do the ovals $V_k$.
Hence $x$ is not a Lebesgue density point for $L\cap J^+$.

By the Lebesgue Density Points Theorem, the horizontal slices of $ J^+$ have zero area.
Since the dynamical vertical lamination of $J^+$ is smooth (Lemma 5.3),
 $J^+$ has zero volume.
\end{proof}

In the complex line the fact that a rational function has only finitely many attracting or parabolic cycles is a direct consequence of there only being finitely many critical points. For complex H\'enon maps this kind of argument cannot be used. Indeed, there do exist H\'enon maps with infinitely many attracting cycles, see for example \cite{Buzzard}. It was shown in \cite{BS1991a} that a hyperbolic H\'enon map has only finitely many Fatou components, each contained in the basin of an attracting cycle.

\begin{corollary}
There are at most finitely many attracting cycles.
\end{corollary}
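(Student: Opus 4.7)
The plan is to reduce the statement to the hyperbolic case of Bedford and Smillie \cite{BS1991a} via a volume argument based on the local unstable manifolds constructed above.

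If $f$ has no parabolic cycles, then by the preceding proposition $f$ is hyperbolic, and the corollary follows at once from \cite{BS1991a}: a hyperbolic H\'enon map has only finitely many Fatou components, each contained in the basin of an attracting cycle.

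Suppose instead that $f$ has parabolic cycles; note that the number of such cycles is finite (a Fatou-type bound, available for substantially dissipative H\'enon maps). I argue by contradiction: assume $f$ has infinitely many attracting cycles $\gamma_1, \gamma_2, \ldots$ with pairwise disjoint immediate basins $U_1, U_2, \ldots \subset \Delta^2_R$. The goal is to exhibit an open subset of each $U_n$ of volume uniformly bounded below, contradicting the finite total volume available in $\Delta^2_R$. Fix a small open neighborhood $P$ of the parabolic cycles. For all but finitely many indices $n$, the boundary $\partial U_n \subset J$ is not entirely contained in $P$, so I select $q_n \in \partial U_n \cap (J \setminus P)$. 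Since $q_n$ lies in the compact set $J \setminus P$, the preceding proposition yields a local unstable manifold $W^u_{\loc}(q_n)$ of size uniformly bounded below by some $\delta = \delta(P) > 0$, with tangent bundle contained in the horizontal cone field.

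On the other hand, $\partial U_n$ is locally foliated by strong stable (vertical) leaves of the dynamical lamination, so by the uniform angle bound between the horizontal and vertical cone fields on $J \setminus P$, the disk $W^u_{\loc}(q_n)$ is transverse to $\partial U_n$ at $q_n$. Consequently $W^u_{\loc}(q_n) \cap U_n$ contains a horizontal holomorphic disk $D_n$ of area bounded uniformly below. Saturating $D_n$ by the local strong stable manifolds through its points produces a flow-box contained in the basin of $\gamma_n$; since these local stable leaves stay inside $U_n$ for points sufficiently interior to $D_n$, a definite fraction of this flow-box sits inside the immediate basin $U_n$ itself, with volume bounded uniformly below. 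Summing over the infinitely many disjoint $U_n \subset \Delta^2_R$ gives the desired contradiction.

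The principal obstacle is the transversality step: \emph{a priori}, the local unstable manifold could be tangent to $\partial U_n$ at $q_n$, in which case the area of $W^u_{\loc}(q_n) \cap U_n$ could shrink to zero along the sequence. The dominated splitting is precisely what rules this out, providing a uniform angle bound between the horizontal direction (tangent to $W^u_{\loc}(q_n)$) and the vertical direction (tangent to the strong stable leaves comprising $\partial U_n$) on the compact set $J \setminus P$. A secondary technical point is verifying that the saturation of $D_n$ by local strong stable leaves stays inside $U_n$, which uses that both $D_n$ and its stable saturation consist of points whose forward orbits converge to $\gamma_n$.
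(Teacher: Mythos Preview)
Your approach is quite different from the paper's, which gives a two-line argument: an attracting cycle cannot lie entirely inside the fixed neighborhood $\mathcal{N}(J^+_R)$ where the dominated splitting (and hence the artificial vertical lamination) is defined, so each attracting cycle meets the compact set $\Delta^2_R \setminus \mathcal{N}(J^+_R)$, which is contained in the Fatou set and therefore meets only finitely many Fatou components. No volume argument, no unstable manifolds, no case distinction on parabolic cycles.

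Your argument, by contrast, has two genuine gaps.

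First, you invoke a ``Fatou-type bound'' on the number of parabolic cycles for substantially dissipative H\'enon maps. No such bound is available at this point in the argument; in fact the paper explicitly remarks later that it is \emph{not aware of any bound on the number of parabolic cycles for complex H\'enon maps}. In the paper's logical order, finiteness of parabolic cycles is deduced \emph{after} the present corollary, so you are using a result that has not been established. Even setting this aside, you also assert without proof that for all but finitely many $n$ the set $\partial U_n \cap J$ is not entirely contained in the parabolic neighborhood $P$; this is not obvious.

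Second, and more seriously, the heart of your volume argument is the claim that $W^u_{\loc}(q_n)\cap U_n$ contains a horizontal disk of area bounded below uniformly in $n$. Transversality of $W^u_{\loc}(q_n)$ to the stable lamination only tells you that $W^u_{\loc}(q_n)$ crosses nearby vertical leaves; it says nothing about the \emph{shape} of the open set $W^u_{\loc}(q_n)\cap U_n$ inside the unstable disk. The boundary $\partial U_n$ is not a submanifold, and the intersection $W^u_{\loc}(q_n)\cap U_n$ could in principle be an open set of arbitrarily small area (a union of thin slivers accumulating on $q_n$). You would need an additional argument---for instance, a lower bound on the density of $U_n$ near $q_n$ along horizontal transversals---to get a uniform area bound, and nothing in the preceding results supplies this.
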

\begin{proof}
This follows from the fact that an attracting cycle cannot be completely contained in the neighborhood of $J^+$ where the dominated splitting exists. But the complement of this neighborhood is contained in only finitely many Fatou components.
\end{proof}

\begin{corollary}
There are at most finitely many parabolic cycles.
\end{corollary}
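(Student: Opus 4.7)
The plan is to mirror, step by step, the argument of the preceding corollary on attracting cycles. First, by Proposition~\ref{per comps}(iii), each parabolic cycle corresponds to a parabolic basin $U$ which is a Fatou--Bieberbach domain, hence biholomorphic to $\mathbb C^2$. The crucial geometric step is to show that such a basin cannot be contained in the thin neighborhood $\mathcal N(J^+_R)$ on which the dominated splitting is defined: by the preceding proposition $J^+$ has Lebesgue measure zero, so the measure of $\mathcal N(J^+_R)$ can be made arbitrarily small by thinning, while $U$, being open, has strictly positive measure in $\Delta^2_R$. Consequently each parabolic basin must contain a point of the compact complement $\overline{\Delta^2_R}\setminus\mathcal N(J^+_R)$.

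The second step is then identical to the one used for attracting cycles: the complement above is compact and contained in the Fatou set, it is covered by the pairwise disjoint Fatou components, so by compactness only finitely many Fatou components can meet it. Since distinct parabolic cycles determine distinct parabolic basins (the parabolic orbit is recovered as the set of boundary points in $J$ to which $U$-orbits converge), there can be at most finitely many parabolic cycles.

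The main difficulty I anticipate is achieving uniformity in the first step: the thickness of $\mathcal N(J^+_R)$ must be fixed in advance, and one has to show that no parabolic basin lies inside it, rather than letting the thickness depend on the basin. If the Lebesgue-measure comparison proves too coarse for this uniformity --- a priori the basin measures $|U\cap\Delta^2_R|$ could tend to zero across an infinite collection of cycles --- I would fall back on a perturbation strategy paralleling the final corollary of Section~2: push each neutral multiplier strictly into the open unit disk, argue that both the dominated-splitting property and substantial dissipativity are stable under small perturbations of the H\'enon parameters, apply the preceding corollary to the perturbed map to bound its attracting cycles, and conclude that the parabolic cycles of $f$ are bounded in number by the attracting cycles of the perturbation, since each parabolic cycle contributes at least one nearby attracting cycle after perturbation.
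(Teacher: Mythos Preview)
Your approach differs substantially from the paper's, and both branches of your proposal have genuine gaps.

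The paper's argument is short and uses the immediately preceding Proposition: through every point of $J\cap\Omega$ there exists a local \emph{unstable} manifold, i.e.\ a horizontal disk whose backward iterates shrink to the point. A parabolic periodic point has center eigenvalue of modulus~$1$, so no horizontal disk through it can be contracted by $f^{-n}$; hence no parabolic cycle lies in $\Omega$. Since the construction of $\Omega$ (Corollary~\ref{cor:absorbing}/Lemma~\ref{lemma:omega}) removes only the parabolic cycles attached to \emph{finitely many} periodic Fatou components, those are the only parabolic cycles. You never invoke this unstable--manifold proposition, which is the key input.

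Your measure argument runs into exactly the uniformity problem you flag: $\mathcal N(J^+_R)$ is fixed once and for all, and nothing prevents an infinite sequence of parabolic basins with $|U_k\cap\Delta^2_R|\to 0$ from all sitting inside it. Thinning $\mathcal N(J^+_R)$ after the fact does not help, since each thinning only excludes finitely many more basins.

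Your perturbation fallback is more seriously flawed. To push \emph{every} neutral multiplier strictly inside the unit disk simultaneously you would need as many independent parameters as there are parabolic cycles---so if you are trying to rule out infinitely many, you need an infinite-dimensional deformation, and the argument becomes circular. Indeed, the paper's own perturbation corollary (that $f$ lies on the boundary of the hyperbolicity locus) explicitly \emph{uses} the finiteness of parabolic cycles to pass to a finite-dimensional parameter family. There is also a continuity issue: dominated splitting persists only if $J$ varies continuously, which near parabolic cycles requires choosing a ``stable'' perturbation direction (avoiding parabolic implosion) for each cycle---again a condition per cycle, not achievable uniformly for infinitely many with a single small perturbation.
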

\begin{proof}
By the existence of unstable manifolds through any point in $J \cap \Omega$ it follows that there can be no parabolic cycles in $\Omega$, which implies that the only parabolic cycles can be those finitely many that were removed from $J$ in the construction of $\Omega$.
\end{proof}

\begin{corollary}
The map $f$ lies on the boundary of the hyperbolicity locus in the family of H\'enon like maps.
\end{corollary}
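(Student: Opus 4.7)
The plan is to mimic the one-dimensional argument: perturb $f$ in the (infinite-dimensional) space of H\'enon-like maps so that every parabolic cycle breaks up into genuinely hyperbolic cycles, and then invoke the earlier proposition asserting that a substantially dissipative H\'enon-like map with dominated splitting and no parabolic cycles is hyperbolic.

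First I would use the previous corollary to fix the finitely many parabolic cycles $p_1,\dots,p_N$ of $f$. Since a small localized holomorphic deformation supported near a single parabolic cycle can freely alter its neutral multiplier without affecting the dynamics elsewhere, I would build a holomorphic one-parameter family $f_\ep$ of H\'enon-like maps with $f_0=f$ such that for all sufficiently small $\ep\neq 0$ every continuation $p_i(\ep)$ of $p_i$ has both multipliers off the unit circle, and no new parabolic cycles are created. The latter is a codimension-one condition along a generic path, so generically avoided.

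Next I would verify that the hypotheses of the ``no parabolics implies hyperbolic'' proposition persist for $f_\ep$ with $\ep$ small. Substantial dissipativity is an open condition on $\mathrm{Jac}\,f$, hence preserved. The dominated splitting on $J$ extends to a strictly invariant cone field on a neighborhood $\NN(J)$, and strict invariance of a cone field is a $C^1$-open condition; hence $f_\ep$ preserves a strict cone field on $\NN(J)$ for $\ep$ small. Upper semi-continuity of the Julia set under holomorphic perturbation then ensures $J_{f_\ep}\subset \NN(J)$, so $f_\ep$ admits a dominated splitting over $J_{f_\ep}$. Being substantially dissipative with dominated splitting and without parabolic cycles, $f_\ep$ is hyperbolic by the previous proposition. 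Since $\ep$ can be chosen arbitrarily small, $f$ lies on the boundary of the hyperbolicity locus in the space of H\'enon-like maps.

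The main delicate point I expect is arranging the family $f_\ep$ so that all parabolic cycles are broken \emph{simultaneously} while no new parabolic cycles appear; this is precisely why the ambient parameter space must be the infinite-dimensional space of H\'enon-like maps rather than the two-parameter H\'enon family itself, and parallels the remark following the corresponding one-dimensional corollary.
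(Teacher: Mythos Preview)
Your overall strategy matches the paper's: perturb $f$ in the infinite-dimensional H\'enon-like family so that every parabolic cycle becomes hyperbolic, check that the dominated splitting survives on the new Julia set, and invoke the proposition that dominated splitting without parabolics implies hyperbolicity. There is, however, a genuine gap in one step.

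The claim ``upper semi-continuity of the Julia set under holomorphic perturbation then ensures $J_{f_\ep}\subset \NN(J)$'' is not valid here. The set $J$ is \emph{not} upper semi-continuous across a parabolic bifurcation: on the side where the neutral multiplier moves inside the unit circle the parabolic basin becomes an attracting basin and $J$ varies continuously, but on the other side one has parabolic implosion and $J_{f_\ep}$ can jump out of any prescribed neighborhood of $J$. Since the whole point is to control $J_{f_\ep}$ near the parabolic cycles, you cannot appeal to a general semi-continuity principle; you must choose the direction of perturbation. The paper does exactly this: it works in a parameter space of dimension equal to the number $N$ of parabolic cycles, and for each cycle selects the \emph{stable} half-space of perturbations. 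On the intersection of these $N$ half-spaces $J$ moves continuously in the Hausdorff topology, which is what actually yields $J_{f_\ep}\subset \NN(J)$ and hence persistence of the cone field and the dominated splitting. Your argument needs this extra ingredient; simply pushing the multipliers off the unit circle in an unspecified direction is not enough.

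A secondary difference: to rule out new parabolic cycles the paper does not argue by genericity. It uses the fact, established earlier, that away from the original parabolic cycles $f$ is uniformly expanding in the horizontal direction; this is an open condition and therefore persists under small perturbation, so no new neutral cycles can appear anywhere in $J_{f_\ep}$. Your codimension-one avoidance argument can be made to work once you have restricted to the stable half-spaces, but the paper's route is more direct and gives hyperbolicity for \emph{all} sufficiently small stable perturbations rather than merely generic ones.
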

\begin{proof}
Under perturbation of the H\'enon map the parabolic cycles bifurcate. Since we consider the infinite dimensional family of H\'enon like maps, we can work in a finite dimensional analytic parameter space whose dimension is as large as the number of parabolic cycles, and for which all parabolic cycles bifurcate for nearby parameters. For each parabolic cycle there is a suitable half-space in the parameter space which leads to stable perturbations. We consider sufficiently nearby parameters in the intersection of those half-spaces. The Julia set $J$ changes continuously in the Hausdorff dimension for such perturbations, hence the dominated splitting on $J$ is preserved. Moreover, by the fact that the horizontal direction is uniformly stable for points in $J$ bounded away from the parabolic cycles, it follows that no new parabolic cycles are constructed under sufficiently small perturbations. Thus the
\end{proof}

We would like to state that $f$ lies on the boundary of the hyperbolicity locus even in the family of polynomial H\'enon maps of the same degree. In order to follow the one-dimensional proof one needs to prove that the number of parabolic cycles is bounded by the degree, as was proved for one dimensional polynomials by Douady and Hubbard \cite{DH}, and for rational functions by Shishikura \cite{Shishikura}. We are not aware of any bound on the number of parabolic cycles for complex H\'enon maps.

\begin{corollary} The two Julia sets $J^\star$ and $J$ are equal.
\end{corollary}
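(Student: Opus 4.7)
The inclusion $J^\star\subseteq J$ is standard; for the reverse I plan to show that saddle periodic points accumulate on every $x\in J$, which (combined with $J^\star=\overline{\{\text{saddles}\}}$) gives $J\subseteq J^\star$. Since $J$ is perfect and the parabolic cycles are finite, it will suffice to treat $x\in J\cap\Omega$. For such $x$ I can invoke the preceding results: a horizontal local unstable manifold $W^u_\loc(x)$ of uniform size on compacta of $\Omega$, the vertical $W^s_\loc(x)$ from the dominated splitting, and the exponential bounds of Lemma~\ref{newlemma5} together with the unstable expansion already established, yielding uniform hyperbolicity on any compact $f$-invariant subset of $J\cap\Omega$.

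Next I would fix a saddle $q\in J^\star$ (nonempty by positive topological entropy) and $\epsilon>0$. The Bedford--Lyubich--Smillie density theorem gives $\overline{W^u(q)}=J^-$ and $\overline{W^s(q)}=J^+$, so since $x\in J^+\cap J^-$, the horizontal $W^u(q)$ meets $W^s_\loc(x)$ transversally at a point $y\in B_\epsilon(x)$, and the vertical $W^s(q)$ meets $W^u_\loc(x)$ transversally at a point $z\in B_\epsilon(x)$. Both $y,z$ lie in $J$ and are heteroclinic between $x$ and $q$: $f^n(z)$ tends to the orbit of $q$ while $f^{-n}(z)$ shadows the past of $x$, and symmetrically $f^{-n}(y)$ tends to $q$ while $f^n(y)$ shadows the future of $x$. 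I would then pick $N$ large enough that $f^N(z)$ and $f^{-N}(y)$ both lie within $\epsilon$ of the orbit of $q$, and form the closed $O(\epsilon)$-pseudo-orbit $z, f(z),\ldots,f^N(z), f^{-N+1}(y),\ldots, y$. By construction it stays in a fixed compact subset of $J\cap\Omega$, bounded away from the parabolic cycles. Applying the Anosov Closing Lemma in this compact uniformly hyperbolic region produces a true periodic orbit shadowing the pseudo-orbit within $O(\epsilon)$, which is a saddle by hyperbolicity and lies within $O(\epsilon)$ of $x$. Letting $\epsilon\to 0$ places $x$ in $\overline{\{\text{saddles}\}}=J^\star$.

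The hard part will be coordinating the choice of $\epsilon$, the transverse intersection points $y,z$, and the integer $N$ so that the pseudo-orbit remains in a \emph{single} compact hyperbolic piece to which the Closing Lemma can be applied with uniform constants. Since $J\cap\Omega$ is not itself compact, one must first fix the saddle $q$ and a compact neighborhood of the orbit of $x$ up to the finite time $N$, all of which lies in a fixed compact subset of $\Omega$, and only then send the shadowing error $\epsilon$ (and hence the required travel time $N$) to zero. A secondary point, which is by now routine in the substantially dissipative setting, is the appeal to the density $\overline{W^u(q)}=J^-$ and $\overline{W^s(q)}=J^+$.
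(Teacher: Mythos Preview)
Your strategy of producing saddles near $x$ by a Closing Lemma argument has a genuine gap: the pseudo-orbit you write down does not close. With $y\in W^u(q)\cap W^s_\loc(x)$ and $z\in W^s(q)\cap W^u_\loc(x)$, the sequence
\[
z,\;f(z),\ldots,f^N(z),\;f^{-N+1}(y),\ldots,y
\]
has an $O(\epsilon)$ jump at the $q$-end (once you synchronize along the periodic orbit of $q$), but to make it a \emph{closed} pseudo-orbit you need $d(f(y),z)$ small. Since $y,z\in B_\epsilon(x)$, the point $f(y)$ is near $f(x)$, not near $x$; as $x$ is an arbitrary point of $J$, there is no reason for $f(x)$ to be close to $x$. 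So the loop breaks at the $x$-end, and the Closing Lemma cannot be invoked. The difficulty you flag (keeping the pseudo-orbit inside a fixed compact hyperbolic piece) is real but secondary; the pseudo-orbit is not periodic to begin with.

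The paper's proof avoids this by never needing to close an orbit near $x$. It fixes a single saddle $p$, uses Bedford--Smillie to get that $W^s(p)$ accumulates on $x\in J^+$ and $W^u(p)$ accumulates on $x\in J^-$, and then observes that near $x$ these are respectively vertical and horizontal (the horizontality of $W^u(p)$ near $x$ uses the relative backward invariance of $\Omega$ to keep the relevant orbit segment inside the cone-field region). Two families of disks in transverse cones, both accumulating at $x$, must intersect arbitrarily close to $x$: this yields \emph{homoclinic} points of $p$ near $x$, and then standard horseshoe theory produces saddles near those homoclinic points. No closing step through $x$ is required.

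Your setup can be salvaged, but the salvage collapses to the paper's argument: from $y\in W^u(q)\cap W^s_\loc(x)$ and $z\in W^s(q)\cap W^u_\loc(x)$, the Inclination Lemma forces $W^u(q)$ to $C^1$-accumulate on $W^u_\loc(x)$, hence to intersect $W^s(q)$ transversally near $z$, producing homoclinic points of $q$ near $x$. That is exactly the homoclinic route, with an unnecessary detour through $x$'s local manifolds.
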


In \cite{GP} the assertion $J = J^\star$ is proved using different methods, under the weaker assumption that the substantially dissipative H\'enon map $f$ admits a dominated splitting on the potentially smaller set $J^\star$.

\begin{proof}
Let $x \in J$ be a point that is not contained in one of the parabolic cycles. Let $p$ be a saddle periodic point. By \cite{BS1991b} the stable manifold $W^s(p)$ accumulates on all of $J^+$, and hence also on $x$, and similarly $W^u(p)$ must accumulate on $x$. Note that the tangent space to $W^s(p)$ must be vertical at all points.

A sufficiently small local unstable manifold $W^u_{\mathrm{loc}}(p)$ is contained in $\Omega$. Since $\Omega$ is relatively backwards invariant, it follows that if $y \in W^u_{\mathrm{loc}}(p)$ is such that $f^n(y)$ contained in $\Omega$ for some $n \in \mathbb N$, then $f^j(y) \in \Omega$ for $j = 0, \ldots n$. Since $\Omega$ is contained in the region of dominated splitting and the tangent bundle to $W^u_{\mathrm{loc}}(p)$ is horizontal, and the horizontal cone field is forward invariant, it follows that the tangent space to $W^u_{\mathrm{loc}}(f^n(p))$ at $f^n(y)$ is horizontal. Thus $W^u(p)$ is horizontal in a small neighborhood of $x$.

Since both $W^s(p)$ and $W^u(p)$ accumulate at $x$ and are respectively vertical and horizontal near $p$, it follows that there exist intersection points of $W^s(p)$ and $W^u(p)$ arbitrarily close to $x$.  By standard construction there are saddle periodic points arbitrarily close to homoclinic intersection points, and hence also arbitrarily close to $x$. Since $J^\star$ is the closure of the set of saddle points, it follows that $x \in J^\star$.

Since $J^\star$ is closed and we proved that all but finitely many points of $J$ are contained in $J^\star$, the fact that $J$ does not have isolated points implies that $J \subset J^\star$.
\end{proof}

\end{document}